\numberwithin{equation}{section}     
\setlist[enumerate,1]{label={\upshape(\roman*)},ref=\roman*}
\setlist[enumerate,2]{label={\upshape(\alph*)},ref=\alph*}
\newtheorem{theorem}{Theorem}[section]
\newtheorem{teo}[theorem]{Theorem}
\newtheorem{coro}[theorem]{Corollary}
\newtheorem{lema}[theorem]{Lemma}
\newtheorem{prop}[theorem]{Proposition}
\newtheorem{claim}[theorem]{Claim}
\newtheorem{question}[theorem]{Question}
\theoremstyle{definition}
\newtheorem{defi}[theorem]{Definition}
\newtheorem{remark}[theorem]{Remark}
\newtheorem{example}[theorem]{Example}
\newcommand{\eps}{\varepsilon}
\newcommand{\wt}[1]{\widetilde{#1}}
\newcommand{\R}{\mathbb R}
\newcommand{\RR}{\R}
\newcommand{\ZZ}{\mathbb Z}
\newcommand{\HH}{\mathbb{H}}
\newcommand{\DD}{\mathbb{D}}
\newcommand{\CC}{\mathbb{C}}
\newcommand{\wwp}{\widetilde \Phi}
\newcommand{\cI}{\mathcal{I}}
\newcommand{\cT}{\mathcal{T}}
\newcommand{\cW}{\mathcal{W}}
\newcommand{\cF}{\mathcal{F}}
\newcommand{\cG}{\mathcal{G}}
\newcommand{\cD}{\mathcal{D}}
\newcommand{\cB}{\mathcal{B}}
\newcommand{\wcF}{\widetilde \cF}
\newcommand{\wcG}{\widetilde \cG}
\newcommand{\cL}{\mathcal{L}}
\newcommand{\cC}{\mathcal{C}}
\newcommand{\cO}{\mathcal{O}}
\newcommand{\cM}{\mathcal{M}}
\newcommand{\cE}{\mathcal{E}}
\newcommand{\cV}{\mathcal{V}}
\newcommand{\cU}{\mathcal{U}}
\newcommand{\mt}{\widetilde M}
\newcommand{\ft}{\widetilde f}
\newcommand{\TT}{\mathbb{T}}
\newcommand{\cs}{\cW^{cs}}
\newcommand{\cu}{\cW^{cu}}
\newcommand{\wcs}{\widetilde \cW^{cs}}
\newcommand{\wcu}{\widetilde \cW^{cu}}
\newcommand{\cTu}{S^1_\infty(\wcF)}
\newcommand{\cTuu}{S^1_\infty(\wcF_1)}
\newcommand{\cTuD}{S^1_\infty(\widetilde{\cD})}
\title[Transverse foliations in 3-manifolds]{Intersection of transverse 
foliations in 3-manifolds \\ \footnotesize Hausdorff leafspace implies leafwise quasigeodesic}
\author{Sergio R.\ Fenley} 
\address{Florida State University, Tallahassee, FL 32306}
\email{fenley@math.fsu.edu}
\author{Rafael Potrie} 
\address{Centro de Matem\'atica, Universidad de la Rep\'ublica, Uruguay}
\email{rpotrie@cmat.edu.uy}
\urladdr{http://www.cmat.edu.uy/~rpotrie/}
\thanks{S.F. was partially supported by Simons Foundation grant 637554; by  National Science Foundation grant 
DMS-2054909, and by the Institute for Advanced Study.
 R. P. was partially supported by CSIC I+D project 'Estructuras Topol\'ogicas de sistemas parcialmente hiperb\'olicos y aplicaciones' and ANII}
\begin{document}

\begin{abstract}
Let $\cF_1$ and $\cF_2$ be transverse two dimensional foliations with Gromov hyperbolic leaves in a closed 3-manifold $M$ whose fundamental group is not solvable, and let $\cG$ be the one dimensional foliation obtained by intersection. We show that $\cG$ is \emph{leafwise quasigeodesic} in $\cF_1$ and $\cF_2$ if and only if the foliation $\cG_L$ induced by $\cG$ in the universal cover $L$ of any leaf of $\cF_1$ or $\cF_2$ has Hausdorff leaf space. We end up with a discussion on the hypothesis of Gromov hyperbolicity of the leaves. 

\bigskip

\noindent {\bf Keywords: } Quasigeodesic flows, 3-manifolds, foliations.
%
\medskip
\noindent {\bf MSC 2022:} Primary: 57R30, 37C27;
Secondary:  53C12, 53C23.
\end{abstract}

\maketitle

\section{Introduction} 

In this paper we study pairs of transverse Reebless  foliations in closed 3-manifolds and the geometric properties of the one dimensional foliation obtained by intersecting them. This problem goes back at least to \cite{ThurstonCircle1} and was explored for instance in \cite{MatsumotoTsuboi} where it is shown to be a quite subtle one.

We will analyze the situation when we have two transverse $2$-dimensional
foliations, which will be denoted by  $\cF_1, \cF_2$, on a $3$-manifold $M$.
We will denote by $\cG$ 
the intersected  foliation. 
The focus of this article will be on geometric properties
of leaves of the one dimensional foliation $\cG$ inside
leaves of $\cF_1, \cF_2$ when lifted to the universal cover.
The study of similar geometric properties was used very successfully  
by Thurston, generating important results in $3$-manifolds:
for example Cannon and Thurston's construction \cite{Ca-Th}
of group invariant Peano curves (involving singular foliations on surfaces).
This geometric study is also extremely useful
for analyzing the continuous extension property for foliations
in hyperbolic $3$-manifolds \cite{FenleyGeom}.

A well known example of  the situation considered in this article
occurs when 
$\cF_1, \cF_2$ are the weak stable and weak unstable foliations
of an Anosov flow $\Phi$ in the $3$-manifold $M$.
A big motivation for the study in this article comes from partial
hyperbolic diffeomorphisms in $3$-manifolds, which has been an active area in the last 20 years. Under orientability
conditions there is pair of branching two dimensional foliations,
preserved by the partially hyperbolic diffeomorphism, which
are transverse to each other. The intersection is a branching one dimensional
foliation. 
The generic case is that the branching two dimensional foliations
have Gromov hyperbolic leaves.
Again under orientability conditions, the two dimensional branching
foliations are well approximated by actual foliations, which
are transverse to each other, and whose intersection is a one dimensional
foliation. The study of the geometric structure of these 
one dimensional foliations is very useful
as follows. Under the orientability condition, the 
one dimensional foliation generates a flow. If one
supposes that the flow lines are uniform quasigeodesics
in the respective leaves of $\wcF_1$ or $\wcF_2$, 
we obtained some results in \cite{BFP} (see also \cite{ChandaFenley}) that allowed us to promote leafwise quasigeodesic flows to (topological) Anosov flows under certain situations.
This analysis was in turn motivated by the study of a particular class of transverse foliations arising from partially hyperbolic systems in \cite{FP2}.  This induces a lot of structure on the partially 
hyperbolic diffeomorphism, and this structure has
very important consequences, such as accessibility
and ergodicity of the system (if volume preserving) \cite{FPAcc}.

This naturally lead to the following more general problem: suppose
$\cF_1, \cF_2$ are transverse foliations by Gromov hyperbolic
leaves in a $3$-manifold $M$. Let $\cG$ be the intersected  foliation.
When are the leaves of $\cG$ leafwise quasigeodesic?
In \cite[\S 1.1]{FP4} we delineated a very careful strategy to attack this problem.
A very easy necessary condition is that in the universal cover,
in each leaf $L$ of $\wcF_1, \wcF_2$, the one dimensional foliation
$\wcG$ in $L$ (denoted by $\cG_L$)
has Hausdorff leaf space (hence homeomorphic to the reals).
In fact proving such Hausdorff behavior is one of the intermediate
steps in the strategy to prove leafwise quasigeodesic behavior
\cite{FP4}.
 A natural question is how strong is the property of
having leafwise Hausdorff leaf space: for example
is it equivalent to being a foliation by uniform quasigeodesics inside the leaves of each of the foliations? This is what we analyze in this article
and we prove the following:

\begin{teo}\label{teo.main} Let $\cF_1, \cF_2$ be two transverse foliations by Gromov hyperbolic leaves in a closed 3-manifold $M$ whose fundamental group is not solvable, and let $\cG$ be the intersected  foliation. Let $\wcG$ be the lifted foliation to $\mt$ and given $L \in \wcF_i$ denote by $\cG_L$ the restriction of $\wcG$ to $L$.  Then leaves of $\cG_L$ are uniformly quasigeodesic in $L$ for all $L \in \wcF_1, \wcF_2$ if and only if the leaf space $\cO_L$ of $\cG_L$ is Hausdorff for all $L \in \wcF_1,\wcF_2$. 
\end{teo}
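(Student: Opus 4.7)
The plan is to split Theorem~\ref{teo.main} into its two implications, with the overwhelming majority of the work going into the direction \emph{Hausdorff $\Rightarrow$ uniform quasigeodesic}.

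For the easier direction \emph{uniform quasigeodesic $\Rightarrow$ Hausdorff}, I would use that in a foliation of the Gromov hyperbolic surface $L$ by uniform quasigeodesics, the endpoint map $\gamma \mapsto (\gamma^+,\gamma^-)$ is continuous from $\cO_L$ into $(\partial_\infty L \times \partial_\infty L)\setminus \Delta$. A pair of non-separated leaves $\alpha \ne \beta$ in $\cO_L$ would, by continuity and compactness of $\partial_\infty L$, be forced to share the same pair of ideal endpoints. Then the two disjoint uniform quasigeodesic leaves $\alpha,\beta$, which lie within bounded Hausdorff distance by the Morse lemma, would enclose a strip foliated by further leaves of $\cG_L$ all with exactly the same pair of endpoints, producing an uncountable family of disjoint uniform quasigeodesics inside a strip of bounded width, which is absurd.

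For the hard direction, assume $\cO_L$ is Hausdorff (hence homeomorphic to $\RR$) for every $L \in \wcF_1 \cup \wcF_2$. I would proceed in three stages. \emph{Stage A.} Show that each leaf $\gamma$ of $\cG_L$ admits two well defined, distinct ideal endpoints $\gamma^\pm \in \partial_\infty L$. Since $\cO_L \cong \RR$ every leaf is properly embedded, so the two ends are well defined set-theoretically; the substantive content is to rule out an end that limits on a non-trivial subset of $\partial_\infty L$, or two ends that limit to the same point. For this I would exploit that $\gamma$ is simultaneously a leaf of $\cG_{L'}$ for the unique $L' \in \wcF_j$ ($j\ne i$) containing it: a pathological end in $L$ would be transported by the transverse foliation $\cF_j$ to a failure of Hausdorffness on a nearby $L'$, contradicting the hypothesis. \emph{Stage B.} Use the resulting continuous equivariant endpoint map together with properness of leaves and Gromov hyperbolicity to conclude that each leaf is a (not yet uniform) quasigeodesic. \emph{Stage C.} Upgrade to uniform constants via a compactness argument: if no uniform constants exist, extract a sequence of leaves $\gamma_n \subset L_n$ together with points $x_n,y_n \in \gamma_n$ witnessing blowing-up quasigeodesic constants, translate by deck transformations of $\pi_1(M)$ to bring each $x_n$ into a compact fundamental domain of $\mt$, extract a limit leaf $\gamma_\infty \subset L_\infty$, and derive a contradiction with Stage~A or Stage~B at the limit.

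The main obstacle I anticipate is Stage~A, followed by the extraction argument in Stage~C. Both hinge on propagating one-dimensional topological data (the leaf space structure of a single $\cG_L$) across the transverse foliation $\cF_j$ and through the deck action of $\pi_1(M)$; the non-solvability hypothesis on $\pi_1(M)$ is essential in both stages, as it prevents degenerate product-like configurations in which the propagation mechanism would collapse or the limit extraction would yield only trivial limits. Much of the delicate work should be organised along the strategy of the authors' earlier paper~\cite{FP4}, whose program already placed leafwise Hausdorffness as an intermediate step toward uniform quasigeodesic behavior and established key pieces of this machinery.
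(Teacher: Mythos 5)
Your Stage~A contains a gap that is, in fact, the crux of the whole theorem: the assertion that a ``pathological end'' $-$ in particular a leaf $\gamma$ of $\cG_L$ whose two rays land at the \emph{same} point of $\partial_\infty L$ $-$ ``would be transported by the transverse foliation $\cF_j$ to a failure of Hausdorffness on a nearby $L'$'' is simply false. The paper exhibits in \S\ref{s.example} an explicit pair of transverse minimal foliations by Gromov hyperbolic leaves (the weak stable foliations of two transverse suspension Anosov flows on a solvmanifold) for which $\wcG$ has Hausdorff leaf space in every leaf of both foliations, yet every leaf of every $\cG_L$ is a horocycle, with both rays landing at the common ``non-marker'' point $\xi_L$. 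So Hausdorffness is perfectly compatible with all leaves being bubble leaves; the transverse foliation does not detect the pathology. Ruling this out is exactly where the non-solvability hypothesis must enter, and the mechanism is not ``propagate to non-Hausdorffness'': the paper instead shows (Proposition~\ref{p.novisualallbubble}, \S\ref{s.nonvisual}) that if the bubble-leaf picture occurs then $\cF_1$ is, after collapsing complementary $I$-bundles, $\RR$-covered and is the weak stable foliation of a topological Anosov flow; in the non-solvable case this flow must be skewed $\RR$-covered, and then one derives a contradiction with the strict monotonicity of the non-marker point on the universal circle (Proposition~\ref{prop.nonmarkermoves}). That is a global argument that passes through the universal circle and the classification of $\RR$-covered Anosov flows, and it cannot be replaced by the local propagation you describe.

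Your Stage~B also has a hole: knowing that each leaf of $\cG_L$ lands at two distinct ideal points, with an equivariant and continuous endpoint map, does not by itself force quasigeodesic behavior. A properly embedded curve with distinct ideal endpoints can still wander arbitrarily far from the geodesic joining them. The paper's bridge is the \emph{small visual measure} property (\S\ref{s.SVMQG}): SVM yields that every geodesic chord stays in a uniform neighborhood of the corresponding leaf segment (Proposition~\ref{prop-charSVM}), and Hausdorffness plus SVM then forces the converse containment by a limiting argument (Lemma~\ref{lema-limitsvm} and Proposition~\ref{prop.svmimpliesqg}), which together give uniform quasigeodesics via Proposition~\ref{prop-charQI}. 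This quantitative intermediate notion is the missing ingredient in your plan. Your Stage~C compactness upgrade and your argument for the easy direction are reasonable in outline, and the ``landing'' half of your Stage~A (ruling out a ray with non-degenerate limit set, using Hausdorffness in the transverse foliation) does match the paper's \S\ref{s.landing}; but without SVM and the Anosov-flow dichotomy, the hard direction does not close.
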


At first sight this is a very surprising result: Hausdorff leaf
space is only a topological property, which could be true
in many situations. On the other hand quasigeodesic behavior
is a very strong geometric property with many important 
consequences.

We mention that the 
the study of transverse foliations has been addressed 
before by Thurston \cite[Section 7]{ThurstonCircle1},
and by work of Hardorp \cite{Hardop} on total foliations (we also point to our previous paper \cite{FP4} where the general problem of 
transverse foliations is discussed).

Regarding the assumption that the leaf spaces of the one dimensional foliations in their two dimensional leaves is Hausdorff for all leaves, we remark that this is equivalent to the fact that the leaf space of the one dimensional foliation $\wcG$ in $\mt$ is Hausdorff as we show in Proposition \ref{prop-Hsdff2Dand3D}. This generalizes a result from \cite{Barbot,FenleyAnosov} where it is proved for Anosov one dimensional foliations. We also point out that as a part of our study we get the following consequence which may be interesting on its own (see Theorem \ref{prop-closedleaf}):

\begin{coro}\label{quasigeodesiccoro-periodic}
 Let $\cF_1, \cF_2$ be two transverse foliations by Gromov hyperbolic leaves in a closed 3-manifold $M$ whose fundamental group is not solvable and let $\cG$ be the intersected  foliation. 
Suppose that $\wcG$ has Hausdorff leaf space.
Then $\cG$ has closed leaves. 
\end{coro}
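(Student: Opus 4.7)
The plan is to deduce leafwise quasigeodesicity from Theorem~\ref{teo.main} and then produce a leaf of $\wcG$ invariant under a non-trivial deck transformation, whose projection to $M$ is the desired closed leaf. The invariant leaf will be obtained by exploiting the induced north-south dynamics of a loxodromic element of $\pi_1(M)$ on the ideal circle of a leaf $L\in\wcF_1$ together with its action on $\cO_L$.

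\textbf{Setup.} By Theorem~\ref{teo.main} together with Proposition~\ref{prop-Hsdff2Dand3D}, the hypothesis implies that $\cG$ is uniformly leafwise quasigeodesic in both $\cF_1$ and $\cF_2$. Fix $L\in\wcF_1$. Since $L$ is Gromov hyperbolic and each leaf of $\cG_L$ is a uniform quasigeodesic, every $\gamma\in\cO_L$ has a well-defined pair of distinct endpoints in $\partial_\infty L$, giving a continuous, $\mathrm{Stab}(L)$-equivariant map $e:\cO_L\to(\partial_\infty L\times\partial_\infty L\setminus\Delta)/\mathbb{Z}_2$. This map is injective (two disjoint uniform quasigeodesics in a Gromov hyperbolic surface cannot share both endpoints together with Hausdorff leaf space) and proper (the two ends of $\cO_L\simeq\mathbb R$ are sent into the diagonal). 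Its image $\Lambda_L$ is therefore an embedded closed line in $(\partial_\infty L)^2/\mathbb Z_2$, invariant under $\mathrm{Stab}(L)$.

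\textbf{Loxodromic element and fixed leaf.} Non-solvability of $\pi_1(M)$ combined with Reeblessness of the $\cF_i$ (forced by their transversality and the Gromov hyperbolicity of their leaves, which rules out toroidal Reeb components) allows me to select $L\in\wcF_1$ and $g\in\mathrm{Stab}(L)$ whose action on $L$ is loxodromic, with attracting and repelling fixed points $\xi_\pm\in\partial_\infty L$. The element $g$ acts on $\cO_L\simeq\mathbb R$ as an orientation-preserving homeomorphism. If $g$ has a fixed point in $\cO_L$, the corresponding leaf of $\cG_L$ projects to a closed leaf of $\cG$ and we are done. Otherwise $g$ acts freely on $\cO_L$, i.e.\ as a translation; then the two ends of $\Lambda_L$ are $g$-invariant, and by the north-south dynamics of $g$ on $\partial_\infty L$ they each limit to $(\xi_-,\xi_-)$ or $(\xi_+,\xi_+)$ on the diagonal. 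Combining the invariance and closedness of $\Lambda_L$ with the $g$-dynamics, I will show that the pair $(\xi_-,\xi_+)$ itself lies in $\Lambda_L$: taking a leaf of $\cO_L$ whose first endpoint is driven to $\xi_-$ along $g^{-n}$-iterates (extracted as a $\Lambda_L$-limit using properness of $e$), the forward iterates $g^n$ push the second endpoint to $\xi_+$, and closedness of $\Lambda_L$ yields a leaf $\gamma_\infty\in\cO_L$ with $e(\gamma_\infty)=(\xi_-,\xi_+)$. Injectivity of $e$ then forces $g\gamma_\infty=\gamma_\infty$, so $\gamma_\infty$ descends to a closed leaf of $\cG$.

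\textbf{Main obstacle.} The hard part is the free-translation case of Step~3: showing that the $g$-fixed pair $(\xi_-,\xi_+)$ actually lies in $\Lambda_L$ rather than in its closure outside $\Lambda_L$. This requires a careful topological-dynamical analysis of how $\Lambda_L$ sits near the diagonal and how the $g$-orbits in $\Lambda_L$ interleave, exploiting the uniform quasigeodesic constant provided by Theorem~\ref{teo.main} (and not just the Hausdorff property in isolation). A secondary, but non-trivial, technical point is the selection of the leaf $L$ with a loxodromic stabilizer element, which draws on the structure theory of Reebless foliations in closed 3-manifolds with non-solvable fundamental group.
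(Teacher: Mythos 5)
Your proposal takes a genuinely different route from the paper's. The paper first invokes \cite[Proposition 6.9]{BFP} to extract a closed $\pi_1(M)$-invariant sublamination $\Lambda$ of $\wcF_1$ on each of whose leaves $L$ the foliation $\cG_L$ is a \emph{weak quasigeodesic fan}: every leaf has one ray landing at a distinguished point $\xi_L\in S^1(L)$, and the other-endpoint map hits every point of $S^1(L)\setminus\{\xi_L\}$. Theorem~\ref{teo.nonholonomy} then produces $L\in\Lambda$ with a non-trivial $\gamma\in\mathrm{Stab}(L)$; since $\gamma$ preserves the fan, it fixes $\xi_L$, and by Lemma~\ref{lem-closedgeodesics} it fixes a second point $\xi^-$. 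The set of leaves of $\cG_L$ running from $\xi^-$ to $\xi_L$ is a \emph{non-empty} closed interval in $\cO_L$, and its endpoints are the desired $\gamma$-fixed leaves. You instead try to work with an arbitrary leaf $L$ carrying a loxodromic element and produce the invariant leaf by a direct topological--dynamical argument on the endpoint map $e:\cO_L\to(S^1(L))^2\setminus\Delta$.

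There are two genuine gaps, one of which is fatal as written. First, the claimed injectivity of $e$ is not justified and is not true in general: two disjoint uniform quasigeodesics in a Gromov hyperbolic plane \emph{can} share both ideal endpoints (they bound a bounded-width strip, and Hausdorffness of $\cO_L$ does not obstruct this). Fortunately this point is repairable by replacing injectivity with the closed-interval argument the paper uses -- if $g$ fixes a pair $(\xi_-,\xi_+)\in\Lambda_L$, then $e^{-1}(\xi_-,\xi_+)$ is a $g$-invariant closed interval and $g$ fixes its endpoints in $\cO_L$.

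The second gap, which you honestly flag as your ``main obstacle,'' is fatal for a general choice of $L$: the pair $(\xi_-,\xi_+)$ simply need not lie in $\Lambda_L$. Concretely, the $g$-invariant geodesic foliation of $\HH^2$ by geodesics orthogonal to the axis $\mu$ of $g$ is a foliation by uniform quasigeodesics with Hausdorff leaf space on which $g$ acts as a free translation of $\cO_L\cong\RR$, and no leaf joins $\xi_-$ to $\xi_+$: both ends of $\Lambda_L$ converge to the diagonal, one to $(\xi_+,\xi_+)$ and the other to $(\xi_-,\xi_-)$, but $\Lambda_L$ avoids the saddle fixed point $(\xi_-,\xi_+)$ entirely. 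Nothing in your setup rules out $\cG_L$ having this shape in some leaf $L$ with non-trivial stabilizer. This is exactly why the paper does not pick an arbitrary such $L$: the fan structure provided by \cite[Proposition 6.9]{BFP} forces $g$ to fix $\xi_L$ and guarantees a leaf from $\xi^-$ to $\xi_L$ exists, and this is where the global hypothesis (Hausdorffness of $\wcG$ over \emph{all} leaves, not just Hausdorffness of a single $\cO_L$) is used. Your argument is purely local to one leaf and therefore cannot close this gap without importing something like the weak-quasigeodesic-fan sublamination.
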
 

The hypothesis of having Gromov hyperbolic leaves is natural for several reasons. A result by Candel (see \cite[\S 7]{Calegari-book}) implies that it is 
in some sense the most common situation: Candel's theorem states that if a Reebless foliation in a 3-manifold with non-solvable fundamental group does not to have this property, then the foliation has a transverse invariant measure approximated by some incompressible tori. So even in toroidal manifolds, having
Gromov  hyperbolic leaves is very abundant. 


In addition the applications we have in mind (for instance, for partially hyperbolic diffeomorphisms in
non solvable manifolds, or for Anosov flows) provide such structure as a given. Finally the quasigeodesic property is particularly relevant in the case of Gromov hyperbolic leaves, as one disposes of tools such as the \emph{Morse lemma} (see \cite{GhysdelaHarpe}) which gives particular relevance to quasigeodesics. We point out in particular that in \cite[\S 6]{BFP} we show that in the context of partially hyperbolic dynamics, if two dynamical foliations intersect in a leafwise quasigeodesic (branching) foliation, then the partially hyperbolic diffeomorphism is what we call a \emph{collapsed Anosov flow}. Nevertheless, in \S \ref{s.nonhyperbolicleaves} we explore the case where the foliations do not have Gromov hyperbolic leaves and prove some results, as well as state some questions. 

We stress that the analysis of this paper deals with the 
quasigeodesic properties of leaves of $\wcG$ as seen inside the leaves of
$\wcF_1, \wcF_2$. In general the leaves of $\wcG$ will not
be quasigeodesics in $\mt$. For example, 
when $\cG$ is the intersection of the weak stable and
weak unstable foliations of an Anosov flow which is
${\mathbb{R}}$-covered and $M$ is hyperbolic, then leaves of $\wt{\cG}$ are known not to be quasigeodesics in $\mt \cong \mathbb{H}^3$ as shown in \cite{FenleyAnosov}.

%

\subsection{Organization of the paper}\label{ss.steps} 
The proof of the main theorem will be achieved in several steps based on a
detailed strategy outlined in \cite[\S 1.1]{FP4}:

$-$ {\bf {Landing}} $-$ The first step is to show that rays of the intersected  foliation ($\wcG$)  \emph{land} in their respective circles at infinity in the sense of Definition \ref{def.land}. 

$-$  {\bf {Small visual measure}} $-$ Roughly this says that  arcs,
rays, or full leaves  of $\wcG$ which are  far in $L$ ($L \in \wcF_i$)
from a point $x$ in $L$,  have small visual measure as  seen  from $x$.
The obvious  counterexample are horocycle rays in hyperbolic leaves.

$-$ {\bf {No bubble leaves}} $-$ Show  that  if $c$ is a leaf of 
$\wcG$ in a leaf   $L  \in \wcF_i$, then the two  rays of $c$
do not land in the same point  in $S^1(L)$.

$-$ {\bf {Hausdorff}} $-$  Show that $\cG_L$ has
Hausdorff leaf   space in any leaf of $\wcF_i$.

$-$ {\bf  {Quasigeodesic property}} $-$  Show that  leaves of $\cG_L$
are uniform quasigeodesics in  $L$  for any $L \in \wcF_i$.

In this article the 4th property above is the  overall hypothesis,
and we  show that it implies the quasigeodesic behavior  under
the  conditions of the main  theorem. But the key observation here is that assuming the 4th step, the other steps can be proved in a more direct way (compare with \cite{FP4}).

In \S \ref{s.genfol} we give needed background and some
preliminary results on foliations and transverse foliations. In \S \ref{s.folinfinity} we analyze some properties at
infinity of foliations by Gromov hyperbolic leaves. Most of section \S \ref{s.folinfinity} is well known
in the case that the leaves are negatively curved,
or hyperbolic. Here we give detailed proofs of some properties 
when the leaves are only Gromov hyperbolic.
This is necessary
since working with pairs of foliations requires finding a nice context that can be applied simultaneously. 
We note that \S \ref{s.folinfinity} can be skipped in a first read, or at least until  \S \ref{s.nonvisual}. 

The short \S \ref{s.pushthrough} shows a simple yet powerful consequence of the Hausdorff leaf space property that will be used several times in the paper. The first use is in \S \ref{s.landing} which shows a key property that rays of the intersected  foliation land in the corresponding leaves. This result does not use the full Hausdorff property, just being leafwise Hausdorff in one of the foliations. 

In \S \ref{s.example} we study an example that shows the importance of the hypothesis of having non-solvable fundamental group in the main result. Formally, this section is not needed, but understanding the example can shed light in the arguments that are presented later. 

Section \S \ref{s.SVMQG} studies general foliations by Gromov hyperbolic leaves subfoliated by one dimensional foliations and relates the quasigeodesic property to the small visual measure property that is defined there. 
The main result is that if the one dimensional foliation is leafwise Hausdorff, then the small visual measure property is enough to establish the leafwise quasigeodesic property. 

In \S \ref{s.nonvisual} we produce some structure from the failure of the small visual measure property. In particular, we show that if the small visual measure
property fails in, say the foliation $\cF_1$, then the foliation $\cF_1$ is up to collapsing, topologically conjugate  to  the weak stable foliation of a (topological) Anosov flow which is $\RR$-covered (these notions are introduced and explained in \S \ref{ss.Anosovflows}). Finally, in \S \ref{s.SVM} we show how this structure is enough to prove that if the visual measure property fails, then, the foliations should be similar to the ones presented in \S \ref{s.example},  and we show that the fundamental group has to be solvable. 

In \S \ref{s.nonhyperbolicleaves} the case where leaves are not all Gromov hyperbolic is studied. 
Finally in \S \ref{s.ph} we obtain the application result
to partially hyperbolic diffeomorphisms.

\medskip
{\small \emph{Acknowledgements:} The authors would like to thank Elena Gomes and Santiago Martinchich for helpful discussions that allowed to improve the presentation. We also thank Thomas Barthelme for an important suggestion and the referee whose thorough read was important in improving the paper.}

\section{Foliations}\label{s.genfol}
\label{s.foliation}

We will let $M$ be a closed 3-manifold. Let $\cF_1, \cF_2$ be two transverse foliations (cf. \S \ref{ss.fol}). We denote by $\cG$ the one dimensional foliation obtained by intersecting $\cF_1,\cF_2$, that is, $\cG = \cF_1 \cap \cF_2$. We consider in $\mt$, the universal cover of $M$ the lifted foliations $\wt{\cF_1}, \wt{\cF_2}$ and $\wt{\cG}$. For a leaf $L \in \wt{\cF_i}$ we denote by $\cG_L$ the one dimensional foliation of $L$ obtained as the restriction of $\wt{\cG}$ to $L$.  

We denote by $\cL_i$ the leaf space of $\wt{\cF_i}$ (i.e. the topological space obtained by the quotient of $\mt$ by the equivalence relation of being in the same leaf of $\wt{\cF_i}$. It is known that if $\cF_i$ is Reebless, then $\cL_i$ is a one dimensional, simply connected (but possibly non-Hausdorff) manifold. 

We let $\cO$ denote the leaf space of $\wt{\cG}$ and, if $L \in \wt{\cF_i}$ let $\cO_L$ be the leaf space of $\cG_L$. 

Since we will be mostly working in the universal cover and since our results are stable by taking finite lifts we can and will assume throughout that 
$M$ is orientable and both $\cF_1$ and $\cF_2$ are orientable and transversally orientable. 

Sections \S \ref{ss.markers}, \S \ref{ss.Anosovflows} and \S \ref{ss.uniformfol} are only used at the end of \S \ref{s.nonvisual}  and in \S \ref{s.SVM}. 

\subsection{Foliations}\label{ss.fol}


We will work with foliation of class $C^{0,1+}$. Recall that a foliation $\cF$ (of class $C^{0,1+}$) is a partition of $M$ by injectively immersed surfaces which are tangent to a two dimensional 
subbundle $E$ of $TM$, that is, if $S \in \cF$ is one such surface, then, at each $x \in S$ we have that $T_x S = E_x$ (this is equivalent to the usual definition using charts). The surfaces of $\cF$ are called \emph{leaves} of $\cF$. For $x \in M$ we denote by $\cF(x)$ to the leaf of $\cF$ containing $x$. We denote by $T\cF$ to the two dimensional subbundle $E$ of $TM$ which is tangent to the leaves of $\cF$. The standing assumption of orientability is equivalent to ask that each of the surfaces of $\cF$ or that the bundle $T\cF$ as well as $M$ are orientable. See  \cite{CandelConlon,Calegari-book}. The regularity assumptions are convenient, but not crucial (see Remark \ref{rem-reg}).

If $\cF_1$ and $\cF_2$ are two transverse foliations (i.e. the subbundles $T\cF_1$ and $T\cF_2$ are everywhere transverse) we get charts in $M$ where the leaves of one of the foliations are mapped into horizontal planes of the form $\RR^2 \times \{t\}$ and the other foliation to vertical planes of the form $\{s\}\times \RR^2$. By compactness, there is some value of $\eps_0>0$ so that every point in $M$ verifies that its $\eps_0$-neighborhood belongs to such a chart. We call such value of $\eps_0$ the size of \emph{local product structure}, and \emph{local product structure boxes} to these charts. This will be used throughout.

The foliations we will be considering are those which do not contain \emph{Reeb components}, that is, there is no solid torus in $M$ which is saturated by leaves of $\cF$,  so that only the boundary is a compact leaf of $\cF$ and the interior leaves are all planes. Such foliations are called \emph{Reebless}. We compile in the next statement results by Reeb, Novikov and Palmeira that give some implications of being Reebless.  

\begin{teo}\label{teo.Novikovetc} 
Let $\cF$ be a Reebless foliation on a closed 3-manifold without spherical leaves $M$, then: 
\begin{enumerate}
\item the fundamental group of every leaf of $\cF$ is injected in the fundamental group of $M$, in particular, if $\wcF$ denotes the lifted foliation in the universal cover $\mt$ of $M$ then every leaf is homeomorphic to the two dimensional plane. 
\item in the universal cover, given a curve $\gamma$ transverse to $\wcF$ we have that $\gamma$ can intersect each leaf of $\wcF$ at most once. 
\item the leaf space $\cL = \mt/_{\wcF}$ (i.e. the topological quotient of $\mt$ obtained by the equivalence relation given by being in the same leaf) is a one dimensional (possibly non-Hausdorff) simply connected manifold. 
\item the foliation $\wcF$ is homeomorphic to the product of a foliation of the plane with $\RR$ (i.e. to the foliation given by product of leaves of the foliation of the plane with $\RR$), in particular $\mt \cong \RR^3$. 
\end{enumerate}
\end{teo}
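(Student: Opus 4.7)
The plan is to prove the four items in order, leveraging each earlier item to establish the next, and invoking the classical theorems of Novikov and Palmeira at the appropriate places.

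For item (i), I would first apply Novikov's theorem on Reebless foliations, which asserts that no leaf of $\cF$ admits a nullhomotopic transverse loop and, more strongly, that every leaf is incompressible in $M$; that is, the inclusion $L \hookrightarrow M$ induces an injection $\pi_1(L) \hookrightarrow \pi_1(M)$. Consequently, when we lift to the universal cover $\mt$, every leaf of $\wcF$ is the cover of a leaf of $\cF$ associated to the trivial subgroup of $\pi_1(L)$, hence simply connected. A simply connected surface (without boundary) is homeomorphic to either $S^2$ or $\RR^2$; the hypothesis that $\cF$ has no spherical leaves rules out the former, so each leaf of $\wcF$ is a plane.

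For item (ii), suppose for contradiction that a transverse curve $\gamma \subset \mt$ meets some leaf $L \in \wcF$ at two distinct points $p, q$. Since $L \cong \RR^2$ is simply connected, there is a path $\alpha \subset L$ from $q$ to $p$. The concatenation $\gamma|_{[p,q]} \cdot \alpha$ is a closed loop in $\mt$, hence contractible there; its projection to $M$ is a closed transversal to $\cF$ (after a small perturbation near $\alpha$ if needed) that is nullhomotopic in $M$, contradicting Novikov's theorem. Therefore $\gamma$ meets each leaf at most once. Item (iii) then follows from a standard foliation-chart argument: the foliated charts of $\wcF$ descend, via (ii), to genuine local charts on $\cL$ with values in $\RR$, making $\cL$ a (possibly non-Hausdorff) 1-manifold. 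Simple connectedness of $\cL$ is inherited from that of $\mt$, since any loop in $\cL$ lifts, up to homotopy, to a transverse loop in $\mt$, which by (ii) must be constant.

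For item (iv), I would apply Palmeira's theorem, which states that a codimension-one foliation of a simply connected $n$-manifold by planes is determined up to homeomorphism by its leaf space: two such foliations with homeomorphic leaf spaces are conjugate, and any such foliation is the product of a foliation of $\RR^{n-1}$ with a 1-manifold. Applied to $\wcF$, items (i)--(iii) supply exactly the hypotheses of Palmeira's theorem, yielding a homeomorphism between $(\mt, \wcF)$ and $(\RR^2, \cF_0) \times \RR$ for some foliation $\cF_0$ of $\RR^2$; in particular $\mt \cong \RR^3$.

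The main obstacle is really only a bookkeeping one: making sure the transverse closed curve constructed in item (ii) is honestly transverse (and not merely piecewise so) when we concatenate with the in-leaf arc $\alpha$, which is handled by a standard smoothing in a local product neighborhood. Apart from that, the argument is a direct assembly of Novikov's and Palmeira's theorems.
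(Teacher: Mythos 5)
Your argument is correct and follows exactly the standard route that the paper invokes by citing Candel--Conlon: Novikov's theorem (incompressibility of leaves, no nullhomotopic closed transversal, hence the Haefliger-style closing up of the transverse arc together with an in-leaf arc by a diagonal push in a product chart) gives (i)--(iii), and Palmeira's theorem gives (iv). The only difference is that the paper simply points to the relevant theorem numbers in Candel--Conlon's book rather than sketching the proofs.
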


All of these  are  proved in volume 2 of  \cite{CandelConlon}: for
(i) see  \cite[Thm 9.1.3]{CandelConlon}.
Since there are  no sphere leaves, (i)  follows.  For  (ii) see \cite[Thm. 9.1.4]{CandelConlon}. Item (iii) follows immediately from (ii). For (iv) see \cite[Thm. 9.1.10]{CandelConlon}.

\begin{remark}
Note that since we will work with transverse foliations $\cF_1$ and $\cF_2$ we know that the only compact leaves can be tori (by our orientability assumption). Thus, there cannot be spherical leaves. On the other hand, we point out that it is possible to produce transverse foliations with Reeb components (see \cite{Hardop}). Foliations with Reeb components
are too flexible and lack tools to be studied in generality.
In addition such foliations are sometimes possible to exclude by other considerations (for instance, our assumption that the intersected  foliations have leafwise Hausdorff leaf space\footnote{If $\cF_1$ and $\cF_2$ are transverse foliations so that the intersected  foliation has  leafwise Hausdorff leaf space in every leaf, then, if $\cF_1$ has a Reeb component, it follows that when lifted to the universal cover, this Reeb component is either a solid torus, or an infinite solid cylinder. Let $L$ be a boundary leaf of the solid torus or cylinder, one can look at a leaf $E$ of $\wcF_2$ intersecting $L$. Then either the leaf space of $\cG_E$ or the leaf space of $\cG_L$ is
non Hausdorff, or there is some tangency between $\wcF_1$ and $\wcF_2$
in the interior of $L$. We have a detailed proof in a more
specific setting in a later section.}). Hence it makes sense to assume that the foliations are Reebless. 
\end{remark}

The following result is \cite[Lemma 7.21]{Calegari-book} (it also follows from \cite{Imanishi}). This will be used in the proof of Lemma \ref{lem.stabilizerinlimit} and in the proof of Theorem \ref{prop-closedleaf} to obtain leaves with non trivial stabilizer. 

\begin{teo}\label{teo.nonholonomy}
Let $\cF$ be a Reebless foliation on a closed 3-manifold $M$ with non abelian fundamental group and let $\wcF$ be its lift to the universal cover $\mt$. Let $\Lambda$ be a non-empty closed $\wcF$ saturated set which is $\pi_1(M)$-invariant, then, there exist $L \in \Lambda$ and $\gamma \in \pi_1(M) \setminus \{\mathrm{id}\}$ such that $\gamma L = L$.  
\end{teo}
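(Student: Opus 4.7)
The approach will be a proof by contradiction. Assume no $\gamma\in\pi_1(M)\setminus\{\mathrm{id}\}$ fixes any $L\in\Lambda$. Let $\pi\colon\mt\to M$ be the covering projection and set $\Lambda':=\pi(\Lambda)$. Since $\Lambda$ is $\pi_1(M)$-invariant and closed, $\Lambda'$ is a closed $\cF$-saturated subset of $M$, and $\pi^{-1}(\Lambda')=\Lambda$. The deck-stabilizer of a leaf $L\in\wcF$ is canonically identified with $\pi_1(\pi(L))\hookrightarrow\pi_1(M)$, so the assumption forces $\pi_1(F)=1$ for every leaf $F\subset\Lambda'$. Combined with Theorem \ref{teo.Novikovetc}(i), which says that leaves of $\wcF$ are planes, we conclude that every leaf in $\Lambda'$ is homeomorphic to $\RR^2$; in particular no leaf in $\Lambda'$ is compact and no leaf has holonomy.

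Next, apply Zorn's lemma to the family of non-empty closed $\cF$-saturated subsets of $\Lambda'$ to obtain a minimal such set $K\subseteq\Lambda'$. By minimality every leaf of $\cF$ in $K$ is dense in $K$. Fix a small transversal $T$ through a point of $K$, and consider the holonomy pseudogroup $\mathcal{P}$ acting on $T\cap K$: the absence of holonomy in any leaf of $K$ means $\mathcal{P}$ acts freely, while minimality translates into density of every $\mathcal{P}$-orbit. This is precisely the setting addressed by Imanishi's theorem in the form packaged in \cite[Ch.~7]{Calegari-book}, from which one concludes that a minimal set of a codimension-one foliation of a closed manifold in which no leaf has holonomy is either a compact leaf of $\cF$ or the whole of $M$. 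Since no leaf in $K$ is compact, we must have $K=M$.

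Consequently, $\cF$ is a Reebless foliation of $M$ by planes. By a classical theorem of Rosenberg, any closed orientable 3-manifold carrying a foliation by planes is homeomorphic to the 3-torus; in particular $\pi_1(M)\cong\ZZ^3$ is abelian, contradicting the hypothesis that $\pi_1(M)$ is non-abelian. The contradiction yields the required leaf $L\in\Lambda$ and non-trivial element $\gamma\in\pi_1(M)$ with $\gamma L=L$.

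The main obstacle is the invocation of Imanishi's theorem for the restricted foliation $\cF|_K$ when $K$ may be a proper subset of $M$: one must rule out an exceptional minimal set whose transverse intersection is a Cantor set supporting a freely acting pseudogroup with dense orbits. This subtle codimension-one step is exactly what is packaged by \cite[Lemma~7.21]{Calegari-book}, which is why we appeal to it rather than proving the statement from scratch. The remaining pieces of the argument (the descent to $\Lambda'$, the extraction of a minimal set, and the final contradiction via Rosenberg) are routine.
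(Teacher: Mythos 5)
The paper does not actually prove this statement; it simply cites \cite[Lemma 7.21]{Calegari-book} and \cite{Imanishi}. Your attempt supplies a reduction argument, so there is no "paper's proof" to compare against line by line, but the strategy you sketch is the natural one and is in the spirit of the sources cited: assume every stabilizer is trivial, deduce that every leaf in $\Lambda'=\pi(\Lambda)$ is a plane (hence has trivial holonomy), pass to a minimal set $K\subseteq\Lambda'$, show $K$ must be a compact leaf or all of $M$, and in the latter case invoke Rosenberg's theorem on foliations by planes.

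The genuine gap is the step you yourself flag as "subtle." You state it as: \emph{a minimal set of a codimension-one foliation of a closed manifold in which no leaf has holonomy is either a compact leaf or the whole of $M$}. That is not what Imanishi's theorem says. Imanishi classifies codimension-one foliations \emph{without holonomy}, i.e.\ foliations for which \emph{every} leaf has trivial holonomy, concluding such a foliation is a fibration over $S^1$ or is minimal. Your hypothesis is strictly weaker: only leaves of $K$ (a sublamination) are known to be holonomy-free; leaves in the complementary region may well have holonomy, so Imanishi does not directly apply. Ruling out an exceptional minimal set whose leaves are holonomy-free is genuinely nontrivial, is sensitive to regularity (Sacksteder/Hector-type arguments require $C^2$; the paper works in $C^{0,1+}$), and is precisely the content of the result being cited. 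You defer exactly this step to \cite[Lemma 7.21]{Calegari-book}, but since the paper identifies its Theorem with that same Lemma 7.21, the deferral leaves the gap at the very place the paper's citation was meant to cover; at best the argument is a restatement of the reduction rather than a proof, and at worst it is circular. Two smaller remarks: Rosenberg's theorem on foliations by planes is classically a $C^2$ result and some care is needed in the present regularity, and the appeal to Zorn's lemma for a minimal set in $\Lambda'$ should invoke compactness of $M$ to ensure the intersection of a nested chain of nonempty closed saturated sets is nonempty.
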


\subsection{Hausdorff leaf space}\label{ss.HLS}

Here we show that the hypothesis of Theorem \ref{teo.main} have some equivalent formulations. This extends work of \cite{Barbot,FenleyAnosov} in the case of Anosov foliations to a more general context.

\begin{prop}\label{prop-Hsdff2Dand3D} 
The leaf space $\cO$ of $\wt{\cG}$ is Hausdorff if and only if 
the leaf space  $\cO_L$ of $\cG_L$ is Hausdorff for each $L$  of $\wcF_1$ and $\wcF_2$, 
\end{prop}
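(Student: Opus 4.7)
I split the proof into the two directions. The forward direction is routine: given $\cO$ Hausdorff, distinct $c_1, c_2 \in \cG_L$ are distinct in $\cO$, so they are separated by disjoint open $\wcG$-saturated sets $U_i \subset \mt$. By Palmeira's product structure (Theorem~\ref{teo.Novikovetc}(iv)) the leaf $L$ carries its subspace topology from $\mt$, so each $U_i \cap L$ is an open $\cG_L$-saturated set in $L$, and their images in $\cO_L$ separate $c_1, c_2$.

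For the reverse direction, suppose $\cO_L$ is Hausdorff for every leaf $L$ of $\wcF_1$ or $\wcF_2$, and assume for contradiction that $\cO$ is non-Hausdorff. Since leaves of $\wcG$ are closed in $\mt$ (being components of transverse intersections of closed planes) and $\cO$ is first countable, one obtains distinct non-separated $c_1, c_2 \in \cO$, a sequence of leaves $c_n$ of $\wcG$ converging in $\cO$ to both $c_1$ and $c_2$, and, by local product structure, points $x_n, y_n \in c_n$ with $x_n \to x \in c_1$ and $y_n \to y \in c_2$ in $\mt$. Denote by $L^i_n$ and $L^i_j$ the $\wcF_i$-leaves containing $c_n$ and $c_j$; then $L^i_n \to L^i_j$ in $\cL_i$ for $i,j \in \{1,2\}$. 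The key unifying mechanism is that if $L' \in \wcF_j$, then any transversal to $\cG_{L'}$ inside $L'$ is, as a curve in $\mt$, transverse to $\wcF_i$ for $i \neq j$, and hence by Novikov (Theorem~\ref{teo.Novikovetc}(ii)) meets each $\wcF_i$-leaf at most once.

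\textbf{Case A} (at least one of $L^1_1 = L^1_2$ or $L^2_1 = L^2_2$ holds; WLOG $L^1_1 = L^1_2 =: L$). Both $c_1, c_2$ lie in $\cG_L$. Local product boxes at $x$ and $y$ exhibit leaves $c^{(1)}_n, c^{(2)}_n \in \cG_L$, defined as the local components of $L \cap L^2_n$ through points $z_n \to x$ and $w_n \to y$ in $L$; since subspace topology equals intrinsic topology on $L$ by Palmeira, $c^{(1)}_n \to c_1$ and $c^{(2)}_n \to c_2$ in $\cO_L$. If $c^{(1)}_n \neq c^{(2)}_n$, these would be two distinct components of $L \cap L^2_n$, but a transversal inside $L^2_n$ joining them would meet $L$ at both endpoints, contradicting Novikov. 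Hence $c^{(1)}_n = c^{(2)}_n$, providing a single sequence in $\cG_L$ converging in $\cO_L$ to both $c_1$ and $c_2$, contradicting Hausdorffness of $\cO_L$.

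\textbf{Case B} ($L^1_1 \neq L^1_2$ and $L^2_1 \neq L^2_2$). Fix $L^2 := L^2_N$ for some large $N$. Local product at $x$ and $y$ gives distinct leaves $\tilde c_1 \subset L^2 \cap L^1_1$ and $\tilde c_2 \subset L^2 \cap L^1_2$ in $\cG_{L^2}$, together with components $e^{(1)}_n, e^{(2)}_n$ of $L^2 \cap L^1_n$ satisfying $e^{(1)}_n \to \tilde c_1$ and $e^{(2)}_n \to \tilde c_2$ in $\cO_{L^2}$. The same Novikov transversal argument inside $L^2$ (with $L^1_n$ playing the role $L$ played above) forces $e^{(1)}_n = e^{(2)}_n$, so $\tilde c_1, \tilde c_2$ are non-separated in $\cO_{L^2}$, contradicting its Hausdorffness. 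I expect the main friction to be verifying that the local components $c^{(i)}_n$ and $e^{(i)}_n$ are genuine leaves with the claimed convergence in the relevant $2$-dimensional leaf space, which is a bookkeeping exercise from local product structure and Palmeira; once that is set up, the Novikov step is clean and uniform across all cases.
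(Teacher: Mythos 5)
Your proof is correct and rests on the same two ingredients as the paper's: Novikov's theorem, packaged as Lemma~\ref{l.twointersnonHsdff} (a leaf of one foliation meeting a leaf of the other in two components forces a non-Hausdorff $2$-dimensional leaf space), together with local product structure to push nearby intersections onto a single reference leaf. The only difference is organizational: the paper's converse proceeds in two direct steps (first showing $p$ and $q$ lie in a common $\wcF_1$-leaf using Hausdorffness in $\wcF_2$-leaves, then applying Hausdorffness in $\wcF_1$-leaves to conclude), while you argue by contradiction and split into Cases~A and~B, with your Case~B being exactly what the paper's first step rules out. One small point worth making explicit in both of your cases: the transversal in $L^2_n$ (resp.\ $L^1_n$) joining the two local components exists \emph{because} the corresponding $\cO_{L^2_n}$ (resp.\ $\cO_{L^1_n}$) is Hausdorff, hence homeomorphic to $\RR$ — this is precisely the contrapositive reading of the cited lemma, and should be stated rather than left implicit.
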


We stress that we need leafwise Hausdorff leaf space for both $\wcF_1$
and $\wcF_2$.  We first show the following elementary result that we will use repeatedly (see figure~\ref{fig-twocomponents}):

\begin{lema}\label{l.twointersnonHsdff}
Let $\cF_1, \cF_2$ be two transverse Reebless foliations of $M$ and assume that for some $L \in \wt{\cF_1}$ there is a leaf $E \in \wt{\cF_2}$ intersecting $L$ in more than one connected component. Then, the leaf space $\cO_L$ is not Hausdorff. 
\end{lema}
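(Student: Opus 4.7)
My plan is to argue by contradiction: I will assume that $\cO_L$ is Hausdorff and derive a contradiction with Theorem~\ref{teo.Novikovetc}(ii) applied to $\wt{\cF_2}$.

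Since $L\cong\RR^2$ by Theorem~\ref{teo.Novikovetc}(i) and $\cO_L$ is a simply connected Hausdorff $1$-manifold, $\cO_L\cong\RR$. The classical Haefliger--Reeb classification of foliations of the plane (which applies because the leaves of $\cG_L$ are lines, there being no compact leaves in a foliation of $\RR^2$) then provides a homeomorphism identifying $(L,\cG_L)$ with $(\RR^2,\{v=\mathrm{const}\})$. Picking $p_1\in c_1$ and $p_2\in c_2$ in distinct leaves of $\cG_L$, the straight segment between them in the product coordinates pulls back to an arc $\delta\colon[0,1]\to L$ with $\delta(0)=p_1$, $\delta(1)=p_2$, transverse to $\cG_L$ throughout.

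The key geometric observation is that $\delta$ is then transverse to $\wt{\cF_2}$ as a curve in $\mt$. Indeed, $\delta'(t)\in T_{\delta(t)}L$, and the transversality of $\wt{\cF_1}$ with $\wt{\cF_2}$ gives, by dimension count, $T_{\delta(t)}L\cap T_{\delta(t)}\wt{\cF_2}=T_{\delta(t)}\cG_L$; so the transversality of $\delta$ to $\cG_L$ forces $\delta'(t)\notin T\wt{\cF_2}$. Now Theorem~\ref{teo.Novikovetc}(ii) applied to the Reebless foliation $\wt{\cF_2}$ in $\mt$ says such a transverse arc meets each leaf of $\wt{\cF_2}$ at most once. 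But $\delta(0)=p_1$ and $\delta(1)=p_2$ are two distinct points both lying on $E\in\wt{\cF_2}$, the desired contradiction.

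The only non-elementary ingredient is the Haefliger--Reeb step guaranteeing the existence of the transverse arc $\delta$; I expect the actual writeup either to cite this classification or to replace it by a direct construction using a continuous vector field on $L$ transverse to $\cG_L$ together with the fact that a Hausdorff quotient makes $L\to\cO_L\cong\RR$ effectively a trivial fibration over a contractible base.
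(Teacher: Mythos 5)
Your proposal is correct and takes essentially the same route as the paper: both boil down to observing that a path in $L$ transverse to $\cG_L$ is automatically transverse to $\wt{\cF_2}$ (by the dimension count $T L \cap T\wt{\cF_2} = T\cG_L$), so it can meet $E$ at most once by Novikov's theorem, forcing $\cO_L$ to be non-Hausdorff. The paper states this more tersely (arguing directly that no such transversal can connect the two components, hence there are non-separated leaves between them), while you phrase it in the contrapositive and explicitly invoke Haefliger--Reeb to manufacture the transverse arc from the Hausdorffness of $\cO_L$; both versions are sound.
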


\begin{figure}[ht]
\begin{center}
\includegraphics[scale=0.82]{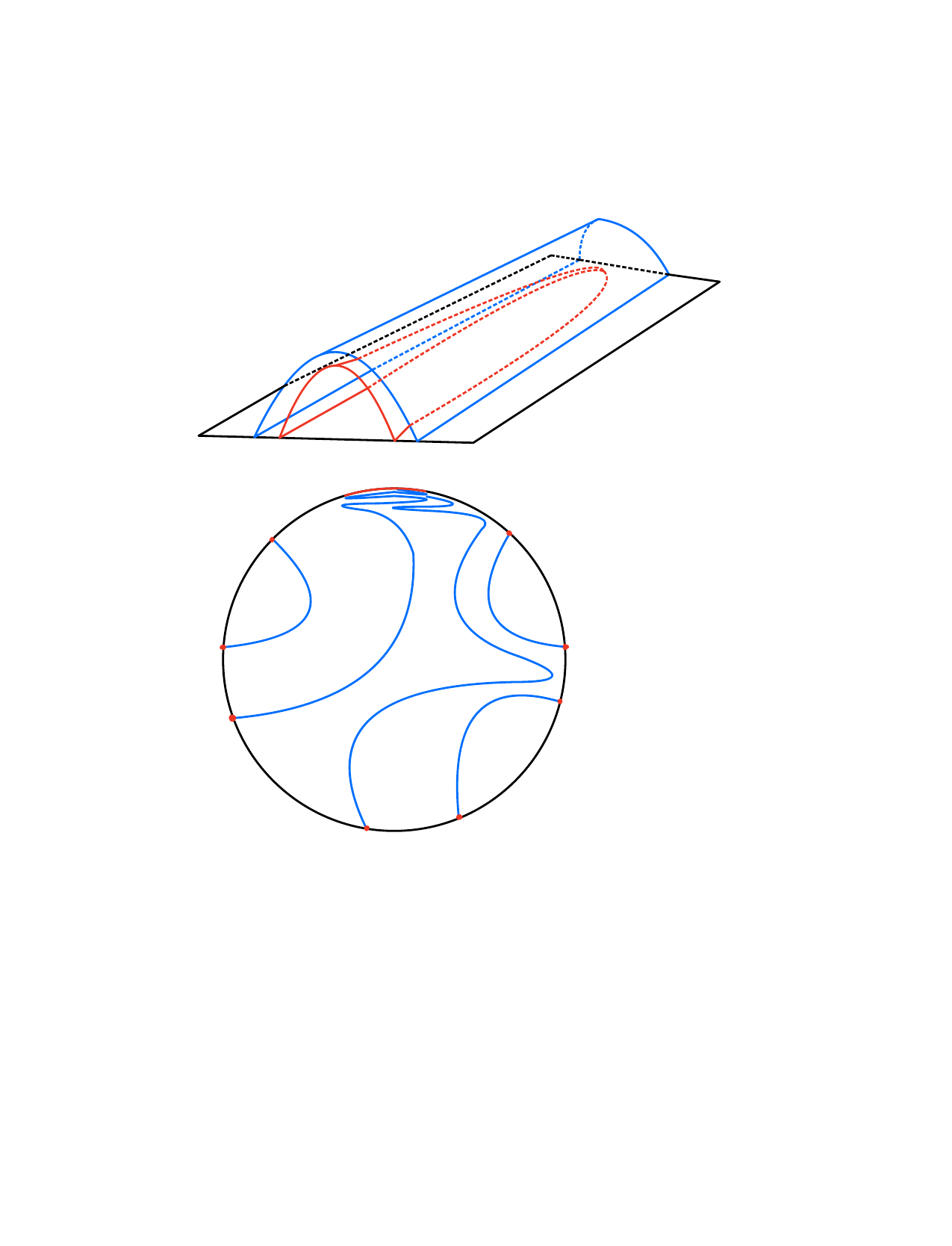}
\begin{picture}(0,0)
\put(-34,100){$L \in \wcF_1$}
\put(-110,133){$E \in \wcF_2$}
\end{picture}
\end{center}
\vspace{-0.5cm}
\caption{{\small When a leaf $L$ of $\wcF_1$ intersects a leaf $E$ of $\wcF_2$ in two connected components, the leafspace of $\cG_L$ is not Hausdorff.}}\label{fig-twocomponents}
\end{figure}

\begin{proof}
If there were a transversal to $\cG_L$ intersecting more than one connected components of $L \cap E$ we would get a transversal to $\wt{\cF_2}$ intersecting $E$ twice, contradicting that $\cF_2$ is Reebless due to Novikov's theorem (cf. Theorem \ref{teo.Novikovetc} (ii)). Thus, there must be some non-separated leaves in between. 
\end{proof}

\begin{proof}[Proof of Proposition \ref{prop-Hsdff2Dand3D}]
Note that if there is a leaf $L$ in one of $\wcF_1$ or $\wcF_2$ such that $\cO_L$ is not Hausdorff, then, this means that there is a sequence of leaves $\ell_n \in \cG_L$ which accumulate in at least two distinct leaves $c_1, c_2 \in \cG_L$. Since $\cG_L \subset \wt{\cG}$ this implies that $\cO$ cannot be Hausdorff either. This shows the direct implication.   

To show the converse we consider a sequence of leaves $\ell_n \in \wt{\cG}$ with points $p_n, q_n \in \ell_n$ such that $p_n \to p$ and $q_n \to q$. Assuming that for any $L$ which is either a leaf  of  $\wt{\cF_1}$ or a  leaf of  $\wt{\cF_2}$ we have that $\cG_L$ is Hausdorff, we want to show that $p$ and $q$ must belong to the same leaf of $\wt{\cG}$. Without loss of generality and up to subsequence, we can assume that in small foliated neighborhoods of $p$ and $q$ respectively, the points $p_n$ and $q_n$ are weakly monotonic in the leaf space of each of the foliations. 

Let $L_n \in \wt{\cF_1}$ and $E_n \in \wt{\cF_2}$ be so that $\ell_n \subset L_n \cap E_n$. 
In fact this implies that $\ell_n = E_n \cap E_n$ by the Hausdorff
hypothesis in $E_n$ or $L_n$ (here we only need the  Hausdorff
hypothesis for  one of the foliations $\wcF_1$ or $\wcF_2$).
Let also $L_p, L_q \in \wt{\cF_1}$ so that $p \in L_p$ and $q \in L_q$ and $E_p, E_q \in \wt{\cF_2}$ are defined so that $p \in E_p, q \in E_q$.  

We define $c_{n,k} = E_n \cap L_k$. Note that it is non-empty since $p_n$ and $p_k$ both belong to a foliated box close to $p$. Moreover, $c_{n,k}$ is a unique curve (that is, it is connected)
in both $L_k$ and $E_n$ because of Lemma \ref{l.twointersnonHsdff}.  

We can consider sequences $x_n \to p$ with $x_n \in L_p \cap E_n$ and similarly $y_n \to q$ with $y_n \in L_q \cap E_n$. Fixing $n>0$ so that $x_n$ is very close to $p$ we get that we can find points $z_k \to x_n$ and $w_k \to y_n$ in $c_{n,k}$. Since $\cO_{E_n}$ is Hausdorff, we deduce that $x_n$ and $y_n$ belong to the same leaf $e_n$ of $\cO_{E_n}$. In particular $L_p = L_q$. 
Here  we  used  the Hausdorff  property in leaves  of $\wcF_2$.

Now  we will use the  Hausdorff hypothesis in  leaves of $\wcF_1$:
notice that the leaves $e_n \subset L_p =L_q$ accumulate in both $p$ and $q$, thus, using that $\cO_{L_p}$ is Hausdorff we deduce that $p$ and $q$ belong to the same leaf of $\cG_{L_p}$ and thus of $\wt{\cG}$, concluding the proof.  
\end{proof}     

We note here that the fact that the intersected  foliation has leafwise Hausdorff leaf space implies that the topology of the leaves must be somewhat  restricted:

\begin{prop}
Let $\cF$ be a foliation in a closed 3-manifold and $\cT$ a one dimensional foliation which subfoliates $\cF$. If in the universal cover $\mt$ of $M$ we have that a leaf $L \in \wcF$ verifies that the foliation $\cT_L = \wt{\cT}|_L$ has Hausdorff leaf space. Then if the stabilizer $\mathrm{Stab}_L = \{\gamma \in \pi_1(M) \ : \ \gamma L = L\}$ is not abelian, it follows that there is $\ell \in \cT_L$ which is fixed by some $\gamma \in \mathrm{Stab}_L$.\footnote{In fact with more work one can prove in general that if $\cT_L$ has Hausdorff leaf space, then $Stab_L$ must be abelian even if it fixes leaves of $\cT_L$. But we do not need it here, so will not prove it.}
\end{prop}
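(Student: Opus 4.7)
The plan is to identify the leaf space $\cO_L$ with $\RR$ and then invoke H\"older's theorem on free actions on the line.

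First, by Theorem~\ref{teo.Novikovetc}(i) the leaf $L$ is homeomorphic to $\RR^2$, so $\cT_L$ is a regular one-dimensional foliation of the plane. Its leaves cannot be compact (a circle in $\RR^2$ bounds a disk, which being foliated without singularities by $\cT_L$ would contradict standard index arguments), so the classical theory of foliations of the plane (Kaplan) tells us that $\cO_L$ is a simply connected one-manifold, possibly non-Hausdorff. Combined with the Hausdorff hypothesis we conclude $\cO_L \cong \RR$.

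Second, the elements of $\mathrm{Stab}_L$ act on $L$ as restrictions of deck transformations, preserving $\wt{\cT}$ and thus $\cT_L$; this induces a homomorphism $\rho \colon \mathrm{Stab}_L \to \mathrm{Homeo}(\cO_L) \cong \mathrm{Homeo}(\RR)$. I then argue by contradiction: assume no non-trivial $\gamma \in \mathrm{Stab}_L$ fixes any leaf of $\cT_L$. Then $\rho$ is injective (any $\gamma \in \ker \rho$ would fix every leaf), and $\rho(\mathrm{Stab}_L)$ acts freely on $\RR$. Since any orientation-reversing self-homeomorphism of $\RR$ has a fixed point, a free action is automatically by orientation-preserving homeomorphisms, so $\rho(\mathrm{Stab}_L) \subset \mathrm{Homeo}^+(\RR)$. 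H\"older's theorem then implies that a group acting freely on $\RR$ by orientation-preserving homeomorphisms embeds in $(\RR,+)$, and is in particular abelian. This contradicts the hypothesis that $\mathrm{Stab}_L$ is non-abelian.

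The main subtlety I expect is the identification $\cO_L \cong \RR$: the Hausdorff condition is essential to rule out branching, and one must also exclude the possibility that $\cO_L$ is some other simply connected Hausdorff one-manifold (e.g.\ there is no way it can be $S^1$, since $L$ is simply connected and the quotient map $L \to \cO_L$ would factor through a non-trivial loop). This is precisely the content of Kaplan's dichotomy for leaf spaces of plane foliations, and is the only non-formal ingredient; the remainder of the argument---the passage to an action on $\RR$ and the application of H\"older's theorem---is essentially automatic.
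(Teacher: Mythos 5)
Your proof is correct and follows the same approach as the paper, which simply cites H\"older's theorem on free actions on the line as giving the result directly. You have filled in the supporting details (the identification $\cO_L \cong \RR$ via Kaplan/Haefliger--Reeb, the induced action of $\mathrm{Stab}_L$, freeness, and orientation-preservation) that the paper leaves implicit.
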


\begin{proof}
This is a direct consequence of H\"{o}lder's theorem on free actions on the line (see e.g. \cite[Appendix E]{BFFP}). 
\end{proof}

Finally, we will show the following consequence of the leaf spaces being Hausdorff and compactness that allows us to detect non-Hausdorfness by looking at finite arcs of the foliation. Compare with \cite[Lemma 4.48]{Calegari-book}.

\begin{prop}\label{prop.Hsdffimpliesunifpropemb}
Let $\cF_1, \cF_2$ be two transverse foliations on $M$ and let $\cG=\cF_1 \cap \cF_2$. Then, the leaf space of $\wcG$ is Hausdorff if and only if there exists a proper function $\rho: \RR_+ \to \RR_+$ such that for every 
leaf $\ell$ of $\wcG$, we have that
if $x,y \in \ell$  then the length of the arc of $\ell$ joining $x$ and $y$ is bounded above by $\rho(d(x,y))$. 
There is a similar  statement for $\cF_2$.
\end{prop}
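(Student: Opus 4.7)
My plan is to handle the two implications separately, with essentially all of the work going into the harder direction.

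For ``proper $\rho \Rightarrow $ Hausdorff leaf space'' I argue by contradiction via Arzel\`a--Ascoli. Suppose the leaf space of $\wt{\cG}$ is not Hausdorff: there exist non-separated leaves $\ell \ne \ell'$ together with a sequence of leaves $\ell_n$ of $\wt{\cG}$ carrying points $p_n \to p \in \ell$ and $q_n \to q \in \ell'$ with $p \ne q$. Since $d(p_n, q_n)$ is eventually bounded by some $R$, the hypothesis gives that the arc of $\ell_n$ from $p_n$ to $q_n$ has length at most $\rho(R)$. Parametrizing these arcs at constant speed on $[0,1]$ yields an equicontinuous family, a uniform subsequential limit of which is a continuous path from $p$ to $q$. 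By continuity of the line field $T\cG$ together with the local product structure of $\wt{\cG}$ in $\mt$, any such limit lies in a single leaf of $\wt{\cG}$, so $\ell = \ell'$, a contradiction.

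For the converse, assume $\wt{\cG}$ has Hausdorff leaf space and set $\cC := \{(x,y) \in \mt \times \mt : y \in \wt{\cG}(x)\}$, which is closed by hypothesis. Since leaves of $\wt{\cG}$ are properly embedded copies of $\RR$ (by Reeblessness and simple connectedness of $\mt$), every pair $(x,y) \in \cC$ is joined by a unique arc in the common leaf, whose length I denote $L(x,y)$. The function $L$ is $\pi_1(M)$-invariant under the diagonal action. For each $R > 0$ the set $V_R := \{d \le R\}$ is closed and $V_R/\pi_1(M)$ is compact by cocompactness, hence so is $(V_R \cap \cC)/\pi_1(M)$. Thus, once $L$ is shown to be continuous on $\cC$, the function $\rho(R) := \sup\{L(x,y) : (x,y) \in V_R \cap \cC\}$ is finite, monotone and bounded below by $R$, yielding the desired proper function.

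For continuity of $L$ I orient $\wt{\cG}$ (possible by the standing orientability assumption) and consider the jointly continuous flow $\Phi \colon \mt \times \RR \to \mt$ along $\wt{\cG}$, with $\Phi(x,\cdot)$ an injective arc-length parametrization of the leaf through $x$; joint continuity follows from continuity of $T\cG$ together with uniqueness of leaves within a foliation. Lower semi-continuity of $L$ is a direct Arzel\`a--Ascoli argument. For upper semi-continuity, given $(x_n,y_n) \to (x,y)$ in $\cC$ with $t_n := L(x_n,y_n)$, any \emph{finite} subsequential limit $T^*$ of $t_n$ satisfies $\Phi(x,T^*) = \lim \Phi(x_n,t_n) = y = \Phi(x, L(x,y))$ by joint continuity, and injectivity of $\Phi(x,\cdot)$ then forces $T^* = L(x,y)$. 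The main obstacle is to rule out $t_n \to \infty$, and here the intersection structure $\cG = \cF_1 \cap \cF_2$ is crucial: by Lemma \ref{l.twointersnonHsdff} combined with Proposition \ref{prop-Hsdff2Dand3D}, the Hausdorff hypothesis implies that any $L \in \wt{\cF_1}$ and $E \in \wt{\cF_2}$ meet in an empty set or in a single connected leaf of $\wt{\cG}$. If $t_n \to \infty$, then for small $\eps > 0$ the points $u_n := \Phi(x_n, L(x,y) + \eps)$ and $y_n = \Phi(x_n, t_n)$ both lie within roughly $\eps$ of $y$ in $\mt$ for $n$ large, while their leafwise distance on the common leaf $\ell_n$ is $t_n - L(x,y) - \eps \to \infty$. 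Analyzing this configuration in a local product chart around $y$ and applying Lemma \ref{l.twointersnonHsdff} and Proposition \ref{prop-Hsdff2Dand3D} to the resulting multi-plaque behavior of $\wt{\cF_1}$ and $\wt{\cF_2}$ in the chart produces some leaf of one foliation meeting a leaf of the other in more than one connected component, contradicting Hausdorffness. Hence $t_n$ remains bounded, $L$ is continuous on $\cC$, and the compactness argument above completes the proof.
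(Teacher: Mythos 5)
Your two directions are both essentially correct, and your overall structure for the harder direction (Hausdorff $\Rightarrow$ existence of $\rho$) is genuinely different from the paper's. The paper argues the contrapositive directly: from a sequence of long arcs with endpoints at bounded ambient distance it passes to a limit and derives a contradiction from a foliated neighborhood. You instead define the arc-length function $L$ on the closed incidence set $\cC$, prove it is continuous, and obtain $\rho$ by a compactness argument over $V_R/\pi_1(M)$. Your route is longer but buys an explicit construction of $\rho$ as a supremum, and isolates cleanly which part of the argument uses the hypothesis (closedness of $\cC$ and ruling out $t_n\to\infty$) from the soft compactness part. The first direction ($\rho$ exists $\Rightarrow$ Hausdorff) is essentially the paper's argument with Arzel\`a--Ascoli spelled out.

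There is, however, one imprecision in your treatment of the crucial step, namely ruling out $t_n\to\infty$. You assert that ``the resulting multi-plaque behavior of $\wt{\cF_1}$ and $\wt{\cF_2}$ in the chart produces some leaf of one foliation meeting a leaf of the other in more than one connected component'' and then invoke Lemma~\ref{l.twointersnonHsdff} and Proposition~\ref{prop-Hsdff2Dand3D}. But Lemma~\ref{l.twointersnonHsdff} runs the other way (it \emph{assumes} a multi-component intersection and deduces non-Hausdorffness), and multi-plaque behavior of $\ell_n$ in a single product box does not by itself yield a multi-component global intersection of a $\wcF_1$-leaf with a $\wcF_2$-leaf: a priori the two plaques could lie on the same $\wcG$-leaf that re-enters the box. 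The direct tool is Theorem~\ref{teo.Novikovetc}(ii): if $u_n$ and $y_n$ lie on different $\cG$-plaques of the product box, then their $(s,t)$-coordinates differ, so either $L_n\in\wcF_1$ or $E_n\in\wcF_2$ meets the box in two plaques, and the resulting short transversal arc inside the box meets that leaf twice, contradicting Reeblessness. This is exactly what the paper invokes at the corresponding point (``else we would contradict Novikov's Theorem for either $\cF_1$ or $\cF_2$''). Once you substitute this Novikov argument for the miscited lemma, the proof goes through.
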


\begin{proof}
We assume first that there is a sequence of points $x_n, y_n$ in leaves $\ell_n \in \wcG$ such that the length of the segment $[x_n,y_n]$ in $\ell_n$ is  larger than $n$,
 but such that $d(x_n,y_n) \leq K$.

As $M$ is compact, then up to composing with deck transformations and taking subsequences, we assume that $x_n \to x_\infty$. Since $d(x_n,y_n) < K$,  then   $y_n$ has a subsequence converging to a point $y_\infty$.

Suppose that  that $x_\infty, y_\infty$ belong to the same leaf of $\wcG$, which  we  denote by $\ell_\infty$, and denote by $[x_\infty, y_\infty]$ the segment in $\ell_\infty$ that joins them. 

Let $U$ be a small foliated neighborhood (of $\wcG$) of $[x_\infty, y_\infty]$. It follows that for $n$ sufficiently large, the segment $[x_n,y_n]$ has to be completely contained in $U$ (this is because a leaf of $\wcG$ cannot intersect the same foliation box in more than one connected component, else we would contradict Novikov's Theorem \ref{teo.Novikovetc} for either $\cF_1$ or $\cF_2$).  This implies that the segment $[x_n, y_n]$ has bounded length, but it was chosen so that its length was larger than $n$, a contradiction.
We have proved then that the sequence $\ell_n \in \wcG$ converges to at least two distinct leaves of $\wcG$, therefore implying that the leaf space of $\wcG$ is not Hausdorff. 

To show the converse, consider a  sequence of points $x_n, y_n$ in leaves $\ell_n \in \wcG$ so that $x_n \to x_\infty$ and $y_n \to y_\infty$. Since $x_n, y_n$ converges, then $d(x_n,y_n)$ remains bounded, thus, the length of the segment $[x_n,y_n]$ that joins them must also remain bounded. Hence up to subsequence, $[x_n,y_n]$ converges to a segment of leaf $[x_\infty,y_\infty]$. This implies that the leaf space of $\wcG$ must be Hausdorff. 
\end{proof}

\subsection{Foliations by Gromov hyperbolic leaves} 

Denote by $\DD^2 = \{ z \in \CC \ : \ |z| \leq 1 \}$. We identify $\DD^2$ with the compactification of $\HH^2 = \{z \in \CC \ : \ \mathrm{Im}(z)>0 \}$ where the circle at infinity $\partial \HH^2 = \RR \cup \{\infty\}$ is identified with $S^1 = \partial \DD^2$. In $\HH^2$ or in the interior of $\DD^2$ one can put the canonical hyperbolic metrics and with this metric $\DD^2$ corresponds to the usual Gromov compactification of the interior of the Poincare disk. 

A classical result by Candel (see \cite[\S 7.1]{Calegari-book}) implies that if a foliation $\cF$ of a closed 3-manifold does not admit a transverse invariant measure of Euler characteristic $\geq 0$, then, one can choose a continuous Riemannian metric on $M$ so that when restricted to leaves it has constant negative curvature. As an example of the applicability of this result, we state the following consequence: 

\begin{prop}\label{prop-CandelThm0}
Let $\cF$ be a minimal foliation of a closed 3-manifold so that $\pi_1(M)$ is not virtually nilpotent. Then, every leaf of $\wt{\cF}$ is uniformly Gromov hyperbolic with the metric induced from the universal cover. 
\end{prop}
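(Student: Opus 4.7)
The plan is to deduce the statement from Candel's uniformization theorem, in the form cited in \S 7.1 of \cite{Calegari-book}. The version relevant here provides the following dichotomy for a foliation $\cF$ of a closed 3-manifold: either there is a continuous Riemannian metric on $M$ whose restriction to every leaf has constant Gaussian curvature $-1$, or $\cF$ admits a holonomy-invariant transverse measure $\mu$ with non-negative measured Euler characteristic $\chi(\cF,\mu)\geq 0$. My first step will be to rule out the second alternative under the hypothesis that $\cF$ is minimal and $\pi_1(M)$ is not virtually nilpotent.

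Suppose for contradiction that such a $\mu$ exists. Its support, being closed and $\cF$-saturated, coincides with $M$ by minimality of $\cF$. The condition $\chi(\cF,\mu)\geq 0$, together with Candel's Gauss-Bonnet formula, forces $\mu$-almost every leaf to have at most polynomial volume growth, and since every leaf is dense this propagates to polynomial growth of every leaf. Plante's theorem on leaves of foliations carrying a transverse invariant measure of full support (see Candel-Conlon Vol.\ 2, Ch.\ 10) then produces a non-trivial asymptotic cycle and implies that $\pi_1(M)$ itself has polynomial growth. By Gromov's theorem on groups of polynomial growth, $\pi_1(M)$ is virtually nilpotent, contradicting the hypothesis.

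With the second alternative excluded, Candel's theorem supplies a continuous Riemannian metric $g_0$ on $M$ whose restriction to each leaf has constant curvature $-1$. Lifting $g_0$ to $\mt$, every leaf of $\wt{\cF}$ becomes a complete simply connected surface of constant curvature $-1$, hence isometric to $\HH^2$ and uniformly $\delta_0$-hyperbolic. To finish for the metric induced from the originally fixed continuous Riemannian metric on $\mt$, I will use that any two continuous Riemannian metrics on the closed manifold $M$ are bi-Lipschitz by compactness, that bi-Lipschitz equivalence lifts to $\mt$ and restricts to a uniform bi-Lipschitz equivalence on each leaf, and that uniform Gromov hyperbolicity of geodesic spaces is preserved under uniform bi-Lipschitz changes at the cost of enlarging $\delta_0$.

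The main obstacle I expect is the passage from $\chi(\cF,\mu)\geq 0$ to polynomial growth of $\pi_1(M)$: both Candel's Gauss-Bonnet estimate and Plante's theorem need some care to apply in the $C^{0,1+}$ regularity setting. Minimality of $\cF$ is crucial at this step, since it ensures $\mu$ has full support (so that Plante's asymptotic cycle really gives a class in $H_1(M;\RR)$ that constrains $\pi_1(M)$) and it lets the leafwise growth bounds from the Gauss-Bonnet analysis be propagated across all of $\cF$.
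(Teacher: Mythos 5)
You take a genuinely different route in the case where a transverse invariant measure exists. The paper's proof of this proposition is two sentences: if no invariant measure exists, Candel uniformizes; if one does, it cites \cite[Theorem 5.1]{FP}, a structure theorem asserting that a minimal foliation of a closed 3-manifold carrying a holonomy-invariant transverse measure is uniformly equivalent to a linear foliation of a nilmanifold, so $\pi_1(M)$ is virtually nilpotent, contradicting the hypothesis. You try to replace the latter citation by an elementary chain through Plante and Gromov, but this chain has real gaps.

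The step ``$\chi(\cF,\mu)\geq 0$ together with Gauss--Bonnet forces polynomial volume growth of $\mu$-a.e.\ leaf'' is not correct as stated: $\chi(\cF,\mu)\geq 0$ controls the \emph{conformal type} of the leaves (via Candel's uniformization), not their volume growth in the ambient metric, and Gauss--Bonnet alone does not translate one into the other. (In fact, polynomial growth of leaves in $\operatorname{supp}\mu$ is a separate Plante fact that uses the invariant measure directly and does not need $\chi\geq 0$; your causal chain is misordered.) The propagation ``to every leaf by density'' is also unjustified, since density of leaves does not preserve growth rates. Most seriously, the final step---that a non-trivial asymptotic cycle plus polynomial leaf growth implies $\pi_1(M)$ has polynomial growth---is not a theorem of Plante: Plante's results give a class in $H^1(M;\RR)$ and a growth bound on leaves, but bounding the growth of $\pi_1(M)$ itself in the codimension-one minimal setting on a closed 3-manifold is precisely the content of the structure theory (Imanishi, Hector--Hirsch, and ultimately \cite[Theorem 5.1]{FP}) that you are trying to bypass. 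Without an argument or precise citation at that strength, Case 2 of your proof does not close.
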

\begin{proof}
Candel's result implies that the result is true if there is no transverse invariant measure. The other case is a consequence of \cite[Theorem 5.1]{FP}.
\end{proof}

We say that a foliation $\cF$ is by (uniformly) \emph{Gromov hyperbolic leaves}\footnote{We note here that this is not the standard definition, which
for instance can be defined as satisfying a linear
isoperimetric inequality. However thanks to Candel's theorem it is equivalent. In fact, before Candel's theorem was proved already showing that the Gromov compactification of each leaf was a disk required some non-trivial arguments, see e.g. \cite{FenleyQI}.}  if there is a constant $Q$ so that every leaf $L \in \wt{\cF}$ we have that $L$ is $Q$-quasi-isometric to the hyperbolic plane with the metric of constant negative curvature when endowed with the Riemannian metric induced by the inclusion $L \hookrightarrow \mt$. Recall that a $Q$-\emph{quasi-isometric embedding} between metric spaces $(X,d_X)$, $(Y,d_Y)$ is a map $q: X\to Y$ so that:

\begin{equation}
Q^{-1} d_X(x_1,x_2) - Q \leq d_Y(q(x_1),q(x_2)) \leq Q d_X(x_1,x_2) + Q.
\end{equation}

A $Q$-\emph{quasi-isometry} is a $Q$-quasi-isometric embedding $q: X\to Y$ whose image is $Q$-dense, that is, for every $y \in Y$ there is $x \in X$ such that $d_Y(q(x),y)<Q$.

When a metric space $X$ homeomorphic to the plane is quasi-isometric to the hyperbolic disk there is a canonical compactification $\overline X = X \cup S^1(X) \cong \DD^2$ to a compact disk with the property that every quasi-isometric embedding of $\RR$ into $X$ extends to the two point compactification 
$\overline{\RR}$ of $\RR$ into $\overline X$ so that it is continuous
at the   endpoints of $\overline{\RR}$. Here 
the topology in $\overline X$ is given by declaring that a 
quasi-isometry homeomorphism from $X$ to $\HH^2$ extends to a homeomorphism of $\overline{X}$ to $\DD^2 = \HH^2 \cup \partial \HH^2$). 

If $\cF$ is a foliation by Gromov hyperbolic leaves, and $L \in \wt{\cF}$ we will denote by 

\begin{equation}\label{eq:compactifL}
\overline{L} = L \cup S^1(L)
\end{equation} \noindent its compactification.

\subsection{Closed geodesics} 

Here we show the following useful property: 

\begin{lema}\label{lem-closedgeodesics}
Let $\cF$ be a foliation by Gromov hyperbolic leaves on a closed 3-manifold $M$. Then, for every $L \in \wcF$ and $\gamma \in \pi_1(M)$ so that $\gamma L=L$ there is a geodesic $g  \in L$ which is $\gamma$-invariant (i.e. $\gamma g = g$).  
\end{lema}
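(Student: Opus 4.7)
The case $\gamma = \mathrm{id}$ is trivial, so I will assume $\gamma \neq \mathrm{id}$. Since deck transformations of the universal cover act freely on $\mt$, $\gamma$ has infinite order, and the restriction $\gamma|_L$ is a fixed-point-free isometry of $L$ with the metric induced from $\mt$; it extends to a self-homeomorphism of the compactification $\overline{L} = L \cup S^1(L)$.

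The plan is to classify $\gamma|_L$ via the standard trichotomy of isometries of a Gromov hyperbolic space (elliptic, parabolic, loxodromic), to rule out the first two cases, and then to exploit the axis structure of a loxodromic isometry. The basic tool throughout will be the distance comparison $d_L(\cdot,\cdot) \geq d_\mt(\cdot,\cdot)$. The \emph{elliptic} case is excluded because a bounded $\gamma$-orbit in $L$ would be bounded in $\mt$, contradicting proper discontinuity of the deck action together with the infinite order of $\gamma$. For the \emph{parabolic} case, I observe that
$$\inf_{x \in L} d_L(x, \gamma x) \ \geq\ \inf_{x \in \mt} d_\mt(x, \gamma x) \ =\ \ell_\mt(\gamma) \ >\ 0,$$
whereas a parabolic isometry of a proper geodesic Gromov hyperbolic space has infimum of displacement equal to zero (classical in $\HH^2$, and in general a consequence of the horoball structure at the parabolic fixed point). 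Hence $\gamma|_L$ must be loxodromic, with two distinct fixed points $\xi^\pm \in S^1(L)$.

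To produce the desired $\gamma$-invariant bi-infinite geodesic from $\xi^-$ to $\xi^+$, I would invoke Candel's theorem \cite[\S 7]{Calegari-book} to replace the Riemannian metric on $M$ by one of constant leafwise curvature $-1$. This is applicable here because Gromov hyperbolicity of \emph{every} leaf of $\cF$ precludes a transverse invariant measure of non-negative Euler characteristic. With this choice of metric, $L$ becomes isometric to $\HH^2$, and a loxodromic isometry of $\HH^2$ has a unique invariant geodesic---its axis from $\xi^-$ to $\xi^+$---which is the sought-after $g$.

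The main obstacle will be this last step: in a generic Gromov hyperbolic space a loxodromic isometry need not have a \emph{unique} invariant geodesic (only a bounded family of quasi-axes), so producing an honest geodesic $g$ with $\gamma g = g$ requires extra structure. Reducing to constant curvature via Candel bypasses this cleanly; a metric-free alternative would be to attain the infimum of the displacement function $x \mapsto d_L(x, \gamma x)$ via a compactness argument (projecting a minimizing sequence to $M$ and using proper discontinuity to extract a convergent sequence of conjugates $\eta_n \gamma \eta_n^{-1}$), and then to argue via a midpoint/angle comparison that the concatenation $\bigcup_{n \in \ZZ} [\gamma^n x_0, \gamma^{n+1} x_0]$ through a minimizer $x_0$ is genuinely a bi-infinite geodesic.
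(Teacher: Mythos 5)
The paper itself omits the proof, calling it a ``standard result which depends on the classification of isometries of Gromov hyperbolic spaces'' and citing \cite[Chapter 8]{GhysdelaHarpe}; your proposal fleshes out exactly that classification route, so you are in the same spirit as the authors. The elliptic exclusion is fine: a bounded orbit in $L$ is bounded in $\mt$, contradicting proper discontinuity of the deck action for the infinite-order element $\gamma$.

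There are, however, two genuine gaps.
First, the parabolic exclusion as you state it over-reaches. That a parabolic isometry has infimum of displacement equal to zero is classical for $\HH^2$ and more generally for ${\rm CAT}(-1)$ spaces (where horoball level sets contract), but it is \emph{not} a theorem for arbitrary proper geodesic Gromov hyperbolic spaces; the general classification (as in \cite[Ch.\,8]{GhysdelaHarpe}) only yields that the stable translation length $\lim_n d(x,\gamma^n x)/n$ vanishes for parabolics, not that $\inf_x d(x,\gamma x)=0$. The clean fix is to pass to a Candel metric \emph{before} invoking the trichotomy, not only for the final step: with leaves isometric to $\HH^2$, the $\HH^2$ facts apply, and the positive systole in $M$ (which, by compactness and uniform bi-Lipschitz comparison with the Candel metric, persists) then rules out parabolics.

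Second, the ``metric-free alternative'' you sketch at the end does not close. The compactness argument (projecting a minimizing sequence and conjugating) produces a limit point, but a priori in a \emph{different} leaf $L'$ of $\wcF$ for a \emph{conjugate} $\eta\gamma\eta^{-1}$; it does not show the infimum is attained on $L$ itself. And even if the infimum is attained at some $x_0\in L$, the first-variation argument shows the concatenation $\bigcup_n\gamma^n[x_0,\gamma x_0]$ is a \emph{local} geodesic, but a bi-infinite local geodesic in a Gromov hyperbolic Riemannian surface need not be globally length-minimizing; promoting it to a genuine geodesic requires something like $\inf_x d(x,\gamma x)=\tau(\gamma)$ (semisimplicity), which your argument does not establish and which is false in general. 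You correctly flag that this is the crux (``producing an honest geodesic $g$ requires extra structure''), but the proposed workaround does not resolve it. The Candel reduction does give an honest axis, although one should note that it is a geodesic in the \emph{Candel} metric and only a uniform quasigeodesic in the original one; for the paper's downstream use (a $\gamma$-invariant simple periodic curve of bounded geometry covered by finitely many product boxes modulo $\gamma$) this distinction is harmless, but it is worth being explicit about it.
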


We note that if the metric is not negatively curved in leaves, this geodesic may not be unique, but by Gromov hyperbolicity we know that any two such geodesics are (uniformly) bounded distance apart. We omit the proof of this standard result which depends on the classification of isometries of Gromov hyperbolic spaces (see e.g. \cite[Chapter 8]{GhysdelaHarpe}). 


\subsection{Limits of leaves} 

Consider $\cF$ a foliation by Gromov hyperbolic leaves of a closed 3-manifold $M$ and let $\mt, \wt{\cF}$ be the lifts to the universal cover. Given a leaf $L \in \wt{\cF}$ we have a compactification $\overline{L}= L \cup S^1(L) \cong \DD^2$ as explained in the previous section (see equation
\eqref{eq:compactifL}). 

Given a properly embedded curve $\ell \subset L$ we denote by: 

\begin{equation}\label{eq:limitcurve}
\partial \ell = \overline{\ell} \cap S^1(L) = \overline{\ell} \setminus \ell,
\end{equation}

\noindent where the closure is taken in the compactification $L \cup S^1(L)$. If the curve $\ell$ is oriented, we consider, for $x \in \ell$ the rays $\ell^+_x$ and $\ell^-_x$ to be the (closure of the) connected components of $\ell \setminus \{x\}$ according to the orientation. (Note that we took the closure so that we consider $x \in \ell^{\pm}_x$.) This way one can define (see figure \ref{fig-limitleaf}): 

\begin{equation}\label{eq:pmlimit} 
\partial^+ \ell = \overline{\ell^+_x} \setminus \ell^+_x \text{ and } \partial^- \ell = \overline{\ell^-_x} \setminus \ell^-_x
\end{equation}

\begin{figure}[ht]
\begin{center}
\includegraphics[scale=0.72]{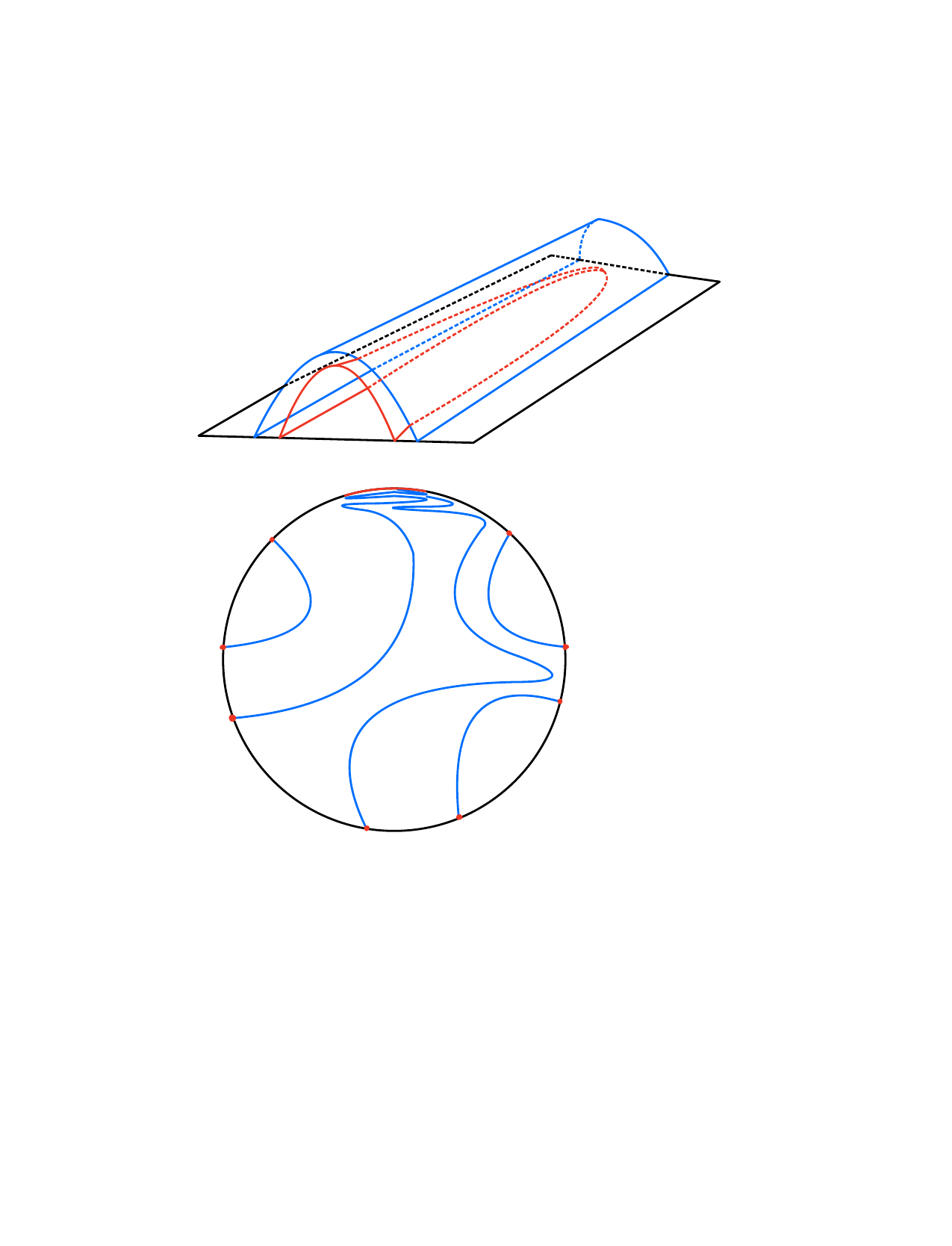}
\begin{picture}(0,0)
\end{picture}
\end{center}
\vspace{-0.5cm}
\caption{{\small Some leaves of $\cG_L$ inside $L$ with different landing behaviour. Note that $\cG_L$ does not have Hausdorff leaf space because there are three leaves of $\cG_L$ which land and define disjoint intervals in $S^1(L)$.}}\label{fig-limitleaf}
\end{figure}

Note that the notation does not include $x$ since the result of the operation is easily seen to be independent of the choice of $x$. Note that $\partial^{\pm} \ell$ is always a closed connected set.  It is connected because
the rays are properly embedded and any two  points disconnect  the
circle.   Hence  the limit set can be a closed proper interval, the full circle, or a singleton. The last case is important so it deserves a definition: 

\begin{defi}[Landing]\label{def.land}
Given a properly embedded oriented curve $\ell \subset L$ a leaf of $\wt{\cF}$, we say that the positive (resp. negative) ray of $\ell$ \emph{lands} in $\xi \in S^1(L)$ if $\partial^+ \ell = \{\xi\}$ (resp. $\partial^- \ell = \{\xi\}$). 
We also say that a ray $\ell$ lands if $\overline{\ell} \setminus \ell$
is a single point.
\end{defi}

It is a direct consequence of plane topology that the following holds: 

\begin{prop}\label{prop-overlaplimit}
If $\ell_1$ and $\ell_2$ are two oriented properly embedded disjoint curves in a leaf $L \in \wt{\cF}$ then, if $I=\partial^+ \ell_1$ is not equal to $S^1(L)$, then we have that $\partial^{\pm} \ell_2$ cannot be contained in the interior of $I$. 
\end{prop}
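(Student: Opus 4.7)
My plan is to argue by contradiction using a Jordan-arc separation in the compactified leaf $\overline L \cong \DD^2$. The case $I = \{\xi\}$ is vacuous since $\mathrm{int}(I) = \emptyset$; so assume $I = [a,b]$ is a closed proper arc with $a \neq b$, and suppose $\partial^+\ell_2 \subset \mathrm{int}(I)$ for contradiction. Because $a, b \notin \mathrm{int}(I) \supseteq \partial^+\ell_2$, one has $a, b \notin \overline{\ell_2^+}$, so there exist nested neighborhoods $V_a^{(n)}, V_b^{(n)}$ in $\overline L$ shrinking to $\{a\}, \{b\}$ and disjoint from $\ell_2^+$. Since $a, b \in I = \partial^+\ell_1$, the ray $\ell_1^+$ re-enters these neighborhoods at arbitrarily large parameters, so one can pick $s_n < t_n$ with $s_n \to \infty$, $\ell_1^+(s_n) \in \partial V_a^{(n)}$, $\ell_1^+(t_n) \in \partial V_b^{(n)}$, and $\ell_1^+((s_n,t_n)) \cap (V_a^{(n)} \cup V_b^{(n)}) = \emptyset$.

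Next, concatenate simple auxiliary arcs $\tau_{a,n} \subset V_a^{(n)}$ from $a$ to $\ell_1^+(s_n)$ and $\tau_{b,n} \subset V_b^{(n)}$ from $\ell_1^+(t_n)$ to $b$ with $\sigma_n = \ell_1^+([s_n,t_n])$ to obtain a Jordan arc $\tilde\sigma_n = \tau_{a,n} \cup \sigma_n \cup \tau_{b,n}$ in $\overline L$ from $a$ to $b$, disjoint from $\ell_2^+$. The Jordan arc theorem splits $\overline L$ into two open components $D_{I,n}$ and $D_{J,n}$ with $\overline{D_{I,n}} \cap S^1(L) = I$ and $\overline{D_{J,n}} \cap S^1(L) = \overline{J}$, where $J = S^1(L) \setminus I$. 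Since $\ell_2^+$ is connected and disjoint from $\tilde\sigma_n$, it lies entirely in one component; containment in $D_{J,n}$ would force $\emptyset \neq \partial^+\ell_2 \subset \overline{D_{J,n}} \cap S^1(L) = J \cup \{a,b\}$, contradicting $\partial^+\ell_2 \subset \mathrm{int}(I)$. Hence $\ell_2^+ \subset D_{I,n}$ for every large $n$.

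To reach the contradiction I invoke shrinking. Since $\partial^+\ell_1 = I$, a compactness argument yields $d(\ell_1^+(u), I) \to 0$ as $u \to \infty$, so $\sigma_n \subset B_{\eta_n}(I)$ with $\eta_n \to 0$; combined with the shrinking of $V_a^{(n)}, V_b^{(n)}$ this gives $\tilde\sigma_n \subset B_{\eta_n}(I)$. For small $\eta_n$, the neighborhood $B_{\eta_n}(I)$ is a simply connected subset of $\DD^2$ containing the Jordan curve $\tilde\sigma_n \cup I$, so Jordan-Schoenflies applied inside $B_{\eta_n}(I)$ tells us this Jordan curve bounds a topological disk inside $B_{\eta_n}(I)$; that disk is identified with the bounded component of $\R^2 \setminus (\tilde\sigma_n \cup I)$, which is precisely $D_{I,n} \cap L$, and hence $D_{I,n} \subset B_{\eta_n}(I)$. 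But $\ell_2^+(0) \in L$ has positive distance $d_0 > 0$ from $I$, so for $n$ large enough that $\eta_n < d_0$ we get $\ell_2^+(0) \notin B_{\eta_n}(I) \supseteq D_{I,n}$, contradicting $\ell_2^+(0) \in \ell_2^+ \subset D_{I,n}$. The identical argument applied to $\ell_2^-$ (using that $\partial^-\ell_2 \subset \mathrm{int}(I)$ also forces $a,b \notin \overline{\ell_2^-}$) handles the other sign. The main delicate point is ensuring $\tilde\sigma_n$ is a simple arc disjoint from $\ell_2^\pm$, which requires that the parameters $s_n, t_n$ be chosen so $\sigma_n$ does not revisit the closing neighborhoods $V^{(n)}$ and that the auxiliary $\tau$'s are simple within the disjoint small disks.
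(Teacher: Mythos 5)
The paper itself offers no proof of this proposition---it simply cites Proposition~2.12 of \cite{FP4}---so there is no in-paper argument to compare against. Your proof is a correct, self-contained plane-topology argument: after dispatching the vacuous singleton case, you shrink neighborhoods $V_a^{(n)}, V_b^{(n)}$ of the endpoints of $I$ disjoint from $\overline{\ell_2^+}$, use that $\ell_1^+$ accumulates on both $a$ and $b$ to extract a long subarc $\sigma_n$ running between the boundaries of these neighborhoods without re-entering them, cap it off to a Jordan arc $\tilde\sigma_n$ from $a$ to $b$, separate, and show $D_{I,n}$ collapses onto $I$. Two spots can be tightened. First, the Jordan--Schoenflies appeal is more cleanly replaced by observing that, for $\eta_n$ small, $\overline L \setminus B_{\eta_n}(I)$ is connected, disjoint from $\tilde\sigma_n$, and has closure meeting $J$; so it lies entirely in $D_{J,n}$, which is exactly $D_{I,n}\subset B_{\eta_n}(I)$. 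Second, the existence of $s_n<t_n$ with $\ell_1^+((s_n,t_n))$ avoiding both neighborhoods is handled by taking $t_n$ the first time into $V_b^{(n)}$ after some visit to $V_a^{(n)}$, and $s_n$ the last exit from $V_a^{(n)}$ before $t_n$; with $\tau_{a,n},\tau_{b,n}$ chosen inside $\overline{V^{(n)}_{a}},\overline{V^{(n)}_{b}}$ touching the frontier only at $\ell_1^+(s_n),\ell_1^+(t_n)$, the arc $\tilde\sigma_n$ is automatically simple. With these small repairs the argument is complete.
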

\begin{proof}
This is \cite[Proposition 2.12]{FP4}. 
\end{proof}

\subsection{Markers}\label{ss.markers}

In this subsection we borrow some results from Thurston, Calegari and Dunfield \cite{CD} (see also \cite{Calegari-book}). This will only be used in \S \ref{s.nonvisual}.

Given a foliation $\cF$ by Gromov hyperbolic leaves on $M$, a \emph{marker} $m$ is a map $m : [0,1] \times \RR_{\geq 0} \to \mt$ with the following properties: 

\begin{itemize}
\item $m(\{t\} \times \RR_{\geq 0})$ is a uniformly quasigeodesic ray in the leaf $L \in \wcF$ so that $m(t,0) \in L$. 
\item there is $\eps>0$ smaller than the foliation size boxes,
so  
so that for every $s \in \RR_{>0}$ we have that the length of $m([0,1]) \times \{s\})$ is smaller than $\eps$.  
\end{itemize}

The \emph{leaf pocket theorem} (see \cite[\S 5]{CD}) states that for a given leaf $L \in \wcF$ there is a dense set in $S^1(L)$ of \emph{marker directions}, i.e. directions $\xi \in S^1(L)$ for which there is a marker $m: [0,1] \times \RR_{\geq 0} \to \mt$ so that $m(0,0) \in L$ and so that $m(\{0\} \times \RR_{>0})$ lands in $\xi$. In fact, the result states that there is a dense set of marker directions in \emph{both sides} meaning that for any given transverse orientation, there is a dense set of marker directions associated to markers for which the curves $m([0,1] \times \{0\})$ are positive in the chosen orientation. It is worth pointing out that in \cite{CD} the results are proved using a Candel metric on which all leaves are isometric to hyperbolic planes in the universal cover; under those assumptions, the authors produce markers by geodesics instead of uniform quasigeodesics. By our assumption on Gromov hyperbolicity, Candel's theorem produces a metric in $M$ so that leaves are hyperbolic, and the identity is a homeomorphism between the metrics which induces uniform quasi-isometries between the leaves. This implies the result in our setting.

\subsection{Uniformly equivalent foliations and the universal circle of uniform foliations}\label{ss.uniformfol}

Here we review some results that will be used in \S \ref{ss.nonsolvable}. We refer the reader to \cite{ThurstonCircle1}, \cite{FPMin} and \cite{FenleyRcoveredtransverse} for more information. 

Given a foliation $\cF$ without Reeb components in a closed 3-manifold $M$ we say that $\cF$ is \emph{uniform} if for every pair of leaves $L, L' \in \wcF$ the Hausdorff distance between $L$ and $L'$ in $\mt$ is finite. Such foliations are always $\RR$-covered (see \cite{FPMin}). 

For such a foliation, the universal circle $S^1_{u}(\cF)$ has an easy description: Given $L, L' \in \wcF$ there is a coarsely well defined quasi-isometry $f_{L,L'}: L \to L'$ which induces a well defined homeomorphism $h_{L,L'}: S^1(L) \to S^1(L')$ between the circles at infinity. With this identification we can define the \emph{universal circle} $S^1_u(\cF)$ by identifying circles with this map. See \cite{ThurstonCircle1, FPMin, FenleyRcoveredtransverse} for details. 

We say that two foliations $\cF_1$ and $\cF_2$ are \emph{uniformly equivalent} if for every leaf $L \in \wcF_1$ there is a leaf $E \in \wcF_2$ which is bounded Hausdorff distance away from $L$ in $\mt$ and for every leaf $E \in \wcF_2$ there is a leaf $L \in \wcF_1$ which is bounded Hausdorff distance away form $E$ in $\mt$. Similarly to the case of a single uniform foliation, when we have two $\RR$-covered, uniformly equivalent foliations $\cF_1$ and $\cF_2$, then given $L \in \wcF_1$ and a leaf $E \in \wcF_2$ at bounded Hausdorff distance away, there is a coarsely well defined (it is defined by mapping each point in $L$ to a closest point in $E$) quasi-isometry $f_{L,E}: L \to E$. This 
induces a well defined homeomorphism $h : S^1(L) \to S^1(E)$. 
This uses that the foliations are $\RR$-covered, and
in the case that the foliations are uniform,
this induces an identification between the universal circles. This is because,
as we explained, when the foliations are $\RR$-covered and uniform, there is
a very natural identification of the universal circle
with the circle at infinity of each of its leaves.

\subsection{Anosov foliations}\label{ss.Anosovflows}

The contents of this subsection  will only be used in \S \ref{ss.RcoveredAF} and \S \ref{s.SVM}. 

Let $M$ be a closed 3-manifold, a \emph{(topological) Anosov flow} $\Phi$ on $M$ is a flow which has $C^1$ orbits, none is a point, and
 preserves  two topological foliations $\cW^{ws}$ and $\cW^{wu}$,
which are topologically transverse and  intersect in the orbit foliation of $\Phi$ with the property that the flow $\Phi$ is expansive. 
{\em Expansive} means  that there is an $\eps > 0$ so that
if two orbits (this should be checked in the universal cover $\mt$)
are Hausdorff distance $\eps$ from each other, then they are the same
orbit.
Up to relabeling, it follows that the foliation $\cW^{ws}$ which is called the \emph{weak stable foliation} of $\Phi$ consists of the orbits which are forward asymptotic to any given orbit in the leaf. Similarly, the \emph{weak unstable foliation} is made by backward asymptotic orbits. 

 We note that when $\Phi$ is transitive (in particular, when $\cW^{ws}$ or $\cW^{wu}$ are minimal) we have that $\Phi$ is orbitally  equivalent to a true Anosov flow, see \cite{Shannon}.  Although it is not standard, we will assume in this paper that the foliations $\cW^{ws}$ and $\cW^{wu}$ are $C^{1,0}$ and thus the flow is by $C^1$ curves. This allows to avoid technical discussions, for instance, on how to define the length of an arc of the flow and in this paper we will not need to deal with more general cases. We refer the reader to \cite[\S 5]{BFP} for more details, general definitions and discussion. 
  
The following fact will be used without reference: 

\begin{itemize}
\item The foliations $\cW^{ws}$ and $\cW^{wu}$ are by Gromov hyperbolic leaves, and orbits inside the leaves in the universal cover, form \emph{quasigeodesic fans} (that is, they are all asymptotic to the same point in the circle at infinity). See \cite[\S 5]{BFP} for proofs. 
\end{itemize}

We say that a (topological) Anosov flow is $\RR$-covered if the foliation $\cW^{ws}$ is $\RR$-covered (i.e. leaf space of $\wt{\cW^{ws}}$ is Hausdorff). As proved in  \cite{Barbot, FenleyAnosov}, this is equivalent to asking that $\cW^{wu}$ is $\RR$-covered. Topological Anosov flows which are $\RR$-covered are always transitive and thus orbitally  equivalent to true Anosov flows by \cite{Shannon} as explained above.  

The $\RR$-covered Anosov flows have two possibilities, one of which is
orbitally  equivalent to suspensions of linear automorphisms of $\TT^2$ (in which case, the fundamental group of $M$ is solvable). For $\RR$-covered
Anosov flows, the following conditions are all equivalent (see  \cite{Barbot, FenleyAnosov, BM}):

\begin{itemize}
\item The foliations $\wt{\cW^{ws}}$ and $\wt{\cW^{wu}}$ have a global product structure (i.e. for every $L \in \wt{\cW^{ws}}$ and $E \in \wt{\cW^{wu}}$ we have that $L \cap E \neq \emptyset$).
\item The foliation $\wt{\cW^{ws}}$ or the foliation $\wt{\cW^{wu}}$ is not uniform.
\item The fundamental group of $M$ is solvable.
\item The flow is orbitally  equivalent to the suspension of a linear hyperbolic automorphism of $\TT^2$. 
\end{itemize} 

If an Anosov flow is $\RR$-covered but does not verify some of the previous equivalent conditions, then it is called \emph{skewed}-$\RR$-\emph{covered}. As a consequence, being skewed-$\RR$-covered is equivalent to the weak stable and unstable foliations being \emph{uniform} and \emph{uniformly equivalent} to each other (recall \S\ref{ss.uniformfol} for definitions). Proofs are again contained in  \cite{Barbot, FenleyAnosov}. 

We will need the following property about the action of the fundamental group on the leaf space of a skewed-$\RR$-covered Anosov flow (which follows from \cite{FenleyAnosov}, see also \cite{BM}). We will continue to assume as a standing assumption that all foliations, in particular $\cW^{ws}, \cW^{wu}$ are transversally orientable. 

\begin{prop}\label{p.skewedAnosov}
Let $\Phi$ be a skewed-$\RR$-covered Anosov flow and let $\cW$ be its weak-stable or weak-unstable foliation. Then, for every $\gamma \in \pi_1(M) \setminus \{\mathrm{id}\}$ we have that either $\gamma$ acts as a translation on the leaf space of $\wt{\cW}$ or it has a countable number of fixed leaves $\{L_n\}_{n\in\ZZ}$ ordered by the transverse orientation and going to $\pm \infty$ in the leaf space as $n \to \pm \infty$. Moreover, in each $L_n$ there is a (unique) orbit $o_n$ of $\Phi$ which is invariant under $\gamma$ and on which $\gamma$ acts as a translation whose orientation with respect to the flow direction is different depending on whether $n$ is even or odd. 
\end{prop}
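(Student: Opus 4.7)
The plan is to analyze the action of $\gamma \in \pi_1(M)\setminus\{\mathrm{id}\}$ on the leaf space of $\wt{\cW}$, assuming without loss of generality that $\cW = \cW^{ws}$ (the weak-unstable case is symmetric). Since $\cW$ is transversely orientable and $\RR$-covered, its leaf space is homeomorphic to $\RR$ and $\pi_1(M)$ acts by orientation-preserving homeomorphisms. Any such homeomorphism either has no fixed points, in which case it acts as a translation, or has at least one fixed point. Thus I only need to analyze the case in which $\gamma$ fixes some leaf $L_0 \in \wt{\cW}$ and show that the rest of the statement follows.

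First, I would produce from such a fixed leaf both a $\gamma$-invariant flow orbit in $L_0$ and a companion fixed leaf in $\wt{\cW^{wu}}$. The restriction $\gamma|_{L_0}$ is a quasi-isometry of the Gromov hyperbolic surface $L_0$ preserving the $\wt{\Phi}$-orbit foliation of $L_0$. By the quasigeodesic-fan property recalled in \S\ref{ss.Anosovflows}, all orbits in $L_0$ land at a common point $\xi \in S^1(L_0)$, which $\gamma$ must therefore fix. Combining Lemma \ref{lem-closedgeodesics} with the standard classification of isometries of Gromov hyperbolic spaces produces a second fixed point $\eta \in S^1(L_0)$ and a $\gamma$-invariant quasigeodesic axis; the fan property forces this axis to coincide (up to bounded distance) with a unique $\wt{\Phi}$-orbit $o_0 \subset L_0$. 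Since $o_0$ lies in a unique leaf $E_0 \in \wt{\cW^{wu}}$ and $\gamma(o_0)=o_0$, we also have $\gamma(E_0)=E_0$.

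The bulk of the argument is the iteration using the skewed structure. By the characterization of skewed-$\RR$-covered flows recalled at the end of \S\ref{ss.Anosovflows}, $\wt{\cW^{ws}}$ and $\wt{\cW^{wu}}$ are uniform and uniformly equivalent but fail to have a global product structure: each $L \in \wt{\cW^{ws}}$ meets only an open interval of unstable leaves, whose two boundary unstable leaves are asymptotic to $L$ and form with $L$ the sides of a \emph{lozenge}. Applying this to $(L_0, E_0)$ and using $\gamma$-equivariance, the two boundary unstable leaves of $L_0$ are $\gamma$-invariant; repeating the construction on them gives adjacent $\gamma$-fixed stable leaves $L_{\pm 1}$, each carrying a unique $\gamma$-fixed orbit $o_{\pm 1}$. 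Iterating, one obtains a bi-infinite sequence $\{L_n\}_{n \in \ZZ}$ of $\gamma$-fixed leaves ordered by the transverse orientation. The key point is that the canonical skew map of the stable leaf space (pass from a stable leaf to one of its boundary unstable leaves, then from that unstable leaf to one of \emph{its} boundary stable leaves) acts as a translation of the stable leaf space; applying it and its inverse to $L_0$ therefore produces a chain that escapes to $\pm\infty$ and, by an equivariance argument, exhausts the set of $\gamma$-fixed stable leaves.

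Finally, the alternation of orientations follows from the geometry of a single lozenge: in any lozenge with corner orbits $o_n$ and $o_{n+1}$, the flow points into the lozenge at one corner and out at the other, so a hyperbolic-type element fixing both corners necessarily translates them in opposite senses along $\Phi$. The main obstacle of the argument is the lozenge-chain construction and the verification that the resulting chain is bi-infinite and contains \emph{every} $\gamma$-fixed leaf; this requires pinning down the translational nature of the skew map and ruling out accumulation of fixed leaves, both of which are consequences of the skewed-$\RR$-covered hypothesis via the results of \cite{Barbot, FenleyAnosov, BM}. Once the chain is in place, the parity alternation of the flow orientation is immediate from the geometry of a single lozenge.
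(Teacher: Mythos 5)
The paper does not actually prove Proposition \ref{p.skewedAnosov}; it cites \cite{FenleyAnosov} and \cite{BM} for it. Your reconstruction is a faithful sketch of the standard argument in those references: reduce to the case of a fixed stable leaf, produce the $\gamma$-invariant axis via the quasigeodesic-fan structure and the fixed ideal point $\xi_{L_0}$, observe the axis is a flow orbit giving a companion fixed unstable leaf, and then iterate the lozenge/skew map to build the bi-infinite chain with alternating flow orientations at the corners of each lozenge.

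One step that deserves to be made explicit rather than delegated to ``an equivariance argument'': the claim that the chain $\{L_n = \eta^n(L_0)\}$ exhausts \emph{all} $\gamma$-fixed stable leaves (discreteness of the fixed set). The cleanest way to see it: if $L'$ were a $\gamma$-fixed stable leaf strictly between $L_n$ and $L_{n+1}$ in the leaf space, then $L'$ meets $E_n$ (the unstable leaf through $o_n$), so $L' \cap E_n$ is a $\gamma$-invariant orbit; but $E_n$ carries a unique $\gamma$-invariant orbit, namely $o_n \subset L_n$, contradicting $L' \neq L_n$. Aside from that, your description of the boundary unstable leaves of the interval met by $L_0$ as ``asymptotic to $L_0$'' is loose (they are non-separated from the leaves that do meet $L_0$ and do not themselves meet $L_0$), but the lozenge structure you invoke is the right one and the alternation of translation direction along the flow at consecutive lozenge corners is exactly as in \cite{FenleyAnosov}.
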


Finally, the following property of skewed-$\RR$-covered Anosov flows will be crucial to get a contradiction to complete the proof of Theorem \ref{teo.main}. (See \cite{FenleyAnosov}.)

\begin{prop}\label{prop.nonmarkermoves}
For skewed-$\RR$-covered Anosov flows, the map that sends each leaf of $\wt{\cW^{ws}}$ (resp. $\wt{\cW^{wu}}$) to the point at infinity in the universal circle corresponding to the common point where all orbits lands, is a strictly monotonic map.
\end{prop}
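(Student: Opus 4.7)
The plan is to prove strict monotonicity of the map $\Xi$ that sends each leaf $L$ of $\wt{\cW^{ws}}$ to the common forward landing point $\Xi(L)$ of orbits in $L$, viewed inside the universal circle $S^1_u$. The strategy is to combine continuity of $\Xi$ with local injectivity, the latter coming from the dual weak unstable foliation. The argument for $\wt{\cW^{wu}}$ is symmetric.

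First I would set up the map and establish continuity. Under the skewed-$\RR$-covered hypothesis, \S\ref{ss.Anosovflows} gives that both $\wt{\cW^{ws}}$ and $\wt{\cW^{wu}}$ are uniform and uniformly equivalent, so \S\ref{ss.uniformfol} provides a canonical universal circle $S^1_u$ that is identified with $S^1(L)$ for every leaf $L$ of either foliation. Continuity of $\Xi$ then follows from the uniform quasigeodesic constants of orbits within leaves and the continuity of the universal-circle identification in the leaf parameter.

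The main step is local injectivity. Fix a weak unstable leaf $E\in\wt{\cW^{wu}}$: the weak stable leaves meeting $E$ form an open interval $I_E$ in the leaf space of $\wt{\cW^{ws}}$, and for each $L\in I_E$ the intersection $L\cap E$ is a single orbit $o_L$. Inside $E$, the orbits share a common backward endpoint but \emph{diverge} in forward time, so the forward endpoints of the rays $o_L$, computed in $S^1(E)\equiv S^1_u$, are pairwise distinct as $L$ varies in $I_E$. Granting the compatibility that the forward endpoint of $o_L$ computed inside $S^1(E)\equiv S^1_u$ equals the one computed inside $S^1(L)\equiv S^1_u$---i.e.\ equals $\Xi(L)$---the restriction $\Xi|_{I_E}$ is injective. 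Since local product structure ensures that the $I_E$ cover the leaf space of $\wt{\cW^{ws}}$, the map $\Xi$ is locally injective. Combined with continuity, this forces the lift $\tilde\Xi:\RR\to\RR$ of $\Xi$ to be locally strictly monotonic, and by connectedness of the leaf space ($\cong \RR$), globally strictly monotonic.

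The hard part will be the compatibility assertion above: the identifications $S^1(L)\equiv S^1_u$ and $S^1(E)\equiv S^1_u$ come from coarse quasi-isometries within two different foliations, so one must verify they assign the same universal-circle point to the ideal endpoint of the quasigeodesic $o_L$ that lives in both foliations. I expect to handle this by picking an auxiliary leaf $E'\in\wt{\cW^{wu}}$ at bounded Hausdorff distance from $L$ (which exists by uniform equivalence), using the induced coarse quasi-isometry $L\to E'$ to transport the endpoint from $S^1(L)$ to $S^1(E')$, and then using the universal circle of $\wt{\cW^{wu}}$ to compare $S^1(E')$ with $S^1(E)$.
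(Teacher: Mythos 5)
The paper does not prove this proposition; it states it with the citation ``(See \cite{FenleyAnosov}.)'' and moves on, so there is no in-paper argument for your proposal to be measured against.

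Evaluated on its own terms, your plan --- continuity of the non-marker map $\Xi$, local injectivity of $\Xi$ read off from the dual weak unstable foliation, then connectedness of the leaf space $\cong\RR$ to upgrade local strict monotonicity to global --- is a sensible route, and the reduction is logically sound: each $E\in\wt{\cW^{wu}}$ produces an open interval $I_E$ in the weak stable leaf space, these intervals cover by local product structure, and a continuous locally injective self-map of $\RR$ is globally strictly monotonic. The weight of the argument, however, rests entirely on the ``compatibility'' you flag: that for the shared orbit $o_L=L\cap E$, the ideal endpoint of its forward ray is sent to the same point of the universal circle whether one reads it in $S^1(L)$ (where it equals the non-marker point $\xi_L$) or in $S^1(E)$ (where it is a marker direction of $E$), under the composite identification $S^1(L)\to S^1_u(\cW^{ws})\to S^1_u(\cW^{wu})\to S^1(E)$.

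This is not yet a proof: the identification $S^1_u(\cW^{ws})\cong S^1_u(\cW^{wu})$ is defined only through coarsely well-defined closest-point maps between leaves of the two foliations at bounded Hausdorff distance, and $L$ and $E$ themselves are \emph{not} at bounded Hausdorff distance (they intersect transversally in a single orbit). So one must route through an auxiliary $E'$ as you suggest and then verify, with uniform estimates, that each stage of the chain $L\to E'\to E$ preserves the ideal endpoint of the particular quasigeodesic ray $o_L^+$: concretely, that the double projection of $o_L^+$ lands at bounded leafwise distance from $o_L^+$ inside $E$, which requires a uniform properness statement for the inclusions of leaves and a check that the closest-point map $E'\to E$ within $\cW^{wu}$ is well behaved on rays that are close to $E'$ but actually live in $E$. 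None of this is carried out; it is the genuine content of the proposition, and as written the proposal names the obstacle without closing it. In short: reasonable strategy, but the decisive step is only sketched, and without it there is no proof.
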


Note that this point is sometimes called the non-marker point because all the other points are markers as in \S\ref{ss.markers}.

\section{Topology at infinity of foliations}\label{s.folinfinity} 
\label{s.topologytub}

We will consider in this section a foliation $\cF$ of $M$ by uniformly Gromov hyperbolic leaves, which as we mentioned before means that each leaf of $\wcF$ with the metric induced by its ambient path metric is quasi-isometric to $\HH^2$. 
Using the geometry of Gromov hyperbolicity each leaf $F$ of
$\wcF$ is canonically compactified with a circle at infinity.
We want to put a topology in the union of $\mt$ with all these
circles at infinity.
The main point is to be able to analyze the topology at infinity as one moves from one leaf to a nearby one. 
This has been done previously, and in a very natural way, when the leaves of
$\wcF$ have a hyperbolic metric or a negatively curved metric
\cite{FenleyRcov,Calegari-book}.
In particular the treatments in \cite{FenleyRcov,Calegari-book} require changing the original metric
to another metric. 
We give a different presentation. Instead of changing the metric (for
example to a negatively curved metric) to work with our foliation, we try to present the results from a coarse point of view, which will work with any metric.
This is because we want to be able to work with a fixed underlying metric so as to avoid needing to change the metric back and forth when working with two foliations,
which is the situation we are considering in this article.
The results here will only be used here in sections 
\ref{s.nonvisual} and \ref{s.SVM}, but obviously will be useful
whenever considering pairs of  transverse foliations.

%

\begin{defi} \label{def.tubulation}
The tubulation of $\cF$ or $\wcF$ is the following set:

$$\cTu \ \  = \ \ \bigsqcup_{L \in \wcF} S^1(L)$$ 
\end{defi}

Sometimes we will call $\cTu$ the tubulation at infinity.
We will endow $\cTu$ with a topology which makes it a circle bundle over the leaf space $\cL$ of $\wcF$. 

Given a Riemannian metric on $M$ we consider the lifted metric on $\mt$ which induces in each leaf of $\wcF$ a path metric which by our assumption is uniformly quasi-isometric to $\HH^2$. The metric varies continuously between leaves with respect to uniform convergence in compact sets. 

A {\em length minimizing}, or {\em globally minimizing},
or {\em minimizing}  segment, ray or bi-infinite curve $\beta$ in
a leaf $L$ of $\wcF$ is one so that: for any $x,y \in \beta$,
the length of the segment $[x,y]$ in $\beta$ from $x$ to $y$
is  a shortest length of any path $x$ to  $y$.

\begin{remark} The following well known fact will be used throughout 
this section: if $\ell_n$ is a sequence of length minimizing segments,
rays, or bi-infinite geodesics in leaves $L_n$ of $\wcF$, and
$\ell$ is a limit of $\ell_n$, then $\ell_n$ is also
length minimizing in its leaf of $\wcF$.
To show this just notice that if a path is not minimizing, then closeby paths cannot be minimizing either (see e.g. \cite[Proposition 4.3]{FenleyQI} for a detailed proof). 
Also unless otherwise stated, by a geodesic  we mean a length
minimizing geodesic.
\end{remark}

Given $p \in \mt$ we consider $S^1_p = T^1_p L$ the set of unit tangent vectors at $p$ to the leaf $L \in \wcF$ such that $p \in L$. This is a topological circle. We consider in $S^1_p$ the subset $A_p \subset S^1_p$ of the vectors such that the geodesic ray $\gamma_p(\theta): [0,\infty) \to L$ starting at $p$ with velocity $\theta \in S^1_p$ verifies that it is  globally minimizing. 
The remark above implies that the set $A_p$ is closed. The remark also
implies that if $p_n \to p$ then $\mathrm{limsup} A_{p_n} = \bigcap_{n>1} \overline{\bigcup_{k>n} A_{p_k}} \subset A_p$ (considering the unit tangent bundle $T^1\wcF$ to the leaves of the foliation as a subbundle of $T^1\mt$). 

It is shown in \cite[Lemmas 4.4 and 4.5]{FenleyQI} that the map sending each point $\theta \in A_p$ to $S^1(L)$ is a monotone quotient, that is, there is a continuous surjective map $\psi_p: A_p \to S^1(L)$ with the property that if $\psi_p(\theta)= \psi_p(\omega)$ then there is exactly one connected component $B_{\theta,\omega}$ of the complement in $L$ of the geodesic rays from $p$ with initial velocities $\theta$ and $\omega$ which is contained in a uniform neighborhood of each of the geodesic rays (this region is an ideal geodesic bigon). One can extend $\psi_p : S^1_p \to S^1(L)$ (we abuse notation and use the same notation for the extended map) by mapping all the vectors pointing into $B_{\theta,\omega}$ to $\psi_p(\theta)=\psi_p(\omega)$. 


Fix a transversal $\tau: (-\eps, \eps) \to \mt$ to $\wcF$ and consider a continuous curve $a: (-\eps, \eps) \to T^1\cF$ such that $a(t) \in S^1_{\tau(t)}$ for all $t$. If $a_-,a_+: (-\eps,\eps) \to T^1\cF$ are the (not necessarily continuous) maps so that $[a_-(t),a_+(t)] = \psi_{\tau(t)}^{-1}(\psi_{\tau(t)} (a(t)))$ (with a chosen continuous orientation of $S^1_{\tau(t)}$) we get that if $t_n \to t$ then the (minimizing) geodesic rays from $\tau(t_n)$ with velocity $a_{\pm}(t_n)$ converge to a minimizing geodesic ray from $\tau(t)$ and with endpoint in $\psi_{\tau(t)}(a(t))$. 

We want to give a topology on $\mt \cup \cTu$ by extending the topology of $\mt$. For this, given $\xi \in \cTu$ we want to define a basis of neighborhoods $V_n(\xi)$ for $\xi \in S^1(L)$ for some $L \in \wcF$ that will be given by the following data:
\begin{itemize}
\item a sequence $p_n \in L$ so that $p_n \to \xi$ in he Gromov
compactification of $L$,
\item a sequence of transversals $\tau_n: (-\eps_n, \eps_n) \to M$ to $\wcF$ with $\tau_n(0) = p_n$ and such that the sequence of leaves intersected by $\tau_n$ forms a basis of neighborhoods of $L$ in $\cL$ the leaf space of $\wcF$ (we choose the parametrization so that for every $n,m$, if $t \in (-\eps_n,\eps_n) \cap (-\eps_m, \eps_m)$ then the leaf $L_{\tau_n(t)} = L_{\tau_m(t)}$ and we call it $L_t$).
We assume that the length of $\tau_n$ goes to $0$ as $n \to \infty$.
\item a strictly decreasing sequence of closed intervals $I_n$ of $S^1(L)$ forming a basis of neighborhoods of $\xi$ in $S^1(L)$, 
\item continuous curves $a^n, b^n : (-\eps_n, \eps_n) \to T^1\wcF$ such that $a^n(t) \neq b^n(t) \in S^1_{\tau_n(t)}$ and so that the points $a^n(0), b^n(0) \in S^1_{p_n}$ and correspond to the endpoints of the interval $I_n$ in $S^1(L)$ via $\psi_{p_n}$ (i.e. for a given orientation we have that  $\psi_{p_n}([a^n(0),b^n(0)]) = I_n$).
\item if we denote by $I_n(t) = \psi_{\tau_n(t)}([a^n(t)), bn(t)])$ then we have that for all $n \geq m$ we have that if $t \in (-\eps_n,\eps_n) \cap (-\eps_m, \eps_m)$ then $I_n(t) \subset \mathrm{Interior}(I_m(t))$.  
\end{itemize}

\begin{figure}[ht]
\begin{center}
\includegraphics[scale=0.62]{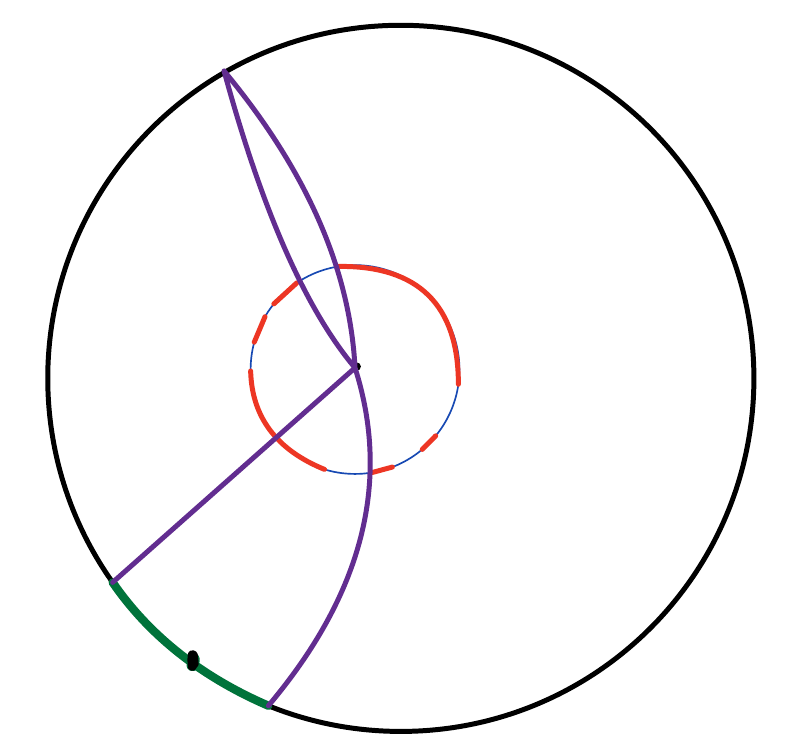}
\begin{picture}(0,0)
\put(-37,40){$L$}
\put(-120,136){$A_p$}
\put(-134,110){$p$}
\put(-195,20){$\xi$}
\put(-214,32){$I_n$}
\put(-176,45){{$W_n(0)$}}
\end{picture}
\end{center}
\vspace{-0.5cm}
\caption{{\small Depiction of some of the objects appearing in the definition of the neighborhood $V_n(\xi)$. The inner circle represents the unit tangent vectors to $p$ and painted in red are the vectors in $A_p$.}}\label{fig-topo}
\end{figure}

With this data, we construct 

$$V_n(\xi) = \bigcup_{t \in (-\eps_n,\eps_n)} W_n(t) \cup I_n(t)$$

\noindent where $W_n(t)$ are the open wedges in $L_t$ (the leaf of $\wcF$ containing $\tau_n(t)$) bounded by the geodesic rays $\gamma_+^t$ and $\gamma_-^t$ starting at $\tau_n(t)$ with velocities $a^n_+(t)$ and $b^n_-(t)$ where $[a^n_-(t),a^n_+(t)] = \psi_{\tau(t)}^{-1}(\psi_{\tau(t)} (a^n(t)))$ and $[b^n_-(t),b^n_+(t)] = \psi_{\tau(t)}^{-1}(\psi_{\tau(t)} (b^n(t)))$ with the chosen orientation, and $I_n(t) \subset S^1(L_t)$ are as defined above. 

\begin{lema}\label{remark-nested}
It is possible to construct sequences of points $p_n$, transversals $\tau_n$, intervals $I_n$ and continuous functions $a^n, b^n$ with the desired properties. Moreover, they can be chosen so that for $n>m$ one has that $V_n(\xi) \subset V_m(\xi)$. 
\end{lema}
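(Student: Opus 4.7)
The plan is to build the data in the natural order: first fix the intervals $I_n$ and choose the points $p_n$ deep in the previously constructed wedges, then transport a single transversal through each $p_n$ by $\wcF$-holonomy, then pick boundary velocities at $t=0$ and extend them continuously along $\tau_n$, and finally verify the nested basis property.

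First I would pick nested closed intervals $I_n \subset S^1(L)$ with $I_{n+1} \subset \mathrm{Interior}(I_n)$ and $\bigcap_n I_n = \{\xi\}$, and construct $p_n \in L$ inductively with $p_n \to \xi$ in $\overline{L}$, taken deep enough in each previously constructed wedge $W_m(0)$ (and with $I_n$ chosen sufficiently deep inside $\mathrm{Interior}(I_m)$) so that by Gromov hyperbolicity of $L$ the minimizing geodesic rays from $p_n$ landing at the endpoints of $I_n$ lie entirely in $W_m(0)$; this yields $W_n(0) \subset W_m(0)$. I would then fix a single short transversal $\sigma \colon (-\delta, \delta) \to \mt$ to $\wcF$ at a chosen basepoint, parametrized by the leaf of $\wcF$ it meets, and define each $\tau_n$ by transporting $\sigma$ via $\wcF$-holonomy along an arc in $L$ from the basepoint to $p_n$. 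The compatibility $L_{\tau_n(t)} = L_{\tau_m(t)}$ is then automatic, and taking $\eps_n \searrow 0$ with $|\tau_n| \to 0$ gives the basis and length conditions.

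For the boundary vectors, at $t = 0$ I would let $a^n(0), b^n(0) \in A_{p_n}$ be the two outer (wedge-side) endpoints of $\psi_{p_n}^{-1}$ applied to the two endpoints of $I_n$, chosen so that $\psi_{p_n}([a^n(0), b^n(0)]) = I_n$ with a fixed orientation on $S^1_{p_n}$; this is well defined because each preimage is a closed arc in $A_{p_n}$ by \cite[Lemmas 4.4 and 4.5]{FenleyQI}. Then I would extend $a^n, b^n$ to continuous maps on $(-\eps_n, \eps_n)$ using any continuous trivialization of the circle bundle $T^1 \wcF \to \mt$ along $\tau_n$.

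Finally, to check the nesting $V_n(\xi) \subset V_m(\xi)$ for $n > m$, at $t = 0$ the inclusion $W_n(0) \cup I_n \subset W_m(0) \cup \mathrm{Interior}(I_m) \subset V_m(\xi)$ holds by the choices above. For $t \neq 0$, the analogous inclusions $W_n(t) \subset W_m(t)$ inside $L_t$ and $I_n(t) \subset \mathrm{Interior}(I_m(t))$ inside $S^1(L_t)$ follow from upper semicontinuity of the assignment $p \mapsto A_p$, the fact that limits of minimizing rays remain minimizing, and continuity of the geodesic flow on $T^1\wcF$: since the nesting at $t = 0$ is open with positive margin, it persists for $t$ in a small enough interval, and shrinking $\eps_n$ if necessary completes the construction. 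The main obstacle is this last step, because the extended $\psi$-map is only upper semicontinuous as a set-valued map in the basepoint, so the strict openness of the nesting at $t = 0$ (arranged using the Gromov hyperbolic geometry of $L$ together with the deep-inside choice of $p_n$ and $I_n$) is essential in order for the nesting to be carried to nearby leaves by continuity of the geodesic flow.
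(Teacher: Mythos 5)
Your plan follows the same route as the paper: fix nested intervals $I_n$ whose intersection is $\xi$, pick $p_n \to \xi$ deep inside previously constructed wedges, choose the transversals and tangent curves, and then argue that the nesting arranged at $t=0$ persists for $t$ small after shrinking $\eps_n$. The one place where your phrasing would get you into trouble if fleshed out verbatim is the final step: ``positive margin at $t=0$ plus continuity of the geodesic flow'' does not directly give persistence, because the ideal-point assignment $t \mapsto I_n(t)$ is not continuous (only upper semicontinuous as a set-valued map, as you yourself note), so openness at $t=0$ is not automatically carried to nearby leaves by a soft continuity argument. What the paper actually does is a compactness argument by contradiction: if $I_n(t_k)$ touches $\partial I_m(t_k)$ along a sequence $t_k \to 0$, one extracts a minimizing ray $\eta_k$ from $\tau_n(t_k)$ and a minimizing ray $\gamma^{t_k}_+$ from $\tau_m(t_k)$ with the \emph{same} ideal point, uses Gromov hyperbolicity to conclude the two families are uniformly fellow-travelling, and passes to limits of minimizing rays to produce a limit ideal point that lies simultaneously in $I_n$ and in $\partial I_m$ --- a contradiction with $I_n \subset \mathrm{Interior}(I_m)$. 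That sequence-extraction and fellow-travelling argument, rather than a continuity statement, is the precise content behind the ``strict openness is essential'' remark in your sketch, so you should replace the appeal to continuity of the geodesic flow with this compactness argument if you write out the proof in full.
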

\begin{proof}
 The existence of points $p_n$ and transversals $\tau_n$ does not need justification, similarly with the intervals $I_n$ that we can assume are nested in the sense that if $n >m$ then $I_n$ is contained in the interior of $I_m$ (which is non-empty as it contains $\xi$).  
 Before we justify the rest of the properties, let us assume (as we will from now on, as it amounts to taking a subsequence) that for $n > m$ we have that $(-\eps_n, \eps_n) \subset (-\eps_m,\eps_m)$. Under this assumption, by choosing the points $p_n$ correctly, and maybe by further taking subsequences and maybe shorter transversals, we can assume that if $n>m$ then the image of the curve $\tau_n$ is contained in $V_m(\xi)$ constructed above.
 
Let $I_n = I_n(0)$, notice this is an interval with non empty
interior for all $n$.

 Now, we need to justify why we can choose the curves $\{a^n(t)\}_n, \{b^n(t)\}_n$ so that $I_n(t)$ is contained in the interior of $I_m(t)$ when $n >m$ (and $t \in (-\eps_n,\eps_n)$). For simplicity we will consider the case where $n = m+1$ since then an induction completes the argument. Indeed, we will see that this can be achieved by taking the transversals shorter. 

\vskip .06in
First we show that by taking transversals shorters we can assume that the intervals $I_n(t)$ have non-empty interior for all $t \in (-\eps_n, \eps_n)$.
Suppose this is not the case, then fix an $n$ for which this is not true.
Then we can take $t_k \to 0$ so that $\psi_{\tau_n(t_k)}(a^n(t_k)) = \psi_{\tau_n(t_k)}(b^n(t_k))$. Now we define two sequences of minimizing geodesic
rays as follows: let $\gamma_+^{t_k}$ (resp. $\gamma_-^{t_k}$)
be the ray in $L_{t_k}$ starting
in $\tau_n(t_k)$ with direction $b^n_+(t_k)$ (resp. $a^n_-(t_k)$). 
By definition 

$$\psi_{\tau_n(t_k)}(b^n_+(t_k)) \ = \ 
\psi_{\tau_n(t_k)}(b^n(t_k))$$

\noindent
 and similarly for $a^n_-(t_k)$.
It follows that $\psi_{\tau_n(t_k)}(b^n_+(t_k)) =
\psi_{\tau_n(t_k)}(a^n_-(t_k))$.
In particular $\gamma_-^{t_k}, \gamma_+^{t_k}$ are a globally
bounded distance from each other in $L_{t_k}$ for all $k$.
Up to subsequence, suppose that
the minimizing geodesic rays $\gamma_+^{t_k}$ and $\gamma_-^{t_k}$ converge 
to minimizing geodesic rays $\gamma_+^\infty$ and $\gamma_-^\infty$ starting
in $p_n$, with directions $b_{\infty}, a_{\infty}$.
It follows that $\gamma_+^{\infty}, \gamma_=^{\infty}$ are
a bounded distance from each other in $L_0$.
In particular this implies that 
$\psi_{\tau_n(0)}(a_{\infty}) = \psi_{\tau_n(0)}(b_{\infty})$.
We analyze the limit rays to obtain a contradiction.
By construction, for all $k$ we have that

$$[a^n(t_k),b^n(t_k)] \ \subset \ [a_-^n(t_k), b_+^n(t_k)].$$

\noindent
Hence $[a^n(0),b^n(0)] \subset [a_{\infty},b_{\infty}]$.
This is because $a^n,b^n$ are continuous and $a_{\infty}, b_{\infty}$
were defined as limits of $a_-^n(t_k),b_+^n(t_k)$ respectively.
The definition of $\psi$ then implies that 
$\psi_{\tau_n(0)}(a^n(0)) = \psi_{\tau_n(0)}(b^n(0)$.
But these points are the endpoints of $I_n$ which are not
the same. This contradiction shows that if $t$ is small
then $I_n(t)$ has non empty interior. 


\vskip .06in
Now we proceed in  a similar way to show that we can choose the intervals shorter so that for $n>m$, 
 we have that $I_n(t)$ is contained in the interior of $I_m(t)$ for all $t \in (-\eps_n,\eps_n)$.
As mentioned before
we consider the case $n=m+1$ and apply induction.

Suppose the property of intervals is not true for some $m$. Then
 there is a sequence $t_k \to 0$ and
so that $I_n(t_k)$ intersects the boundary of $I_m(t_k)$.
Then there are 
$v(t_k) \in [a^n_+(t_k), b^n_-(t_k)]$ with
 $\psi_{\tau_n(t_k)}(v(t_k)) \in \partial I_m(t_k)$.
Without loss of generality, we can assume that 
$$\psi_{\tau_n(t_k)}(v(t_k)) \ = \ \psi_{\tau_m(t_k)}(a^m(t_k)).$$
Since $\psi_{\tau_n(t_k)}(A_{\tau_n(t_k)})= S^1(L_{t_k})$ we can assume also without loss of generality that 
$$v(t_k) \ \in \ A_{\tau_n(t_k)} \cap [a^n_+(t_k), b^n_-(t_k)].$$
Up to taking subsequences, we can assume that the (minimizing) geodesic rays $\eta_k$ from $\tau_n(t_k)$ with initial vector $v(t_k)$ converge to some minimizing geodesic ray $\eta_\infty$ from $p_n=\tau_n(0)$ with starting velocity $v_\infty$. Since for every $k$, 
$$[a^n_+(t_k),b^n_-(t_k)] \ \subset \ [a^n(t_k),b^n(t_k)], \ \ 
{\rm then} \ \  v_\infty \in [a^n(0), b^n(0)],$$ 
and thus, the ideal point $c$ of $\eta_\infty$ (that is, $c = \psi_{\tau_n(0)}(v_\infty)$) 
belongs to $I_n(0)=I_n$. 

On the other hand, we have that (maybe after further subsequence) the (minimizing) geodesic rays $\gamma_+^{t_k}$ (resp.
$\gamma_-(t_k)$) in $L_{t_k}$ with initial point $\tau_m(t_k)$ and velocity $a^m_+(t_k)$ 
(resp. $a^m_-(t_k)$ converge to some minimizing geodesic ray 
$\gamma_+^\infty$ (resp. $\gamma_-^\infty$)
from $p_m = \tau_m(0)$ with velocity
$u_+$ (resp. $u_-$) and ideal point $d_+ = \psi_{\tau_m(0)}(u_+)$ 
(resp $d_- = \psi_{\tau_m(0)}(u_-)$).
We claim that $d_+, d_-$ are both in  the boundary of $I_m$, in fact 
both are equal to $\psi(a^m(0))$.
For all $k$ 
$$\psi_{\tau_m(0)}(a^m_+(t_k)) \ = 
\psi_{\tau_m(0)}(a^m_-(t_k))$$
so $\gamma_+^{t_k}, \gamma_-^{t_k}$ are a uniformly bounded
distance from each other in $L_{t_k}$. Consequently $\gamma_-^\infty,
\gamma_+^\infty$ are a bounded distance from each other in $L_0$
and have the same ideal point.
For each $k$, $a^m(t_k) \in [a^m_-(t_k),a^m_+(t_k)]$, hence
$a^m(0) \in [u_-,u_+]$ and so $d_- = d_+ = \psi_{\tau_m(0)}(a^m(0))$
which is in the boundary of $I_m$ as stated.

Now we remark that the rays $\eta_k$ and $\gamma_+^{t_k}$ are at uniformly bounded Hausdorff distance in $L_{t_k}$ $-$ because they have the starting points $\tau_m(t_k), \tau_n(t_k)$ which are a globally
bounded distance from each other in $L_{t_k}$ and they
have the same point at infinity $\psi_{\tau_m(t_k)}(a^m_+(t_k))$. 
Thus by Gromov hyperbolicity, they remain at uniformly bounded distance in $L_{t_k}$.
Therefore their limits must also be at finite Hausdorff distance and thus land in the same point in $S^1(L)$. 
The limit point of $\eta_\infty$ is $c$ which is in $I_n$.
The limit point of $\gamma_+^\infty$ is $d_+$ which is in the boundary of $I_m$. They are equal to each other and to $\psi_{p_m}(a^m_+(0))$.
Therefore $I_n$ has a point in the boundary of $I_m$.
This contradicts the choice that $I_n$ 
is contained in the interior of $I_m$.  
 \end{proof}

\begin{lema} \label{lem-hausd}
Let $\xi$ in $S^1(L)$.
One can  choose the sets $V_n(\xi)$ so that $\xi$ is  separated
from any point in $L$. 
If  $\zeta \in  S^1(L) \setminus \{  \xi \}$ one
can chooose $V'_m(\zeta)$ so that $\xi, \zeta$  are
separated  from each other.
We can choose the  $V_n(\xi)$ so that 
$d_{L_t}(W_n(t),\tau_1(t))$  converges to infinity
when  $n \to \infty$.
\end{lema}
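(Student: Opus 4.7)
The plan is to prove the three assertions in order, exploiting the geometry of narrow geodesic sectors in Gromov hyperbolic leaves. The key geometric fact I will rely on is that as $n \to \infty$, the apex $p_n = \tau_n(0)$ escapes to $\xi$ in the Gromov compactification of $L$, so $d_L(p_n, q) \to \infty$ for any fixed $q \in L$, while the defining interval $I_n \to \{\xi\}$ confines the wedge $W_n(0)$ to an increasingly narrow sector pointing at $\xi$.

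For the first assertion, fix $q \in L$. Since $V_n(\xi) \cap L = W_n(0)$, it suffices to show $q \notin W_n(0)$ for large $n$. I would fix a basepoint $p_0 \in L$ and consider the nearest-point projection onto a length-minimizing ray from $p_0$ to $\xi$. By Gromov hyperbolicity of $L$ and the fact that $I_n$ shrinks to $\{\xi\}$, for large $n$ the wedge $W_n(0)$ is contained in the set of points whose projection lies beyond the projection of $p_n$, which escapes to $\xi$. Since the projection of $q$ stays bounded, $q \notin W_n(0)$ eventually.

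For the second assertion, given $\zeta \in S^1(L) \setminus \{\xi\}$, I would choose defining data $p'_m \to \zeta$, $I'_m \to \{\zeta\}$, and transversals $\tau'_m$ for $V'_m(\zeta)$, arranging the parametrization so that both families intersect the same leaves $L_t$ over a common interval. For $n, m$ large, $I_n$ and $I'_m$ are disjoint intervals in $S^1(L)$, and one can find a length-minimizing geodesic $g \subset L$ whose two ideal points separate $I_n$ from $I'_m$. By Gromov hyperbolicity and the definition of the sectors via $\psi_{p_n}, \psi_{p'_m}$, the wedges $W_n(0)$ and $W'_m(0)$ lie on opposite sides of $g$ up to a uniformly bounded thin-triangle error; pushing $I_n, I'_m$ further apart (by taking $n,m$ larger) absorbs this error and yields $W_n(0) \cap W'_m(0) = \emptyset$. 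Continuity of the bounding minimizing rays and of the leaf metrics in the transversal parameter $t$ (together with the leaves $L_t$ being arbitrarily close to $L$) then gives disjointness of $W_n(t) \cap W'_m(t)$ and of $I_n(t) \cap I'_m(t)$ for $t$ in a sufficiently small common domain, possibly after shrinking the $\varepsilon_n$'s.

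For the third assertion, I would argue that, up to a bounded additive error coming from Gromov hyperbolicity, the nearest point of $\overline{W_n(t)}$ to $\tau_1(t)$ is the apex $\tau_n(t)$ itself: the sector $W_n(t)$ is quasi-convex, and a length-minimizing path from $\tau_1(t)$ to any point of $W_n(t)$ must first approach one of its bounding rays, all of which emanate from $\tau_n(t)$. Hence $d_{L_t}(W_n(t),\tau_1(t)) \geq d_{L_t}(\tau_n(t),\tau_1(t)) - C$ for a uniform constant $C$. Since $p_n \to \xi$, we have $d_L(p_n, p_1) \to \infty$, and since the lengths of $\tau_n$ tend to $0$ while leaf metrics vary continuously with the transversal parameter, $d_{L_t}(\tau_n(t),\tau_1(t)) \to \infty$ uniformly for $t$ in the (shrinking) common domain. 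The main obstacle will be giving a clean proof of the reduction ``distance to the wedge equals distance to the apex up to bounded error'' in merely Gromov hyperbolic leaves, which requires invoking the Morse lemma for the bounding minimizing rays and carefully controlling the limiting behavior of those rays as $n \to \infty$.
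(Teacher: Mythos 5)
Your proposal has a genuine gap in the third assertion, which you yourself flag as ``the main obstacle''; in fact the reduction you want is more problematic than you acknowledge, and the paper's proof avoids it entirely by a different route.

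Concretely, you want $d_{L_t}(W_n(t),\tau_1(t)) \geq d_{L_t}(\tau_n(t),\tau_1(t)) - C$ for a uniform $C$, on the grounds that the nearest point of the wedge to $\tau_1(t)$ should be near the apex $\tau_n(t)$. This is not a consequence of quasi-convexity alone: the angle of $W_n(t)$ at its apex is not controlled by the construction (the visual size of $I_n$ as seen from the apex $\tau_n(t)$ can be large even though its visual size from $p_1$ is small, since the apex is itself escaping to $\xi$). If the wedge is wide at its apex, the nearest point to $\tau_1(t)$ can lie far out on a bounding ray, and the apex estimate fails. Making this work would require an additional argument that one can choose the $I_n$ to shrink fast enough relative to the escape of $p_n$, and then a careful Morse-lemma computation; the paper instead gives a much shorter compactness argument: if $d_{L_0}(p_1, W_n(0))$ stayed bounded, a subsequence of the boundary rays $\gamma^n_+$ would converge to a bi-infinite minimizing geodesic through a bounded neighborhood of $p_1$, one of whose ideal points is not $\xi$, contradicting (as in Lemma~\ref{remark-nested}) that these rays have ideal points in $I_n \to \{\xi\}$. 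The uniformity in $t$ is then obtained by the same limiting scheme applied along a sequence $t(k) \to 0$, using the fixed constant $a_0$ bounding the Hausdorff distance between minimizing rays with the same endpoint.

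A second issue is the order of the assertions. You prove the first assertion by checking only $q \notin W_n(0)$; but to separate $q$ from $\xi$ one needs an open set around $q$ in $\mt \cup \cTu$ disjoint from some $V_n(\xi)$, and such an open set meets leaves $L_t$ with $t \neq 0$, so one must control $W_n(t)$ for small $t$ as well. The paper proves the third assertion first precisely because its uniformity in $t$ is what yields the first assertion: taking $\tau_1$ to pass through $q$ (allowed since the topology is independent of the choices by Lemma~\ref{lem-topologytubulation2}), the uniform lower bound on $d_{L_t}(W_n(t),\tau_1(t))$ gives a ball around $q$ disjoint from $V_n(\xi)$. Your order of proof leaves the first assertion unsupported unless you fold in the missing uniformity. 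Your treatment of the second assertion is in the right spirit and close to the paper's (disjoint intervals plus a thin-triangle/limiting argument; the paper makes the choice $d_{L_0}(W_1(0),W'_1(0)) > 10 a_0$ explicit and again runs a convergence-of-rays contradiction).
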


\begin{proof}
Recall that $L  = L_0$.
We  first prove the last property. The experession 
$d_{L_t}(W_n(t),\tau_1(t))$  assumes that $|t| < \eps_n$.
First we consider   $t = 0$. We   know that $d_{L_0}(p_1,p_n)
\to \infty$. We show that  
$d_{L_0}(p_1,W_n(0)) \to  \infty$, as follows:  
let  $\gamma^n_-,\gamma^n_+$ be
the boundary rays of $W_n(0)$.  
If  the property does not follow then  distance to at least  one
of  the sides  is bounded with  $n$.
Suppose  (say) that $d_{L_0}(p_1,\gamma^n_+)$ is bounded.
Then there is a subsequence $n_k$  so that $\gamma^{n_k}_+$
converges to  a bi-infinite geodesic $\gamma$. One of the ideal
points of $\gamma$ is not  $\xi$, and this is a contradiction
as in the proof of the previous lemma.

Let $r_n = d_{L_0}(p_1,W_n(0))$. This converges to $\infty$ with $n$.
Fix $n_0$ so that  if $n  > n_0$ then $r_n$  is bigger than
$10  a_0$, where $a_0$ is an upper bound of the Hausdorff distance
between pairs  of minimizing rays with same endpoints in a leaf of
$\wcF$.
By decreasing $\eps_n$ if necessary,  we  claim that  $d_{L_t}(W_n(t),\tau_1(t)) \geq 
r_n/2$ for all $t$.
If not  there is a subsequence  $t(k) \to 0$ so that
$d_{L_{t(k)}}(W_{n_k}(t(k)),\tau_{n_k}(t_k) < r_n/2$.
Let  $\gamma^k_+,\gamma^k_-$ be the
boundary rays  of $W_{n}(t(k))$ and assume  wlog that
$d_{L_{t(k)}}(\tau_1(t(k)), \gamma^k_+) < r_n/2$ for all $k$.
Then $\gamma^k_+$ converges to a geodesic ray $\gamma$.
As in the  proof  of the previous lemma, this  geodesic
ray is contained  in the $a_0$  neighborhood of $W_n(0)$ in $L_0$.
But it  has points at least $r_n/2$ from this neighborhood,
contradiction.
This proves the first property.

This shows that one can choose $V_n(\xi)$ so that $\xi$ is
separated from any point in $L$.

For the  second   property choose $V'_n(\zeta)$ a neighborhood
basis of $\zeta$ so that $V'_n(\zeta)$ intersects the  same
set of leaves  of $\wcF$  that $V_n(\xi)$   does  (change the $t$
parametrization,  and restrict intervals if necessary). 
Also  chooose it  so that at $t =  0$  the sets for 
$\zeta, \xi$ are disjoint in $L  \cup S^1(L)$. 
In fact we can choose them so that $d_{L_0}(W_1(0),W'_1(0)) 
> 10 a_0$.
This  implies  that the $I_n, I'_n$
are  disjoint  for  any $n$.
If  there is $t_k \to 0$ so that $W_1(t_k),  W'_1(t_k)$
intersect, then (say) one boundary component of  $W'_1(t_k)$
(call it $\gamma_k$) intersects $W_1(t_k)$.  Take
a subsequence and limit  $\gamma$ as $k \to \infty$. Then 
as  in  the  previous lemma,
$\gamma$ is $a_0$ distant from $W_1(0)$, but it intersects the
$a_0$ neighborhood of $W'_1(0)$. This  contradicts  the choices
of $W_1(0), W'_1(0)$.
This   finishes the proof.
\end{proof}

We will show that the sets $V_n(\xi)$ allow us to give a topology on $\mt \cup \cTu$ and then that this topology is independent of our choices. 

\begin{lema}\label{lem-topologytubulation1}
The topology generated by this (decreasing) basis of neighborhoods of points in $\cTu$ is compatible with the topology of $\mt$.
\end{lema}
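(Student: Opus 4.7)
The statement amounts to two assertions: first, that the family $\{V_n(\xi)\}_n$ gives a valid neighborhood basis at each $\xi \in \cTu$, and second, that when these are combined with the existing open sets of $\mt$ to generate a topology on $\mt \cup \cTu$, the resulting subspace topology on $\mt$ coincides with the original one. The first is essentially contained in Lemma~\ref{remark-nested}, which ensures the family is nested and descending, so my plan would be to invoke that directly to verify the filter axioms of a neighborhood basis. The substantive content is the second assertion, and I will reduce it to showing that, for every $\xi \in \cTu$ and every $n$, the intersection $V_n(\xi) \cap \mt = \bigcup_{t \in (-\eps_n, \eps_n)} W_n(t)$ is open in $\mt$.

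To establish this openness I would argue pointwise. Given $p \in W_n(t_0)$, I would first use that $W_n(t_0)$ is open in $L_{t_0}$ to extract $r > 0$ so that the ball of radius $2r$ around $p$ in $L_{t_0}$ lies in $W_n(t_0)$, placing $p$ at distance at least $2r$ from the boundary rays $\gamma_\pm^{t_0}$. Then I would pass to a local product structure chart around $p$, producing a neighborhood of the form $U = (-\delta, \delta) \times B$ with $B$ a plaque of $L_{t_0}$ of diameter less than $r$ about $p$. The core of the argument is to choose $\delta$ so small that the slice in each nearby leaf $L_{t_0 + s}$ lies inside $W_n(t_0 + s)$. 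For this I need the boundary rays $\gamma_\pm^{t_0 + s}$ to remain close to $\gamma_\pm^{t_0}$ on the compact region around $p$; this will follow from the continuity of the transversal $\tau_n$ and of the curves $a^n, b^n$, together with the standing remark that limits of length-minimizing rays in leaves of $\wcF$, with converging initial data, are themselves length-minimizing, so that the rays $\gamma_\pm^{t_0+s}$ converge to $\gamma_\pm^{t_0}$ uniformly on compact sets as $s \to 0$.

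Once openness of $V_n(\xi) \cap \mt$ is known, compatibility is immediate: every basic open set of the generated topology is either an original open set of $\mt$, which is open by definition, or some $V_n(\xi)$, whose intersection with $\mt$ is open by the previous step; hence $\mt \hookrightarrow \mt \cup \cTu$ is an open embedding and the two topologies agree on $\mt$. The main obstacle I anticipate is the fact that $p$ may sit arbitrarily far from the apex $\tau_n(t_0)$ of the wedge, so I need the convergence of $\gamma_\pm^{t_0+s}$ to $\gamma_\pm^{t_0}$ on a compact region containing $p$ rather than merely near the apex; this is exactly the level of continuity provided by the limit-of-minimizing-geodesics remark recalled at the start of the section, so the argument should proceed without further complications.
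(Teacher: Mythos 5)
The framework of your proposal (reduce to openness of $V_n(\xi) \cap \mt$, argue pointwise near a point $p \in W_n(t_0)$) matches the paper's. However, the key step has a genuine gap. You assert that the rays $\gamma_\pm^{t_0+s}$ converge to $\gamma_\pm^{t_0}$ uniformly on compacts as $s \to 0$, and you derive this from continuity of $\tau_n$, continuity of $a^n, b^n$, and the limits-of-minimizing-geodesics remark. This derivation does not go through: $\gamma_\pm^{t}$ is the geodesic ray with initial velocity $a^n_+(t)$ (respectively $b^n_-(t)$), and these are exactly the maps that the paper explicitly flags as ``not necessarily continuous'' in the setup before the definition of $V_n(\xi)$. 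They are obtained by taking $\psi_{\tau_n(t)}^{-1}$ of a point, and because minimizing geodesic rays to a fixed ideal point need not be unique, this preimage interval can jump as $t$ varies. Continuity of the directions $a^n(t), b^n(t)$ themselves (which are continuous) tells you nothing about continuity of the extremal preimage vectors $a^n_+(t), b^n_-(t)$.

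What \emph{is} true, and what the paper's proof establishes by a careful analysis of the velocities, is a one-sided semi-continuity: any subsequential limit $\hat\gamma_\pm$ of $\gamma_\pm^{t_k}$ as $t_k \to t_0$ is again a minimizing geodesic ray starting at $\tau_n(t_0)$ whose direction lies in $A_{\tau_n(t_0)}$, with $\psi_{\tau_n(t_0)}$-image equal to that of $a^n(t_0)$ (respectively $b^n(t_0)$), and crucially $\hat\gamma_\pm$ cannot cross $\gamma_\pm^{t_0}$ nor enter the interior of $W_n(t_0)$ because $\gamma_\pm^{t_0}$ are directed by the innermost vectors in their $\psi$-fibers. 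In other words, the boundary rays can jump ``outward'' but not ``inward.'' This weaker statement is exactly what makes the pointwise argument go through: if $p_k \to p \in W_n(t_0)$ with $p_k \notin W_n(t_k)$, then $p$ would lie in the limit of the complements of the wedges, which stays outside the interior of $W_n(t_0)$, a contradiction. To repair your proof you would need to replace the continuity claim for $\gamma_\pm^t$ with this semi-continuity statement and justify it by the argument on directions and $\psi$-fibers, which is precisely the content of the paper's proof.
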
 

\begin{proof}
The fact that the basis is decreasing for a given $\xi \in \cTu$ as $n$ increases is a consequence of the choice of neighborhoods $V_n(\xi)$ (cf. Lemma \ref{remark-nested}). 

We must then show that $V_n(\xi) \cap \mt$ is open.  For this, it is enough to show that if $t_k \to t \in (-\eps_n,\eps_n)$ is such that $\gamma_-^{t_k}$ and $\gamma_{+}^{t_k}$ converge to geodesics $\hat \gamma_-$ and $\hat \gamma_+$ in $L_t$ then it follows that $\hat \gamma_{-}$ and $\hat \gamma_+$ are not contained in the interior of the wedge bounded by $\gamma_+^t$ and $\gamma_-^t$. 
Here $\gamma^t_+$ (resp. $\gamma^t_-$) 
is the geodesic ray in $L_t$ with starting 
velocity $a^n_+(t)$ (resp. $b^n_-(t)$).
To see this, note that if $v_k^+$ and $v_k^-$ are vectors in $S^1_{\tau_n(t_k)}$ directing the geodesic rays $\gamma_-^{t_k}$ and $\gamma_{+}^{t_k}$ we get that $\psi_{\tau_n(t_k)}(v_k^-)= \psi_{\tau_n(t_k)}(a^n(t_k))$ and  $\psi_{\tau_n(t_k)}(v_k^+)= \psi_{\tau_n(t_k)}(b^n(t_k))$, in particular $[v_k^-, v_k^+] \subset [a^n(t_k),b^n(t_k)]$. Thus in the limit we get that if $v_\infty^-$, $v_{\infty}^+$ are the limits of $v_k^-, v_+$, we get $\psi_{\tau_n(t)}(v_\infty^-)= \psi_{\tau_n(t)}(a^n(t))$ and $\psi_{\tau_n(t)}(v_\infty^+)= \psi_{\tau_n(t)}(b^n(t))$ 
(see the proof of the Lemma \ref{remark-nested}).
Note also that $v_\infty^\pm \in A_{\tau_n(t)}$ because they are limits of points in $A_{\tau_n(t_k)}$. Hence it follows that the geodesics $\hat \gamma_\pm$ cannot cross the geodesics $\gamma_\pm^t$ and cannot intersect the interior of the wedge because the geodesics $\gamma_\pm^t$ are directed by the innermost vectors with the property that mapped by $\psi_{\tau_n(t)}$ go to the same points as $a^n(t)$ and $b^n(t)$.   
\end{proof}

\begin{lema}\label{lem-topologytubulation2}
The topology is independent of the choices of the points $p_n$, the transversals $\tau_n$, the sequence of intervals $I_n$ in $S^1(L)$, and the curves $a^n,b^n$. 
\end{lema}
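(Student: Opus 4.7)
The plan is to show that if $\{V_n(\xi)\}$ and $\{V'_m(\xi)\}$ are two neighborhood systems at $\xi\in S^1(L)$ built from two different choices of data $(p_n,\tau_n,I_n,a^n,b^n)$ and $(p'_m,\tau'_m,I'_m,{a'}^m,{b'}^m)$, then for every $n$ there exists $m=m(n)$ with $V'_m(\xi)\subset V_n(\xi)$; by symmetry this gives topological equivalence. Throughout, I would reparametrise both transversal families over a common $t$-interval, using that both families of leaves $\{L_t\}$ form a basis of neighborhoods of $L$ in the (locally Hausdorff) leaf space $\cL$ near $L$; this is possible after restricting to a local product structure box around $L$.

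First I would handle the central leaf $L=L_0$. Since both $\{I_n\}$ and $\{I'_m\}$ are decreasing neighborhood bases of $\xi$ in $S^1(L)$, for any $n$ we can pick $m$ large enough that $I'_m$ lies in the interior of $I_n$. Then I would compare the wedges $W'_m(0)$ and $W_n(0)$: their boundary rays are minimizing geodesics in $L$ starting respectively at $p'_m$ and $p_n$ and landing at the endpoints of $I'_m\subset\mathrm{Interior}(I_n)$. Two minimizing rays in the Gromov hyperbolic leaf $L$ landing at the same ideal point stay at bounded Hausdorff distance depending only on the distance between their starting points, and an outer minimizing ray to an endpoint of $I_n$ separates $L$ into two pieces one of which contains every minimizing ray to a point in the interior of $I_n$ outside a bounded set. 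Since $p'_m\to\xi$, by enlarging $m$ we may also assume $p'_m\in W_n(0)$ and far from $p_n$, so that the whole of $W'_m(0)$ sits inside $W_n(0)$, and hence $W'_m(0)\cup I'_m\subset W_n(0)\cup I_n$.

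Next I would propagate this containment to nearby leaves $L_t$. The statement we need is that for $|t|$ small enough, $W'_m(t)\cup I'_m(t)\subset W_n(t)\cup I_n(t)$. The interval part is immediate after possibly shrinking $\eps'_m$, because the continuous curves ${a'}^m,{b'}^m$ keep $\psi_{\tau'_m(t)}({a'}^m(t)), \psi_{\tau'_m(t)}({b'}^m(t))$ close to the endpoints of $I'_m$, while $a^n,b^n$ keep the endpoints of $I_n(t)$ close to those of $I_n$; by the definition of $\psi$ and continuity, $I'_m(t)\subset\mathrm{Interior}(I_n(t))$ persists on a smaller interval of $t$'s. For the wedge containment, I would argue by contradiction exactly as in the proof of Lemma~\ref{remark-nested}: if there is a sequence $t_k\to0$ along which $W'_m(t_k)$ meets the complement of $W_n(t_k)$, then some boundary ray of $W'_m(t_k)$ crosses a boundary ray of $W_n(t_k)$; extracting limits of these minimizing rays (which are again minimizing) produces in $L_0$ a crossing between boundary rays of $W'_m(0)$ and $W_n(0)$, contradicting the containment already established on $L_0$.

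The main obstacle is the last step: the wedge comparison is really a statement about the continuity at $t=0$ of the geometric configuration on the varying leaves $L_t$, where the two configurations come from different apex points $p_n,p'_m$. The key technical input is that limits of minimizing rays are minimizing together with the bounded Hausdorff distance between minimizing rays with the same ideal endpoint in Gromov hyperbolic leaves, which is precisely the ingredient already exploited in Lemmas~\ref{remark-nested} and~\ref{lem-hausd}; once this is in place, the independence from the choice of $a^n,b^n$ is automatic (different curves yielding the same intervals $I_n$ give the same endpoints after applying $\psi$), independence from the points $p_n$ reduces to the wedge comparison on $L_0$, and independence from the intervals and transversals reduces to basic refinement.
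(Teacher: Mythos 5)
Your proposal is correct and follows essentially the same strategy as the paper's proof: reparametrize the two transversal families over a common parameter interval, reduce the containment first to the central leaf $L_0$, and then propagate it to nearby leaves $L_t$ via a limiting argument on minimizing geodesic rays of exactly the kind deployed in Lemma~\ref{remark-nested}. The paper phrases the propagation step purely in terms of interval containments $I'_m(t)\subset I_n(t)$ (after noting $\tau'_m$ lies in $V_n(\xi)$), deriving a contradiction from a limit minimizing ray landing on $\partial I_n$ while lying in the closure of $V'_m(\xi)$; you instead track the wedges $W'_m(t)\subset W_n(t)$ directly, which is a cosmetic difference that makes the wedge inclusion explicit where the paper leaves it implicit. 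One small caution in your wedge version: when extracting limits of the crossing boundary rays you should also rule out the degenerate cases where the crossing points escape to infinity (impossible here since $I'_m$ lies in the interior of $I_n$, so the limit rays have distinct ideal endpoints) or where the limit rays merely touch (ruled out once $p'_m$ is chosen deep enough in $W_n(0)$ so that $\partial W'_m(0)$ has positive distance to $\partial W_n(0)$). With these observations filled in, the argument is sound and matches the paper's.
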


\begin{proof}
Take different sequences $p_n'$ transversals $\tau'_n$ and curves $(a^n)',(b^n)'$ of tangent vectors and denote by $V_n'(\xi)$ the obtained sets. We must show that given $n$, there is $m$ such that $V'_m(\xi) \subset V_n(\xi)$. 

Note that if $m$ is large enough, the transversal $\tau'_m$ intersects a subset of the leaves intersected by $\tau_n$, we can, up to reparametrizing, assume that $\tau'_m(t)$ and $\tau_n(t)$ belong to the same leaf of $\wcF$. Also, since $p_m' \to \xi$ and the lengths of $\tau'_m$ converge to zero,
 we can assume by further taking subsequences that the image of $\tau_m'$ is contained in $V_n(\xi)$. 


To prove the result it is therefore enough to show that for $m$ sufficiently large, one has that $I'_m(t) \subset I_n(t)$ for all the $t$ so that $I'_m(t)$ is defined (note that as $m \to \infty$ the values of $t$ where $I_m'(t)$ is defined converge to $0$. Note that if $m$ is large enough we have that $I_m'=I_m'(0)$ is contained in the interior of $I_n=I_n(0)$.

If this is not the case, since $I_k'(t) \subset I_m'(t)$ for $k>m$ and $t \in (-\eps'_k, \eps'_k) \subset (-\eps_m', \eps'_m) \subset (-\eps_n,\eps_n)$ one can construct a sequence $t_j \to 0$ on which $I_m'(t_j)$ is not contained in $I_n(t_j)$. Now we can argue as in Lemma  \ref{remark-nested} to construct a sequence of minimizing geodesic rays from $\tau_m'(t_j)$ to a point in the boundary of $I_n(t_j)$ so that the ray is contained in $V_m'(\xi)$ for all $j$. Taking limits, one obtains a minimizing geodesic ray from $\tau_m'(0)=p_m'$ to some point in the boundary of $I_n$ which is contained in the closure of $V'_m(\xi)$. This contradicts that $I_m'$ is contained in the interior of $I_n$. 
\end{proof}

\begin{lema}\label{lem-topologytubulation3}
The topology induced is independent of the metric in $M$. In fact if $f: M \to M$ is a homeomorphism preserving $\cF$ it follows that any lift $\ft$ to $\mt$ extends continuously to a homeomorphism of $\cTu$ (in particular, this holds for deck transformations). 
\end{lema}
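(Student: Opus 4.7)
The plan is to handle both statements together by observing that the construction of the neighborhood basis $V_n(\xi)$ only uses the metric through two pieces of data that behave well under bi-Lipschitz changes of the metric on $M$: the maps $\psi_p$ and the minimizing geodesic rays in leaves of $\wcF$.

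For the metric independence, let $g_1$ and $g_2$ be two Riemannian metrics on $M$. By compactness of $M$ they are bi-Lipschitz, hence so are the induced path metrics on every leaf $L \in \wcF$ with uniform constants. In particular the Gromov boundary $S^1(L)$ is canonically identified for $g_1$ and $g_2$, so $\cTu$ is the same set, and for each leaf $L$ any $g_2$-minimizing ray is a uniform $g_1$-quasi-geodesic. Given a $g_1$-neighborhood $V_n(\xi)$, I use the same points $p_n$ and transversals $\tau_n$ but choose intervals $I'_m \subset I_n$ at infinity that shrink to $\xi$, together with vector fields $(a')^m, (b')^m$ adapted to $g_2$, to build a $g_2$-neighborhood $V'_m(\xi)$. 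To see that $V'_m(\xi) \subset V_n(\xi)$ for $m$ large, I note that each $g_2$-geodesic ray bounding the $g_2$-wedge $W'_m(t)$ lands in a point of $S^1(L_t)$ inside $I_n(t)$; these rays are uniform $g_1$-quasi-geodesics, so by the Morse lemma each lies within a uniform $g_1$-distance $C$ of the corresponding $g_1$-minimizing ray to the same ideal point, and those $g_1$-rays lie in the closed $g_1$-wedge $\overline{W_n(t)}$. The last property in Lemma~\ref{lem-hausd} lets me arrange that the $g_1$-distance from $W_n(t)$ to $\tau_1(t)$ dominates $C$ uniformly in $t$, forcing $W'_m(t) \subset W_n(t)$; together with $I'_m(t) \subset I_n(t)$ this gives the required inclusion. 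The converse inclusion follows symmetrically.

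For the homeomorphism extension, let $f : M \to M$ be a homeomorphism preserving $\cF$ and $\ft$ a lift to $\mt$. Push the metric forward by $f$: the metric $f_*g$ on $M$ is Lipschitz-equivalent to $g$, so by the first part the topology induced on $\mt \cup \cTu$ by $g$ agrees with the one induced by $f_*g$. But $\ft \colon (\mt, g) \to (\mt, f_*g)$ is, by construction, an isometry on leaves; in particular it preserves the $\psi_p$-maps and sends minimizing geodesic rays to minimizing geodesic rays. Hence it maps each datum $(p_n, \tau_n, I_n, a^n, b^n)$ defining $V_n(\xi)$ in the metric $g$ to a datum defining a neighborhood $V_n(\ft(\xi))$ in the metric $f_*g$. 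By metric independence, this is also a neighborhood of $\ft(\xi)$ in the $g$-topology, so $\ft$ is continuous at $\xi$. Applying the same argument to $\ft^{-1}$ gives continuity of the inverse, and since $\ft$ is already a homeomorphism of $\mt$, it extends to a homeomorphism of $\mt \cup \cTu$.

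The main obstacle I expect is the uniform Morse-lemma estimate in the first step: one has to verify that the quasi-geodesic constants of $g_2$-minimizing rays seen as $g_1$-quasi-geodesics, and the ensuing Morse constants, are uniform across the leaves $L_t$ and across the family as $t$ varies in $(-\eps_n, \eps_n)$. This uses the global bi-Lipschitz comparison of $g_1$ and $g_2$ on $M$, the uniform Gromov hyperbolicity of the leaves of $\wcF$, and the stability-under-limits remark at the start of the section to upgrade pointwise estimates at $t=0$ to estimates valid on a small neighborhood of $0$.
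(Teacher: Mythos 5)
Your route is genuinely different from the paper's, and it inverts the logical order: you prove metric independence first (directly, via the bi-Lipschitz comparison of the two path metrics on leaves and the Morse lemma), and then reduce the homeomorphism statement to metric independence by pushing the metric forward along $f$. The paper instead proves the homeomorphism extension first, by a contradiction argument: assuming $\ft^{-1}(V'_m(\ft\xi))$ fails to contain any $V_n(\xi)$, it extracts a sequence of minimizing geodesic rays converging to a ray in $L_0$ with ideal point $\xi$, and matches it against a boundary ray of $Z\cap L_0$ whose ideal point is an endpoint of $Z\cap S^1(L_0)$, contradiction; metric independence is then read off as a corollary (take $f=\mathrm{id}$ with two metrics). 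Your push-forward trick for the homeomorphism part is clean and arguably cleaner than the paper's direct limiting argument, and the bi-Lipschitz/Morse-lemma strategy for metric independence is a reasonable alternative.

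One spot where your argument as written needs adjustment: to get $W'_m(t)\subset W_n(t)$, you invoke the last clause of Lemma~\ref{lem-hausd}, which controls $d_{L_t}(W_n(t),\tau_1(t))$, i.e.\ how far the $g_1$-wedge sits from the outer transversal. What you actually need is that the $g_2$-wedge basepoint $\tau_m(t)$ is uniformly deep inside $W_n(t)$ (distance to $\partial W_n(t)$ dominating the Morse constant $C$), so that a $g_1$-minimizing ray from $\tau_m(t)$ to an ideal point in the interior of $I_n(t)$ remains in $W_n(t)$. This is a different quantity, but it can be arranged by an argument in the spirit of Lemma~\ref{lem-hausd} (as $m\to\infty$ with $n$ fixed, $\tau_m(t)$ escapes to $\xi$, hence leaves any $g_1$-neighborhood of $\partial W_n(t)$, uniformly in small $t$ by the usual limiting argument on minimizing rays). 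With that correction the approach goes through.
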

\begin{proof} 
Let $f: M \to N$ be a homeomorphism sending a foliation $\cF$ of $M$ to a foliation $\cD$ in $N$ then, we will show that any lift $\ft: \mt \to \widetilde{N}$ extends to a homeomorphism from  $\mt \cup \cTu$ to $\widetilde{N} \cup  \cTuD$ with the induced topologies. 

To see this, since $f$ is the lift of a homeomorphism on a compact manifold, it maps geodesic rays to (uniform) quasigeodesic rays. 
In particular, leafwise the following happens: $\ft: L  \to \ft(L)$ extends
canonically to  a homeomorphism still denoted by $\ft:  L \cup  S^1(L)
\to \ft(L) \cup  S^1(\ft(L))$.
This is the extension and it is a bijection.
We show that it is continuous (hence by the same argument
the inverse is also continuous).

Let $\xi \in S^1(L)$, $L  \in \wcF$ a point in the tubulation.
Let  $L_0 =  L$,  and
choose neighborhood basis  $V_n(\xi)$ and $V'_m(\ft(\xi))$.
Fix one $m$ and let $Z' = V'_m(\ft(\xi))$ and  $Z = \ft^{-1}(Z')$.
We will  show that $Z$ contains some $V_n(\xi)$  and this
will prove  continuity of  $\ft$  at $\xi$.
First of all since $\ft$ is a homeomorphims from $L_0 \cup S^1(L_0)$
to $\ft(L_0) \cup S^1(\ft(L_0))$ it follows that $Z \cap L_0$ contains
some $V_{n_0}(\xi) \cap L_0$ so contains $V_n(\xi)  \cap  L_0$  for
$n \geq n_0$.  We consider $n \geq n_0$.

Let $\tau'$  be the transversal to $\widetilde \cD$ made up of the
corners of $Z'$. Let $\tau = \ft^{-1}(\tau')$.
Assume that $Z$ does not contain $V_n(\xi)$ for  any $n$.
But we know that $Z$ contains $\tau_n(0)$ for all $n$ ($\geq n_0)$.
Hence it contains $\tau_n(t)$ for all  $|t| \leq  t_0$ for some
$t_0 > 0$. If $Z$ contains $V_n(\xi) \cap L_t$ for all $|t|  \leq  t_0$,
then $Z$  will contain $V_n(\xi)$ as soon as $\eps_n < t_0$.
In this case we are done.

Therefore $Z$ does not  contain  $V_n(\xi) \cap L_t$ for  
some $|t| < \eps_n$, but  $\tau_n(t)$ is contained in $Z$.
We denote this $t$ by $t_n$. It follows  that one 
boundary ray of $V_n(\xi) \cap   L_{t_n}$ intersects
a  boundary ray of  $Z \cap L_{t_n}$.  Let  an
intersection point be denoted by $p_n$.

Either $p_n$ is in a minimizing geodesic ray in $L_{t_n}$ starting
from $\tau(t_n)$, or $p_n$ is containing between two
such rays which are at most  $a_0$ Hausdorff distant from
each other in $L_{t_n}$. One of these rays  has to intersect the boundary
of  $V_n(\xi) \cap  L_{t_n}$. Denote such a ray by $\gamma_n$.
This ray has an ideal point $\theta_n$, and since it intersects
the boundary of  $V_n(\xi) \cap L_{t_n}$  it follows  that
$\theta_n$ is in $I_n(t_n)$.

Now let  $n  \to \infty$, so $t_n \to  0$. Up to a subsequence
assume that $\gamma_n$ converges to a subset of $L_0$.
This  set is  made up of minimizing rays and  geodesics.
We first claim that the set is connected: if $x,y$  are in the 
limit of the $\gamma_n$ , 
then there are $x_n, y_n$ in $\gamma_n$ with $x_n \to x,
y_n \to  y$. Since $L_{t_n} \to L_0$, it follows that  
distance  in $L_{t_n}$ between $x_n, y_n$ is bounded, hence
the segments in $\gamma_n$ from $x_n$ to $y_n$ have bounded
length and converge to a geodesic segment connecting $x, y$
in $L_0$. This shows the claim. Note that if we were not
dealing with  minimizing geodesics, but rather general curves,
the limit certainly could be a disconnected union of curves.
In our case the limit is a single 
minimizing  geodesic ray $\gamma$
in $L_0$,
which starts in $\tau(0)$. 
Since $I_n(t)$ converges to $\xi$, it follows as
in Lemma \ref{remark-nested} that the ideal point of $\gamma$ is $\xi$.

Notice that  in $L_{t_n}$ we have the  following:  \ 1) a geodesic
segment, denoted by  $s_n$, contained in $\gamma_n$ from $\tau(t_n)$ to 
a  point at most $a_0$ distant from $p_n$;
 \ 2)  the compact segment contained
in $\partial Z \cap L_{t_n}$ from $\tau(t_n)$ to $p_n$, this
is denoted by $v_n$. 
The second segment is a uniform quasigeodesic in $L_{t_n}$. 
Since $s_n, v_n$ have the  same starting point, and endpoints
at most $a_0$ distant  in $L_{t_n}$, there is global $a_1$ so that
$s_n, v_n$ are at most $a_1$ Hausdorff distant from each
other in $L_{t_n}$.

By the previous lemma it follows that the distance in $L_{t_n}$ from
$\tau(t_n)$ to $p_n$ converges to $\infty$.
This implies that the $v_n$ have subsegments which are converging to a
ray in the boundary of $Z \cap L_0$, let this
ray be  $v$. It follows that $\gamma$ is a bounded distance
from $v$. But $v$ has  ideal point one  of   the endpoints
of  $Z  \cap S^1(L_0)$ and the ideal point of $\gamma$ is $\xi$,
which is not an endpoint of  $Z \cap S^1(L_0)$.

This is a contradiction, and shows  that the assumption
that  $Z$ does  not contain $W_n(\xi)$ for all $n$ is
impossible. 
This finishes the proof that $\ft$ is continuous,
and hence it is a homeomorphism.

This has in particular the consequence that this definition of the topology is independent of the metric one chooses in $M$. 
\end{proof}

We now have some  important consequences:
The previous lemma in particular implies that the topology in $S^1_{\infty}(\wcF)$ coincides with the one introduced in \cite[\S 7.2]{Calegari-book} (which is the same topology, but constructed for a Candel metric, on which all leaves are negatively curved or hyperbolic, and thus all geodesics are minimizing geodesics). 

The  following  remark  will  be essential in future sections:

\begin{remark}  \label{rem-topology}
Given a closed transversal $\tau$ to $\wcF$, let $S^1_\infty(\tau) := \bigcup_{L \cap \tau} S^1(L)$. Then $S^1_{\infty}(\tau)$ has the topology of a closed cylinder. This can either be shown directly, or in the following way:
Apply the previous lemma using a Candel metric (in which all leaves are negatively curved, so $A_p = S^1_p$ for every $p \in \mt$). Then note that in
this metric, the map which sends each $v \in T^1_{\tau(t)}\wcF$ to the endpoint of the geodesic ray starting at $\tau(t)$ with velocity $v$ gives a homeomorphism between a closed topological cylinder and $S^1_\infty(\tau)$. In addition, note that one can find a metric inducing this topology (on the cylinder alone, since the global topology of $\cTu$ may be non-Hausdorff) by considering the Gromov product in each leaf and extending it to the boundary (see \cite[III.H.3]{BridsonHaefliger}).

This will be used in \S\ref{s.SVMQG} to discuss the small visual measure property.
\end{remark}


\section{Pushing through and separation}\label{s.pushthrough} 
Here we will develop one of the main tools we will use to produce geometric properties for the intersected  foliation. 
We prove the following:

\begin{prop}[Pushing Property]\label{prop-pushing}
Let $\cF_1, \cF_2$ be two transverse foliations by Gromov hyperbolic leaves of a closed $3$-manifold $M$ so that for every $E \in \wt{\cF_2}$ we have that the leaf space $\cO_E$ of $\cG_E$ is Hausdorff. Consider $L \in \wt{\cF_1}$ and $E \in \wt{\cF_2}$ and let $\ell_0 = L \cap E$ be the unique leaf of $\wt{\cG}$ in their intersection (cf. Lemma \ref{l.twointersnonHsdff}).
Suppose that $c$ is a compact arc contained
in $\ell_0$ with endpoints $x, y$. Suppose that
$\tau_i: [0,t_0] \to \mt$ are transversals to $\wcF_1$, starting
in $x, y$ both contained in $E$, and
so that for all $t$, $\tau_1(t), \tau_2(t)$ are in the same leaf
$L_t$ 
of $\wcF_1$. Then for each $t$, $\tau_1(t), \tau_2(t)$ are the
endpoints of an arc $c_t$ of $\wcG$ in $L_t \cap E$ and these arcs vary
continuously with $t$.
The same holds with $\wcF_1, \wcF_2$ switched.
\end{prop}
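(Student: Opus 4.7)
\textit{Approach and uniqueness.} My plan is first to establish that $c_t$ is uniquely determined by the hypotheses, and then to prove Hausdorff continuity by a local product structure argument. The key preliminary observation is a ``symmetric'' version of Lemma \ref{l.twointersnonHsdff}: if $L'\cap E'$ has more than one connected component for $L'\in\wt{\cF_1}$, $E'\in\wt{\cF_2}$, then $\cO_{E'}$ (not only $\cO_{L'}$) is non-Hausdorff. The proof is a verbatim copy of the given one, except that Novikov's theorem is applied to $\cF_1$ rather than $\cF_2$: a transversal in $E'$ to $\cG_{E'}$ separating two components of $L'\cap E'$ would be a transversal to $\wt{\cF_1}$ in $\mt$ hitting $L'$ twice. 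Our hypothesis therefore forces $L_t\cap E$ to be a single leaf $\ell_t$ of $\wt{\cG}$ for every $t\in[0,t_0]$. Since both $\tau_1(t)$ and $\tau_2(t)$ lie in $\ell_t$ and $\ell_t$ is a topological line, there is a unique compact subarc $c_t\subset\ell_t$ joining them, and $c_0=c$.

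\textit{Continuity via local product structure.} To show $t\mapsto c_t$ is continuous in Hausdorff distance in $\mt$, I fix $t_*\in[0,t_0]$ and cover the compact arc $c_{t_*}$ by finitely many local product structure boxes $U_1,\ldots,U_k$ simultaneously adapted to $\wt{\cF_1}$ and $\wt{\cF_2}$, ordered along $c_{t_*}$ so that $\tau_1(t_*)\in U_1$, $\tau_2(t_*)\in U_k$, and the plaque of $L_{t_*}\cap E$ in $U_i$ overlaps the plaque in $U_{i+1}$. In each $U_i$ the intersection $L_t\cap E\cap U_i$ is the unique plaque of $\wt{\cG}$ through the intersection of a plaque of $L_t$ and a plaque of $E$, and depends continuously on $t$ in the product coordinates. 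For $t$ close enough to $t_*$, the transversals keep $\tau_1(t)\in U_1$ and $\tau_2(t)\in U_k$, and consecutive plaques of $L_t\cap E$ in $U_i\cap U_{i+1}$ continue to overlap. Concatenating these plaques produces a compact arc $\tilde c_t\subset L_t\cap E$ from $\tau_1(t)$ to $\tau_2(t)$ contained in an arbitrarily small Hausdorff neighborhood of $c_{t_*}$. By the uniqueness of the previous step, $\tilde c_t=c_t$, so $c_t\to c_{t_*}$ as $t\to t_*$.

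\textit{Symmetry and main difficulty.} The ``$\wt{\cF_1},\wt{\cF_2}$ switched'' version of the proposition follows from exactly the same argument after relabelling, now using the Hausdorff hypothesis on $\cO_L$ for $L\in\wt{\cF_1}$. The conceptual heart of the proof lies in the first step: once the symmetric version of Lemma \ref{l.twointersnonHsdff} is in place, the uniqueness of $c_t$ is forced by the Hausdorff hypothesis, and the continuity statement becomes a routine finite-cover / product-structure exercise. No genuine obstacle should arise from $c_{t_*}$ being long, since the cover is finite, nor from the global leaf space of $\wt{\cG}$ being potentially non-Hausdorff, since we only use the hypothesis leafwise.
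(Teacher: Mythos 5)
Your proof is correct, and it rests on the same essential mechanism as the paper's: Novikov's theorem applied to $\cF_1$ shows that if $L_t\cap E$ had two components, a transversal in $E$ to $\cG_E$ joining them would hit $L_t$ twice, so $\cO_E$ would be non-Hausdorff. You are right to make explicit the ``symmetric'' version of Lemma~\ref{l.twointersnonHsdff}: the lemma as stated concludes only that $\cO_L$ is non-Hausdorff, whereas the application in the proposition needs the conclusion about $\cO_E$, and the symmetry comes precisely from applying Novikov to the other foliation. The paper uses this implicitly.

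Where your route differs from the paper's is the structure of the existence argument. The paper defines the set $V\subset[0,t_0]$ of parameters for which $c_t$ exists, observes that $V$ is open and contains $0$, and derives a contradiction at the first $t_1\notin V$ by noting that the arcs $c_t$ for $t<t_1$ all lie in $E$ and that $\tau_1(t_1),\tau_2(t_1)\in L_{t_1}$ would then force $L_{t_1}\cap E$ to be disconnected. Your version is more direct: since $\tau_1(t),\tau_2(t)\in L_t\cap E$ by hypothesis, and $L_t\cap E$ is connected for every $t$ by the Hausdorff hypothesis (via the symmetric lemma), the arc $c_t$ simply exists with no contradiction argument needed. This is cleaner and avoids the boundary-of-$V$ bookkeeping. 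Conversely, the paper leaves the continuity of $t\mapsto c_t$ as an immediate observation, while you spell it out with the finite cover by product boxes; that level of detail is fine and correct, though one should note (as you implicitly do via Novikov) that each leaf of $\wcF_1$ or $\wcF_2$ meets a product box in a single plaque, so the concatenation is unambiguous. Both proofs are valid; yours trades the open/closed argument for a pointwise connectedness statement, which I'd call a modest simplification rather than a fundamentally different method.
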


\begin{figure}[ht]
\begin{center}
\includegraphics[scale=0.82]{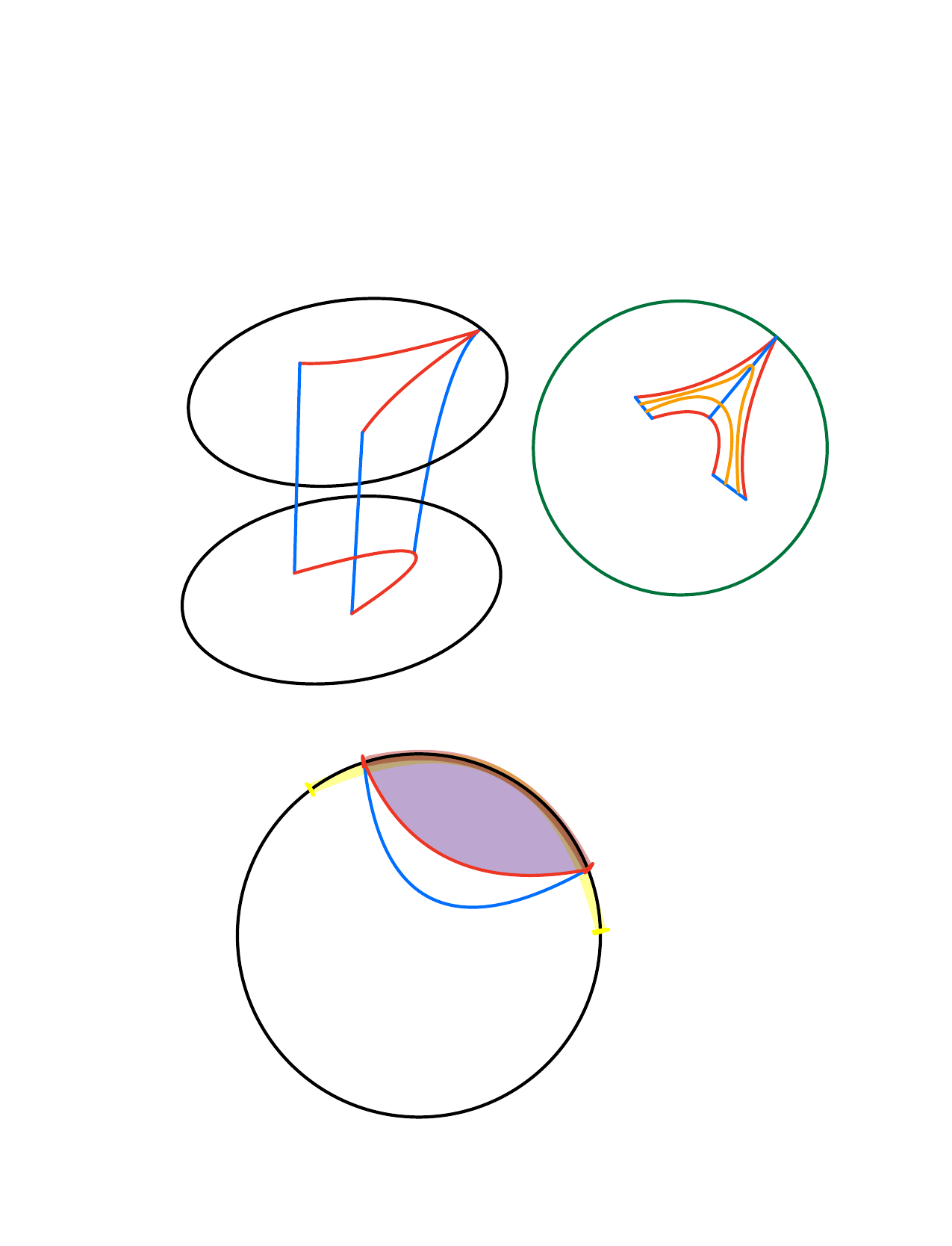}
\begin{picture}(0,0)
\put(-329,30){$L_{0} \in \wcF_1$}
\put(-230,120){$E \in \wcF_2$}
\put(-283,224){$L_{t_1} \in \wcF_1$}
\put(-156,58){$E$}
\put(-100,142){$c_0$}
\put(-84,172){{$c_s$}}
\put(-254,61){{$c_0$}}
\put(-135,160){$\tau_1$}
\put(-82,110){$\tau_2$}
\put(-315,125){$\tau_1$}
\put(-283,100){$\tau_2$}
\end{picture}
\end{center}
\vspace{-0.5cm}
\caption{{\small In the left a piece of leaf $E$ of $\wcF_2$ intersecting two leaves $L_0$ and $L_{t_1}$ of $\wcF_1$. $E$ intersects $L_0$ in a compact arc $c_0$, and $E$ intersects $L_{t_1}$ in two rays that go to infinity. In the right the leaf of $E$ is depicted and showed how this structure of intersection forces the arcs to split and induce non Hausdorff leaf space.}}\label{fig-split}
\end{figure}

In other words given a segment $c$ of $\wcG$ in a leaf of $\wcF_1$,
if we can push the endpoints of $c$ transversely to $\wcF_1$ and
mantaining the endpoints in the same leaf of $\wcF_2$, then
we can push the entire segment $c$ to a new segment $c'$ of a
 leaf of $\wcG$.

We stress that we require the Hausdorff leaf space on leaves of $\wcF_2$ and we obtain the pushing property for arcs of leaves of $\wcG$ in leaves of $\wcF_1$.

\begin{proof}
Let us define

$$V = \{ t \in [0,t_0] \ \text{ so  that }  
\tau_1(t),\tau_2(t) \text{ bound  an  arc } c_t \text{ of }  \wcG
\ \text{ in }  L_t \}, $$
In other words $\tau_1(t), \tau_2(t)$ are in the same
leaf of $\wcG$ in $L_t$ if $t$ is in $V$.
By the local product structure of foliations the set $V$ is
open and by assumption it contains $0$. Clearly the arcs
$c_t$ are unique if they exist (they are contained in $L_t \cap E$)
and vary continuously in $V$.

By way of contradiction suppose that there is $t_1 \leq t_0$ so
that $[0,t_1) \subset V$, but $t_1$ is not in $V$.
Now consider the situation in the leaf $E$ of $\wcF_2$ 
which contains $x, y$. See figure \ref{fig-split}. 
Notice that $E$ contains all $c_t$
with $0 \leq t < t_1$. By assumption $\tau_1(t_1), \tau_2(t_1)$
are in $L_{t_1}$ but are not in the same leaf of $\cG_E$.  In particular, we get that $L_{t_1} \cap E$ has  more than one connected component, and then Lemma \ref{l.twointersnonHsdff} completes the proof of the proposition. 
\end{proof}

\section{Landing of rays}\label{s.landing}
The goal of this section is to show the first step of the  strategy presented in \S \ref{ss.steps}: 

\begin{teo}[Hausdorff implies landing]\label{t.Landing}
Let $\cF_1$ and $\cF_2$ be two transverse foliations  by Gromov hyperbolic leaves of a closed manifold $M$ so that for every $E \in \wt{\cF_2}$ we have that the leaf space $\cO_E$ of $\cG_E$ is Hausdorff, then, for every $L \in \wcF_1$ and every $\ell \in \cG_L$ we have that both rays of $\ell$ land in $S^1(L)$. 
\end{teo}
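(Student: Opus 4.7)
\textbf{Proof plan for Theorem \ref{t.Landing}.}

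The plan is to argue by contradiction and produce a non--Hausdorff pair of leaves in some $\cG_{E}$ with $E\in\wcF_{2}$, contradicting the standing hypothesis. The two main tools will be the Pushing Property (Proposition \ref{prop-pushing}) and compactness of $M$.

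Suppose for contradiction that there exist $L\in\wcF_{1}$ and $\ell\in\cG_{L}$ whose positive ray $\ell^{+}$ does not land in $S^{1}(L)$; then the closed connected set $\partial^{+}\ell\subset S^{1}(L)$ has at least two distinct points. Let $E$ denote the $\cF_{2}$-leaf containing $\ell$; by Lemma \ref{l.twointersnonHsdff} and the Hausdorffness of $\cG_{E}$, one has $L\cap E=\ell$. The first thing I would establish, and the most delicate geometric input, is that non--landing forces a sequence of \emph{long arcs with bounded endpoints}: there exist $p_{n},q_{n}\in\ell^{+}$ with $q_{n}$ later than $p_{n}$ along the ray, such that the subarc length $|[p_{n},q_{n}]|_{\ell}\to\infty$ while $d_{M}(p_{n},q_{n})$ remains bounded. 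The point is that, although a properly embedded ray in a Gromov hyperbolic plane need not return close to itself in the leaf metric, the projection of $\ell^{+}$ to the compact quotient $M$ is recurrent, and combined with the spreading of $\partial^{+}\ell$ over a nontrivial arc, this produces the desired long-arc configuration after applying suitable deck transformations $\gamma_{n}\in\pi_{1}(M)$.

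After applying $\gamma_{n}$ and passing to a subsequence, I may assume $\gamma_{n}p_{n}\to p_{\infty}$ and $\gamma_{n}q_{n}\to q_{\infty}$ in $\mt$, with $d_{M}(p_{\infty},q_{\infty})$ bounded. Each arc $A_{n}=\gamma_{n}[p_{n},q_{n}]\subset\gamma_{n}\ell$ is contained in the single $\cF_{2}$-leaf $\gamma_{n}E$. In local product charts of $\cF_{1}\pitchfork\cF_{2}$ around $p_{\infty}$ and $q_{\infty}$, the initial and terminal subarcs of $A_{n}$ converge to arcs of $\wcG$-leaves lying inside $\cF_{2}$-plaques through $p_{\infty}$ and $q_{\infty}$; extending these plaques globally gives $\cF_{2}$-leaves $E^{p}_{\infty}$ and $E^{q}_{\infty}$.

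The favourable case is $E^{p}_{\infty}=E^{q}_{\infty}=:E_{\infty}$. Then the two limit arcs live in $\cG_{E_{\infty}}$ and there are two subcases. If they lie on the same leaf of $\cG_{E_{\infty}}$, then by Novikov's Theorem \ref{teo.Novikovetc} applied to $\wcG$-foliation boxes (the $\wcG$-leaves cannot re-enter such a box), the entire $A_{n}$ is trapped in a fixed chart around a compact segment of that limit leaf for $n$ large, forcing $|A_{n}|$ to be uniformly bounded, contradicting $|A_{n}|\to\infty$. If they lie on two distinct leaves of $\cG_{E_{\infty}}$, these leaves are limits of the same $\gamma_{n}\ell\subset\gamma_{n}E$ and hence non--separated in $\cG_{E_{\infty}}$, directly contradicting the Hausdorff hypothesis.

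The main obstacle is the remaining case $E^{p}_{\infty}\neq E^{q}_{\infty}$, which corresponds to the $\wcF_{2}$-leaf space itself being non--Hausdorff at the relevant configuration. This is handled via the Pushing Property: since the plaques of $\gamma_{n}E$ near $\gamma_{n}p_{n}$ and near $\gamma_{n}q_{n}$ belong to a \emph{single} $\cF_{2}$-leaf and are connected inside that leaf by a path built from the arc $A_{n}$ together with short transversals to $\cF_{1}$ contained in $\gamma_{n}E$, one can push the endpoints slightly, using Proposition \ref{prop-pushing} in its $\wcF_{1}\leftrightarrow\wcF_{2}$ symmetric form, so as to force the limit plaques around $p_{\infty}$ and $q_{\infty}$ to extend to a common $\cF_{2}$-leaf, thereby reducing to the previous case. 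Carrying out this perturbation while preserving the unbounded length $|A_{n}|\to\infty$ is the technical heart of the proof.
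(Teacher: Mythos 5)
Your proposal takes a genuinely different route from the paper, but there are two gaps that I believe are fatal, both in steps the proposal itself flags as delicate.

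\textbf{Gap 1.} The claim that non-landing forces a sequence of subarcs $[p_{n},q_{n}]\subset\ell^{+}$ of unbounded length whose endpoints remain at bounded distance in $\mt$ \emph{after applying a single deck transformation $\gamma_n$ to both} is not established, and recurrence of $\ell^{+}$ in $M$ does not imply it. Recurrence only gives, for each $n$, some $\gamma_n$ bringing $p_n$ into a fixed ball in $\mt$; applying that same $\gamma_n$ to $q_n$ gives no control whatsoever on $\gamma_n q_n$. What you actually need, namely $d_{\mt}(p_n,q_n)$ bounded while arclength along $\ell$ diverges, is by Proposition \ref{prop.Hsdffimpliesunifpropemb} essentially equivalent to the non-Hausdorffness of $\wcG$ that you are trying to derive, so this step is close to circular as stated. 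A non-landing ray could a priori spiral out with consecutive crossings of a fixed geodesic separating at ever-increasing distances. The paper sidesteps this entirely: rather than trying to make $\ell$ itself recurrent, it locates a $\pi_1$-periodic leaf $L_1$ in the limit set $\mathrm{Lim}(H)$ of a half-space $H\subset L$, uses holonomy along a $\gamma$-invariant geodesic $\alpha\subset L_1$ to build long quasigeodesics returning close to $\alpha$, and pushes arcs of $\ell$ into $L_1$ via Proposition \ref{prop-pushing}; the endpoint separation bound $a_2$ in Lemma \ref{lema.findingcurves} then comes from the $\gamma$-periodicity of $\alpha$, not from recurrence of $\ell$.

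\textbf{Gap 2.} In the case $E^{p}_{\infty}\neq E^{q}_{\infty}$ you invoke ``Proposition \ref{prop-pushing} in its $\wcF_1\leftrightarrow\wcF_2$ symmetric form.'' The symmetric form requires $\cO_L$ to be Hausdorff for every $L\in\wcF_1$, which Theorem \ref{t.Landing} deliberately does \emph{not} assume: the hypothesis (Hausdorff only in $\wcF_2$-leaves) and conclusion (landing of rays in $\wcF_1$-leaves) are asymmetric, and the case of non-separated $\wcF_2$-leaves is precisely where the asymmetry bites. Without the $\wcF_1$-side Hausdorff hypothesis you cannot push in the $\wcF_2$-transverse direction, and this case remains open. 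Your case analysis once the long-arc configuration and a common limiting $\cF_2$-leaf are assumed (same versus different $\cG_{E_\infty}$-leaf) is correct, but neither of the two flagged steps is filled in, and I do not see how to fill them with the tools at hand.
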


Again we emphasize on the asymetry of the statement and the fact that we require Hausdorff leaf space along one of the two foliations. As in Proposition \ref{prop-pushing} we need Hausdorfness along $\wcF_2$ to deduce a property of rays in leaves of $\wcF_1$. We remark here that if we assume that both foliations are leafwise Hausdorff, then some parts of the proof can be simplified (see Remark \ref{rem-simplificationHsdff} below). 

The proof of this theorem will occupy all of this section. Before going into
the several intermediate results,
 we roughly explain the overall idea of the proof.
The leaves of $\wcF_1$ are Gromov hyperbolic, if a ray $\ell$ in a leaf
$F$ of $\wcF_1$ does not land, then the region where it limits
to grows exponentially in size, so projecting to $M$ it limits to a
sufficiently big set, and this will have consequences
which will lead to a contradiction. Suppose first that $\pi(F)$ contains
a closed curve and lift it to a curve $\beta$ in $F$ periodic
under a non trivial deck transformation $\gamma$. Suppose that
one of the endpoints of $\beta$ is in the interior of the limit
set of $\ell$, so $\beta$ keeps intersecting $\ell$. Suppose there
are two intersections $x,y$ so that the segment between them 
$[x,y]$ in a leaf of $\cG_F$ does not intersect $\beta$. Suppose
that the length of $[x,y]$ is very big so that $x, \gamma(x), y,
\gamma(y)$ are lined in $\beta$. Then $[x,y]$ and $\gamma([x,y])$
intersect transversely, which contradicts that $\cG_F$ is a foliation
in $F$. In general $F$ does not have such a $\gamma$, but we can
get a geodesics in $F$ intersecting $\ell$ in more and more points,
so that deck translates get closer and closer to such a 
$\beta$ $-$ this is what we were alluding to in the remark above that
$\ell$ keeps going about a region that grows exponentially. 
Using the push through result, Proposition \ref{prop-pushing},
we can push these intersections and arcs in between
to a leaf $F'$ having such a deck transformation $\gamma$,
and eventually obtaining a contradiction. There is a further 
case, which we will get into eventually.

The proof of the theorem
will proceed by contradiction, separating the proof in two cases. To set up the proof, we consider $\ell_0 =L \cap E$ a leaf of $\wcG$ where $L \in \wcF_1$ and $E \in \wcF_2$ and we assume that $\partial^+\ell_0$ in $L$ is not a single point (the proof is analogous if $\partial^- \ell_0$ is not a point). 

It follows that $\partial^+ \ell_0$ is either all of $S^1(L)$ or a proper non-trivial interval. In any case we can consider $I \subset S^1(L)$ a proper non trivial closed interval such that $I \subset \partial^+\ell_0$. We can also consider $J \subset I$ a non trivial closed interval contained in the interior of $I$. 

We let $H \subset L$ to be the half space in $L$ bounded by a geodesic in $L$ joining the endpoints of $J$ and whose closure in $L \cup S^1(L)$ contains $J$. Given a half space $H$ in a leaf $L$, we consider $H_n \subset H$ to be the set of points at distance $\geq n$ of $\partial H$, see figure \ref{fig-Hn}. We define: 

\begin{equation} \mathrm{Lim}(H) = \bigcap_{n>0} \overline{\bigcup_{\gamma \in \pi_1(M)} \gamma H_n}.
\end{equation}

\begin{figure}[ht]
\begin{center}
\includegraphics[scale=0.82]{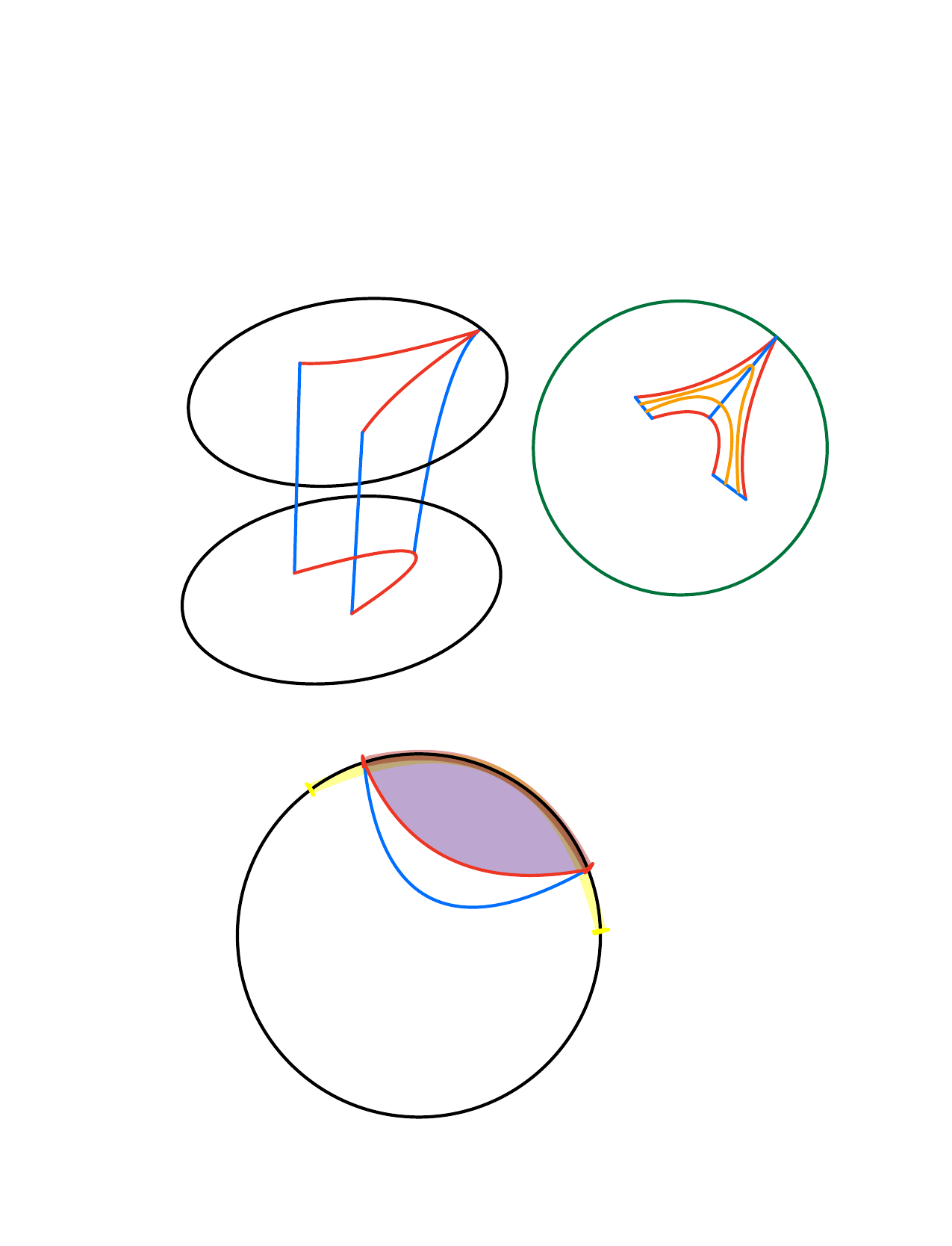}
\begin{picture}(0,0)
\put(-100,110){$\ell$}
\put(-64,181){{$J$}}
\put(-125,165){$H_n$}
\put(-30,118){$I$}
\end{picture}
\end{center}
\vspace{-0.5cm}
\caption{{\small A depiction of the sets $H_n$ inside $L$.}}\label{fig-Hn}
\end{figure}

This is a closed $\pi_1(M)$-invariant set saturated by leaves of $\wcF_1$. To see that $\mathrm{Lim}(H)$ is saturated by leaves one only needs to use the fact that leaves of $\wcF$ vary continuously in the topology of convergence in compact sets, as follows:
If $p \in \overline{\bigcup_{\gamma \in \pi_1(M)} \gamma H_n}$, then
there are disks of radius $n$  in the union which  converge to the
the disk of  radius $n$ around $p$,  so the disk of radius $n$ 
about $p$ is contained in the  closure. Since  this is  true for
all $n$, the entire leaf  of $p$  is contained in $\mathrm{Lim}(H)$.

If $\pi: \mt \to M$ is the universal cover projection, $\mathrm{Lim}(H)$ corresponds to the preimage by $\pi$ of the accumulation set of $\pi(H_n)$ as $n\to \infty$. 

\begin{lema}\label{lem.stabilizerinlimit}
Let $\mathrm{Lim}(H)$ be the accumulation set of $H$ as defined above and let $\Lambda$ be any minimal $\pi_1(M)$-invariant sublamination of $\mathrm{Lim}(H)$. Then, $\Lambda$ has some leaves which have non-trivial stabilizer (i.e. there is some $L_1 \in \Lambda$ such that there is $\gamma \in \pi_1(M) \setminus  \{ \mathrm{id}\}$ such that $\gamma L_1  = L_1$). 
\end{lema}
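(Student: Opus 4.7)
The proof should be a short, direct application of Theorem \ref{teo.nonholonomy}, once we have verified that $\Lambda$ fits the hypotheses of that theorem applied to the foliation $\cF_1$.

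First I would note that $\mathrm{Lim}(H)$ is non-empty. Indeed, since $L \in \wcF_1$ is Gromov hyperbolic and $H$ is a half-space in $L$ bounded by a bi-infinite geodesic, the set $H_n \subset H$ of points at distance at least $n$ from $\partial H$ is non-empty for every $n$ (it contains an entire sub-half-space at distance $n$ from $\partial H$). By compactness of $M$, the sets $\pi(H_n)$ accumulate, so the full preimage $\mathrm{Lim}(H)$ is non-empty. As observed in the paragraph preceding the lemma, $\mathrm{Lim}(H)$ is closed, $\pi_1(M)$-invariant, and $\wcF_1$-saturated (continuity of leaves of $\wcF_1$ in the compact-open topology ensures that if a point $p$ is a limit of translates of points in $H_n$ for every $n$, then the entire leaf of $\wcF_1$ through $p$ lies in $\mathrm{Lim}(H)$).

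Next I would observe that any minimal $\pi_1(M)$-invariant sublamination $\Lambda \subset \mathrm{Lim}(H)$ inherits the properties of being non-empty, closed, $\pi_1(M)$-invariant and $\wcF_1$-saturated. The fundamental group $\pi_1(M)$ is non-solvable by the overall hypothesis of the paper, so in particular non-abelian. The foliation $\cF_1$ is Reebless by our standing assumption. Hence we are in a position to apply Theorem \ref{teo.nonholonomy} to $\cF_1$ with the saturated set $\Lambda$.

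Theorem \ref{teo.nonholonomy} produces a leaf $L_1 \in \Lambda$ and an element $\gamma \in \pi_1(M) \setminus \{\mathrm{id}\}$ with $\gamma L_1 = L_1$, which is exactly the conclusion of the lemma. There is no real obstacle here: the entire content of the statement is packaged into Theorem \ref{teo.nonholonomy}, and the only things to check are the mild properties of $\mathrm{Lim}(H)$ (non-emptiness, closedness, invariance, leaf-saturation), all of which follow directly from the definition and from continuity of the foliation $\wcF_1$.
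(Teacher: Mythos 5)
Your proof is correct and follows the same route as the paper's: verify that $\Lambda$ (or $\mathrm{Lim}(H)$) satisfies the hypotheses of Theorem \ref{teo.nonholonomy} and apply it directly. You supply slightly more detail than the paper (in particular, you explicitly check non-emptiness via compactness of $M$, which the paper leaves implicit), but the argument is the same.
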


\begin{proof}
Since $\mathrm{Lim}(H)$  is clearly closed and $\pi_1(M)$-invariant by definition, it is a sublamination of $\wcF$.

The rest now follows directly from Theorem \ref{teo.nonholonomy}. 
\end{proof}

We now fix a minimal sublamination $\Lambda$ of $\mathrm{Lim}(H)$ and consider a leaf $L_1 \in \Lambda$ which is invariand under some $\gamma \in \pi_1(M) \setminus \{ \mathrm{id}\}$. Let $\alpha \in L_1$ be a minimizing geodesic in $L_1$ invariant under $\gamma$ (see Lemma \ref{lem-closedgeodesics}). 

We will use the following result that also follows from the proof of the existence of sawblades and marker directions in the proof of the leaf pocket theorem in \cite[\S 5]{CD}. Since it is a simple argument we give a proof for the convenience of the reader not familiar with \cite{CD}.  We recall here our standing assumption that all foliations are orientable and transversally orientable.

\begin{lema}\label{lem.liftwithbothendpointsinH}
For every $\eps>0$ there exists a sequence $\eta_n \in \pi_1(M)$ and quasigeodesics $\beta_n \subset L$ with one endpoint in $J \subset S^1(L)$ and such that $\alpha$ and $\beta_n$ contain  rays $r_n \subset \alpha$ and $s_n \subset \beta_n$ such that $r_n$ and $\eta_n s_n$ are at Hausdorff distance less than $\eps$ in $\mt$. Moreover, either $s_n$ limits in a point of $J$ for infinitely many $n$ or the starting points $x_n$ of the rays $s_n$ converge to a point in $J$. 
\end{lema}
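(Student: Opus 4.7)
\emph{Setup.} Since $L_1 \subset \Lambda \subset \mathrm{Lim}(H)$, for any chosen $p \in \alpha$ the definition of $\mathrm{Lim}(H)$ yields sequences $\eta_n \in \pi_1(M)$ and $z_n \in H_n$ such that $\eta_n z_n \to p$. Because $z_n$ lies at distance at least $n$ from the geodesic $\partial H$ in $L$, passing to a subsequence we may assume that $z_n$ converges in the Gromov compactification $L \cup S^1(L)$ to a point $\xi \in \overline H \cap S^1(L) = J$. This $\xi$ will supply the limit in $J$ required by the second alternative of the conclusion.

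\emph{Marker construction.} The central technical input is a sawblade/marker construction along $\alpha$, following \cite[\S 5]{CD}. Since $\gamma \neq \mathrm{id}$ stabilizes $\alpha$, the image $\pi(\alpha) \subset M$ is a closed loop, and the holonomy of $\wcF_1$ along $\pi(\alpha)$ is realized as the deck action of $\gamma$ on a small transversal $\tau$ to $\wcF_1$ through $p$. The sawblade argument then produces, on at least one side of $\tau$ and for at least one of the two rays $r$ of $\alpha$ based at $p$, a continuous family of quasigeodesic rays $\{m(q,\cdot)\}_{q\in I}$ parametrised by a small interval $I \subset \tau$ containing $p$, each $m(q,\cdot)$ contained in the leaf $F_q$ of $\wcF_1$ through $q$, with every transverse arc $m(I,s)$ of length less than $\eps$ in $\mt$. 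Possibly replacing $\gamma$ by $\gamma^{-1}$ and passing to a further subsequence, we may assume the $\eta_n z_n$ lie on the correct side of $\tau$, so that $\eta_n L = F_{q_n}$ for some $q_n \in I$ with $q_n \to p$. Setting $\delta_n := m(q_n,\cdot) \subset \eta_n L$, we obtain a quasigeodesic ray within Hausdorff distance $\eps$ in $\mt$ of the ray $r \subset \alpha$.

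\emph{Pull-back and construction of $\beta_n$.} Define $s_n := \eta_n^{-1}(\delta_n)$ and $x_n := \eta_n^{-1}(q_n)$. As $\eta_n$ is an isometry of $\mt$ sending leaves of $\wcF_1$ to leaves of $\wcF_1$, the curve $s_n$ is a quasigeodesic ray in $L$ starting at $x_n$, and $\eta_n s_n = \delta_n$ is at Hausdorff distance less than $\eps$ from $r_n := r$ in $\mt$. Since $q_n$ and $\eta_n z_n$ both converge to $p$ inside a common foliation product box, $d_L(x_n, z_n) \to 0$, and hence $x_n \to \xi \in J$. If $s_n$ lands in $J$ we set $\beta_n := s_n$; otherwise we extend $s_n$ backward from $x_n$ to a quasigeodesic $\beta_n \subset L$ whose other endpoint in $S^1(L)$ lies in $J$, using a standard Morse lemma argument in the Gromov hyperbolic leaf $L$. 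Then $\beta_n$ is a quasigeodesic in $L$ with one endpoint in $J$ containing $s_n$ as a ray, and the two cases of the ``or'' clause correspond respectively to $s_n$ landing in $J$ for infinitely many $n$ and to the backup $x_n \to \xi \in J$. The main obstacle is the sawblade construction itself, where one must control both the side of $\tau$ on which the leaves $\eta_n L$ approach $L_1$ and the direction of contraction of the holonomy of $\gamma$ in order that the marker actually apply to our sequence of leaves.
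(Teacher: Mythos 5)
Your argument is correct and follows essentially the same strategy as the paper's own proof: extract $\eta_n, z_n$ from the definition of $\mathrm{Lim}(H)$, exploit the closed curve $\pi(\alpha)$ to build a sawblade/marker along a ray of $\alpha$ on the side from which the $\eta_n L$ accumulate (adjusting $\gamma$ to $\gamma^{-1}$ to match the contracting direction), lift to the $\eta_n L$, and pull back by $\eta_n^{-1}$. The only cosmetic difference is that the paper carries out the holonomy lifting along $\alpha$ explicitly rather than invoking the sawblade machinery of [CD] abstractly; one small wording slip is ``we may assume the $\eta_n z_n$ lie on the correct side of $\tau$'' --- replacing $\gamma$ by $\gamma^{-1}$ does not move the $\eta_n z_n$, it changes which ray of $\alpha$ carries the marker on the (already fixed, by subsequence) side of $\tau$ where those points accumulate, which is in fact what your closing sentence correctly identifies as the real point.
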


See figure \ref{fig-limitleaf2}. We note that the conditions in the last assertion need not be exclusive. 

\begin{proof}
Since the deck traslates of $H_n$ accumulate in $L_1$ by construction, we can find a sequence of deck transformations $\eta_n$ and points $p_n \in H$ so that the distance in $L$ from $p_n$ to the boundary of $H$ goes to infinity and such that $\eta_n p_n \to p_\infty \in \alpha \subset L_1$.  Since the distance of $p_n$ to the boundary of $H$ grows, we know that $p_n$ converges  in  $L \cup S^1(L)$, up to subsequence, to a point in the circle at infinity.
This point is in  $J$, and we denote this by $p_n \to J$ \  (in $L \cup S^1(L)$). 

Pick a short transversal $\tau$ to $\wcF_1$ through $p_\infty$,  and consider $\tau_k = \gamma^k \tau$. Let $\tau^+$ be a half interval of $\tau$ which intersects infinitely many translates $\eta_n L$. Up to taking subsequences, we can assume it intersects all the $\eta_n L$. Let $\tau^+_k$ denote $\gamma^k \tau^+$.

Since $\alpha$ is $\gamma$-invariant then up to taking $\gamma^{-1}$ we can assume that the holonomy from $p_\infty$ to $\gamma p_\infty$ maps of $\tau^+$ inside $\tau_1^+$. It follows that the ray $\hat r$ of $\alpha$ starting at $p_\infty$ and containing $\gamma^n p_\infty$ for all $n>0$ can be lifted by holonomy to a curve $\hat s_n$ in $\eta_n L$ which must be a uniform quasigeodesic ray in $\eta_n L$,  and $\hat s_n$ is always close to $L_1$. One can complete $\hat s_n$ to a full quasigeodesic $\hat \beta_n$ in $\eta_n L$. 
Since the point $p_n$ in $L$ is very far in $L$ from the boundary of $H$, we get that for large $n$ one of the endpoints of $\beta_n = \eta_n^{-1} \hat \beta_n$ must be in $J$ since $s_n= \eta_n^{-1} \hat s_n$ has starting point $p_n$ and $p_n \to J$.

\begin{figure}[ht]
\begin{center}
\includegraphics[scale=0.72]{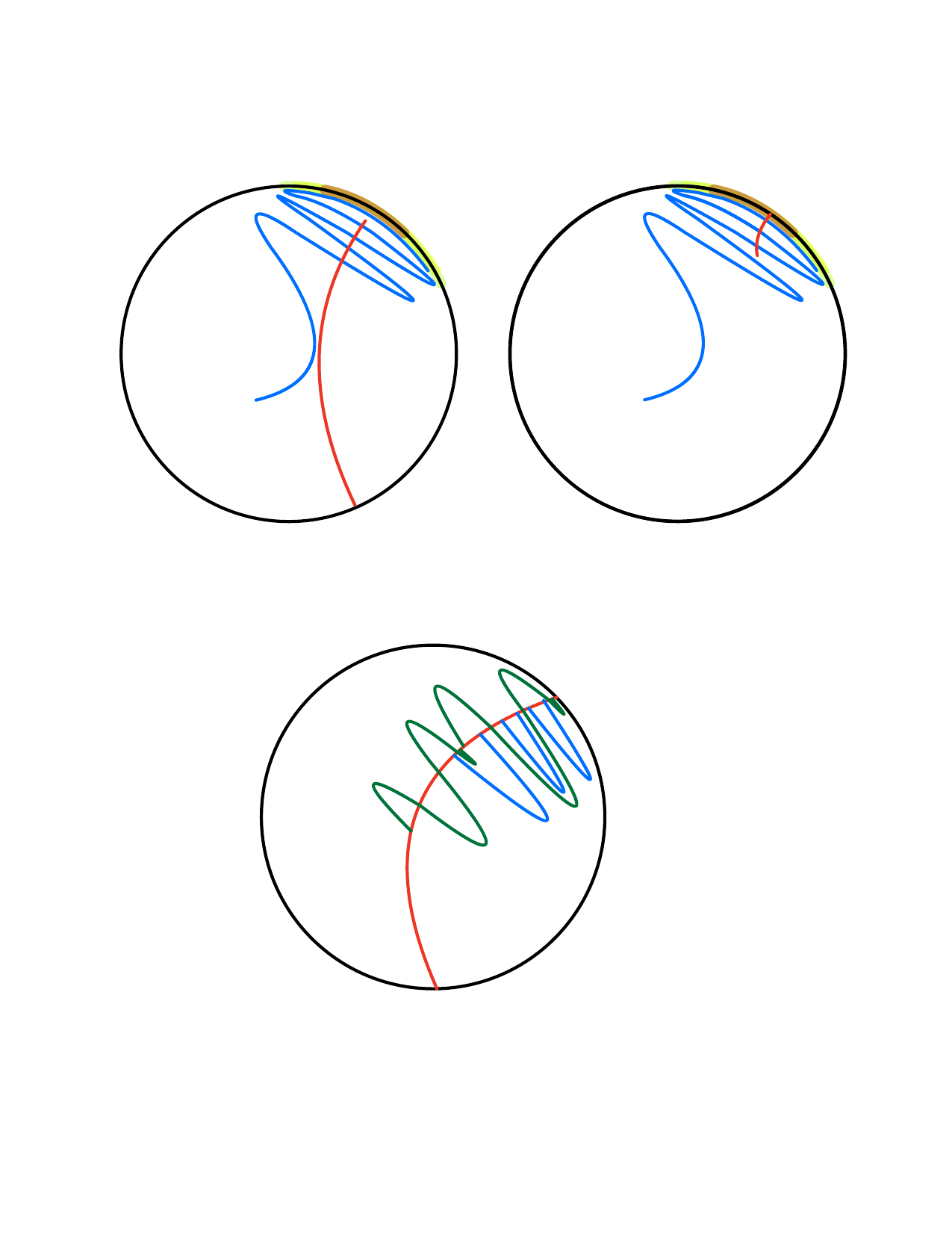}
\begin{picture}(0,0)
\put(-90,100){$\ell_0$}
\put(-259,63){$s_n$}
\put(-293,94){$\ell_0$}
\put(-64,152){$s_n$}
\put(-67,174){$J$}
\put(-239,174){{$J$}}
\end{picture}
\end{center}
\vspace{-0.5cm}
\caption{{\small The possibilities for the ray $s_n$ inside $L_1$.}}\label{fig-limitleaf2}
\end{figure}

Fix some $\eps>0$. By taking $n$ sufficiently large we get that $\hat s_n$ and $\hat r$ are at Hausdorff distance less than $\eps$ in $\mt$. Also, by construction we know that either $s_n$ limits in a point in $J$, or the initial points of $s_n$ (which are $p_n$) converge to a point  in $J$. This completes the proof. 
\end{proof}

In particular as $n$ grows, the number of intersections of 
$\beta_n$ with $\ell_0$ grows without bound. This is because
$\ell_0$ limits in the whole  closed  interval $I$ and $p_n$ converges
to a point  in the interior of $I$.

Using the previous lemma we will produce many segments in $L_1$ that will later allow us to produce a contradiction. For this, we will consider $\eps$ much smaller than the size of local product structure boxes and consider $\eta_n, p_n, s_n$ given by the previous lemma, in particular so that the ray $s_n$ verifies that $\eta_n s_n$ is very close to $\alpha$.  We recall that $\ell_0 = L \cap E$ is connected and is a curve such that $\partial^+ \ell_0$ contains $I$ which has $J$ in its interior by assumption. We denote by 

$$\ell_n \ = \ L_1 \cap \eta_n E$$ 

\noindent
(note that the intersection is non-empty because there are points of $\eta_n(\ell_0)$ in $\eta_n(s_n)$ which is very close to $\alpha$ for large $n$). 
In addition the intersection is connected for all $n$ because
of Hausdorff leaf space of $\wcG$ in $\eta_n E$.

Denote $C,D$ to be the closure of the connected components of $L_1 \setminus \alpha$.

Recall that for points $x,y \in \ell \in \wcG$ we denote by $[x,y]$ the closed segment joining $x$ and $y$.  See figure \ref{fig-l3}.

\begin{lema}\label{lema.findingcurves}
There are constants $0<a_0<a_1<a_2$ so that for every $N>0$, there is $k_0$ (which depends only on $N$) so that if $k\geq k_0$ then, there are points:
\begin{enumerate}
\item $x_k,y_k \in \alpha \cap \ell_k$ such that $[x_k,y_k] \subset C$ and $[x_k,y_k]$ contains a point at distance (in  $L_1$) larger than $N$ from $\alpha$. Moreover, the intersection points $\{x_k, y_k\} = [x_k,y_k] \cap \alpha$ are contained in a sub-interval of $\alpha$ of length $\leq a_2$ and are at distance $\geq a_1$ from each other.  
\item $w_k,z_k \in \alpha \cap \ell_k$ such that $[w_k,z_k] \subset D$  and $[w_k,z_k]$  contains a point at distance (in $L_1$) larger than $N$ from $\alpha$. Moreover,the intersection points $\{w_k,z_k\} =[w_k,z_k] \cap \alpha$ are contained in a sub-interval of $\alpha$ of length $\leq a_2$ and are at distance $\geq a_1$  from each other.  
\end{enumerate}
For every $m_0$ sufficiently large there is $k$ and $m>m_0$ such that there are points $p$ and $q$ in $\ell_k \cap \alpha$ such that $d(\gamma^m p, q) < a_0$. 
\end{lema}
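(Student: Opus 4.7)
The overall plan is to transfer intersection combinatorics of $\ell_0$ with $\beta_n$ in $L$ to intersection combinatorics of $\ell_n$ with $\alpha$ in $L_1$, using the Pushing Property (Proposition \ref{prop-pushing}) together with the fact that $\eta_n s_n$ is Hausdorff $\eps$-close to a ray of $\alpha$ for $\eps$ much smaller than the local product box size. The basic correspondence is this: each intersection point $p$ of $\eta_n s_n$ with $\eta_n \ell_0$ lies within $\eps$ of $\alpha$, so a short $\wcF_1$-transversal inside $\eta_n E$ through $p$ reaches a unique point $q \in \alpha \cap \ell_n$. As $n$ grows the number of intersections of $s_n$ with $\ell_0$ grows without bound, since $\ell_0$ limits into the interval $I$ and $s_n$'s endpoint lies in $J \subset \mathrm{int}(I)$.

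For parts (1) and (2), I would argue that since $\ell_0$ accumulates on all of $I$ and the endpoint of $\beta_n$ lies strictly inside $I$, the curve $\ell_0$ must alternate sides of $\beta_n$ and have arcs reaching arbitrarily far from $\beta_n$ on both sides in order to approach distinct points of $I$. Translating such an arc by $\eta_n$ and pushing the endpoints onto $\alpha$ via Proposition \ref{prop-pushing} (the short $\wcF_1$-transversals in $\eta_n E$ are compatible by local product structure) yields an arc of $\ell_n$ in $L_1$ with endpoints on $\alpha$, lying in $C$ or $D$ respectively. That this arc reaches $L_1$-distance $>N$ from $\alpha$ follows from $d_{L_1} \geq d_{\mt}$ combined with the shadowing of $\alpha$ by $\eta_n s_n$. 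The lower bound $d(x_k, y_k) \geq a_1$ comes from local product structure: two endpoints too close on $\alpha$ would trap the pushed arc inside a product box, preventing it from reaching distance $>N$.

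The main obstacle is the upper bound $d(x_k, y_k) \leq a_2$. To obtain it, I would use that $\alpha$ and $\eta_n s_n$ are both quasigeodesics at Hausdorff distance $\leq \eps$, hence their natural arc-length parametrizations are quasi-isometric, so it suffices to bound the spacing of consecutive intersections of $s_n$ with $\ell_0$ along $s_n$. This spacing is controlled by a compactness argument in $M$: projecting down to the closed manifold, one picks intersections of specific type (for instance, coming from the shadowing of the $\gamma$-orbit $\gamma^i p_\infty$ along $\alpha$, whose image in $M$ is compact and periodic), and extracts a uniform upper bound $a_2$ depending only on $\cF_1, \cF_2, \alpha$ and $\gamma$, independent of $k$ or $N$.

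For the last assertion I would use a pigeonhole argument. By parts (1)--(2), for $k$ large the set $\ell_k \cap \alpha$ is a long finite sequence of points along $\alpha$, and by producing more and more disjoint arcs of the form $[x_k,y_k]$ or $[w_k,z_k]$ the interval spanned by these intersections on $\alpha$ can be made arbitrarily long. Let $T$ denote the translation length of $\gamma$ along $\alpha$ and project $\ell_k \cap \alpha$ to the circle $\alpha/\langle \gamma \rangle$ of circumference $T$. Covering this circle by finitely many arcs of length less than $a_0$, any sufficiently dense projection contains two points with the same image, yielding $p, q \in \ell_k \cap \alpha$ with $d_\alpha(\gamma^m p, q) < a_0$ for some integer $m$; choosing $p, q$ at $\alpha$-distance exceeding $m_0 T$ forces $|m| > m_0$, completing the argument.
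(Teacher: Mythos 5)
There is a genuine gap at the step where you claim the pushed arc $[x_k,y_k]$ reaches $L_1$-distance $>N$ from $\alpha$. You write that this ``follows from $d_{L_1}\geq d_{\mt}$ combined with the shadowing of $\alpha$ by $\eta_n s_n$,'' but this does not go through, and the paper's proof explicitly identifies this as the delicate point. The difficulty is that $\eta_k\beta_k$ shadows only \emph{one ray} of $\alpha$ (the one near $\eta_k s_k$), so knowing that $[\hat x_k,\hat y_k]$ wanders far from $\beta_k$ inside $L$ gives no control at all over how far the pushed arc gets from the \emph{other} ray of $\alpha$ in $L_1$; moreover, the pushing through $\wcF_2$ can deform the interior of the arc, and intrinsic distance in $L$ does not transfer to intrinsic distance in $L_1$. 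The paper instead controls the \emph{length} of $[x_k,y_k]$ (which tends to infinity), projects to $M$, and runs a compactness argument: if the arcs stayed in a fixed $N$-neighborhood of $\alpha$ they would accumulate on an entire leaf in that neighborhood, spiraling toward a closed leaf and forcing a Reeb annulus, which is incompatible with $[x_k,y_k]$ returning to hit $\alpha$. This argument is essential and is entirely absent from your proposal.

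A secondary issue is the upper bound $a_2$. You suggest bounding the spacing of consecutive intersections of $s_n$ with $\ell_0$ along $s_n$ and then transferring by quasi-isometry, appealing to ``a compactness argument in $M$''; this is vague and does not obviously yield a bound independent of $k$. The paper's argument is both cleaner and different in kind: for an arc $[x,y]$ meeting $\alpha$ only at its endpoints, the Jordan curve $[x,y]\cup c$ (with $c\subset\alpha$ the arc from $x$ to $y$) cannot contain $\gamma^{\pm 1}x$ or $\gamma^{\pm 1}y$ in the interior of $c$, since otherwise $[x,y]$ and $\gamma^{\pm 1}[x,y]$ would cross, contradicting that $\cG_{L_1}$ is a foliation (and using that $\gamma$ preserves $C$ and $D$ by orientability). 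Hence the length of $c$ is bounded by a fixed multiple of the translation length of $\gamma$. Your treatment of the $a_1$ lower bound (local product boxes prevent a short loop from escaping) and of the final pigeonhole assertion is essentially the same as the paper's.
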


\begin{figure}[ht]
\begin{center}
\includegraphics[scale=0.92]{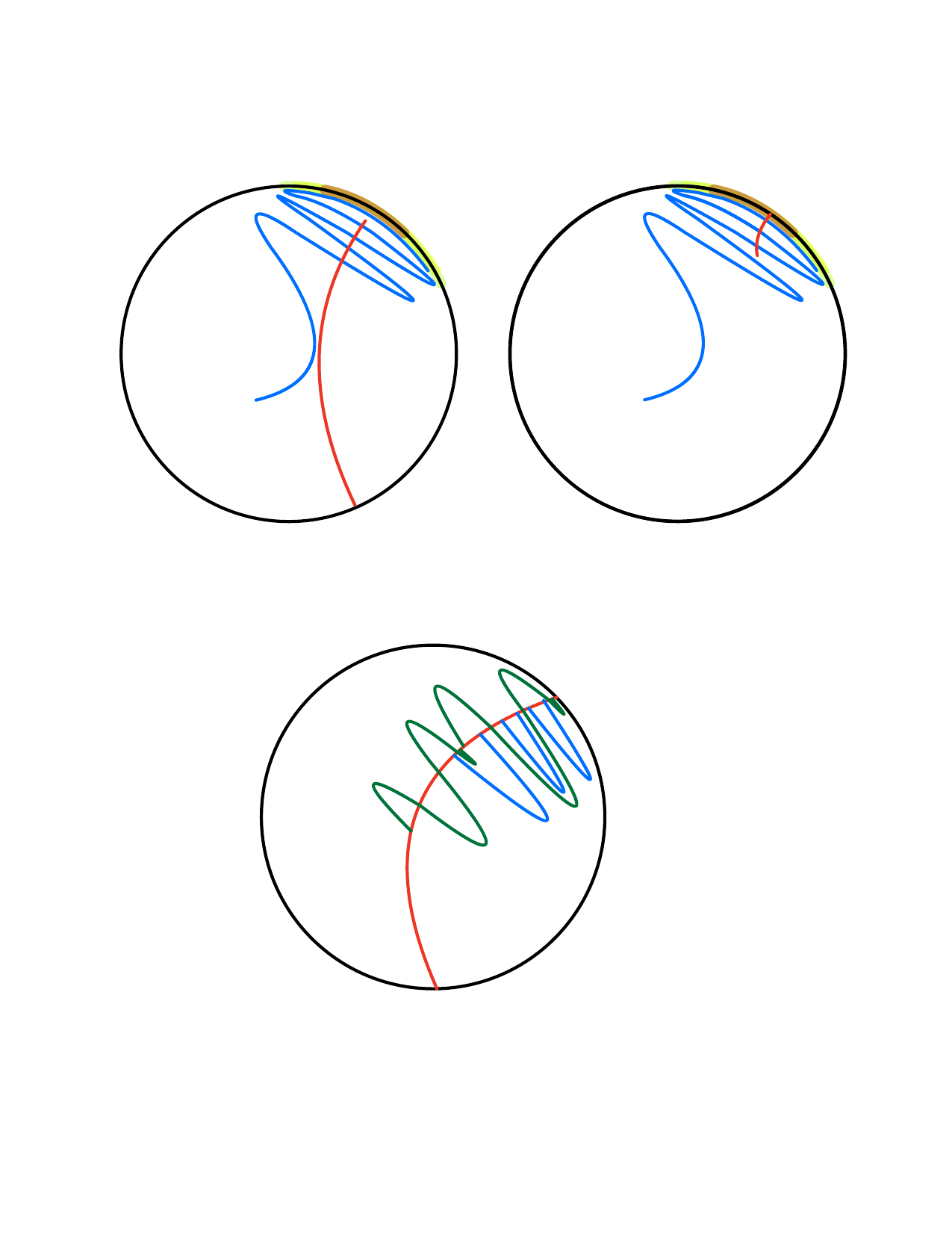}
\begin{picture}(0,0)
\put(-160,80){$\alpha$}
\put(-145,113){$p$}
\put(-92,85){$C$}
\put(-192,85){$D$}
\put(-126,171){$q$}
\put(-138,211){{\small $\gamma^m [p,q]$}}
\put(-89,116){{\small $[x_k,y_k]$}}
\end{picture}
\end{center}
\vspace{-0.5cm}
\caption{{\small Depiction of some objects in Lemma \ref{lema.findingcurves}.}}\label{fig-l3}
\end{figure}

\begin{proof}
We first determine the bounds $a_1$ and $a_2$. We  can cover $\alpha$ by a locally finite family of local product structure boxes 
of $\wcG$ in $L_1$ (which we can choose to be $\gamma$-periodic). A leaf $\ell$ of $\wcG$ cannot intersect the same box twice, since otherwise one can produce a transversal to $\wcF_2$ intersecting a leaf more than once, contradicting Theorem \ref{teo.Novikovetc}. It follows that if a segment $[x,y]$ of a leaf intersects $\alpha$ exactly twice and contains points far from $\alpha$ 
in $L_1$ then the endpoints must be at some distance bounded from below by some constant $a_1$ related to the covering we chose.

Now, let $a_2$ be much larger than the translation distance of $\gamma$ along $\alpha$.
Assume we have an arc $[x,y]$ of a leaf of $\wcG$ so that it has both endpoints in $\alpha$ and is contained in either $C$ or $D$ intersecting $\alpha$ only at the endpoints. Then we have that $[x,y]$ together with the arc $c$ of $\alpha$ joining $x$ and $y$ forms a Jordan curve. This implies that $\gamma x$ cannot be in the interior of the arc $c$, because if that were the case one would produce an intersection between $[x,y]$ and $\gamma [x,y]$  (recall that our standing assumption is that everything is orientable and transversally orientable, thus $\gamma$ preserves $C$ and $D$). Similarly, for $\gamma^{\pm} y$ and $\gamma^{-1}x$. We deduce that the length of $c$ must be smaller than $a_2$ showing the upper bound. 

We now fix some large $N$ and produce the arcs $[x_k,y_k]$ inside $\ell_k$ for sufficiently large $k$. The construction of $[w_k,z_k]$ is completely analogous. We first choose $k_0$ large so that for every $k>k_0$ we have that the ray $s_k \subset L$ created in Lemma \ref{lem.liftwithbothendpointsinH} verifies that it intersects $\ell_0$ in points $[\hat x_k, \hat y_k]$ and so that the interior of the arc is contained in the component which is close to $C$ after applying $\eta_k$. Since $\ell_0$ accumulates in all of $I$ 
(with $J$ contained in the interior of $I$), and the initial point of $s_k$ is very close to a point in $J$ we can assume, by taking $k$ large enough, that the arc $[\hat x_k, \hat y_k]$ has points at arbitrary large distance in $L$ from $\beta_k$.

We can now apply Proposition \ref{prop-pushing} to push the segment $\eta_k [\hat x_k, \hat y_k]$ to a segment $[x_k, y_k]$ (after maybe cutting the boundaries to obtain that the arc only intersects $\alpha$ in the boundaries). 
We emphasize that this is a crucial application of the Hausdorff property
of $\wcG$  in $\wcF_2$ leaves.
To show that this segment has the desired properties, we need to show that it has points at distance larger than $N$ from $\alpha$. Note that $\eta_k \beta_k$ and $\alpha$ are close only in a ray $\alpha_k$ of $\alpha$ (the one close to $\eta_k s_k$) and so it could be that the arc $[x_k, y_k]$ remains at distance less than $N$ from $\alpha$. Note however that we can assume that the length of $[x_k,y_k]$ is as large as desired (it goes to infinity with $k$). Suppose that
for some fixed $N$ the segments $[x_k, y_k]$ never exits the $N$ neighborhood
of $\alpha$. Project to $M$ via $\pi: \mt \to M$ the universal covering projection. The $[x_k,y_k]$ cannot project
to a closed curve because the endpoints are in $\alpha$ but
the interior does not intersect $\alpha$.
But the arcs $[x_k, y_k]$ project to longer and longer segments
in the $N$ neighborhood of a closed curve. 
Take the midpoints of these segments and a convergent subsequence 
of midpoints: this shows that 
there is an entire leaf
contained in this neighborhood. This leaf must spiral 
towards a closed leaf. In fact this forces the existence of
a Reeb annulus: the boundary circles are leaves and the interior
leaves are lines spiraling towards the boundary on both directions
to produce a foliation with non Hausdorff leaf space in the
annulus. This would force the projection of $[x_k, y_k]$ to
be entirely contained in this annulus for $k$ sufficiently big.
 This is incompatible with the segment $[x_k, y_k]$
returning to intersect $\alpha$.

Finally, fix $a_0\ll a_1$ and we note that if $k$ is large enough then $\ell_k$ must have more than $a_2/a_0$ intersections with $\alpha$. 
This is possible as was remarked after the proof of 
Lemma \ref{lem.liftwithbothendpointsinH}. Taking the intersections to a fundamental domain by applying $\gamma^m$ we get pairs of points at distance less than $a_0$ giving the last property. 
\end{proof}

\begin{remark}\label{rem-simplificationHsdff}
Note that if we assume that $\wcG$ is Hausdorff (instead of just asking that the leaf spaces restricted to only leaves in $\wcF_2$ to be Hausdorff as we do in Theorem \ref{t.Landing}), Proposition \ref{prop-Hsdff2Dand3D} implies that
leaf spaces of $\cG_L$ for $L$ in $\wcF_1$ are also  Hausdorff.
This contraditcs the first item of the previous lemma.
This is all that is needed for the proof of the landing property in the setting of Theorem \ref{teo.main}, so the remainder of this section can be skipped by someone only interested in the proof of Theorem \ref{teo.main}. 
\end{remark}

Now we can complete the proof of Theorem \ref{t.Landing}:

\begin{proof}[Proof of Theorem \ref{t.Landing}]
We must show that the previous Lemma gives a contradiction. First, fix $k_0$ and points $p,q \in \ell_{k_0} \cap \alpha$ so that for some $m$, 
$d(\gamma^m p, q) < a_0$. Pick a small arc $u$ of $\alpha$ from $\gamma^m p$ to $q$, it follows that 
$u \cup [p,q]$ (where $[p,q] \subset \ell_{k_0}$) verifies that it projects in $L_1/_{<\gamma^m>}$ to a simple closed curve, equivalently, $\bigcup_{j} \gamma^{jm} (u \cup [p,q])$ provides a simple curve $\beta$ at finite Hausdorff distance from $\alpha$, say distance less than $N_0$. Denote $\hat C$ and $\hat D$ the connected components of $L_1 \setminus \beta$ such that $\hat C$ is contained in the $N_0$-neighborhood of $C$ and $\hat D$ in the $N_0$-neighborhood of $D$ in $L_1$. 

Since $\gamma^m p$ and $q$ are in a foliated box,
we can assume that the foliation $\cG_{L_1}$ is transverse to $\alpha$ in $u$ and so, every leaf of $\cG_{L_1}$ intersects $u$ with the same orientation,
which we assume is from $\hat C$ to $\hat D$. This happens for
any $\gamma^{jm}(u)$, and it follows that leaves of $\wcG$ in $L$
intersecting $\beta$ either cross from $\hat C$ to $\hat D$ or
follow along  $\gamma^{jm}([p,q])$ for  a while  exiting
to  $\hat D$ in the  future  and  $\hat D$ in  the past.
Hence the intersection of  a leaf  of $\wcG$ with $\beta$
is  connected,
and every $\ell_{k}$ is either contained in $\hat C$, contained in $\hat D$, or has one ray contained in $\hat C$ and one contained in $\hat D$.

Since given $N>N_0$ fixed we can, using Lemma \ref{lema.findingcurves}, find arcs of the form $[x_k,y_k]$ on the ray contained in $\hat D$ and arcs of the form $[w_k,z_k]$ contained in $\hat C$. But by choice, this implies that either $[x_k,y_k]$ is contained in a $N_0$-neighborhood of $D$ or $[w_k,z_k]$ is contained in a $N_0$-neighborhood of $C$ contradicting the fact that they contain points at distance larger than $N$ from $\alpha$. This contradiction proves the result. 
\end{proof}
 
\begin{remark}\label{rem-reg}(On regularity) For the proofs of this section
we used the hypothesis on $\cF_1$, namely $C^{0,1+}$ foliation with
Gromov hyperbolic leaves. However we did not use any hypothesis on
$\cF_2$, so the result of landing of rays
 still works for $\cF_2$ with only topological
leaves, and $\cF_1$ with the other hypothesis.  
The same holds for the push through result of 
Section \ref{s.pushthrough}, and also for the small visual
measure property in section \ref{s.nonvisual}, 
where in both cases it only applies to $\cF_1$, that
is small visual  measures of certain  arcs of $\wcG$
in leaves of $\wcF_1$, and similarly for the push through
property.
\end{remark}

\section{A relevant example}\label{s.example}

Before we continue with the proof of 
Theorem \ref{teo.main} we will present an enlightening example that shows how the non-solvability of the fundamental group of $M$ must enter into the proof. Formally, this section is not needed for the proof of Theorem \ref{teo.main} and can be skiped.

Let $\Phi_1$ be a suspension of a linear hyperbolic 
automorphism of the torus. We assume that the weak foliations $\cW^{ws}_1, \cW^{wu}_1$ of $\Phi_1$ are transversely orientable.

Now perturb $\Phi_1$ to $\Phi_2$ as follows:
fix $\theta > 0$ small, and turn the tangent vector of $\Phi_1$
by an angle of $\theta$ still keeping it tangent to $\cW^{wu}_2$. 
Let $\Phi_2$ be the resulting flow. If $\theta$ is small enough,
then $\Phi_2$ is Anosov, and in fact orbitally equivalent to
$\Phi_1$ by the structural stability of Anosov flows. Denote by $\cW^{ws}_2, \cW^{wu}_2$ the weak foliations of $\Phi_2$. Note that by construction we have that $\cW^{wu}_1 = \cW^{wu}_2$ because the weak unstable bundle of $\Phi_1$ is $\Phi_2$-invariant and thus must be one of the invariant bundles of $\Phi_2$. By continuity it must be the weak unstable one. However, the weak stable foliation changes, and in fact, since the vector field tangent to $\Phi_2$ is everywhere transverse to $\cW^{ws}_1$ by construction, we get that $\cW^{ws}_1$ and $\cW^{ws}_2$ are transverse (minimal) foliations.  




We have the following properties: 

\begin{prop}\label{prop-example} 
The foliations $\cW^{ws}_1$ and $\cW^{ws}_2$ are transverse minimal foliations by Gromov hyperbolic leaves. Moreover, the intersected  foliation $\cG$ coincides with the strong stable foliation of $\Phi_1$ and $\Phi_2$ which is Hausdorff but is not by quasigeodesics inside the corresponding leaves. 
\end{prop}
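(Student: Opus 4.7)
The plan is to verify each claim in turn, working in the explicit solv-geometric model for $\mt$.

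First, transversality: the flow direction $X_2$ of $\Phi_2$ is tangent to $\cW^{wu}_1 = \cW^{wu}_2$ and not proportional to the flow direction $X_1$ of $\Phi_1$, since $\theta \neq 0$. Because $T\cW^{wu}_1$ is transverse to $T\cW^{ws}_1$, the vector $X_2 \in T\cW^{ws}_2$ is transverse to $T\cW^{ws}_1$, and this suffices to conclude that the two $2$-dimensional foliations are transverse in $TM$. Minimality of both $\cW^{ws}_1$ and $\cW^{ws}_2$ is the classical fact that the weak stable of a transitive Anosov flow is minimal, applied to $\Phi_1$ and transported to $\Phi_2$ by orbit equivalence. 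For Gromov hyperbolic leaves: in the universal cover, leaves of $\wt{\cW^{ws}_1}$ are isometric to $\HH^2$ with the induced solv metric (classical for the suspension of a linear Anosov), and the orbit equivalence $\Phi_1 \sim \Phi_2$ lifts to a map that restricts to a quasi-isometry between corresponding weak stable leaves, so leaves of $\wt{\cW^{ws}_2}$ are uniformly quasi-isometric to $\HH^2$ as well.

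The main task is identifying $\cG$ with the strong stable foliation of both flows. The crux is the identification $E^{ss}_1 = E^{ss}_2$ of the two strong stable bundles, together with the inclusion $E^{ss}_1 \subset T\cW^{ws}_2$. The inclusion $E^{ss}_1 \subset T\cW^{ws}_1$ holds by definition; for $E^{ss}_1 \subset T\cW^{ws}_2$ I would use that the left-invariant vector field $X$ spanning $E^{ss}_1$ satisfies $[X_2, X] = -\alpha X$, where $\alpha = \cos \theta$ is the $X_1$-component of $X_2$. This follows from the SOL bracket relations $[X_1, X] = -X$ and the commutation of $X$ with the weak unstable direction. Consequently $E^{ss}_1 \oplus \langle X_2 \rangle$ is involutive and $\Phi_2$-invariant, and $X$ is exponentially contracted by $\Phi_2$ at rate $\alpha$; hence this plane field coincides with the unique contracting $2$-plane $T\cW^{ws}_2$ of $\Phi_2$ containing $X_2$. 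By dimension, $T\cG = T\cW^{ws}_1 \cap T\cW^{ws}_2 = E^{ss}_1$, and $\cG$ is the strong stable foliation of both flows.

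Finally, for Hausdorffness and failure of the quasigeodesic property: inside each hyperbolic leaf $L$ of $\wt{\cW^{ws}_i}$, all orbits of $\Phi_i$ are asymptotic to a single ideal point $\xi_L \in S^1(L)$, and the strong stable leaves in $L$ are precisely the horocycles based at $\xi_L$. The horocycle foliation has leaf space canonically identified with $\RR$ (the height parameter), hence is Hausdorff; by Proposition~\ref{prop-Hsdff2Dand3D} this promotes to Hausdorffness of the leaf space of $\wt{\cG}$ in $\mt$. On the other hand, horocycles in $\HH^2$ are the canonical example of properly embedded curves failing to be quasigeodesic: the hyperbolic distance between the endpoints of a horocyclic arc of hyperbolic length $s$ is only $O(\log s)$, so no uniform linear comparison is possible. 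Therefore $\cG$ is not leafwise quasigeodesic. The main obstacle throughout is the identification $E^{ss}_1 = E^{ss}_2$, whose proof hinges on the algebraic structure of the solv Lie algebra together with the fact that the perturbation was performed within the leaves of the common weak unstable foliation.
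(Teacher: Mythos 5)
Your proof is correct and follows the same essential route as the paper: both rest on the explicit solvable (product) model of the suspension and the observation that the strong stable direction $E^{ss}_1$ is also exponentially contracted by $\Phi_2$, which yields $E^{ss}_1 = E^{ss}_2$ and identifies $\cG$ with the horocyclic foliation of each weak-stable leaf, from which Hausdorffness and non-quasigeodesity follow. Your Lie-algebra bracket computation $[X_2, X^{ss}] = -\cos\theta\, X^{ss}$ is just a more explicit way of saying what the paper phrases as ``flowing time $t$ under $\Phi_2$ one sees that points get exponentially close --- because of the product structure,'' so the two arguments coincide in substance.
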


\begin{proof}
This construction has a product structure: one can 
start with $x, y$ coordinates in the plane and a linear
matrix $A$ so that the flow is associated with $A$
and the vertical direction is $t$. If this is the case for $\Phi_1$, then, the flow $\Phi_2$ is then the suspension of the affine automorphism $\hat A$ given by $\hat A p = A p + v$ where $v$ is a vector in $E^u_A$ (the unstable direction of the matrix $A$). 

The product is with
respect to both $x$ and $y$.
In particular the intersection of a weak stable leaf of
$\wwp_1$ and a weak stable leaf of $\wwp_2$ is a horocyle
in the weak stable leaf of $\wwp_1$ and similarly a horocycle
in the weak stable leaf of $\wwp_2$.
More precisely if one considers a horocycle in a 
weak stable leaf of $\wwp_1$, then 
flowing time $t$ under $\wwp_2$ one sees that points get
exponentially close $-$ because of the product structure.
Hence this is also a horocycle in a weak stable leaf of $\wwp_2$.
\end{proof}

Note that this proof is very specific to suspension flows, and it is the reason we added the assumption of $M$ not having solvable fundamental group in the statement of Theorem \ref{teo.main}. If one does the same construction starting with a general Anosov flow in a $3$-manifold and perturbing the flow along the weak unstable foliation one will obtain that the weak stable foliations of the original and perturbed flow are transverse, but it is no longer true that their intersection will happen in horocycles as follows from our main result.\footnote{Heuristically, if one looks at the geodesic flow in the unit tangent bundle of a hyperbolic surface and one perturbes slightly the vector field tangent to geodesics to be outside the weak-stable foliation, the new flow will still be Anosov and its weak stable foliation will intersect the former one in curves which have constant geodesic curvature in the weak-stables, and less than the curvature or horocycles, thus, will be quasigeodesics.}

\section{Small visual measure and the quasigeodesic property}\label{s.SVMQG}
We recall here the notion of small visual measure, its properties, and its relation with the quasigeodesic property. 

First, we give the precise definition and fix our setup. Fix a foliation $\cF$ by Gromov hyperbolic leaves of a closed 3-manifold $M$ and consider a leaf $L \in \wcF$ in the universal cover. Given $x \in L$ and $K \subset L$ we define the shadow $\mathrm{Sh}_x(K,L)$ to be the set of points $\xi$ in $S^1(L)$ for which there is a minimizing geodesic ray from $x$ to some point in $K$ that lands in $\xi$.  

If the metric in $M$ makes all leaves to have negative curvature, 
given $x \in L$ and an interval $I \subset S^1(L)$ we define the \emph{visual measure} of $I$ from $x$ to be the length of the interval of $T^1_x L$ consisting of unit vectors $v$ for which the geodesic ray from $x$ with initial condition $(x,v)$ lands in some point in $I$.  Note that this is not canonical, but the notion we shall define is independent of this choice (see \cite[\S 2.5,4.3]{FP2}). 

The general case of Gromov hyperbolic leaves requires using the definition of visual measure using the Gromov product. Recall that a visual metric on $S^1(L)$ with parameter $a>1$ seen from $x$ is a distance $d_x$ in $S^1(L)$ such that there is a constant $k>0$ so that for $\xi,\xi' \in S^1(L)$ it verifies: 

\begin{equation}\label{eq:visualmeasure} k^{-1} a^{-(\xi | \xi')_x} < d_x(\xi,\xi') < k a^{-(\xi|\xi')}. \end{equation}

\noindent where $(\xi|\xi')_x$ denotes the Gromov product seen from $x$ and defined as $(\xi|\xi')_x = \liminf_t \frac{1}{2}( d_L(x, x(t)) + d_L(x, x'(t)) - d_L(x(t),x'(t)))$ where $x(t)$ and $x'(t)$ are geodesic rays landing respectively at $\xi$ and $\xi'$. Such metrics exist in any Gromov hyperbolic space (see \cite[III.H.3]{BridsonHaefliger}) for values of $a$ that depend only on the hyperbolicity constant of the spaces. For us, we will just pick one with a fixed given parameter and only use some coarse properties of the metric, so that the property \eqref{eq:visualmeasure} is more than enough.

\begin{defi}
We say that a one dimensional subfoliation $\cT$ of $\cF$ has the \emph{small visual measure} property if for every $\eps>0$ there is some uniform constant $R>0$ such that if $x\in L \in \wcF$ and $c \subset \ell \in \wt{\cT}$ is a segment of leaf contained in $L$ such that $d_L(x,c) >R$, then the shadow $\mathrm{Sh}_x(c,L)$ has visual measure smaller than $\eps$. 
\end{defi}

Note that the small visual measure property is strictly stronger than the landing property. 
On the one hand, if $\cT$ has the small visual measure property in $\cF$ it follows that any ray $r$ of $\cT$ in a leaf must land in its corresponding leaf
because of the following argument:  parametrize $r$ by arclength and consider $r_n$ the subray of $r$ so that the segment between the starting points has length $n$. Then,  since $r$ is proper, the ray $r_n$ is at distance going to infinity from the starting point of $r$ as $n \to \infty$. By the small visual measure property, this implies that the closure of $r_n$ in $S^1(L)$ is contained in $r_n$ plus an interval of $S^1(L)$ of small visual measure, with measure
going to $0$ when $n$ goes to infinity. Therefore the limit set of $r$
can only be a singleton, and $r$ lands.  On the other hand, one can make an example on which all leaves land, but for which the small visual measure property fails, this is given for instance by the horocyclic foliation of an Anosov flow, for which all rays land in their corresponding leaves, but it does not have the small visual measure property. Note that this example verifies that the leaf space of $\widetilde \cT$ is Hausdorff. 

All along, when $\cT$ is a subfoliation of $\cF$ we will implicitly assume that $\cT$ is by $C^1$-leaves and tangent to a continuous vector field (this includes the orientability assumption) so that notions such as length make sense. 

A relevant implication of small visual measure is the fact that segments of curves of the foliation 
contain geodesic segments joining their endpoints in a bounded neighborhood.
More specifically  the following is proved in \cite[Lemma 5.9]{FP2} and \cite[Proposition 5.2]{FP4}. 

\begin{prop}\label{prop-charSVM} 
If $\cT$ has the small visual measure property in $\cF$, it follows that there is a constant $a_0>0$ such that if $c \subset \ell \in \wt{\cT}$ is a compact segment inside a leaf $L \in \wcF$ then we have that any geodesic segment joining the endpoints of $c$ is contained in $B_{a_0}(c) = \{z \in L \ : \ d_L(z,c) < a_0 \}$.  
\end{prop}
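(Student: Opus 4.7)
The plan is to argue by contradiction. If no uniform $a_0$ existed, for each $n \geq 1$ I could find a leaf $L_n \in \wcF$, a compact segment $c_n \subset \ell_n \in \wt{\cT}$ contained in $L_n$ with endpoints $x_n, y_n$, a minimizing geodesic segment $g_n \subset L_n$ from $x_n$ to $y_n$, and a point $z_n \in g_n$ with $d_n := d_{L_n}(z_n, c_n) \to \infty$. The key goal is to show that $\mathrm{Sh}_{z_n}(c_n, L_n)$ has visual measure from $z_n$ bounded below by a uniform $\eps_0 > 0$; combined with the small visual measure hypothesis applied to $\eps = \eps_0/2$ (which produces a threshold $R$ such that any shadow of a curve at distance $>R$ has visual measure $<\eps_0/2$), this yields the desired contradiction as soon as $d_n > R$.

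To construct $\eps_0$, I exploit the fact that $z_n$ lies between the two endpoints of $c_n$ along a minimizing geodesic. For each $p \in c_n$, let $v(p) \in T^1_{z_n} L_n$ be the initial unit vector of a minimizing geodesic segment from $z_n$ to $p$; using uniform Gromov hyperbolicity of leaves of $\wcF$, extend this segment past $p$ to a minimizing ray in $L_n$ landing at some $\xi(p) \in S^1(L_n)$. By definition $\xi(p) \in \mathrm{Sh}_{z_n}(c_n, L_n)$, so $v(p)$ belongs to the set of unit directions whose minimizing rays from $z_n$ land in the shadow, and hence the visual measure of the shadow is bounded below by the angular length of $\{v(p) : p \in c_n\}$ in $T^1_{z_n} L_n$. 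Since $g_n$ is a minimizing geodesic through $z_n$ and $x_n, y_n$ lie on opposite sides of $z_n$ along $g_n$, the directions $v(x_n)$ and $v(y_n)$ are (essentially) antipodal in $T^1_{z_n} L_n$. Combining this with connectedness of $c_n$ and continuity (up to bounded error) of the assignment $p \mapsto v(p)$, the image $\{v(p) : p \in c_n\}$ must contain a connected arc joining near-antipodal points, which has angular length bounded below by some $\eps_0 > 0$ (near $\pi$).

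The main technical obstacle is justifying rigorously the continuity of $v$ and the exact antipodality of $v(x_n), v(y_n)$ in the uniformly Gromov hyperbolic (rather than negatively curved) setting, where minimizing geodesics from $z_n$ need not be unique and angles between them are only defined up to a bounded additive error in the hyperbolicity constant. A clean way to handle this will be to work in a Candel metric on $M$, in which leaves are genuinely hyperbolic: there $v$ is a well-defined continuous map and $v(x_n) = -v(y_n)$ exactly, so $\{v(p) : p \in c_n\}$ is a genuine connected arc of length $\geq \pi$ in $T^1_{z_n} L_n$. The identity map is then a uniform quasi-isometry between the original metric and the Candel metric and preserves shadows, leafwise distances, and Gromov boundaries up to bounded error, so the final neighborhood bound transfers back to the original metric with $a_0$ enlarged by a bounded additive constant.
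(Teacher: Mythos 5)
Your argument is correct, and it is essentially the standard argument for this kind of statement. The paper itself does not give a proof of Proposition~\ref{prop-charSVM} but defers to \cite[Lemma~5.9]{FP2} and \cite[Proposition~5.2]{FP4}, so there is no in-paper proof to compare against; your route by contradiction via the antipodality of the directions $v(x_n)$, $v(y_n)$ at a point $z_n$ on the geodesic, plus connectedness of $c_n$, is precisely the natural way to get a lower bound on the visual measure of the shadow, which then contradicts the small visual measure hypothesis once $d_{L_n}(z_n,c_n)$ exceeds the threshold $R$ for $\eps<\pi$.

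One organizational point you should tighten. As written, you first produce the contradiction sequence $(c_n,g_n,z_n)$ in the ambient metric and only later invoke the Candel metric to make $v$ well-defined and $v(x_n)=-v(y_n)$ exact. But $z_n$ was chosen on a geodesic $g_n$ in the original metric, and in the Candel metric $z_n$ will generally \emph{not} lie on the (unique) Candel-geodesic from $x_n$ to $y_n$, so the exact antipodality $v(x_n)=-v(y_n)$ does not follow directly. The fix is to switch to the Candel metric at the very start: both the statement of the proposition (by the Morse lemma, since geodesics in quasi-isometric Gromov hyperbolic metrics stay at uniformly bounded Hausdorff distance, the conclusion holds in one metric iff it holds in the other with $a_0$ adjusted by a uniform constant) and the small visual measure hypothesis (metric-independent as noted after its definition in the paper) transfer cleanly, so one may set up the contradiction sequence entirely in the Candel metric. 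With that reordering, each $v(p)$ is uniquely determined, $v$ is continuous on the compact connected set $c_n$, the image is a closed arc containing the antipodal pair $v(x_n),v(y_n)$ and hence has angular length $\geq\pi$, each ray lands in $\mathrm{Sh}_{z_n}(c_n,L_n)$, and the shadow has visual measure $\geq\pi$, contradicting the hypothesis for $n$ large. This is a presentational clean-up rather than a mathematical gap, since you do flag the issue and the transfer; you just invoke it in the wrong order.
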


Proposition \ref{prop-charSVM} says that small visual measure property is \emph{half} of what one needs to prove to get the uniform quasigeodesic property for the foliation. More precisely, we have the following characterization of being uniformly quasigeodesic that requires the property ensured by the previous proposition plus a symmetric one. Recall that, for $L \in \wcF$ we denote by $\cT_L$ to the restriction of the foliation $\wt{\cT}$ to the leaf $L$.  

\begin{prop}\label{prop-charQI}
A one dimensional subfoliation $\cT$ of $\cF$ is leafwise (uniformly) quasigeodesic\footnote{Here we are assuming that length in leaves of $\cT$ is measured by arclength by choosing a vector field tangent to the leaves with unit size with respect to the metric. Recall that $\cT$ is by $C^1$-leaves tangent to a vector field.} if and only if there is a constant $a_1>0$ such that for every $L \in \wcF$ and every compact segment $c \subset \ell \in \cT_L$  we have that if $g_c$ is a geodesic ray in $L$ joining the endpoints of $c$, then the Hausdorff distance between $c$ and $g_c$ in $L$ is less than $a_1$, more precisely: 
\begin{itemize}
\item $g_c \subset B_{a_1}(c)$, and, 
\item $c \subset B_{a_1}(g_c)$. 
\end{itemize}
\end{prop}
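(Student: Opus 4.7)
The equivalence is proved in two directions, each requiring very different arguments.

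\emph{Forward direction.} If $\cT$ is leafwise uniformly $Q$-quasigeodesic, then every segment $c \subset \ell \in \cT_L$ is a $(Q,Q)$-quasi-isometric embedding of an arclength interval into the Gromov hyperbolic leaf $L$. Since the leaves of $\wcF$ are uniformly $\delta$-Gromov hyperbolic, the Morse Lemma (the stability theorem for quasigeodesics; see e.g.\ \cite{GhysdelaHarpe}) yields a uniform $a_1 = a_1(\delta, Q)$ such that both inclusions $c \subset B_{a_1}(g_c)$ and $g_c \subset B_{a_1}(c)$ hold.

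\emph{Backward direction.} Assume the Hausdorff bound with constant $a_1$. Since $c$ is parametrized by arclength, $d_L(c(s), c(t)) \le |s-t|$ is automatic, so it suffices to exhibit a universal $K$ with $|s-t| \le K\, d_L(c(s), c(t)) + K$, equivalently $T \le K\, d + K$ for $T = \mathrm{length}(c)$ and $d = d_L(c(0), c(T))$. The argument is local-to-global and proceeds in two steps.

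\emph{Step 1 (local length bound).} There is a constant $M$ depending only on $a_1$ and the local product structure size $\eps_0 > 0$ of $\cT$ in $M$ such that every segment of a leaf of $\wcT$ whose endpoints lie at $L$-distance at most $\eps_0$ has arclength at most $M$. This is the essential difficulty of the proof: the conclusion does \emph{not} hold in general for rectifiable curves in Gromov hyperbolic spaces (oscillations can make arclength arbitrarily large inside a bounded neighborhood of a short geodesic), and it genuinely uses the foliation structure of $\cT$. The argument proceeds by contradiction and compactness: a hypothetical violating sequence of segments, after translation by deck transformations, converges to a leaf of $\wcT$ containing long sub-arcs inside a bounded region of $\mt$. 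Applying the Hausdorff hypothesis to ever larger sub-arcs forces them to sit in ever smaller balls around an accumulation point, producing many plaques of a single leaf inside a single local product chart of $\cT$ and contradicting the local embedding structure of the foliation.

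\emph{Step 2 (global linear bound).} Cover $g_c$ by a chain of $\lceil d/\eps_0\rceil$ consecutive geodesic sub-segments of length $\eps_0$. Using the second inclusion $g_c \subset B_{a_1}(c)$ together with Poincar\'e--Bendixson type arguments for the one dimensional foliation $\cT_L$ in the simply connected surface $L$ (cf.\ Theorem \ref{teo.Novikovetc}), one shows that $c$ traverses this chain in a coarsely monotone fashion, entering each link a uniformly bounded number of times. By Step 1 each visit contributes arclength at most $M$, and summing yields $T \le K d + K$ with $K$ depending only on $a_1$, $\eps_0$, $M$, and the uniform hyperbolicity constant of leaves of $\wcF$. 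The main obstacle is Step 1; once it is in place, Step 2 is essentially a packing argument in the Gromov hyperbolic plane $L$.
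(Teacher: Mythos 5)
Your forward direction matches the paper's citation of the Morse lemma; for the converse the paper cites \cite[Proposition 7.9]{FP2} and \cite[\S 6]{ChandaFenley} rather than giving an argument, so your attempt at a self-contained proof is a different route, and that is where I focus. You correctly identify that the Hausdorff bound alone cannot control arclength (it fails for general rectifiable curves) and that the foliation structure of $\cT$ is the essential extra input. However, your Step~1 argument is incorrect as written. The assertion that applying the hypothesis to ``ever larger sub-arcs forces them to sit in ever smaller balls'' does not hold: for a sub-arc $c'$ whose endpoints lie at distance $d' \le 2(a_1+\eps_0)$, the inclusion $c'\subset B_{a_1}(g_{c'})$ only places $c'$ in a ball of radius $a_1 + d'$, and these radii do not shrink as the sub-arcs grow. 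The compactness/limit set-up is also delicate, since arclength is merely lower semicontinuous and a limit of longer and longer sub-arcs confined to a fixed ball need not itself be a single sub-arc of a single leaf of $\wt\cT$.

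The mechanism you gesture at (``producing many plaques of a single leaf inside a single local product chart'') is the right one, but it needs to be stated and used cleanly. Because $\cT_L$ is a nonsingular one-dimensional foliation of the plane $L$, a transversal to $\cT_L$ meets each leaf at most once (Jordan curve plus Poincar\'e--Bendixson), hence a leaf $\ell$ meets a foliation chart $U$ of $\cT_L$ in at most one plaque; consequently a connected sub-arc $c$ of $\ell$ meets $\overline U$ in at most one connected piece (otherwise the entry and re-entry points both lie in the unique plaque $\ell\cap\overline U$, which would then contain the whole sub-arc between them, contradicting that this sub-arc leaves $\overline U$). Each such piece has uniformly bounded length by compactness of $M$ and continuity of the unit tangent field. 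With this in hand the split into Steps~1 and~2 is unnecessary: $c\subset B_{a_1}(g_c)$, the $a_1$-tube about $g_c$ has volume linear in $d=\mathrm{length}(g_c)$ since leaves of $\wcF$ are uniformly Gromov hyperbolic, so it is covered by $O(d)$ foliation charts, each contributing uniformly bounded arclength of $c$, yielding $\mathrm{length}(c)\le Kd+K$; together with the trivial $1$-Lipschitz bound $d_L(c(s),c(t))\le |s-t|$ this gives the uniform quasigeodesic property.
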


As mentioned, one of the conditions (namely that $g_c \subset B_{a_1}(c)$) is guaranteed by the small visual measure property. 

\begin{proof}
The direct implication is the classical Morse Lemma for Gromov hyperbolic spaces (see e.g. \cite[Theorem III.H.1.7]{BridsonHaefliger}). For the converse, see \cite[Proposition 7.9]{FP2} (see also \cite[\S 6]{ChandaFenley}).  
\end{proof}


We can now state the main result of this section.
It says that in our setting, the Hausdorff property plus the
small visual measure property implies the uniform quasigeodesic
property:

\begin{prop}\label{prop.svmimpliesqg} 
Let $\cT$ be a subfoliation of $\cF$ so that leaves of $\cT$ have the small visual measure property. Assume moreover that for every $L \in \wcF$ we have that the leaf space of $\cT_L$ is Hausdorff. Then, the foliation $\cT$ is by uniform quasigeodesics in $\cF$. 
\end{prop}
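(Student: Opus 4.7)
The plan is to apply Proposition~\ref{prop-charQI}, which characterises the leafwise uniform quasigeodesic property as the conjunction of two inclusions, uniform in some $a_1$: for every compact arc $c \subset \ell \in \cT_L$ with $L \in \wcF$ one should have $g_c \subset B_{a_1}(c)$ and $c \subset B_{a_1}(g_c)$, where $g_c$ is a geodesic in $L$ joining the endpoints of $c$. Proposition~\ref{prop-charSVM} directly supplies the first inclusion with some constant $a_0$ from the SVM hypothesis, so my task reduces to establishing the second uniformly.

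I argue by contradiction. If no uniform constant works, there are leaves $L_n \in \wcF$, arcs $c_n \subset \ell_n \in \cT_{L_n}$ with geodesic joins $g_n \subset L_n$, and points $p_n \in c_n$ with $d_{L_n}(p_n, g_n) \to \infty$. The geometric step is a U-shape argument. Since $g_n \subset B_{a_0}(c_n)$ by SVM, tracking a closest point on $c_n$ within distance $a_0$ as one traverses $g_n$ gives a correspondence starting at the endpoint of $c_n$ before $p_n$ and ending at the endpoint after $p_n$; it cannot, however, meet the arc-length $(n-a_0)$-neighbourhood of $p_n$ inside $\ell_n$, since every point there has $L_n$-distance $>a_0$ from $g_n$ ($L_n$-distance is bounded by $\ell_n$-arc-length). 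At a transition between the two sides, one extracts points $u_n, v_n \in c_n$ on opposite sides of $p_n$ along $\ell_n$ with $d_{L_n}(u_n, v_n) \leq 2a_0 + 1$, while the $\ell_n$-arc $[u_n, v_n]$ contains $p_n$ and hence has length at least $2(n-a_0)$.

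To close, after applying deck transformations I may assume $u_n \to u_\infty$, and the uniform bound on $d_{L_n}(u_n, v_n)$ gives $v_n \to v_\infty$ along a subsequence; working in a foliated chart for $\wcF$ about $u_\infty$ of radius larger than $2a_0+1$ forces $u_n, v_n$ to lie in a common plaque of $L_n$ for large $n$, so $u_\infty, v_\infty$ sit in a common leaf $L_\infty \in \wcF$. The contradiction will come from an intermediate uniform lemma, in the spirit of Proposition~\ref{prop.Hsdffimpliesunifpropemb}: the hypothesis that $\cT_L$ has Hausdorff leaf space for every $L \in \wcF$, combined with compactness of $M$, yields a proper function $\rho \colon \RR_+ \to \RR_+$ such that any arc of a leaf of $\wt{\cT}$ inside some $L \in \wcF$, with endpoints at $L$-distance at most $r$, has length at most $\rho(r)$. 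Granted this lemma, $[u_n, v_n]$ would have length at most $\rho(2a_0+1)$, contradicting the divergence.

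The main obstacle is the uniform lemma itself. I would prove it by contradiction, along the lines of Proposition~\ref{prop.Hsdffimpliesunifpropemb}: a violating sequence of arcs with bounded endpoint leaf-distance and divergent length can, after deck translation, be assumed to converge at its endpoints to points in a common leaf $L_\infty \in \wcF$. The 2D Hausdorff hypothesis on $\cT_{L_\infty}$ forces its leaves to be properly embedded lines in $L_\infty \cong \RR^2$ (a self-accumulating leaf of a 1D foliation of the plane would produce a Poincar\'e--Bendixson-style limit-cycle obstruction incompatible with Hausdorffness of the leaf space). Hence the endpoint limits lie on a common leaf of $\cT_{L_\infty}$, and properness of its embedding bounds the limiting arc length, yielding the contradiction.
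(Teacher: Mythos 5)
Your reduction via Propositions~\ref{prop-charQI} and~\ref{prop-charSVM}, and the ``U-shape'' extraction of points $u_n, v_n$ with $d_{L_n}(u_n,v_n)$ uniformly bounded while the $\ell_n$-arc between them has length $\to \infty$, are sound and parallel exactly the paper's construction of the pairs $x'_n, y'_n$. The proof you then route through a ``uniform lemma'' (a proper $\rho$ bounding arc length by leafwise endpoint distance) which, if established, would indeed close the argument. The problem is the proof of that lemma: the step ``Hence the endpoint limits lie on a common leaf of $\cT_{L_\infty}$'' does not follow from anything you wrote. Leaves of any one-dimensional foliation of $\RR^2$ are automatically properly embedded lines, Hausdorff leaf space or not, so your parenthetical about Poincar\'e--Bendixson is true but vacuous here and gives no information about whether $u_\infty$ and $v_\infty$ lie on the same leaf of $\cT_{L_\infty}$.

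The case your argument does not handle is the live one: $u_\infty$ and $v_\infty$ landing on \emph{distinct} leaves of $\cT_{L_\infty}$. Here you must convert that into a contradiction with the leafwise Hausdorff hypothesis, and the subtlety is that the arcs $[u_n,v_n]$ live in $\cT_{L_n}$, not in $\cT_{L_\infty}$, so one cannot directly speak of a non-separated pair in $\cT_{L_\infty}$. This is precisely what the paper's Lemma~\ref{lema-limitsvm} supplies: assuming $\cT_{L_\infty}$ Hausdorff, take a transversal $\tau$ in $L_\infty$ from $u_\infty$ to $v_\infty$, thicken it to a disk $\cD$ transverse to $\wt{\cT}$ whose horizontal slices stay in single leaves of $\wcF$, and observe that $\ell_n$ must meet $\cD$ near both $u_n$ and $v_n$; the slice of $\cD$ in $L_n$ is then a transversal to $\cT_{L_n}$ running from the leaf $\ell_n$ back to itself, which is impossible for a one-dimensional foliation of a plane. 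Note also that Proposition~\ref{prop.Hsdffimpliesunifpropemb}, which you cite as a model, is proved under the 3D Hausdorff hypothesis for $\wcG$ and exploits $\cG = \cF_1 \cap \cF_2$ via Novikov; it does not directly transfer to a general subfoliation $\cT$ under only the leafwise hypothesis, which is why the disk argument is needed. With that lemma in place your proof closes; without it, there is a genuine gap.
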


An important consequence of the Hausdorff hypothesis
is that one can take limits of leaves and get some results on these limits:

\begin{lema}\label{lema-limitsvm} 
Let $\cT$ be a one dimensional subfoliation of $\cF$ with the small visual measure property and consider a sequence $x_n \to x_\infty$ in $\mt$. Let $\ell_n, L_n$ be respectively the leaves of $\wt{\cT}$ and $\wcF$ containing $x_n$ and let $\ell_\infty, L_\infty$ be the leaves of $\wt{\cT}$ and $\wcF$ containing $x_\infty$. If the set of leaves of $\wt{\cT}$ on which the leaves $\ell_n$ limit inside $L_\infty$ contains more than one leaf, then $\cT_{L_\infty}$ does not have Hausdorff leaf space. 
\end{lema}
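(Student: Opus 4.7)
\emph{Plan of proof.} After passing to subsequences, extract from the hypothesis two distinct leaves $\ell^{(1)}, \ell^{(2)} \in \cT_{L_\infty}$ together with points $y^{(i)} \in \ell^{(i)}$ and sequences $y_n^{(i)} \in \ell_n$ with $y_n^{(i)} \to y^{(i)}$; let $c_n \subset \ell_n$ be the arc joining $y_n^{(1)}$ to $y_n^{(2)}$. The plan is to transfer each $c_n$ into $L_\infty$ along the transverse structure of $\wcF$ in order to produce a single leaf $\hat\ell_n$ of $\cT_{L_\infty}$ passing near both $\ell^{(1)}$ and $\ell^{(2)}$; the failure of Hausdorffness of $\cO_{L_\infty}$ then follows immediately by taking limits of the $\hat\ell_n$.

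By the small visual measure hypothesis together with Proposition~\ref{prop-charSVM}, there is a uniform $a_0 > 0$ so that $c_n$ lies in the intrinsic $a_0$-neighborhood in $L_n$ of the $L_n$-geodesic $g_n$ from $y_n^{(1)}$ to $y_n^{(2)}$. Since leaves of $\wcF$ vary continuously and the endpoints converge, the geodesics $g_n$ subconverge in $\mt$ to an $L_\infty$-geodesic $g_\infty$ from $y^{(1)}$ to $y^{(2)}$, so for $n$ large the arcs $c_n$ all lie in a fixed simply connected compact neighborhood $N \subset \mt$ of $g_\infty$. Cover $N$ by finitely many charts of $\mt$ that simultaneously trivialize $\cF$ (as horizontal plaques) and $\cT$ (as horizontal line segments inside those plaques); such charts exist since $\cT$ subfoliates $\cF$ and is tangent to a continuous vector field. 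In each such chart, the $\wcF$-transverse projection from level $L_n$ to level $L_\infty$ sends $\cT_{L_n}$-plaques to $\cT_{L_\infty}$-plaques. Using the simple connectedness of $N$, these chart-wise projections glue to a single continuous map $h_n \colon \Sigma_n \to L_\infty$, where $\Sigma_n$ is the connected component of $L_n \cap N$ containing $c_n$.

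Therefore $\hat c_n := h_n(c_n)$ is a connected curve in $L_\infty$ that locally is a union of $\cT_{L_\infty}$-plaques sharing endpoints, so $\hat c_n$ is contained in a single leaf $\hat\ell_n$ of $\cT_{L_\infty}$, with endpoints $\hat y_n^{(i)} = h_n(y_n^{(i)}) \to y^{(i)}$ by continuity. Passing to a further subsequence, the leaves $\hat\ell_n$ accumulate on a closed $\cT_{L_\infty}$-saturated subset of $L_\infty$ containing both $y^{(1)}$ and $y^{(2)}$, hence containing both $\ell^{(1)} \ne \ell^{(2)}$; equivalently, the sequence $\hat\ell_n$ has (at least) two distinct limits in $\cO_{L_\infty}$, so $\cO_{L_\infty}$ is not Hausdorff.

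The main obstacle is making sense of the transverse projection $h_n$ along the whole arc $c_n$, because $c_n$ may have unbounded intrinsic length in $L_n$ even though its endpoints converge — the horocyclic example mentioned earlier shows this phenomenon really occurs in Gromov hyperbolic leaves. The small visual measure hypothesis is used precisely to confine $c_n$ to the compact simply connected region $N$, inside of which the $\wcF$-transverse direction produces a coherent, single-valued continuous projection to $L_\infty$ that respects the subfoliation $\cT$ chart by chart.
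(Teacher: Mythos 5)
Your proof has a genuine gap at the crucial step where you claim that Proposition~\ref{prop-charSVM} gives a uniform $a_0>0$ with $c_n \subset B_{a_0}(g_n)$. That proposition gives the \emph{opposite} inclusion $g_n \subset B_{a_0}(c_n)$ (the geodesic stays near the arc of the subfoliation), which says nothing about the arc $c_n$ staying near the geodesic. The inclusion you need is precisely the ``other half'' of the quasigeodesic characterization in Proposition~\ref{prop-charQI}, and it is exactly what Proposition~\ref{prop.svmimpliesqg} is trying to establish --- using the present Lemma as one of its inputs. So invoking it here is circular. Concretely, the failure is real: with only small visual measure in hand, the arcs $c_n$ can have lengths tending to infinity even though their endpoints converge (that is exactly the phenomenon that the Lemma is meant to detect), so they need not be confined to any fixed compact $N$, and the ``gluing of chart-wise transverse projections along $c_n$'' does not make sense. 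You identify this obstacle at the end of your argument, but the resolution you propose does not follow from the cited proposition.

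The paper's proof avoids transporting the arc $c_n$ altogether and does not actually use the small visual measure hypothesis in the argument. It runs by contradiction: assume $\cT_{L_\infty}$ has Hausdorff leaf space, take a transversal $\tau$ in $L_\infty$ from $\ell_x$ to $\ell_y$ (Hausdorffness of the one-dimensional leaf space of a plane foliation gives this), and extend $\tau$ to a $2$-disk $\cD$ transverse to $\wt\cT$ with horizontal slices $\cD(\cdot,s)$ contained in single $\wcF$-leaves. For $n$ large, $\ell_n$ meets $\cD$ near $x_n$ and near $y_n$; since $\ell_n \subset L_n$, these two intersection points lie on the same slice $\cD(\cdot,s_n) \subset L_n$, which is a transversal to $\cT_{L_n}$ hitting the single leaf $\ell_n$ twice. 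That contradicts $\cT_{L_n}$ being a foliation of the plane. This pushes a short transversal rather than a possibly long arc, which is what sidesteps the length problem you correctly identified but could not overcome.
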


\begin{proof} 
Let $\ell_x$ be the leaf of $\wt{\cT}$ through $x_\infty$ and assume that $\ell_n$ has points $y_n$ so that $y_n \to y_\infty \in L_\infty$ so that the leaf $\ell_y \in \wt{\cT}$ containing $y_\infty$ is different from $\ell_x$. 
Let $L_n$ be the leaf of $\wcF$ containing $\ell_n$.

Assuming that $\cT_{L_\infty}$ has Hausdorff leaf space, then, there is a transversal $\tau: (-\eps,1+\eps) \to L_\infty$ to $\cT_{L_\infty}$ so that $\tau(0)= x_\infty$ and $\tau(1)= y_\infty$. We can extend the transversal $\tau$ to a disk $\cD: (-\eps,1+\eps) \times (-\delta,\delta) \to \mt$ which is everywhere transverse to $\wt{\cT}$ and so that for a given $s \in (-\delta,\delta)$ we have that $\cD(t,s)$ belongs to the same leaf of $\wcF$ for every $t \in (-\eps,1+\eps)$, in particular the curve $t \mapsto \cD(t,s)$ is transverse to the foliation $\wt{\cT}$ restricted to the corresponding leaf of $\wt{\cF}$. Since $x_n,y_n$ converge to $x_\infty$ and $y_\infty$, we deduce that the leaf $\ell_n$ intersects the image of $\cD$ in points $\hat x_n, \hat y_n$ close to $x_n$ and $y_n$ respectively. This implies that $\ell_n$ intersects the disk $\cD$ twice. Hence in $\cD$ there is a segment in $L_n$,
a transversal $\beta$ to 
$\cT_{L_n}$, which joins $\hat x_n$ and $\hat y_n$.
It follows that $\cT_{L_n}$ is a one dimensional
foliation of the plane $L_n$, and $\beta$ is a transversal
from the  leaf of $\cT_{L_n}$ through $\hat x_n$
to itself. This is impossible.
\end{proof}


Now we are ready to prove Proposition \ref{prop.svmimpliesqg}:

\begin{proof}[Proof of Proposition \ref{prop.svmimpliesqg}]
We know by Proposition \ref{prop-charSVM} that there is $a_0>0$ such that if $L \in \wcF$ and $c \subset \ell \in \cT_L$ is a compact segment and $g_c$ a geodesic joining its endpoints, then $g_c \subset B_{a_0}(c)$. We must show that there is another constant $a_1>0$ such that $c \subset B_{a_1}(g_c)$ and the proposition will follow from Proposition \ref{prop-charQI}. 

We assume by contradiction this is not the case, so we can construct a sequence $c_n=[x_n,y_n]$ of compact segments of curves $\ell_n \in \wt{\cT}$ such that if $\ell_n$ is contained in $L_n \in \wcF$ then we have that there is a geodesic segment $g_n \subset L_n$ joining $x_n,y_n$ such that $c_n \notin B_n(g_n)$, that is, there is a point $z_n \in c_n$ which is at distance larger than $n$ from $g_n$. 

Note that $g_n \subset B_{a_0}(c_n)$ by the small visual measure property (cf. Proposition \ref{prop-charSVM}) and so, we can pick points $x'_n, y_n' \in [x_n,y_n]$ so that the following properties are verified: 

\begin{itemize}
\item $z_n \in [x'_n,y'_n]$, 
\item $x'_n, y'_n \in B_{a_0}(g_c)$, 
\item $d_L(x'_n,y'_n) \leq 3 a_0$.  
\end{itemize}

To construct such points, given $n$ one can subdivide $g_n$ into finitely many points at distance less than $a_0$ from each other, and there are points $x_n=q_0, q_1, \ldots, q_k = y_n$ in $[x_n,y_n]$ which are $a_0$ close
(that is $d_L(q_i, g_c) < a_0$) to such points (in order). One gets that $d_L(q_i, q_{i+1})\leq 3a_0$,  and the $q_i$ can be chosen so that the union of the intervals $[q_i, q_{i+1}]$ covers $[x_n,y_n]$. Thus, one can find consecutive ones $q_i, q_{i+1}$ so that $z_n \in [q_i,q_{i+1}]$.
Now let $x'_n = q_i, y'_n = q_{i+1}$, then all the conditions are verified. 

Up to composing with deck transformations and taking subsequences we can assume that $x_n' \to x_\infty$ and $y_n' \to y_\infty$. Since
$d_{L_n}(x'_n, y'_n) \leq 3 a_0$ it follows that
$x_\infty, y_\infty$ are in the same leaf $L_\infty \in \wcF$
(here $L_n$ is the leaf containing $x'_n$). Now, since the length of the arc $[x_n',y_n']$ must go to infinity, this implies that the leaves of $\cT_{L_\infty}$ containing $x_\infty$ and $y_\infty$ cannot coincide. Using Lemma \ref{lema-limitsvm} we conclude. 

\end{proof}


\section{Non small visual measure implies all bubble leaves}\label{s.nonvisual}

We now return to the setting of $\cF_1$ and $\cF_2$ two transverse foliations with Gromov hyperbolic leaves intersecting in a one dimensional foliation $\cG$. We will assume that $\wcG$ has Hausdorff leaf space (in particular, for every $L \in \wcF_i$ we have that $\cG_L$ has Hausdorff leaf space, c.f. Proposition \ref{prop-Hsdff2Dand3D}). 
 
The main result of this section is the following: 

\begin{prop}\label{p.novisualallbubble}
Assume that $\wcG$ has Hausdorff leaf space,
and that $\wt{\cG}$ does not have the small visual measure property in $\wcF_1$. Then for every $L \in \wcF_1$ we have that there is a point $\xi_L \in S^1(L)$ so that every leaf of $\cG_L$ has both rays landing in $\xi_L$. Moreover, up to collapsing some foliated products of $\cF_1$ and keeping the foliation transverse to $\cF_2$ we get that $\cF_1$ must be the weak stable foliation of  a topological $\RR$-covered Anosov flow. 
\end{prop}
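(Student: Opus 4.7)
My plan has three main phases, following the global strategy of first extracting a bubble leaf from the failure of small visual measure (SVM), then propagating the bubble structure across all of $\cG_{L}$ and across all leaves $L$, and finally using the resulting structure at infinity to recognize $\cF_1$ as coming from a topological $\RR$-covered Anosov flow.

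\medskip

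\textbf{Phase 1 (Extract a bubble leaf).} Assume SVM fails, so there exist $\eps_0>0$ and sequences $x_n\in L_n\in\wcF_1$ and segments $c_n=[a_n,b_n]\subset \ell_n\in\wcG$ with $c_n\subset L_n$, $d_{L_n}(x_n,c_n)\to\infty$, and visual measure of the shadow $\mathrm{Sh}_{x_n}(c_n,L_n)\geq \eps_0$. Let $y_n\in c_n$ be a closest point to $x_n$ in $L_n$. Compose with deck transformations so that $y_n\to y_\infty\in L_\infty\in\wcF_1$. By the Hausdorff hypothesis on $\wcG$, the leaves $\ell_n$ converge to a unique leaf $\ell_\infty\in\wcG$ with $y_\infty\in\ell_\infty\subset L_\infty$, and $L_n\to L_\infty$ in the tubulation topology of Section \ref{s.folinfinity}. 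By Proposition \ref{prop.Hsdffimpliesunifpropemb}, the unbounded length of $c_n$ along $\ell_n$ forces $d_{\mt}(a_n,y_n),d_{\mt}(b_n,y_n)\to\infty$, so by Theorem \ref{t.Landing} both rays of $\ell_\infty$ land at points $\xi^\pm\in S^1(L_\infty)$. After extracting a subsequence, the direction of the minimizing geodesic $[y_n,x_n]$ in $L_n$ converges to a direction at $y_\infty$, whose associated ray lands at a point $\xi_\infty\in S^1(L_\infty)$. Now I use the standard Gromov-hyperbolic fact that if $x_n\to\xi_\infty$ and $z_n,w_n$ satisfy $\angle_{x_n}(z_n,w_n)\geq\eps_0$, then (after passing to the appropriate quasi-isometric model $\HH^2$ for each $L_n$) both $z_n$ and $w_n$ must escape to $\xi_\infty$ at infinity. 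Applying this with $z_n,w_n$ on $c_n$ realizing the visual angle, and transferring landing points through the tubulation topology, I conclude $\xi^+=\xi^-=\xi_\infty$, i.e.\ $\ell_\infty$ is a bubble at $\xi_\infty$.

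\medskip

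\textbf{Phase 2 (Every leaf of $\cG_{L_\infty}$ is a bubble at $\xi_\infty$).} The bubble $\ell_\infty$ cuts $L_\infty\cong\RR^2$ into two regions; in the ``inside" region (the one whose closure in $L_\infty\cup S^1(L_\infty)$ meets $S^1(L_\infty)$ only at $\xi_\infty$), Theorem \ref{t.Landing} immediately forces every leaf of $\cG_{L_\infty}$ therein to be a bubble at $\xi_\infty$. To handle the ``outside" region I argue by contradiction: suppose some leaf $\ell'\in\cG_{L_\infty}$ has a landing point $\zeta\neq\xi_\infty$. Using Proposition \ref{prop-pushing} I transport $\ell'$ between nearby leaves of $\wcF_1$, and using the density of marker directions (Section \ref{ss.markers}) together with the tubulation topology I produce a second SVM-failure configuration whose resulting bubble is based at a point in the direction of $\zeta$. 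The presence of two distinct bubble-points for $\cG_{L_\infty}$ together with Proposition \ref{prop-overlaplimit} and the uniform properly embedded bound of Proposition \ref{prop.Hsdffimpliesunifpropemb} produces either non-Hausdorff behavior of $\cG_{L_\infty}$ or a forced transverse intersection, contradicting the standing hypothesis. Hence every leaf of $\cG_{L_\infty}$ is a bubble at $\xi_\infty$.

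\medskip

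\textbf{Phase 3 (Global propagation and the Anosov conclusion).} The set of $L\in\wcF_1$ admitting a common bubble-point $\xi_L$ is nonempty (by Phase 2), closed in $\wcF_1$ (by continuity of the tubulation topology from Section \ref{s.folinfinity}), and $\pi_1(M)$-invariant. Applying Theorem \ref{teo.nonholonomy} to any minimal sublamination and transporting the bubble-structure between leaves via the push-through Proposition \ref{prop-pushing} and markers (Section \ref{ss.markers}), I conclude that every $L\in\wcF_1$ admits such a $\xi_L$. The map $L\mapsto\xi_L$ is then an equivariant continuous section of the tubulation over $\cL_1$, and it plays the role of the non-marker-point map in Proposition \ref{prop.nonmarkermoves}. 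Transverse to the bubbles inside each leaf I obtain a distinguished direction; integrating this direction and using the transversality to $\cF_2$ I construct a topological flow $\Phi$ whose orbit foliation recovers $\cG$ after collapsing the foliated-product regions of $\cF_1$ (these are precisely the places where several leaves of $\cF_1$ share the same bubble family and hence must be identified). The resulting $\cF_1$ is the weak-stable foliation of $\Phi$, and since $L\mapsto \xi_L$ gives a monotone identification of the leaf space of $\wcF_1$ with a subset of the universal circle analogous to the one in Proposition \ref{prop.nonmarkermoves}, the leaf space of $\wcF_1$ is Hausdorff, so $\Phi$ is $\RR$-covered.

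\medskip

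\textbf{Main obstacle.} I expect Phase 2 to be the hardest step: ruling out ``mixed" configurations where some leaves of $\cG_{L_\infty}$ are bubbles at $\xi_\infty$ while others land elsewhere requires a delicate interplay of the Hausdorff property (for both $\cG_{L_\infty}$ and $\cG_{E}$ for $E\in\wcF_2$), Proposition \ref{prop-overlaplimit}, markers, and push-through. The construction of the flow $\Phi$ and the verification of topological expansivity in Phase 3 (to get an honest topological Anosov flow rather than just a foliated structure) is also substantial and will rely heavily on the bubble-point section $L\mapsto\xi_L$ and its interaction with $\cF_2$.
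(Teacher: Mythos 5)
Your global outline—extract a bubble configuration from the SVM failure, propagate the bubble structure to all of $\cG_L$ and all $L \in \wcF_1$, then recognize an $\RR$-covered Anosov flow after collapsing—matches the paper's strategy. However, there are concrete gaps at each phase.

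In Phase 1, the "standard Gromov-hyperbolic fact" you invoke is not correct as stated: a segment $c_n$ at distance $\to\infty$ from $x_n$ subtending visual angle $\geq \eps_0$ only tells you that the minimizing geodesic $[a_n,b_n]$ passes within uniformly bounded distance of $x_n$; the endpoints $a_n,b_n$ could perfectly well escape to distinct, even opposite, ideal points. What the paper does instead (Lemma \ref{lem-nonvisualgoestoinf}) is to move the basepoint from $x_n$ to a point $y_n$ far out on the geodesic ray toward the midpoint of the shadow $I_n$. This turns the hypothesis "visual measure of the shadow is $\geq\eps_0$" into the much stronger statement "visual measure of the \emph{complement} of the shadow from $y_n$ tends to $0$." Your choice of $y_n$ as the closest point of $c_n$ to $x_n$ does not yield this, and without it the transfer to the limit leaf does not give a bubble. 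Moreover, your claim that the arc length of $c_n$ tends to infinity (needed to invoke Proposition \ref{prop.Hsdffimpliesunifpropemb}) is only implicit in your setup and needs an argument.

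In Phase 2 you only establish that the single limit leaf $\ell_\infty$ is a bubble, and then you must separately rule out "mixed" configurations in $\cG_{L_\infty}$. Your inside-region argument is fine, but the outside-region case is handled only by an unsupported assertion that a "second SVM-failure configuration" leads to a contradiction; this is the hard content and it is not filled in. The paper (Lemma \ref{lem-lam}, Claim \ref{claim84}) avoids this dichotomy altogether: after pushing the big-shadow arcs $c_n$ through to $L_\infty$ via markers (Lemma \ref{lema-pushing}), one gets nested arcs $\hat c_n$ of $\cG_{L_\infty}$ converging to a huge interval $I_\infty$ with tiny complement. Since every other leaf of $\cG_{L_\infty}$ is disjoint from $\hat c_n$, no ray of any leaf can land in the interior of $I_\infty$, and letting $\eps \to 0$ forces all of them to land at the unique point $\xi_{L_\infty}$. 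This establishes "every leaf of $\cG_{L_\infty}$ is a bubble" in one stroke, with no separate propagation argument.

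In Phase 3, your justification that $\cF_1$ is $\RR$-covered is circular: you write of a "monotone identification of the leaf space of $\wcF_1$ with a subset of the universal circle," but a universal circle for $\cF_1$ is only available once one already knows the foliation is $\RR$-covered. The paper instead proves $\RR$-covered directly (Lemma \ref{lem-anosovrcovered}): if $L,L'$ were non-separated in the leaf space, one finds a leaf $E \in \wcF_2$ meeting a nearby $L_n$ as well as $L$ and $L'$ via markers and the bubble structure, and the Hausdorff property of $\cG_E$ then forces a transversal from $L$ to $L'$, a contradiction. You also skip the analysis of the complementary regions of the minimal sublamination (the $I$-bundle structure in Lemmas \ref{lem-ibundle1}, \ref{lem-ibundle2}, needed both to extend $\xi_L$ to all leaves and to justify the collapse) and the cyclic-stabilizer step (Corollary \ref{coro-stabilizer}). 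Finally, the construction of the topological Anosov flow is not done by "integrating a direction and using transversality to $\cF_2$"; the paper builds a geodesic fan toward $\xi_L$ in each leaf (using a Candel metric and the continuity of $\xi_L$) and appeals to Calegari's expansivity theorem, which is a substantial input you would need to cite or reprove.
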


The proof will proceed in three stages. In \S\ref{ss.minimalcaseB} we find a sublamination $\cL$ of $\cF_1$ where every leaf is a {\em {bubble leaf}}: i.e. a leaf $L \in \wcF_1$ so that for every $\ell \in \cG_L$ the two landing points of the rays in
$\ell$ are the same point in $S^1(L)$ (this notion is used in \cite{FP4}).
This gives the first statement of Proposition 
\ref{p.novisualallbubble} for this sublamination (and covers the case where $\cF_1$ is minimal). We extend the property to the whole foliation in \S \ref{ss.extensionfullfol} after explaining the notion of collapse appearing in the statement (this is done in \S\ref{ss.collapse}). Then in \S\ref{ss.RcoveredAF} we complete the proof of Proposition \ref{p.novisualallbubble}. 

By Section \ref{s.topologytub} the
topology of $\mt \cup S^1_\infty{\wcF_1}$ is well defined,
irrespective of the metric in $M$. This will be implicitly used
throughout this section.

\subsection{A minimal lamination with the desired property}\label{ss.minimalcaseB}

The goal of this subsection is to show an intermediate result, Lemma \ref{lem-lam}, that completes the proof of the first part of Proposition \ref{p.novisualallbubble} assuming that $\cF_1$ is minimal.  

We first need the following (compare with \cite[Lemma 2.5]{FP4}):

\begin{lema}\label{lem-nonvisualgoestoinf}
Under the assumptions of Proposition \ref{p.novisualallbubble} there is a sequence of points $y_n \in \mt$ so that $y_n \to y_\infty$ and such that there are arcs $c_n$ of  leaves of $\wcG$ contained in $L_n = \wcF_1(y_n)$ such that $c_n \cap B_{L_n}(y_n, n) = \emptyset$ and such that the visual measure from $y_n$ of $S^1(L_n) \setminus \mathrm{Sh}_{y_n}(c_n, L_n)$ is smaller than $1/n$. 
\end{lema}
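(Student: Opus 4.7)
The plan is to derive the conclusion from the failure of the small visual measure property via compactness and an amplification step that exploits the Hausdorff hypothesis.

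First, I would unpack the negation of the small visual measure property for $\wcG$ in $\wcF_1$: there exist $\eps_0 > 0$ and sequences $z_k \in L_k \in \wcF_1$ together with arcs $c'_k \subset \ell'_k \in \cG_{L_k}$ such that $d_{L_k}(z_k, c'_k) > k$ while the visual measure of $\mathrm{Sh}_{z_k}(c'_k, L_k)$ is bounded below by $\eps_0$. Using the compactness of $M$, after composing with suitable deck transformations $\gamma_k \in \pi_1(M)$ and passing to a subsequence, I would arrange that $\gamma_k z_k \to y_\infty \in \mt$; relabel so that $z_k \to y_\infty$ directly. The leaves $L_k$ then converge in $\mt$ to the leaf $L_\infty$ containing $y_\infty$.

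Next I would execute the amplification. The geometric input is that a visual measure of at least $\eps_0$ at distance larger than $k$ forces the arc $c'_k$ to have length of order $\eps_0 \cdot e^k$ by the exponential growth of balls in a Gromov hyperbolic leaf. By Theorem~\ref{t.Landing} (which uses the Hausdorff hypothesis), both rays of $\ell'_k \supseteq c'_k$ land in $S^1(L_k)$, so $\ell'_k$ is a properly embedded curve separating $L_k$ into two planar regions, and $z_k$ lies in one of them. I would then move along a length-minimising geodesic in $L_k$ from $z_k$ toward $c'_k$, stopping at a point $y_n$ with $d_{L_k}(y_n, c'_k) = n$ for a parameter $n$ which will be chosen much smaller than $k$. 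Because hyperbolic geometry magnifies visual angles exponentially with distance travelled toward the arc, the shadow of $c'_k$ viewed from $y_n$ grows, and a direct estimate gives that the complement of $\mathrm{Sh}_{y_n}(c'_k, L_k)$ in $S^1(L_k)$ has visual measure at most $C \cdot e^{-(k - n)}$ for a uniform constant $C$. A diagonal choice $k = k(n)$ with $k - n \to \infty$ then produces $y_n$ satisfying both $c_n := c'_{k(n)}$ at distance larger than $n$ and the visual measure of the complement less than $1/n$; a further extraction ensures $y_n \to y_\infty$.

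The main obstacle, needing the most care, is ensuring that the amplification really does drive the visual measure of the complement to zero and not merely to some positive bound depending on the ``shape'' of $\ell'_k$. A long geodesic-like arc would cap the shadow at $\pi$, far from the desired $2\pi$. The Hausdorff hypothesis on $\wcG$ enters here essentially: if the two landing points of $\ell'_k$ in $S^1(L_k)$ remained uniformly separated along the sequence, then taking a limit of appropriate translates of $\ell'_k$ would produce, via the push-through Proposition~\ref{prop-pushing}, either an accumulation of distinct leaves of $\wcG$ in $L_\infty$ violating the Hausdorff property of $\cG_{L_\infty}$, or, after applying the topology of the tubulation developed in \S\ref{s.topologytub}, an inconsistency with the landing of rays in $L_k$. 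Ruling out this bad case forces the landing points of $\ell'_k$ to collapse, so that $\ell'_k$ ``bubbles'' around the amplified basepoint $y_n$, from which the shadow indeed covers $S^1(L_k)$ up to a set of measure $o(1)$. Plugging this quantitative estimate back into the diagonal argument completes the proof.
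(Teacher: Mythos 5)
Your proposal identifies the correct starting point (negating the small visual measure property) and the correct shape of the construction (walk into the large shadow to amplify it), but it goes wrong in the part you flag as the ``main obstacle,'' and the quantitative estimate at the heart of the amplification is asserted rather than justified.

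The paper's proof of this lemma is purely local and geometric, and in particular uses none of the Hausdorff hypothesis, Theorem~\ref{t.Landing}, Proposition~\ref{prop-pushing}, or the tubulation topology from \S\ref{s.topologytub}. The key observation you are missing is the following: let $s_n$ be a minimizing geodesic segment in $L_n$ joining the two endpoints of the arc $c_n$. Because geodesic segments far from a basepoint have uniformly small visual measure, the lower bound $\eps_0$ on the visual measure of the shadow $I_n=\mathrm{Sh}_{x_n}(c_n,L_n)$ forces $d_{L_n}(x_n,s_n)$ to be \emph{uniformly bounded}, independent of $n$. (By contrast, $c_n$ itself is at distance $>2n$ from $x_n$, which is exactly the failure of SVM.) One then walks from $x_n$ along the geodesic ray toward the \emph{midpoint} of $I_n$, stopping at $y_n=r_n(n)$. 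Since $s_n$ is at bounded distance from $x_n$, the point $y_n$ is at distance tending to infinity from $s_n$; since the complement of $I_n$ in $S^1(L_n)$ lies ``behind'' $s_n$ from the viewpoint of $y_n$, its visual measure from $y_n$ decays to zero. Finally $d(y_n,c_n)\geq 2n-n=n$ handles the ball condition, and one translates $y_n$ (not the initial $x_n$) into a fundamental domain. No dichotomy about whether the landing points of $\ell'_k$ ``remain separated'' is needed; that is not the mechanism, and the machinery you invoke to rule out the ``bad case'' does not enter this lemma at all (it appears later, in Lemma~\ref{lema-pushing} and Lemma~\ref{lem-lam}).

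Concretely, the gap in your write-up is the sentence asserting that ``a direct estimate gives that the complement\dots has visual measure at most $C\cdot e^{-(k-n)}$.'' As stated this is not a direct estimate: moving closer to an arc does not by itself make its shadow cover almost all of the circle (a geodesic arc, however close, never subtends more than $\pi$). What makes the complement small is precisely that the geodesic $s_n$ joining the endpoints of the arc stays bounded distance from the starting point while you travel arbitrarily far past it into the region the arc bounds; that is the fact you need to state and use. Also, you walk along a geodesic toward the nearest point of $c'_k$, whereas the correct (and simpler to analyse) choice is the geodesic ray toward the midpoint of the interval $I_n$, which guarantees you pass on the ``inside'' of $s_n$. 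With those corrections, and dropping the unnecessary appeal to Hausdorffness, your argument would match the paper's.
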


\begin{proof}
By definition, if the foliation $\cG$ does not have the small visual measure property on $\cF_1$ we know that there is some $\eps_0>0$, sequences of points $x_n \in L_n \in \wcF_1$ and segments $c_n \in \cG_{L_n}$ such that $d_{L_n}(x_n, c_n) > 2n$ and such that $I_n =\mathrm{Sh}_{x_n}(c_n, L_n) \subset S^1(L_n)$ has visual measure larger than $\eps_0$. Note that without loss of generality, we can assume that the endpoints of $c_n$ which we call $w_n,z_n$ project from the $x_n$ to the endpoints of $I_n$. 

Since geodesic segments that are far away from a point
have uniformly small visual measure from that point, we know that the distance from a minimizing geodesic arc $s_n$ from  $w_n$ to $z_n$ in $L_n$
 to $x_n$ is uniformly bounded above.

\begin{figure}[ht]
\begin{center}
\includegraphics[scale=0.72]{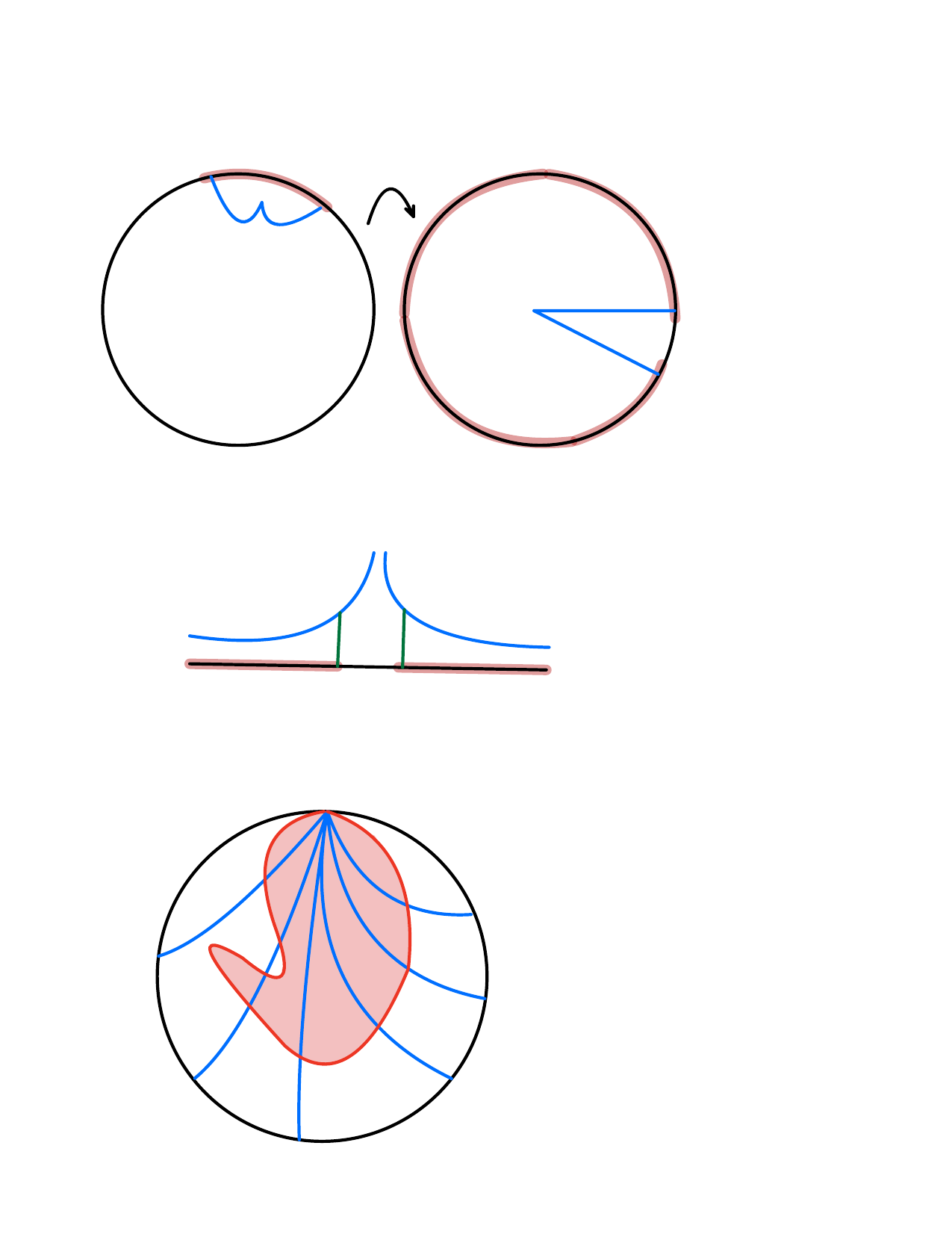}
\begin{picture}(0,0)
\put(-98, 84){$\gamma_n y_n$}
\put(-216,132){$y_n$}
\put(-163,143){$\gamma_n$}
\end{picture}
\end{center}
\vspace{-0.5cm}
\caption{{\small Sending deep points to a fixed fundamental domain provides big visual measure.}}\label{fig-zoomin}
\end{figure}

Fix a geodesic ray $r_n:[0,\infty) \to L_n$ from $x_n$ landing in the midpoint $\xi_n$ of the segment $I_n$. It follows that the distance from $y_n= r_n(n)$ to $s_n$ goes to infinity with $n$. In particular, we know that the visual measure from $y_n$ to the complement of $I_n$ goes to $0$ with $n$. Also, $B_{L_n}(y_n, n) \cap c_n = \emptyset$. One can take deck transformations to assume that all $y_n$ belong to a given compact fundamental domain of $M$ in $\mt$ (see figure \ref{fig-zoomin}). Thus, taking subsequences if necessary, we obtain the result. 
\end{proof}

We will now show an application of Proposition \ref{prop-pushing} that we will need to use more than once. In some sense, what this lemma says is that if $c_n$ is a sequence of arcs of $\cG_{L_n}$ so that $c_n$ converges to some interval $I \subset S^1(L_\infty)$ in the topology of $\mt \cup \cTuu$ (c.f. \S \ref{s.topologytub}) where $L_\infty$ is a leaf in the limit of $L_n$, then we have that applying Proposition \ref{prop-pushing} to the arcs $c_n$ we obtain arcs $\hat c_n$ of leaves of $\cG_{L_\infty}$ such that $\hat c_n \to I$ in $L_\infty \cup S^1(L_\infty)$. 

\begin{lema}\label{lema-pushing} 
Let $\eps>0$ and $y_n \to y_\infty$ in $\mt$ and denote $L_n = \wcF_1(y_n)$ and $L_\infty = \wcF_1(y_\infty)$ the leaves through those points.  Let $c_n$ be arcs of curves in $\cG_{L_n}$ so that the complement of the shadow of $c_n$ from $y_n$ has visual measure smaller than $\eps$ and such that $d_{L_n}(y_n, c_n) \to \infty$. Then, there are arcs $\hat c_n$ of curves in $\cG_{L_\infty}$ which converge in $L_\infty \cup S^1(L_\infty)$ to an interval $I_\infty \subset S^1(L_\infty)$ so that the visual measure of $S^1(L_\infty) \setminus I_\infty$ from $y_\infty$ is smaller than $3\eps$. 
\end{lema}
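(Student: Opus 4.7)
The plan is to apply the Pushing Property (Proposition \ref{prop-pushing}) to transport each $c_n$ from $L_n$ to $L_\infty$ along its containing $\wcF_2$-leaf $E_n$ (so that $c_n \subset L_n \cap E_n$), producing arcs $\hat c_n \subset L_\infty \cap E_n$ which are arcs of $\cG_{L_\infty}$. The convergence to $I_\infty$ and the visual measure estimate then follow from the continuity of the tubulation topology constructed in Section \ref{s.topologytub}.

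The crucial first step is to establish that for large $n$, $E_n$ meets $L_\infty$ in a way compatible with the push. Since $y_n \to y_\infty$, fix a local product box around $y_\infty$; for large $n$, $y_n$ lies in this box and there is a short transversal from $y_n$ to a point $\hat y_n \in L_\infty$ sitting inside some leaf $F_n \in \wcF_2$, so that $F_n$ meets both $L_n$ and $L_\infty$. Since $\cG_{L_n}$ has Hausdorff leaf space (by Proposition \ref{prop-Hsdff2Dand3D}), the leaf $L_n \cong \RR^2$ is foliated by $\cG_{L_n}$ as a product foliation, so we can take a transversal arc to $\cG_{L_n}$ in $L_n$ from $F_n \cap L_n$ (through $y_n$) to $E_n \cap L_n$ (containing $c_n$). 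Each point of this arc determines an $\wcF_2$-leaf, giving a one-parameter family of $\wcF_2$-leaves interpolating from $F_n$ to $E_n$. The subset of this family consisting of $\wcF_2$-leaves that also meet $L_\infty$ is open (by the openness of local product structure near $L_\infty$) and closed (using Hausdorffness of $\cG_{L_\infty}$ together with the uniform properness of Proposition \ref{prop.Hsdffimpliesunifpropemb} to rule out escape to infinity in the limit); hence it is the whole family, and in particular $E_n \cap L_\infty \neq \emptyset$.

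Once this is established, Proposition \ref{prop-pushing} applies: the endpoints $x_n, z_n$ of $c_n$ admit transversals in $E_n$ that reach $E_n \cap L_\infty$, so the push yields the desired arc $\hat c_n \subset L_\infty \cap E_n$ of $\cG_{L_\infty}$. For the convergence, we work in the topology of $\mt \cup \cTuu$ from Section \ref{s.topologytub}. Since $d_{L_n}(y_n, c_n) \to \infty$ and $y_n \to y_\infty$, the arcs $c_n$ have, up to a subsequence, a limit which is a connected subset $I_\infty \subset S^1(L_\infty)$; the hypothesis that the complement of the shadow has visual measure less than $\eps$ from $y_n$ passes to the limit, and the factor $3$ in the conclusion absorbs slack coming from the change of observation point from $y_n$ to $y_\infty$ and from the small perturbation introduced by the push. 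The arcs $\hat c_n$ share the same limit $I_\infty$ because the transversals used in the push have lengths tending to zero as $L_n \to L_\infty$, so $\hat c_n$ and $c_n$ become arbitrarily close in the tubulation topology.

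The main obstacle is the openness-closedness step establishing $E_n \cap L_\infty \neq \emptyset$: this is where the Hausdorff hypothesis on $\wcG$ enters essentially, since without it one loses the product-like structure in leaves that allows the continuous family of $\wcF_2$-leaves to track uniformly between $L_n$ and $L_\infty$.
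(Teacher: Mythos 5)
Your proposal takes a genuinely different route from the paper's, and it has a gap that cannot be fixed without importing the paper's key idea.

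The paper's proof uses markers (cf.\ \S\ref{ss.markers}) in an essential way: from $y_\infty$ it constructs finitely many marker rays $\hat m_i$ landing at points $\xi_i \in S^1(L_\infty)$, cuts $c_n$ down to a subsegment $s_n$ whose two endpoints lie on marker rays $\hat m_i^{t_n}$ inside $L_n$, and pushes only $s_n$. The pushing transversals are then the short transverse fibers of the markers, which have length $<\eps$ (smaller than an $\cF_2$-foliation box); so by local product structure those transversals lie in $E_n$, reach $L_\infty$, and the pushed endpoints land on the markers $\hat m_1$, $\hat m_2$ inside $L_\infty$ near $\xi_1, \xi_2$. That control is what makes the limit interval $I_\infty$ come out right, after a separate argument showing the $\hat c_n$ escape every compact set around $y_\infty$.

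Your proposal omits all of this, and the step you use to replace it is false. You claim ``the transversals used in the push have lengths tending to zero as $L_n \to L_\infty$, so $\hat c_n$ and $c_n$ become arbitrarily close in the tubulation topology.'' But $L_n$ and $L_\infty$ are only guaranteed close \emph{on compact sets near $y_\infty$}: since $d_{L_n}(y_n, c_n) \to \infty$, the endpoints of $c_n$ sit in regions of $L_n$ that are far from $y_\infty$, where nothing controls the transverse distance from $L_n$ to $L_\infty$ along $E_n$. The transversals can be arbitrarily long (or, worse, may not reach $L_\infty$ at all). The marker construction is precisely what makes this step work: markers supply quasigeodesic rays in $L_\infty$ along which the transverse ``width'' to the nearby leaves $L_n$ is uniformly small all the way to infinity, and the subsegment $s_n$ is chosen to terminate on them.

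The earlier open-closed step for $E_n \cap L_\infty \neq \emptyset$ also has an unjustified closedness claim: if $E_{t_k} \cap L_\infty =: \ell_k$ with $t_k \to t_0$, the leaves $\ell_k$ could escape to infinity in $L_\infty$, and neither Hausdorffness of $\cG_{L_\infty}$ nor Proposition~\ref{prop.Hsdffimpliesunifpropemb} (which bounds arc length \emph{within} a $\wcG$-leaf in terms of ambient distance, not the transverse behavior of a family of leaves) rules this out. Finally, even granting $E_n \cap L_\infty \neq \emptyset$ and producing some arc $\hat c_n \subset L_\infty \cap E_n$, without the markers you have no mechanism for locating the endpoints of $\hat c_n$ in $L_\infty \cup S^1(L_\infty)$, nor for the nesting/escape-to-infinity argument the paper uses to conclude $\hat c_n \to I_\infty$ with the stated visual measure bound.
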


\begin{proof} We will use the markers introduced in \S \ref{ss.markers}.
From $y_\infty$ we can produce a finite number of markers $m_1, \ldots, m_k : [0,1] \times \RR_{\geq 0} \to \mt$ so that the rays  $\hat m_i= m_i(\{0\} \times \RR_{\geq 0})$ land at points $\xi_i \in S^1(L_\infty)$ satisfying that the visual measure from $y_\infty$  of the interval made by two consecutive ones is smaller than $\eps$. The markers are considered in the direction where the leaves $L_n$ accumulate $L_\infty$ (which up to subsequence we can assume are all in the same side).  We will denote by $\hat m_i^t$ to the rays $m_i(\{t\} \times \RR_{\geq 0})$. Since there are finitely many markers, we can parametrize the interval so that the leaves $L_t$ containing $m_i(t,0)$ with $1\leq i\leq k$ coincide. One can always  change
the initial transversal of the marker (that is, the  set
$m_i([0,1] \times \{ 0 \}$) because what matters is the asymptotic
behavior of nearby leaves with respect to that ray in  $L_\infty$. 
Hence, up to reducing the  sizes, we can assume without loss of generality that $m_i(0,0) = y_\infty$ and that $m_i(t,0)=m_j(t,0)$ for all $i\neq j$.  We denote by $t_n \in (0,1]$ the parameter such that $L_{t_n}=L_n$ (clearly, $t_n \to 0$). 

Given $i \neq j$ we denote by $W^{ij}_t$ the wedge between $\hat m_i^t$ and $\hat m_j^t$ whose closure in $L_t \cup S^1(L_t)$ contains the interval $J^{ij}_t$ between the limit points of $\hat m_i^t$ and $\hat m_j^t$ which is oriented in the same way as the interval between $\xi_i$ and $\xi_j$ which has smallest visual measure seen from $y_\infty$. 
(Note that we will only consider the case where $\xi_i$ and $\xi_j$ are very close in $S^1(L_\infty)$ so there will be no ambiguity.)

Up to considering subsequences, and relabeling we can assume that the points $\xi_1$ and $\xi_2$ define an interval $J$ whose visual measure from $y_\infty$ is smaller than $2\eps$ and verifies that

\begin{itemize}
\item $c_n$ has a subsegment $s_n$ with endpoints in $\hat m_i^{t_n}$ and is completely contained in $L_n \setminus (W^{12}_{t_n} \cup B(y_n,n))$.  
\end{itemize} 

If we denote $E_n \in \wcF_2$ so that $c_n \subset L_n \cap E_n$, we can apply Proposition \ref{prop-pushing} to the endpoints of $s_n$ pushing them along the corresponding markers to obtain arcs $\hat c_n$ of $\cG_{L_\infty}$
with $\hat c_n \subset L_\infty \cap E_n$. The $\hat c_n$ connect points $p_n, q_n$ in $\hat m_1$ and $\hat m_2$ respectively and avoid the wedge $W^{12}_0$ between $\hat m_1$ and $\hat m_2$. Note that $p_n \to \xi_1$ and $q_n \to \xi_2$ so we can assume that the curves $\hat c_n$ are nested in the sense that to connect $y_\infty$ to $\hat c_{n+1}$ in the complement of $W^{12}_0$ we must intersect $\hat c_n$. We want to show that given a ball $B$ around $y_\infty$, there is $n_0$ so that $\hat c_{n_0}$ (and therefore every $\hat c_n$ with $n\geq n_0$)  is outside $B$. 

We assume by contradiction that $\hat c_n$ intersects $B$ for all $n$. Thus, the sequence of leaves through $\hat c_n$ limits in some leaf $\ell \in \cG_{L_\infty}$ 
which intersects $B$ and has both landing points in the complement of $J^{12}_0$. 
 
 Now, let $E_\infty \in \wcF_2$ so that $\ell = E_\infty \cap L_\infty$. It follows that $E_\infty$ does not intersect the wedge $W^{12}_0$ and so it cannot intersect $W^{12}_t$ for small $t$. Applying Proposition \ref{prop-pushing} we obtain, for large $n$ leaves $\ell_n \in L_n$ landing outside the interval $J^{12}_{t_n}$, thus forbidding the curves $c_n$ to remain far from $y_n$. This is a contradiction and finishes the proof of the lemma. 
\end{proof}
Now we can show the main result of this subsection. 

\begin{lema}\label{lem-lam}
Under the assumptions of Proposition \ref{p.novisualallbubble} there is a closed $\pi_1(M)$-invariant subset $\cL$ of leaves of $\wcF_1$ with the property that for every $L \in \cL$ there is a point $\xi_L \in S^1(L)$ so that every leaf of $\cG_L$ has both rays landing in $\xi_L$ (i.e. every leaf is a bubble leaf).  
\end{lema}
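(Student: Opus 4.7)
The approach is to exhibit a single leaf $L_\infty \in \wcF_1$ all of whose $\cG_{L_\infty}$-leaves bubble to a common point $\xi_\infty \in S^1(L_\infty)$, and then take $\cL$ to be the closure of the $\pi_1(M)$-orbit of $L_\infty$. Start by applying Lemma~\ref{lem-nonvisualgoestoinf} to obtain points $y_n \to y_\infty$ in $\mt$, leaves $L_n = \wcF_1(y_n)$, and arcs $c_n \subset \cG_{L_n}$ with $d_{L_n}(y_n, c_n) \to \infty$ such that $S^1(L_n) \setminus \mathrm{Sh}_{y_n}(c_n, L_n)$ has visual measure smaller than $1/n$; set $L_\infty = \wcF_1(y_\infty)$.

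For each $\varepsilon > 0$, passing to a subsequence and applying Lemma~\ref{lema-pushing} yields arcs $\hat c_n^{(\varepsilon)} \subset \cG_{L_\infty}$ converging in $L_\infty \cup S^1(L_\infty)$ to an interval $I^{(\varepsilon)}$ whose complement has visual measure below $3\varepsilon$; write $\xi_1^{(\varepsilon)}, \xi_2^{(\varepsilon)}$ for its endpoints. These two points are close in $S^1(L_\infty)$ and converge as $\varepsilon \to 0$ to a single $\xi_\infty \in S^1(L_\infty)$. By Theorem~\ref{t.Landing} the leaves $\hat\ell_n^{(\varepsilon)} \supset \hat c_n^{(\varepsilon)}$ have well-defined landings in $S^1(L_\infty)$. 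The key step is to show that as $n \to \infty$ these landings converge to $\xi_1^{(\varepsilon)}, \xi_2^{(\varepsilon)}$: the marker structure (markers $\hat m_1, \hat m_2$ from the proof of Lemma~\ref{lema-pushing}) together with the nesting of the arcs forces the ray extensions beyond the arc endpoints to remain near the markers, and Hausdorff leaf space of $\wcG$ (via Lemma~\ref{lema-limitsvm}) rules out accumulation on multiple limit leaves, which would be the only way the landings could drift uncontrollably.

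Granting this, for large $n$ each $\hat\ell_n^{(\varepsilon)}$ separates $L_\infty$ into two half-planes, and the one $H_n^{(\varepsilon)}$ containing $y_\infty$ has ideal boundary arc on $S^1(L_\infty)$ contained in a small neighborhood of the complement $J^{(\varepsilon)} = S^1(L_\infty) \setminus I^{(\varepsilon)}$. The half-planes $H_n^{(\varepsilon)}$ are nested and exhaust $L_\infty$; any leaf $\ell \in \cG_{L_\infty}$ therefore lies in $H_n^{(\varepsilon)}$ for large $n$, so its limit set in $S^1(L_\infty)$ sits in a small neighborhood of $J^{(\varepsilon)}$, and letting $\varepsilon \to 0$ collapses this to $\{\xi_\infty\}$. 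Hence every leaf of $\cG_{L_\infty}$ lands at $\xi_\infty$ from both sides, so $L_\infty$ is a bubble leaf.

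To finish, let $\cL$ be the closure in $\wcF_1$ of the $\pi_1(M)$-orbit of $L_\infty$; this is closed and $\pi_1(M)$-invariant by construction. The bubble-at-a-point property transfers to limits: if $L_k \to L$ in the leaf space with $L_k$ bubble at $\xi_{L_k}$, then in the topology on the tubulation $\cTuu$ developed in Section~\ref{s.topologytub} the points $\xi_{L_k}$ converge to some $\xi_L \in S^1(L)$, and any leaf of $\cG_L$ is a limit of leaves in $\cG_{L_k}$ whose two landings are both at $\xi_{L_k}$, forcing both landings of the limit to be $\xi_L$. The principal technical obstacle throughout is the convergence-of-landings claim in paragraph two, where Hausdorff leaf space of $\wcG$ must be leveraged against the nested marker configuration from Lemma~\ref{lema-pushing} to control the asymptotic behavior of the ray extensions.
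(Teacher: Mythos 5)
Your overall structure follows the paper's: apply Lemma~\ref{lem-nonvisualgoestoinf} to produce $y_\infty$ and $L_\infty$, then apply Lemma~\ref{lema-pushing} to get arcs $\hat c_n$ converging to an interval $I^{(\varepsilon)}$ whose complement has small visual measure, and conclude that $L_\infty$ is a bubble leaf. The closure step at the end is also fine (the paper takes the set of all bubble leaves and shows it is closed, while you take the orbit closure of $L_\infty$; both yield a valid $\cL$).

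However, your ``key step'' --- that the landing points of the full leaves $\hat\ell_n^{(\varepsilon)} \supset \hat c_n^{(\varepsilon)}$ converge to the \emph{endpoints} $\xi_1^{(\varepsilon)}, \xi_2^{(\varepsilon)}$ of $I^{(\varepsilon)}$ --- is not only unproven, it is actually false for fixed $\varepsilon$. Consider the model in which $\cG_{L_\infty}$ is a horocyclic fan (which is precisely the situation the proposition is ultimately trying to rule out in the non-solvable case, but which is consistent with all the hypotheses available at this stage of the argument). There the full leaves $\hat\ell_n^{(\varepsilon)}$ all land at a \emph{single} interior point of $J^{(\varepsilon)} = S^1(L_\infty)\setminus I^{(\varepsilon)}$, not at the two endpoints. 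Controlling the rays of $\hat\ell_n^{(\varepsilon)}$ beyond the arc endpoints $p_n, q_n$ is genuinely delicate: Lemma~\ref{lema-pushing} only guarantees that the \emph{arcs} $\hat c_n$ avoid the wedge $W^{12}_0$, and nothing in its proof prevents the ray extensions from wandering back across the markers. This is not a cosmetic issue: your ``granting this'' paragraph leans on knowing where those rays go in order to identify which complementary half-plane has small ideal boundary, and your informal appeal to Lemma~\ref{lema-limitsvm} and the nesting of markers does not supply a proof.

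The paper's argument (Claim~\ref{claim84}) sidesteps the problem entirely by never using the full leaves $\hat\ell_n$. It argues by contradiction: if some ray of $\ell \in \cG_{L_\infty}$ lands at a point $\xi$ in the \emph{interior} of $I_\infty$, one cuts the ray so that it lies in a small disk-neighborhood $U$ of $\xi$ meeting $S^1(L_\infty)$ inside the interior of $I_\infty$; because the arcs $\hat c_n$ converge to $I_\infty$ in $L_\infty\cup S^1(L_\infty)$, for large $n$ they contain a sub-arc that separates $\xi$ from the interior point $z$ of the cut ray within $U$, and this forces a transverse intersection of two leaves of the one-dimensional foliation $\cG_{L_\infty}$ (the case $\hat c_n \subset \ell$ is ruled out by landing, since a properly embedded curve whose two rays land cannot accumulate on an interval). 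This is a purely local separation argument inside $U$ and never needs to track where the rays of $\hat\ell_n$ escape. You should replace your paragraph two by this separation argument; the remainder of your proof then goes through.
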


\begin{proof}
Using Lemma \ref{lem-nonvisualgoestoinf} we get a sequence $y_n \to y_\infty$ in $\mt$ of points with the property that if $L_n = \wcF_1(y_n)$ is the leaf of $\wcF_1$ through $y_n$ there are arcs $c_n$ of leaves of $\cG_{L_n}$ contained in $L_n$ such that $d_{L_n}(y_n, c_n) >n$ and such that the visual measure of the complement of the shadow of $c_n$ from $y_n$ is less than $1/n$. 

Consider first $L_\infty = \wcF_1(y_\infty)$. 

\begin{claim}\label{claim84}
If $\ell \in \cG_{L_\infty}$ then both rays of $\ell$ must land in a unique point. 
\end{claim}
\begin{proof}
Fix an arbitrary $\eps>0$ and we show that the landing points of $\ell$ must be contained in an interval of $S^1(L_{\infty})$ of visual measure less than $\eps$ from $y_\infty$. 

Apply Lemma \ref{lema-pushing} to obtain arcs $\hat c_n$ in $L_\infty$ which converge in $L_\infty \cup S^1(L_\infty)$ to a segment $I_\infty$ in $S^1(L_\infty)$ so that the visual measure of $S^1(L_\infty) \setminus I_\infty$ from $y_\infty$ is smaller than $\eps$. Assume that a ray $\ell$ lands in a point $\xi$ the interior of $I_\infty$, then, we can choose a neighborhood $U$ of $\xi$ in $L_\infty \cup S^1(L_\infty)$ which verifies that $U \cap S^1(L_\infty)$ is contained in the interior of $I_\infty$. Now, it follows that $\ell$ can be cut so that it is fully contained in $U$ and joins a point $z \in U \cap L_\infty$ to $\xi$. However, the fact that $\hat c_n$ converges to $I_\infty$ implies that it contains arcs separating $\xi$ from $z$ in $U$ a contradiction. 
\end{proof}

Since $\eps$ was arbitrary, we deduce that there is at most one point, $\xi_{L_\infty}$ where such leaves can land.

Now, for every $\gamma \in \pi_1(M)$ then $\gamma L_\infty$ has the same property (and one gets that $\gamma \xi_{L_\infty}= \xi_{\gamma L_\infty}$). Moreover, if $\mathcal{L}$ is the set of leaves $L$ which verify that every ray of $\cG_L$ lands in a unique point $\xi_L$ then this set is closed in the leaf space. Indeed, given $L_n \in \mathcal{L}$ with $L_n \to L_\infty$, choose $y_n \in L_n$ so that $y_n \to y_\infty$. It follows that for every $n$ the leaf $L_n$ contains an arc $c_n$ of a leaf of $\cG_{L_n}$ which verifies that $d_{L_n}(y_n, c_n) >n$ and such that the visual measure of the complement of the shadow of $c_n$ from $y_n$ is smaller than $1/n$. So, we can apply the same argument to show that $L_\infty \in \mathcal{L}$. 
\end{proof}

\begin{coro}\label{coro-stabilizer}
For every leaf $L \in \cL$ we have that the stabilizer is trivial or cyclic. 
\end{coro}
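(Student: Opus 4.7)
The plan is to show that the bubble point $\xi_L \in S^1(L)$ is fixed by every $\gamma \in \mathrm{Stab}_L$, and then to invoke the classification of discrete torsion-free isometry groups of a Gromov hyperbolic space that fix a common point at infinity. Note that $\pi_1(M)$ is torsion-free, since $\mt \cong \RR^3$ by Theorem \ref{teo.Novikovetc}(iv), so $\mathrm{Stab}_L$ acts freely and properly discontinuously by isometries on $L$. For the first step, any $\gamma \in \mathrm{Stab}_L$ preserves $\cG_L$ (as it preserves both $L$ and $\wcG$) and extends to a homeomorphism of $L \cup S^1(L)$ by Lemma \ref{lem-topologytubulation3}; since by Lemma \ref{lem-lam} the point $\xi_L$ is characterized intrinsically as the unique landing point in $S^1(L)$ of every leaf of $\cG_L$, we conclude $\gamma \xi_L = \xi_L$.

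For the second step I would use Lemma \ref{lem-closedgeodesics} to associate to each non-trivial $\gamma \in \mathrm{Stab}_L$ an invariant geodesic $g_\gamma \subset L$. By the classification of isometries of a Gromov hyperbolic space (see \cite[Chapter 8]{GhysdelaHarpe}), and since $\mathrm{Stab}_L$ is torsion-free, $\gamma$ must be hyperbolic, with its two fixed points at infinity given by the endpoints of $g_\gamma$; one of them must therefore equal $\xi_L$. For any two non-trivial $\gamma_1, \gamma_2 \in \mathrm{Stab}_L$, the geodesics $g_{\gamma_1}, g_{\gamma_2}$ share $\xi_L$ as an endpoint, hence their rays towards $\xi_L$ are asymptotic and at uniformly bounded distance in $L$ by Gromov hyperbolicity. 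Equivalently, $\gamma_1$ and $\gamma_2$ both translate along a common quasi-axis ending at $\xi_L$. A discrete torsion-free group of isometries of a Gromov hyperbolic space acting by translations along a common quasi-axis is cyclic, which completes the argument.

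I expect the main obstacle to be the last assertion: transferring the classical classification of elementary Kleinian subgroups of $\mathrm{Isom}(\HH^2)$ to the Gromov hyperbolic setting, where $L$ is only quasi-isometric (not isometric) to $\HH^2$. Concretely, one must verify that two commuting hyperbolic isometries of $L$ sharing a fixed endpoint at infinity have commensurable translation lengths along their common quasi-axis, thereby ruling out $\ZZ^2$ (or non-abelian) subgroups of $\mathrm{Stab}_L$ acting freely and discretely while fixing $\xi_L$. This aligns with the footnote to the earlier H\"older-type proposition, which already observes that Hausdorffness of $\cO_L$ alone forces $\mathrm{Stab}_L$ to be abelian; the extra content here is the upgrade from abelian to cyclic using the Gromov hyperbolic structure at $\xi_L$.
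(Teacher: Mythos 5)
Your opening step matches the paper: every $\gamma\in\mathrm{Stab}_L$ fixes $\xi_L$, since $\xi_L$ is characterized as the common landing point of all rays of $\cG_L$ (Lemma~\ref{lem-lam}). After that, however, you diverge from the paper and the divergence opens a real gap, which you partially flag yourself. You pass from ``the axes $\mu_{\gamma_1},\mu_{\gamma_2}$ share the endpoint $\xi_L$'' to ``$\gamma_1,\gamma_2$ both translate along a common quasi-axis ending at $\xi_L$.'' This is not justified: sharing a single ideal endpoint only makes the rays toward $\xi_L$ asymptotic; the opposite rays of $\mu_{\gamma_1}$ and $\mu_{\gamma_2}$ could a priori land at different ideal points, so there is no common quasi-axis yet. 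The statement you then want to invoke --- a discrete, torsion-free group of isometries fixing a common point at infinity is cyclic --- is classical for $\mathrm{Isom}(\HH^2)$, but $L$ is only quasi-isometric to $\HH^2$ and the group acts by isometries of $L$, not of a model hyperbolic plane. Transporting the $\HH^2$ argument (which runs through the affine group $\mathrm{Aff}(\RR)$ and a ping-pong/non-discreteness dichotomy) across a quasi-isometry requires genuine care, precisely because the quasi-isometry need not conjugate isometries to isometries; you correctly identify this as the main obstacle but do not resolve it.

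The paper's proof avoids the abstract classification entirely by exploiting compactness of $M$. It projects to the quotient annulus $L/\langle\gamma\rangle$: the ray $r_1\subset\mu_\gamma$ projects to a closed core curve $\nu$, the ray $r_2\subset\mu_\zeta$ projects to an embedded curve a bounded distance from $\nu$ (hence spiraling toward a closed curve in the annulus), and since $\pi(\mu_\zeta)$ is closed in $M$ this spiraling forces $\mu_\zeta$ to project to a closed curve in $L/\langle\gamma\rangle$, i.e.\ to be $\gamma^k$-invariant for some $k$. Thus $\gamma$ and $\zeta$ actually share \emph{both} ideal endpoints and have a common axis, after which cyclicity of $\langle\gamma,\zeta\rangle$ follows directly (a free discrete action preserving a quasi-geodesic line embeds in $\RR$ by signed translation length). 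That step --- upgrading ``share one endpoint'' to ``share both endpoints'' via the annulus projection --- is exactly the content your argument is missing, and the compactness-based route is the more robust one in the merely Gromov hyperbolic setting.
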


\begin{proof}
Given $\gamma \in \pi_1(M)$, if $\gamma L = L$, then $\gamma$ must fix $\xi_L$ in  $S^1(L)$.
Let $\gamma, \zeta$ non trivial elements in the stabilizer of $L$,
and let $\mu_\gamma, \mu_\zeta$ be axes for $\gamma, \zeta$ 
in $L$ respectively. 
The axes exist  by Lemma \ref{lem-closedgeodesics}.  Then
$\mu_\gamma, \mu_\zeta$ both limit in $\xi_L$, and hence
have rays $r_1, r_2$ which are a bounded
distance from  each other in $L$. Project to $L/ <\gamma>$, which
is an annulus.  \ $r_1$
projects to a closed curve $\nu$, and $r_2$ projects
to a curve
a bounded distance from $\nu$ in $L/<\gamma>$. The projection of $r_2$
is also an embedded curve, and hence the projection of $r_2$
limits to a closed curve in $L/<\gamma>$. Since the projection
of $\mu_\zeta$ to $M$ is a closed curve, it now follows that
$\mu_\zeta$ projects to a closed curve in $L/<\gamma>$.
Therefore $\gamma, \zeta$ share  both fixed points in $S^1(L)$,
they admit a common axis, and hence they are in a cyclic group. 
\end{proof}
%

We end by showing an important property of the points $\xi_L$.
We note that this uses the results of Section   \S \ref{s.topologytub}
and specifically Remark \ref{rem-topology} and the preceeding
paragraph  discussing the topology of $\mt \cup S^1_\infty(\wcF_1)$.

\begin{lema}\label{lem-cont1}
The point $\xi_L$ varies continuously in $\cTuu$ with respect to the leaf $L \in \cL$. 
\end{lema}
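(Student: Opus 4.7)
The plan is a proof by contradiction, combining the push-through property (Proposition~\ref{prop-pushing}) with the Hausdorff property of $\cTuu$ (Lemma~\ref{lem-hausd}). Suppose a sequence $L_n \to L_\infty$ in $\cL$ has $\xi_{L_n}$ not converging to $\xi_{L_\infty}$ in $\cTuu$. By Hausdorffness of $\cTuu$, passing to a subsequence, we may fix two basic neighborhoods satisfying strict nesting $\overline{V_{k+1}(\xi_{L_\infty})} \subset V_k(\xi_{L_\infty})$ such that $\xi_{L_n} \notin V_k(\xi_{L_\infty})$ for every $n$. Write $L_n = L_{t_n}$ with $t_n \to 0$ in the transversal parameterization from Section~\ref{s.topologytub}, with wedges $W_k(t)$ and intervals $I_k(t)$ at infinity.

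Next, fix a point $y_\infty \in L_\infty$ deep inside the wedge $W_{k+1}(0)$ and consider the bubble leaf $\ell_\infty = \cG_{L_\infty}(y_\infty)$, parameterized by arclength. Since both rays of $\ell_\infty$ land at $\xi_{L_\infty}$, which lies in the interior of the angular interval corresponding to $I_{k+1}$, there exists $R_0$ such that $\ell_\infty(\pm R) \in W_{k+1}(0)$ for all $R \geq R_0$. For each such $R$, Proposition~\ref{prop-pushing} applied to the compact arc $\ell_\infty|_{[-R, R]}$ produces, for $n$ sufficiently large, a compact arc $c_n^R \subset \ell_n$ for some leaf $\ell_n \in \cG_{L_n}$, whose endpoints $z_\pm^{(n)}(R)$ are close in $\mt$ to $\ell_\infty(\pm R)$. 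Proposition~\ref{prop.Hsdffimpliesunifpropemb} provides uniform control on arclength, so $c_n^R$ has length comparable to $2R$, and by openness of $V_{k+1}(\xi_{L_\infty})$ in $\mt \cup \cTuu$ we get $c_n^R \subset V_{k+1}(\xi_{L_\infty})$ for $n$ large.

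The main step, and the delicate one, is to convert this finite-arclength control into control on the landing point $\xi_{L_n}$. Here I plan a diagonal argument: pick $R_n \to \infty$ slowly enough that the pushed arcs $c_n^{R_n}$ still lie inside $V_{k+1}(\xi_{L_\infty})$ for all large $n$, and set $z_n := z_+^{(n)}(R_n) \in L_n$. Since $\ell_\infty(R_n) \to \xi_{L_\infty}$ in $\cTuu$ and $z_n$ is close to $\ell_\infty(R_n)$ in $\mt$, openness of basic neighborhoods gives $z_n \to \xi_{L_\infty}$ in $\cTuu$. On the other hand, $z_n$ lies on $\ell_n$ at arclength approximately $R_n$, and since $\ell_n$ is a bubble leaf landing at $\xi_{L_n}$, the geodesic direction from the apex $\tau_{k+1}(t_n)$ to $\ell_n(R)$ converges as $R \to \infty$ to the direction of $\xi_{L_n}$. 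By the uniform Gromov hyperbolicity of leaves of $\wcF_1$ one can further arrange the diagonal so that $z_n$ is simultaneously arbitrarily close to $\xi_{L_n}$ in $\overline{L_n}$, hence in $\cTuu$. Combined with $z_n \to \xi_{L_\infty}$ in $\cTuu$, the Hausdorff property of $\cTuu$ forces $\xi_{L_n}$ arbitrarily close to $\xi_{L_\infty}$; in particular, $\xi_{L_n} \in V_k(\xi_{L_\infty})$ for $n$ large, contradicting the standing assumption.

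The main obstacle is precisely the coordination of the diagonal choice of $R_n$: the push-through is only faithful on arcs of length bounded in terms of $t_n$, while the Gromov convergence of $\ell_n(R)$ to $\xi_{L_n}$ requires $R$ large enough compared to the distance from the wedge apex to $\ell_n$. The uniform Gromov hyperbolicity of leaves of $\wcF_1$ together with the uniform length bound from Proposition~\ref{prop.Hsdffimpliesunifpropemb} make both rates uniform in $n$, so such a diagonal exists and the proof closes.
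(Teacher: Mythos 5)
There is a genuine gap, and it sits exactly where you flagged "the delicate step." Your argument needs $z_n=\ell_n(\approx R_n)$ to be close to $\xi_{L_n}$ in $\overline{L_n}$, and you claim this can be arranged uniformly in $n$ using uniform Gromov hyperbolicity and Proposition~\ref{prop.Hsdffimpliesunifpropemb}. Neither gives this. Proposition~\ref{prop.Hsdffimpliesunifpropemb} only says that $d_{L_n}(\ell_n(0),\ell_n(R))\to\infty$ uniformly, i.e.\ the point escapes to infinity in the leaf; it says nothing about the \emph{direction} in which it escapes. Gromov hyperbolicity controls quasigeodesics, but the leaves $\ell_n$ are bubble leaves in the regime where small visual measure fails --- precisely the situation in which an arc of $\ell_n$ far from a basepoint can have shadow of large visual measure, so for any fixed $R$ the point $\ell_n(R)$ may sit in essentially any direction of $S^1(L_n)$. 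Each individual $\ell_n$ does land at $\xi_{L_n}$, but with no modulus that is uniform in $n$. Your diagonal therefore cannot be closed: the push-through forces $R_n$ to grow slowly as a function of how close $L_n$ is to $L_\infty$, while proximity of $\ell_n(R_n)$ to $\xi_{L_n}$ requires $R_n$ to exceed a threshold depending on $\ell_n$ for which there is no uniform bound. (A secondary issue: even if $z_n$ lay in small basic neighborhoods of both $\xi_{L_\infty}$ and $\xi_{L_n}$, two wedges in $L_n$ can intersect in a compact set without their ideal intervals being close, so the final appeal to Hausdorffness of $\cTuu$ also needs more care.)

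The paper avoids this by pushing in the opposite direction. Assuming $\xi_{L_n}\to\xi_\infty\neq\xi_L$, one takes arcs of $\cG_{L_n}$ whose shadows cover all of $S^1(L_n)$ except a small interval around $\xi_{L_n}$ (these exist because every leaf of $\cG_{L_n}$ is a bubble at $\xi_{L_n}$), and uses Lemma~\ref{lema-pushing} to push them \emph{into the fixed limit leaf} $L$. The pushed arcs converge in $L\cup S^1(L)$ to an interval $I$ whose complement is a small neighborhood of $\xi_\infty$, hence $\xi_L$ lies in the interior of $I$; the separation argument of Claim~\ref{claim84} then shows no ray of $\cG_L$ can land in the interior of $I$, contradicting $L\in\cL$. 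This direction only requires locating limits of arcs inside the single fixed leaf $L$, so no uniform landing rate for the varying leaves is ever needed. If you want to keep your setup, you should reorganize it along these lines rather than trying to make the diagonal work.
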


\begin{proof} 
Consider $L_n \in \cL$ a sequence of leaves so that $L_n \to L$ and consider $\xi_n := \xi_{L_n} \in S^1(L_n) \subset \cTuu$. We want to show that $\xi_n \to \xi_L$ in $\cTuu$. Note that it is enough to consider the case where $\xi_n$
converges to a point $\xi_\infty$, and show that $\xi_\infty = \xi_L$, because else we consider converging subsequences. 

Suppose by contradiction that $\xi_\infty \neq \xi_L$. 
Using Lemma \ref{lema-pushing} as in the proof of Lemma \ref{lem-lam} we can 
do the following: choose $\eps > 0$ so that the neighborhoods of $\xi_\infty$
and $\xi_L$ of radius $2\eps$ in $S^1(L)$ have disjoint closures. Let $J$ be the neighborhood of radius $\eps/2$ of $\xi_\infty$ in $S^1(L)$,
and let  $I$  be the closure of the  complement of
the  neighbhorhood of radius $\eps/2$  of $\xi_\infty$. Notice
that $J$ is  contained in the interior of  $I$.
 Then
we can find a neighborhood $\cU$ of $L$ so that for every leaf $L_n \in \cU \cap \cL$ there are sequences of arcs of leaves of $\cG_{L_n}$ converging to an interval $I_{L_n}$ with $\xi_{L_n}$ in the interior of $S^1(L_n) \setminus I_{L_n}$ and 
$S^1(L_n) \setminus I_{L_n}$ has length less than $2 \eps$.
We can arrange that $I_{L_n}$ converges to $I$ in $S^1(L)$
when $L_n$ in $\cU \cap \cL$ converges to $L$. In addition $\xi_L$ is
contained in the interior of $I$.
This uses that $\xi_\infty \not = \xi_L$.
Exactly as in Claim \ref{claim84} 
one obtains that $\xi_L$ cannot be the landing point of a
ray of any leaf of $\cG_L$. This contradiction proves the lemma.
\end{proof}

\subsection{Collapsing}\label{ss.collapse} 

We start by defining precisely what we mean by \emph{collapsing foliated products} of $\cF_1$. 
First, a foliated product of $\cF_1$ is an $\cF_1$-saturated set $B$ which
is topologically a product (that is boundary leaf times $[0,1]$). Note that leaves of a foliation may be just immersed and not be embedded (in particular, boundary leaves of a foliated product may not be embedded). Therefore, to define precisely what we mean we need to actually consider $\tilde B$ to be a connected component of the lift of $B$ to $\mt$ which by definition is precisely invariant (deck translates of $\tilde B$ are disjoint from $\tilde B$ or coincide with $\tilde B$).
In addition we ask that $\tilde B$ is homeomorphic to $\RR^2 \times [0,1]$ with a homeomorphism sending $\RR^2 \times \{t\}$ to leaves of $\wcF_1$. Since there is a transverse foliation $\cF_2$ we will moreover ask that every leaf $E \in \wcF_2$ intersecting $\tilde B$ intersects it in a set homeomorphic to $\RR \times [0,1]$ with the homeomorphism sending sets of the form $\RR \times \{t\}$ to intersection between $E$ and leaves of $\wcF_1$ (that is, leaves of $\wcG$). 

The collapsing operation collapses the product leaf times
$[0,1]$ to a single leaf. The foliation $\cF_2$ in the
product is collapsed to a foliation in the collapsed leaf, and thus, the foliation $\cF_2$ also descends to the collapsed quotient and is still transverse to the new foliation. 

\begin{remark}
We note that the collapsing is a \emph{monotone} map (i.e. it collapses cellular sets, that is, sets which are decreasing intersections of balls, in this case, intervals). In particular the topology of $M$ does not change after this procedure \cite{FenleyRcov}. It is also true that after collapsing, the new foliations induced in the quotient preserves the property of being or not being $\RR$-covered (note that the fact of being minimal or not can change, indeed, one of the reasons to collapse is to try to make the foliation minimal). 
\end{remark}

To be able to collapse, we will need the following result from \cite[Proposition 2.6]{FenleyRcov}: 

\begin{prop}\label{prop-precollapse} 
Let $\cF$ be an $\RR$-covered foliation without compact leaves. Then, it has a unique minimal saturated set whose complement is a union of $I$-bundles over non-compact surfaces and the foliation can be collapsed to a minimal foliation by collapsing each complementary region to a single leaf. 
\end{prop}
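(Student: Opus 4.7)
The plan is to work entirely in the leaf space $\cL \cong \RR$ of $\wcF$ lifted to $\mt$, and exploit the induced action of $\pi_1(M)$ by orientation-preserving homeomorphisms of $\cL$ (using transverse orientability). First, I would observe that the absence of compact leaves forbids any global fixed point of $\pi_1(M)$ on $\cL$: such a fixed leaf would be invariant under all deck transformations, forcing it to descend to a compact leaf in $M$. The classical structure theory for actions on $\RR$ without a global fixed point then yields a \emph{unique} minimal closed $\pi_1(M)$-invariant subset $K \subseteq \cL$, which is either all of $\cL$ or a Cantor set. The preimage $\Lambda$ of $K$ in $M$ is then the unique minimal $\cF$-saturated subset.

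If $K = \cL$, the foliation is already minimal and there is nothing to collapse. Otherwise, $\cL \setminus K$ decomposes into a countable family of open intervals permuted by $\pi_1(M)$. For each such complementary interval $I$, let $\widetilde V_I \subset \mt$ be the associated $\wcF$-saturated region and let $H_I < \pi_1(M)$ be its stabilizer. By Palmeira's theorem (Theorem \ref{teo.Novikovetc}(iv)) $\widetilde V_I$ is foliated-homeomorphic to $\RR^2 \times I$, so $V_I = \widetilde V_I / H_I$ is the corresponding connected component of $M \setminus \Lambda$. The group $H_I$ acts on $\widetilde V_I$ preserving the transverse direction $I$; the no-compact-leaves assumption ensures that no element of $H_I$ fixes a point of $I$ (any such fixed point would produce a leaf with stabilizer of finite index in $H_I$, and combined with the structure of the product one obtains a compact leaf). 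Hence $H_I$ acts freely on $I$, and $V_I$ admits the structure of an $I$-bundle over the quotient surface $\Sigma_I = L/\mathrm{Stab}_{H_I}(L)$ for any leaf $L$ in $\widetilde V_I$. Since $L \cong \RR^2$ and $\Sigma_I$ must be non-compact (no compact leaves), this gives the desired description of $M \setminus \Lambda$.

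To build the collapse, within each complementary region $V_I$ I would collapse every $I$-fiber of the bundle to a point. This upper semicontinuous decomposition of $M$ has arcs as non-trivial decomposition elements, so it is a monotone (cellular) decomposition. By Moore's theorem the quotient space is a $3$-manifold homeomorphic to $M$. The foliation $\cF$ descends to the quotient because each collapsed fiber is transverse to $\cF$ and entirely contained in one complementary region; the new leaf space becomes $K$ with each pair of endpoints of a complementary interval identified, giving a leaf space homeomorphic to $\RR$ with no ``gaps'', hence a minimal foliation. In the application to our setting, where there is a transverse foliation $\cF_2$, the collapsing is compatible with $\cF_2$ because each complementary region admits a product structure with respect to both foliations.

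The main obstacle is the verification of the $I$-bundle structure, i.e.\ the claim that $H_I$ acts freely on $I$ under the no-compact-leaf hypothesis, as this requires ruling out holonomy-invariant leaves within the complementary regions. A secondary technical point is ensuring that the monotone quotient indeed yields a manifold homeomorphic to $M$ and that the resulting quotient foliation has the regularity required for the rest of the paper; both are standard but need to be checked carefully.
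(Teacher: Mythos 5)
The paper does not give a proof of this statement; it is cited directly from Fenley's earlier work \cite{FenleyRcov}, where it appears as Proposition~2.6, so there is no internal argument to compare against. Assessing your sketch on its own, the overall architecture (pass to the $\pi_1(M)$-action on $\cL\cong\RR$, locate the unique minimal invariant set, analyze the complementary gaps via Palmeira, collapse by a monotone decomposition) is the right one.

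The central claim in your middle paragraph is, however, incorrect. You assert that the no-compact-leaves hypothesis forces $H_I$ to act freely on the interval $I$, on the grounds that a fixed point $t\in I$ would yield a compact leaf. Neither implication holds. A leaf $L_t$ fixed by some $\gamma\in H_I$ merely means $\pi(L_t)$ has nontrivial fundamental group; such a leaf can perfectly well be a noncompact cylinder. In fact the generic picture is the opposite of what you claim: if one blows up a noncompact cylinder leaf of a minimal $\RR$-covered foliation by inserting a trivially foliated $I$-bundle $C\times I$, then in the resulting complementary region every element of $H_I$ fixes \emph{every} point of $I$, while all the gap leaves are still noncompact. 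The good news is that freeness on $I$ is not what the $I$-bundle structure requires: $V_I = \widetilde V_I/H_I$ is an $I$-bundle simply because $H_I$ acts freely and properly discontinuously on $\widetilde V_I$ as deck transformations. The genuine point to establish is that the vertical $I$-fibration of the Palmeira product $\widetilde V_I \cong L_0\times I$ can be chosen $H_I$-equivariantly so that it descends to $V_I$ (a standard general-position or averaging argument for foliated products; in this paper's application it is handed to you by the transverse foliation $\cF_2$, but the proposition as stated must be proved without it). What the no-compact-leaves hypothesis actually buys you is the \emph{noncompactness of the base} $\Sigma_I$, since $\Sigma_I$ is identified with a boundary leaf of the gap, and you do note that correctly.

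Two smaller gaps. First, a leaf $L$ of $\wcF$ fixed by all of $\pi_1(M)$ would descend to a leaf with $\pi_1(\pi(L))=\pi_1(M)$, but it is not immediate that this leaf is compact; one clean way to close the gap is a cohomological-dimension count, since $\mt\cong\RR^3$ by Palmeira so $\pi_1(M)$ has cohomological dimension $3$, while a surface group has cohomological dimension at most $2$. Second, ``classical structure theory'' for actions on $\RR$ without a global fixed point does not by itself give a unique minimal set (the $\ZZ$-action by integer translations has infinitely many discrete minimal sets); you must first rule out the discrete case, which here follows because a discrete minimal $\pi_1(M)$-orbit would project to a single leaf that is closed in $M$, hence compact, contradicting the hypothesis.
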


Note that our foliation $\cF_1$ does not have compact leaves (because it admits a one dimensional subfoliation and its leaves are Gromov hyperbolic in the universal cover).  We will show next that $\cF$ is $\RR$-covered. This will be done after understanding more carefully the complementary regions to $\cL$ the sublamination produced in the previous section. 

\subsection{Extending to the whole foliation}\label{ss.extensionfullfol}
We begin by studying the foliation $\cF_1$ and the 
complementary regions of $\cL$. The goal is to show that these complementary regions are $I$-bundles,
and that $\xi_L$ is in some sense constant in each one. This will then allow to show that $\cF_1$ is $\RR$-covered. 

Let $\pi(\cL)$  be the projection of $\cL$ to $M$ by the universal covering projection $\pi: \mt \to M$. Corollary
\ref{coro-stabilizer} shows that every leaf of $\pi(\cL)$ is either a plane or an annulus. Let $\cV$ be the closure of a connected component of $\mt \setminus \cL$ (it is a foliated region), note that leaves in $\pi(\partial \cV)$ are also planes and annuli since they are contained in $\pi(\cL)$.

To $\pi(\cV)$ we can apply an  the octopus decomposition with respect to $\cF_2$ (see \cite[Proposition I.5.2.14]{CandelConlon}) to get $\pi(\cV) = K \cup A$ where $K$ is a compact set, and $A$ are the arms. The relation with $\cF_2$ is given by the fact that one can fix some $\eps_0>0$ and consider $A$ so that every point in the boundary of $A$ is contained in a foliated box of size less than $\eps_0$ which also intersects the other boundary component, in particular, leaves of $\cF_2$ are a product in $A$ (in other words one can think of the $I$-bundle decomposition in $A$ as made by arcs contained in $\cF_2$-leaves). This means the following:  one can choose coordinates  so that in each foliated box one has that $\cF_1$ leaves restricted to $A$ are horizontals and $\cF_2$ leaves restricted to $A$ are verticals. Vertical means union
of $I$-fibers, and horizontal means transverse to
the $I$-fibers.  

Let $L$ be a boundary leaf of $\cV$, and
let $F = \pi(L)$. We have two cases:

\begin{lema}\label{lem-ibundle1}
If $F$ is a plane, then $\cV \cong L \times [0,1]$ and
the foliation $\wcF_2$ restricted to $\cV$ is a product. In particular, 
$\pi(\cV)$ can be collapsed with respect to $\cF_2$.
\end{lema}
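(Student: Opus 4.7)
The plan is to establish that $\cV$ is globally a product by leveraging the octopus decomposition of $\pi(\cV) = K \cup A$ together with the fact that $F = \pi(L)$ being a plane forces the stabilizer $\mathrm{Stab}_L = \{\gamma \in \pi_1(M) \ : \ \gamma L = L\}$ to be trivial. I would begin by recording the key tools: by Lemma~\ref{l.twointersnonHsdff} combined with the Hausdorff hypothesis on $\wcG$, any leaf $E \in \wcF_2$ meets $L$ (and each other leaf of $\wcF_1$) in at most one connected component, which is a bubble leaf of $\cG_L$. Moreover, Proposition~\ref{prop-pushing} allows us to push arcs of leaves of $\wcG$ transversely along $\wcF_2$ while keeping them as single arcs of $\wcG$ in nearby leaves of $\wcF_1$.

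Next, the octopus decomposition endows the arms $A$ with an explicit product $I$-bundle structure whose $I$-fibers are arcs of leaves of $\cF_2$. Lifting this decomposition to $\cV$ gives a corresponding structure on the preimage of the arms. The key step is to extend this $\cF_2$-fibration across the lifts of the compact core $K$. Here the planarity of $F$ enters essentially: $L$ meets any lift of $K$ in a compact set, so $L$ minus this compact set has exactly one unbounded component, contained in a single lifted arm where the product structure is already in place. Using Proposition~\ref{prop-pushing}, starting from a bubble leaf $\ell \subset L$ and pushing along the $\cF_2$-leaf $E$ containing $\ell$ into $\cV$, I would show that the connected component of $E\cap \cV$ adjacent to $\ell$ is a proper arc (or strip of the form $\ell \times [0,1]$) with its other endpoint on a uniquely determined second boundary leaf $L'$ of $\cV$, independent of the starting point $p \in \ell$.

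Assembling these fibers gives a homeomorphism $\cV \to L \times [0,1]$ sending leaves of $\wcF_1$ to horizontal slices and arcs of $\wcF_2$-leaves to vertical $\{p\}\times [0,1]$, so $\cV$ is a product and $\wcF_2|_\cV$ is a product. Since $\mathrm{Stab}_L$ is trivial and $\mathrm{Stab}(\cV)$ acts on the two-element set $\{L, L'\}$, it is either trivial or $\ZZ/2$; torsion-freeness of $\pi_1(M)$ (coming from $\mt \cong \RR^3$ by Theorem~\ref{teo.Novikovetc}(iv)) rules out the second case, so $\pi|_\cV$ is injective, $\pi(\cV) \cong F \times [0,1]$, and the collapse along $\cF_2$-fibers is well-defined and preserves the transversality of $\cF_2$ to the collapsed foliation.

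The main obstacle is the extension step in the second paragraph: the octopus decomposition only guarantees the product structure on the arms, so one has to rule out that the compact core introduces ``twists,'' extra boundary leaves, or ambiguity in the target leaf $L'$. The planarity of $F$, together with the Hausdorff hypothesis on $\wcG$ and the bubble leaf structure of $\cL$, is what forbids these: any twist in a lift of $K$ would either produce a nontrivial element of $\mathrm{Stab}_L$ (contradicting that $F$ is a plane) or force a new boundary leaf of $\cV$ whose intersection with some $E\in \wcF_2$ would have more than one component, contradicting Lemma~\ref{l.twointersnonHsdff}. Making this topological dichotomy precise, and carefully matching the $\cF_2$-fibers across finitely many components of $L$ minus the lifted core, is the technical heart of the argument.
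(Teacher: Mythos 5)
Your proposal follows essentially the same route as the paper: use the octopus decomposition $\pi(\cV)=K\cup A$ with its $\cF_2$-compatible $I$-bundle structure on the arms, exploit planarity of $F$ so that the core meets $L$ in a compact piece, and invoke Proposition~\ref{prop-pushing} to push arcs of $\cG_L$ across $\cV$ to the opposite boundary leaf $L'$. The paper handles the ``twist'' worry you flag more directly: since $F$ is a plane one may choose $K$ so that $F\cap K$ lifts to a compact disk $D\subset L$, and then the lifted core $\widetilde K$ is \emph{automatically} a trivial product $D^2\times I$ (an $I$-bundle over a disk), with $\partial\widetilde K = D\cup C\cup D'$ and $D'\subset L'$; there is no separate twist/new-boundary-leaf dichotomy to rule out. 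Your extra torsion-freeness argument for $\mathrm{Stab}(\cV)$ is a valid but unnecessary elaboration of the final ``can be collapsed'' claim, which the paper treats as immediate once the $\wcF_2$-product structure on $\cV$ is in hand. One small imprecision: you should make explicit (as the paper does) that $K$ is chosen so that $F\cap K$ is a disk, rather than merely that $L$ meets the lifted core in a compact set; it is the disk hypothesis that makes $\widetilde K$ a trivial $I$-bundle and removes the ambiguity in the target leaf $L'$.
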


\begin{proof}
In this case, 
by an appropriate choice of $K$, we can assume that 
the intersection $F \cap K$ lifts to a compact disk $D \subset L$.
Let $\widetilde  K$ be the lift of $K$ to $\mt$ contained in $\cV$,
so that $F \cap K$ lifts to $D$..
Then $\widetilde K$ is homeomorphic to a closed disk times $I$ and we write
$\partial \widetilde K = D \cup C \cup D'$,
where $D  \subset L$, $D' \subset L'$ and $C$ is a compact  
annulus so that its interior is  contained in the interior of $\cV$.

We now claim that for every $\ell \in \cG_L$, so that $\ell = L \cap E$ with $E \in \wcF_2$, we have that $E \cap \cV$ is homeomorphic to $\ell \times I$.  For this, note first that every leaf $\ell \in \cG_L$ must intersect $D$ in a compact set, outside of which is fully contained in $A$ where the product behavior is part of the definition. Now, each compact arc of intersection of $\ell$  with $D$, has its endpoints in $A$ that  can be `pushed' along the $\wcF_2$ foliation to $L'$, and then Proposition \ref{prop-pushing} completes the claim. This ends the proof of the lemma.  
\end{proof}

The case that $F$ is an annulus is somewhat more complicated since it is harder to show that the rays escape the compact part. 

\begin{lema}\label{lem-ibundle2}
If $F$ is an annulus, then 
the foliation $\wcF_2$ restricted to $\cV$ is a product. 
\end{lema}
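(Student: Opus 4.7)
The plan is to mimic the proof of Lemma~\ref{lem-ibundle1} equivariantly with respect to the infinite cyclic stabilizer of $L$. By Corollary~\ref{coro-stabilizer}, the stabilizer of $L$ in $\pi_1(M)$ is an infinite cyclic group $\langle\gamma\rangle$, and since $\gamma\xi_{L} = \xi_{\gamma L} = \xi_L$, the generator $\gamma$ fixes the common landing point $\xi_L\in S^1(L)$. Because $\cL$ is $\pi_1(M)$-invariant and $L$ lies on $\partial\cV$, the element $\gamma$ preserves $\cV$; in particular, if $L'$ denotes the other boundary leaf of $\cV$, then $\gamma L'=L'$, so by the same corollary $F':=\pi(L')$ is either an annulus whose stabilizer contains $\langle\gamma\rangle$, or a plane (in which case the argument reduces to the one in Lemma~\ref{lem-ibundle1}).

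Next, I would apply an octopus decomposition $\pi(\cV)=K\cup A$ adapted to $\cF_2$, with $K$ compact, the arms $A$ of $\wcF_2$-product type, and chosen so that $F\cap K$ (resp.\ $F'\cap K$) is an annular subsurface of $F$ (resp.\ $F'$) whose preimage in $L$ (resp.\ $L'$) is a $\gamma$-invariant strip $S$ (resp.\ $S'$). Taking $\wt K\subset\cV$ to be the $\gamma$-invariant lift of $K$ containing $S\cup S'$, we obtain a region with $\wt K/\langle\gamma\rangle$ compact, whose boundary in $\cV$ consists of $S$, $S'$, and a $\gamma$-invariant cylindrical wall $\wt C$ lying in the interior of $\cV$.

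The core claim, as in Lemma~\ref{lem-ibundle1}, is that for every $\ell\in\cG_L$ with $\ell=L\cap E$ and $E\in\wcF_2$, the intersection $E\cap\cV$ is homeomorphic to $\ell\times I$. Points of $\ell$ lying in an arm push canonically to $L'$ along $\wcF_2$-arcs using the product structure there. For the arcs of $\ell\cap S$, Proposition~\ref{prop-pushing} (with the roles of $\wcF_1,\wcF_2$ switched, using the Hausdorff hypothesis on $\cG_{L'}$) produces matching pushed arcs in $L'\cap E$; concatenating these with the arm-pieces yields a continuous injection $\ell\hookrightarrow E\cap L'$. Lemma~\ref{l.twointersnonHsdff} applied to $\cG_E$ then forces $E\cap L'$ to be a single connected leaf $\ell'\in\cG_{L'}$, and the push-through becomes a homeomorphism $\ell\to\ell'$, yielding $E\cap\cV\cong\ell\times I$ as desired.

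The main obstacle is that $\ell$ is non-compact and $S$ is itself non-compact, so the push-through must be extended globally. The bubble structure of leaves in $\cL$ is essential here: both rays of $\ell$ escape toward $\xi_L$, and since the arms of the octopus decomposition cover a neighborhood of $\xi_L$ in $L$, only the portion of $\ell$ projecting into the compact set $\pi(\wt K)$ actually requires the Hausdorff-based argument, so Proposition~\ref{prop-pushing} can be applied piece by piece on compact exhaustions of $\ell\cap S$. Finally, ruling out that $E\cap L'$ could be empty or have multiple components combines this exhaustion with Novikov's theorem (Theorem~\ref{teo.Novikovetc}) to preclude $E$ from spiraling inside $\cV$, exactly as in the proof of Lemma~\ref{lem-ibundle1}.
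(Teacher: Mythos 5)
Your proposal has two substantive gaps, both of which are precisely the points the paper's argument is built to handle.

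\textbf{The claim that the arms cover a neighborhood of $\xi_L$ in $L$ is false.} Since $F=\pi(L)$ is an annulus, the set $\cC = K\cap F$ is a compact annulus lifting to a $\gamma$-invariant band $\cB\subset L$ bounded by quasigeodesics $g_1,g_2$. Since $g_1, g_2$ are $\gamma$-invariant, their endpoints are fixed points of $\gamma$ on $S^1(L)$; one such fixed point is $\xi_L$ itself (because $\gamma\xi_L=\xi_L$). So the band $\cB$ has $\xi_L$ as an ideal endpoint, and the complement of the arms in $L$ is \emph{not} bounded away from $\xi_L$. Consequently, a priori a ray of $\ell$ heading toward $\xi_L$ could re-enter $\cB$ infinitely often, oscillating between the two arm regions $G_1, G_2$. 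Your ``compact exhaustion'' argument never rules this out. The paper devotes a careful argument to showing exactly this: if $x_n\in r\cap\cB$ go to infinity, compose with $\gamma^{i_n}$ to bring the points back to a fundamental domain, pass to a limit leaf $\ell_\infty$ of $\cG_L$ (which is unique because $\cG_L$ has Hausdorff leaf space and $\gamma^{i_n}\ell$ is a nested sequence), deduce $\gamma\ell_\infty=\ell_\infty$, and contradict the bubble structure since $\ell_\infty$ would then have two distinct ideal endpoints. Separately, one must rule out that both rays eventually lie in the same $G_i$, which the paper does using the fact that $\bigcup_{n\leq 0}\gamma^n P_\ell = L$.

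\textbf{The existence of a well-defined ``other boundary leaf $L'$'' is not given; it must be proved.} You write ``if $L'$ denotes the other boundary leaf of $\cV$, then $\gamma L' = L'$'', but $\partial\cV$ need not a priori consist of just two leaves, and $\gamma$ need not fix any particular boundary leaf other than $L$. Once the escape-to-arms fact is established, each ray $\ell_i$ exits into an arm whose opposite face lies on some boundary leaf $L_i$, and the paper then proves $L_1=L_2$ using the Hausdorff leaf space of $\cG_E$: one of $L, L_1, L_2$ would otherwise separate the other two, contradicting that all three are boundary leaves of the same $\cV$. This is an independent piece of content that your proposal assumes rather than proves. A minor additional confusion: you invoke Proposition~\ref{prop-pushing} ``with the roles of $\wcF_1,\wcF_2$ switched, using the Hausdorff hypothesis on $\cG_{L'}$'', but to push an arc of $\cG_L\subset L\in\wcF_1$ transversely to $\wcF_1$ (through the $\wcF_2$-product arms) one needs Hausdorffness of $\cG_E$ for $E\in\wcF_2$, not of $\cG_{L'}$; no role switch is appropriate here.
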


\begin{proof}
We keep the notation from the previous lemma. 
Let $\cC = K \cap F$.
In this case, again by choosing $K$ appropriately
we can assume that $\cC$ is a compact annulus with boundary components $c_1,c_2$.
The set $\cC$  lifts to a band $\cB$ inside $L$ which is bounded by quasigeodesics $g_1,g_2$ lifting $c_1,c_2$. Denote by $\gamma$ a generator of the stabilizer of $L$. 
Denote by $G_1$ and $G_2$ the connected components of $L \setminus \cB$ whose boundaries are $g_1$ and $g_2$ respectively.

We first remark that given $\ell \in \cG_L$ it follows that both rays land in $\xi_L$ because $L \in \cL$. Moreover, if we fix an orientation in $L$ and $\cG_L$ we can denote $P_\ell$ to be the connected component of $L \setminus \ell$ in the positive direction of $\ell$ (chosen so that the closure of $P_\ell$ in $L \cup S^1(L)$ is $P_\ell \cup \ell \cup \xi_L$). Up to changing $\gamma$ for its inverse we can assume that $\gamma P_\ell  \subset P_\ell$ and that $\bigcup_{n \leq 0} \gamma^n P_\ell = L$. 

We want to show that given a leaf $\ell = L \cap E$ with $E \in \wcF_2$ then, one ray is eventually contained in $G_1$ and the other eventually contained in $G_2$. First, we show that a ray $r$ of $\ell$ cannot  intersect $\cB$ indefinitely. For this, assume that there is a sequence $x_n \in r$ going to $\infty$ so that $x_n \in \cB$. Up to composing with $\gamma^{i_n}$ for appropriate $i_n$, and taking subsequences, we get that $\gamma^{i_n} x_n \to y_\infty$. It follows that $\gamma^{i_n} \ell \to \ell_\infty$ which is the leaf of $\cG_L$ through $y_\infty$ (this limit is unique because the leaf space of $\cG_L$ is Hausdorff). Now, it follows that $\gamma \ell_{\infty} = \ell_{\infty}$ because all the leaves $\gamma^{i_n} \ell$ are nested, so we can assume that they are increasingly converging to $\ell_\infty$. But this is a contradiction, since then $\ell_{\infty}$ would have two distinct landing points in $S^1(L)$ contradicting that $L \in \cL$ (where all rays land in $\xi_L$). 

Now we need to show that it cannot be that both rays are contained in $G_1$ (or $G_2$). If that were the case, then $\ell$ as well as all iterates $\gamma^n \ell$, would be eventually contained in $G_1 \cup \cB$ (except for some compact intervals of bounded length) contradicting that $\bigcup_{n\leq 0} \gamma^n P_\ell = L$. 

Let $\ell_i$ the ray of $\ell$ contained in $G_i$. 
The initial point  of $\pi(\ell_i)$ is contained in 
an annulus $S_i$,  which is a component of $K \cap A$. 
This annulus has one boundary component in $F$ and another 
boundary component in a leaf $F_i$ of $\cF_1$.
Lift the annulus to $\mt$, so that starting point lifts to a point
in $\ell_i$ and  let $L_i$ be the corresponding lift of $F_i$.
Since $\ell = L \cap E$, we get that close to $\ell_i$ the leaf $E$ intersects $L_i$. Note that both $L_1,L_2$ are contained in $\partial \cV$.

We claim that $L_1 = L_2$. Denote by $\hat \ell_i = E \cap L_i$. Since $\cG_E$ is Hausdorff we have that either $\ell$ separates $\hat \ell_1$ from $\hat \ell_2$ or there is a transversal in $E$ from $\hat \ell_1$ to $\hat \ell_2$ disjoint from $\ell$. In the first case, we get that $L$ separates $L_1$ from $L_2$ and in the second we get that either $L_1$ separates $L$ from $L_2$ or $L_2$ separates $L$ from $L_1$. All these posibilities contradict the fact that the three leaves are in the boundary of $\cV$, giving a contradiction. We let $L' = L_1= L_2$. 

As in the previous lemma, apply Proposition \ref{prop-pushing} to show that we can push two disjoint rays in every leaf $\ell \in \cG_L$ to $L'$.
Then the compact interval in between the rays must belong then to the same leaf, showing that $\wcF_2$ is trivially foliated in $\cV$.  
This finishes the proof.
\end{proof}

We can now complete the proof of the first part of Proposition \ref{p.novisualallbubble}: 

\begin{lema}\label{lem-continuous}
For every $L \in \wcF_1$ there is a point $\xi_L \in S^1(L)$ so that for every leaf $\ell \in \cG_L$ both rays land in $\xi_L$ (that is, every leaf of $\cG$ is a bubble leaf in its corresponding leaf of $\wcF_1$). Moreover, the point $\xi_L$ varies continuously with $L$ (in the leaf space of $\wcF_1$).
\end{lema}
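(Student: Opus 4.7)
The strategy is to extend the definition of $\xi_L$ from $\cL$ to every leaf of $\wcF_1$ using the product structure of the complementary regions of $\cL$ provided by Lemmas \ref{lem-ibundle1} and \ref{lem-ibundle2}, and then to verify continuity in the tubulation topology on $\cTuu$ from Section \ref{s.topologytub}. For $L \in \cL$, the point $\xi_L$ is already defined by Lemma \ref{lem-lam}, with continuity restricted to $\cL$ given by Lemma \ref{lem-cont1}. For $L \in \wcF_1 \setminus \cL$, the leaf lies in the interior of a connected component $\cV$ of $\mt \setminus \cL$ whose boundary contains some $L_b \in \cL$; by the product structure of $\wcF_2|_{\cV}$, each $E \in \wcF_2$ meeting $\cV$ produces a natural correspondence $\ell = L \cap E \leftrightarrow \hat{\ell} = L_b \cap E$ between leaves of $\cG_L$ and leaves of $\cG_{L_b}$.

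I will show that every such $\ell$ is a bubble leaf. Fix $\ell$ and its partner $\hat{\ell}$; by hypothesis both rays of $\hat{\ell}$ land at $\xi_{L_b}$. Take sequences $p_n, q_n \in \hat{\ell}$ tending to infinity along each ray, so $p_n, q_n \to \xi_{L_b}$ in $L_b \cup S^1(L_b)$. Applying Proposition \ref{prop-pushing} to $\wcF_2$-transversals from $p_n, q_n$ into $L$, we obtain corresponding points $p_n', q_n' \in \ell$. These must exit every compact set of $L$, since the product homeomorphism of $\cV$ is proper on each $\wcF_2$-leaf, so the arcs pushed across $\cV$ have lengths going to infinity in $L$. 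Using the tubulation topology of Section \ref{s.topologytub}, a neighborhood basis of $\xi_{L_b}$ in $\cTuu$ intersects every nearby leaf in a wedge bounded by geodesic rays converging to a single point of its circle at infinity; the $\wcF_2$-transversals from $L_b$ to $L$ force $p_n', q_n'$ into progressively smaller wedges of $L$, so both sequences must converge to a common point $\xi_L \in S^1(L)$. Applying this to every $\ell \in \cG_L$ and invoking the argument of Claim \ref{claim84} (which forbids disjoint landing intervals for distinct leaves of $\cG_L$) gives a common landing point $\xi_L$ for all of $\cG_L$.

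For continuity of $L \mapsto \xi_L$, within $\cV$ the product structure combined with the topology of $\cTuu$ yields continuity in the $[0,1]$-parameter. At a boundary leaf $L_b \in \cL$ approached from within $\cV$, the convergence $\xi_L \to \xi_{L_b}$ as $L \to L_b$ follows from the same wedge argument, since the relevant neighborhoods of $\xi_{L_b}$ in $\cTuu$ eventually contain the pushed-forward points in $L$. Combined with Lemma \ref{lem-cont1}, this yields global continuity of $L \mapsto \xi_L \in \cTuu$. The main obstacle, in my view, is the extension step: the product structure on $\cV$ is purely topological and need not preserve metric quantities, so propagating the single-point landing behavior requires carefully exploiting the interplay between the tubulation topology at $\xi_{L_b}$ and the $\wcF_2$-transversals in $\cV$. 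A secondary issue is ensuring that $\xi_L$ is independent of the choice of boundary leaf $L_b$ when $\partial\cV \cap \cL$ contains more than one leaf, which should follow by matching at both ends of the $[0,1]$-product once the wedge argument is in place.
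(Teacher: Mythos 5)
Your overall plan---push from a boundary leaf $L_b\in\cL$ of a complementary region $\cV$ to an interior leaf $L$ using the $\wcF_2$-product structure from Lemmas \ref{lem-ibundle1} and \ref{lem-ibundle2}, and deduce the bubble property and continuity---is indeed the approach the paper intends. The gap is in the mechanism you use to show $p_n',q_n'$ converge to a single point: a basic neighborhood $V_m(\xi_{L_b})$ of $\xi_{L_b}$ in $\cTuu$ is supported on a transversal of length $\eps_m\to 0$, so for a fixed interior leaf $L$ at positive transverse distance from $L_b$ the neighborhoods $V_m(\xi_{L_b})$ fail to meet $L$ once $m$ is large, and the wedges of $L$ that could constrain $p_n',q_n'$ stop shrinking at a finite stage. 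Thus the assertion that ``the $\wcF_2$-transversals from $L_b$ to $L$ force $p_n',q_n'$ into progressively smaller wedges of $L$'' is not supplied by the tubulation topology, and the conclusion $p_n',q_n'\to\xi_L$ does not follow as stated.

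The way to close this is to note that the $\wcF_2$-fibers of the product structure on $\cV$ have uniformly bounded length (in the octopus decompositions used in Lemmas \ref{lem-ibundle1} and \ref{lem-ibundle2}, the arm fibers have length less than $\eps_0$ while the core is compact). Combined with the local product structure of the transverse pair $\cF_1,\cF_2$, this makes the push-through map $\phi\colon L_b\to L$ a bi-Lipschitz homeomorphism in the leafwise metrics, hence a uniform quasi-isometry. Therefore $\phi$ extends to a homeomorphism $\partial\phi\colon S^1(L_b)\to S^1(L)$; since $\phi$ sends each leaf of $\cG_{L_b}$ to the corresponding leaf of $\cG_L$ and all rays of $\cG_{L_b}$ land at $\xi_{L_b}$, all rays of $\cG_L$ land at $\xi_L:=\partial\phi(\xi_{L_b})$. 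This replaces the wedge argument, and it also makes $\xi_L$ manifestly independent of the choice of boundary leaf $L_b$. Continuity at a boundary leaf of $\cV$ then follows as in Lemma \ref{lem-cont1}, and within $\cV$ it follows from continuity of the family of push-through quasi-isometries.
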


\begin{proof}
This follows from the fact that if $\cV$ is a complementary region of $\cL$ and $L, L'$ are its boundary leaves, we have shown that leaves in $\cG_L$ push to leaves of $\cG_{L'}$ entirely. So, the same proof as in Lemma \ref{lem-cont1} applies. 
\end{proof}

Moreover, we are in conditions to prove: 

\begin{lema}\label{lem-anosovrcovered}
The foliation $\cF_1$ is $\RR$-covered.
\end{lema}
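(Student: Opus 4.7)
The plan is to argue by contradiction. Suppose $\cF_1$ is not $\RR$-covered. Then the leaf space $\cL_1$ of $\wcF_1$ is non-Hausdorff, so there exist two distinct non-separated leaves $L_1, L_2 \in \wcF_1$ together with a sequence $L_n \in \wcF_1$ satisfying $L_n \to L_1$ and $L_n \to L_2$ in $\cL_1$. Choosing basepoints appropriately, I can find $p_n, q_n \in L_n$ with $p_n \to p_\infty \in L_1$ and $q_n \to q_\infty \in L_2$.

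The goal is to contradict the standing hypothesis that $\wcG$ has Hausdorff leaf space; by Proposition \ref{prop-Hsdff2Dand3D}, this implies that $\cO_E$ is Hausdorff for every $E \in \wcF_2$. The main intermediate step is to produce a \emph{single} leaf $E \in \wcF_2$ that meets both $L_1$ and $L_2$. To achieve this, for each large $n$ I consider an arc $\alpha_n \subset L_n$ from $p_n$ to $q_n$ transverse to $\cG_{L_n}$ (such an arc exists because $p_n$ and $q_n$ sit in distinct leaves of $\cG_{L_n}$ for large $n$ and $\cO_{L_n}\cong\RR$ by the Hausdorff hypothesis together with the bubble leaf structure). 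The map $t \mapsto \wcF_2(\alpha_n(t))$ gives a continuous family of $\wcF_2$-leaves starting near $E_1 = \wcF_2(p_\infty)$, which meets $L_1$, and ending near $E_2 = \wcF_2(q_\infty)$, which meets $L_2$. A compactness argument, combined with the continuity of the bubble point map $L\mapsto\xi_L$ in the tubulation topology (Lemma \ref{lem-continuous}) and the pushing property (Proposition \ref{prop-pushing}), will extract in the limit an $\wcF_2$-leaf $E$ that meets both $L_1$ and $L_2$ simultaneously.

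Once such $E$ is in hand, the contradiction comes quickly. Write $\ell_1 = E \cap L_1$ and $\ell_2 = E \cap L_2$; these are distinct leaves of $\cG_E$ since $L_1 \neq L_2$. For large $n$, the intersection $E \cap L_n$ is a single connected arc $\ell_n \in \cG_E$ by Lemma \ref{l.twointersnonHsdff}. Because $L_n$ accumulates on both $L_1$ and $L_2$, the arc $\ell_n \subset E$ contains points accumulating on points of $\ell_1$ (near $p_\infty$) and points accumulating on points of $\ell_2$ (near $q_\infty$). Consequently, for every small transversal to $\cG_E$ in $E$ crossing $\ell_1$ (respectively $\ell_2$), the arc $\ell_n$ meets it for all sufficiently large $n$. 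Hence the single sequence $[\ell_n] \in \cO_E$ admits both $[\ell_1]$ and $[\ell_2]$ as limits, contradicting the Hausdorffness of $\cO_E$.

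The hard part is producing the leaf $E \in \wcF_2$ that simultaneously meets the two non-separated leaves $L_1$ and $L_2$. The continuous family of $\wcF_2$-leaves traversed by $\alpha_n$ must accumulate, as $n\to\infty$, on a leaf that contains points near both $L_1$ and $L_2$; making this limit rigorous requires exploiting the strong rigidity imposed by the bubble leaf structure together with the continuity of $L\mapsto\xi_L$ in $\cTuu$ from Section \ref{s.folinfinity}.
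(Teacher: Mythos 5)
Your overall architecture matches the paper's: assume two non-separated leaves $L_1,L_2$ of $\wcF_1$, produce a single leaf $E\in\wcF_2$ meeting both, and then use Hausdorffness of $\cO_E$ to conclude. Your endgame is fine (the paper derives the contradiction slightly differently, by using Hausdorffness of $\cO_E$ to build a transversal in $E$ from $E\cap L_1$ to $E\cap L_2$, which is then a transversal to $\wcF_1$ joining two non-separated leaves; your version, where $E\cap L_n$ accumulates on both $E\cap L_1$ and $E\cap L_2$ and violates Hausdorffness of $\cO_E$ directly, is equally valid).

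The problem is that the construction of $E$ — which you yourself flag as ``the hard part'' — is exactly the content of the lemma, and you have not supplied it. The appeal to ``a compactness argument'' on the family $t\mapsto\wcF_2(\alpha_n(t))$ does not work as stated: the leaf space of $\wcF_2$ is neither compact nor a priori Hausdorff, the arcs $\alpha_n$ necessarily leave every compact set as $n\to\infty$ (since $p_\infty$ and $q_\infty$ lie in different, non-separated leaves), and nothing in a naive limit forces a single $\wcF_2$-leaf to retain points near both $L_1$ and $L_2$. The mechanism the paper uses, and which your sketch only gestures at, is concrete: take marker rays (Calegari--Dunfield leaf pocket theorem, \S\ref{ss.markers}) based at $x_\infty\in L_1$ and $y_\infty\in L_2$ on the side from which the $L_n$ accumulate, chosen so that their push-offs into $L_n$ land at points of $S^1(L_n)$ different from $\xi_{L_n}$. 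Because every leaf of $\cG_{L_n}$ is a bubble leaf at $\xi_{L_n}$, the regions they bound are nested and exhaust $L_n$, so any quasigeodesic ray landing away from $\xi_{L_n}$ must cross a terminal ray's worth of leaves of $\cG_{L_n}$; hence both pushed markers cross a common leaf $c$ of $\cG_{L_n}$. Since the transverse arcs of the markers are shorter than an $\cF_2$-product-box, the leaf $E\in\wcF_2$ with $c\subset E$ must then cross both $L_1$ and $L_2$. Without this (or an equivalent) argument, your proof has a genuine gap at its central step.
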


\begin{proof}
We assume by contradiction that $\cF_1$ is not $\RR$-covered, 
 and thus there are distinct leaves $L, L' \in \wcF_1$ which are non-separated in the leaf space of $\wcF_1$, that is, there is a sequence of leaves $L_n \in \wcF_1$ with points $x_n,y_n \in L_n$ so that $x_n \to x_\infty \in L$ and $y_n \to y_\infty \in L'$. 

We want to show that there is a leaf $E \in \wcF_2$ which intersects some $L_n$ as well as $L$ and $L'$ which will contradict the fact that the foliation $\cG_E$ is Hausdorff. 

To do this, we use that there is a dense set of marker directions 
in  $L$ and $L'$ on the side  the leaves $L_n$ are limiting on. 

Let $m_1,m_2: [0,1] \times \RR_{\geq 0} \to \mt$ and $m_1', m_2': [0,1] \times \RR_{\geq 0} \to \mt$ distinct markers of $L$ and $L'$ respectively with the property that $m_i(0,0)=x_\infty$ and $m_i'(0,0)=y_\infty$. We can assume that if $n_0$ is large, for every $n>n_0$ we have that $x_n \in m_i(t_n,0)$ and $y_n \in m_i'(t_n',0)$. 

We can chooose the markers to be distinct, and to land at different points in $S^1(L), S^1(L')$. We can assume without loss of generality that we have $m_1(t_n \times \RR_{\geq 0})$ and $m_1'(t_n' \times \RR_{\geq 0})$ do not land in $\xi_{L_n}$. 


Lemma \ref{lem-continuous} implies that every leaf of $\cG_{L_n}$ verifies that both rays land in $\xi_{L_n}$.
We can find a leaf $c \in \cG_{L_n}$  which intersects both the
marker $m_1$ with $L$ and the marker $m'_1$ with $L'$.  
If $c = E \cap L_n$ we deduce that $E$ must then intersect $L$ and $L'$,
because  the `vertical' length of  the  markers is less than
a  foliation box size of $\cF_2$.
Denote by $c_1 = E \cap L$ and $c_2 = E \cap L'$. Since $\cG_E$ is Hausdorff, there is a transversal to $\cG_E$ intersecting $c_1$ and $c_2$. This produces a transversal to $\wcF_1$ 
which intersects $L$ and $L'$ contradicting the fact that these leaves are non-separated. This completes the proof. 
\end{proof}

We can now apply Proposition \ref{prop-precollapse}: There is a set closed $\wcF_1$-saturated and $\pi_1(M)$-invariant set $\cL$  so that $\pi(\cL)$ 
is the unique minimal set of $\cF_1$. 
Moreover, the complementary regions of $\pi(\cL)$ are $I$-bundles over non  compact surfaces,
and the foliation $\cF_1$ can be collapsed to a minimal
foliation by collapsing each complementary region to a single leaf. In this collapsed foliation, every leaf is a bubble leaf. So, from now on, we can assume that our foliation is minimal and $\RR$-covered. See Lemma \ref{lema-minimal} below for a precise statement.

\subsection{Constructing the Anosov flow}\label{ss.RcoveredAF}

To recap, we state the following result that we obtained in the
previous subsection:

\begin{lema}\label{lema-minimal}
Under the assumptions of Proposition \ref{p.novisualallbubble}, then, one can collapse $\cF_1$ to a minimal $\RR$-covered foliation $\cM_1$ which is still transverse to $\cF_2$ and such that the new intersected  foliation $\cG'$ verifies that $\wt{\cG'}$ does not have the small visual measure property
in $\wt{\cM_1}$. 
\end{lema}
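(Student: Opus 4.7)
The plan is to apply Proposition~\ref{prop-precollapse} to $\cF_1$ and verify that the resulting foliation satisfies the three required conditions: being minimal and $\RR$-covered, being transverse to $\cF_2$, and inheriting the failure of small visual measure for the new intersected foliation.

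First I will check that Proposition~\ref{prop-precollapse} applies to $\cF_1$. Lemma~\ref{lem-anosovrcovered} already gives that $\cF_1$ is $\RR$-covered. To see that $\cF_1$ has no compact leaves, I argue that under our orientability assumptions any compact leaf would be a torus, but a torus cannot be Gromov hyperbolic since its universal cover $\RR^2$ with any $\ZZ^2$-invariant Riemannian metric has polynomial (quadratic) growth and fails to be quasi-isometric to $\HH^2$. Hence Proposition~\ref{prop-precollapse} produces a unique minimal $\wcF_1$-saturated $\pi_1(M)$-invariant set $\cL^{\min}$, whose complementary regions are $I$-bundles over non-compact surfaces, and collapsing each complementary region to a single leaf yields a minimal foliation $\cM_1$. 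Since the collapsing identifies open intervals in the leaf space $\cL_1$, the resulting leaf space of $\wt{\cM_1}$ is still Hausdorff, so $\cM_1$ is $\RR$-covered.

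Next, I will verify that $\cF_2$ descends to a foliation transverse to $\cM_1$. By Lemma~\ref{lem-continuous}, every leaf of $\wcF_1$ is a bubble leaf. The arguments of Lemmas~\ref{lem-ibundle1} and~\ref{lem-ibundle2} relied only on the bubble leaf property and the Hausdorff leaf space of $\wcG$ (together with the stabilizer structure from Corollary~\ref{coro-stabilizer}), and they extend to complementary regions of $\cL^{\min}$, whose boundary leaves are still bubble leaves. These arguments show that $\wcF_2$ restricted to each complementary region is a product, with $I$-fibers of the $I$-bundle lying inside leaves of $\wcF_2$. Hence the collapsing simultaneously collapses $\cF_2$ within each complementary region, producing a new foliation transverse to $\cM_1$, still denoted $\cF_2$ by abuse of notation.

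Finally, to check that $\wt{\cG'}$ fails the small visual measure property in $\wt{\cM_1}$: each leaf of $\wt{\cM_1}$ arises from a leaf of $\wcF_1$ (possibly with a pair of boundary leaves of a complementary region identified), and remains a bubble leaf. I will show that the witnesses $y_n \to y_\infty$ and arcs $c_n$ produced by Lemma~\ref{lem-nonvisualgoestoinf} descend to analogous witnesses: since the collapsing is trivial on leaves of $\cL^{\min}$ and at worst identifies a pair of boundary leaves across a complementary region, leafwise distances and visual measures are preserved up to bounded distortion, so the estimates $d(y_n, c_n) \to \infty$ and the lower bound on the visual measure of $\mathrm{Sh}_{y_n}(c_n)$ carry over to $\wt{\cM_1}$. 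The main obstacle I expect is exactly this last verification: one must carefully confirm that the induced metric on a quotient leaf (arising from two boundary leaves identified via the $I$-bundle) is coarsely equivalent to the original leaf metrics, so that the quantitative witnesses to failure of small visual measure survive in the new setting.
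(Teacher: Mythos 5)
Your proof is correct and follows essentially the same route as the paper, which in fact offers no explicit proof here: Lemma~\ref{lema-minimal} is stated as a recap of what was established in \S\ref{ss.extensionfullfol} (Lemma~\ref{lem-anosovrcovered}, the $I$-bundle structure of complementary regions from Lemmas~\ref{lem-ibundle1}--\ref{lem-ibundle2}, Proposition~\ref{prop-precollapse}, and the persistence of the bubble leaf property after collapse). Your justifications for the hypotheses of Proposition~\ref{prop-precollapse} (no compact leaf since a torus is never Gromov hyperbolic), for $\cM_1$ being $\RR$-covered (the leaf space quotient is monotone, preserving Hausdorffness), and for $\cF_2$ descending (the $I$-fibers of the complementary regions lie in $\cF_2$-leaves, so the collapse is compatible with $\cF_2$) all match what the paper uses.

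Where you take a more roundabout route is the final verification that $\wt{\cG'}$ fails small visual measure in $\wt{\cM_1}$. You attempt to push the quantitative witnesses from Lemma~\ref{lem-nonvisualgoestoinf} through the quotient and correctly flag the metric-distortion issue as the point that needs checking; while this can be closed (the $I$-fibers have uniformly bounded length by compactness, so the quotient is a leafwise quasi-isometry), it is unnecessary work. The cleaner argument, implicit in the paper, is that the bubble-leaf property itself passes trivially to the collapsed foliation (the paper states this directly: ``In this collapsed foliation, every leaf is a bubble leaf''), and once every leaf of $\cG'_L$ is a bubble leaf in $L \in \wt{\cM_1}$, SVM must fail: otherwise Proposition~\ref{prop.svmimpliesqg}, applicable because the leaf spaces $\cO'_L$ are still Hausdorff, would make $\cG'$ leafwise uniformly quasigeodesic, forcing each leaf to have two distinct ideal endpoints and contradicting the bubble-leaf property. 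This bypasses any need to estimate visual measures or distances in the quotient. A further small inaccuracy: the collapsing is not literally ``trivial on leaves of $\cL^{\min}$'' since it identifies pairs of boundary leaves of a complementary region, both of which lie in $\cL^{\min}$; this does not affect your conclusion but the phrasing should be tightened.
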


Now, we show that $\cM_1$ is the weak stable foliation of a (topological) Anosov flow on $M$ (cf. \S \ref{ss.Anosovflows}). 

\begin{lema}\label{lem-anosovflow}
There is a (topological) Anosov flow $\Phi$ on $M$ for which $\cM_1$ is the weak stable foliation. 
\end{lema}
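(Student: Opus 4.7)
The plan is to construct the flow $\Phi$ directly from the continuous section $L \mapsto \xi_L$ provided by Lemma \ref{lem-continuous}, and then verify the topological Anosov axioms of \S \ref{ss.Anosovflows}.

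First, applying Candel's theorem (Proposition \ref{prop-CandelThm0}) to the minimal foliation $\cM_1$, endow $M$ with a continuous Riemannian metric under which each leaf of $\wcM_1$ is isometric to $\HH^2$ with the hyperbolic metric. The potential obstruction, namely a transverse invariant measure of non-negative Euler characteristic supported on tori, is incompatible with the presence of the one-dimensional subfoliation $\cG'$ whose leaves are all bubble leaves with a single ideal endpoint: a torus leaf would carry a foliation $\cG'$ of the plane in which every leaf has both ends landing at the same point, and one checks this conflicts with the holonomy of the torus.

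Second, for each $L \in \wcM_1$ let $\cH_L$ be the foliation of $L$ by complete hyperbolic geodesics asymptotic to $\xi_L$. Continuity of the map $L \mapsto \xi_L$ in the topology of $\cTuu$ (Lemma \ref{lem-continuous} together with \S \ref{s.topologytub}) ensures that the $\cH_L$ assemble into a continuous one-dimensional subfoliation $\cH$ of $\cM_1$. Orient each leaf of $\cH$ so that its positive ray lands at $\xi_L$; this determines a continuous unit vector field $X$ on $M$, and we let $\Phi$ be the flow of $X$. That $\cM_1$ is the weak stable foliation of $\Phi$ follows from the hyperbolic geometry of leaves: in any given $L \in \wcM_1$, two orbits of $\widetilde \Phi$ are distinct geodesics of $\HH^2$ both landing at $\xi_L$, so they are forward-asymptotic at exponential rate. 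To produce a weak unstable foliation we exploit the $\RR$-covered structure: the leaf space $\cL_1$ is identified with $\RR$ and holonomy along a global transversal identifies the circles $S^1(L)$ consistently with $\xi_L$ preserved; the remaining $S^1$-direction after collapsing $\xi_L$ parametrizes ``unstable ideal points,'' and for each such point $\eta$ one assigns the locus of orbits whose positive ideal direction (as one moves monotonically through $\cL_1$) is $\eta$. Using the push-through Proposition \ref{prop-pushing} and the local product structure with $\cF_2$, this yields a (possibly branching) transverse two-dimensional foliation $\cW^{wu}$; after a further collapse of $I$-bundle complementary regions as in Proposition \ref{prop-precollapse} it becomes an honest transverse foliation.

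For expansiveness, suppose $\widetilde \alpha, \widetilde \beta$ are two orbits of $\widetilde \Phi$ in $\mt$ that remain within Hausdorff distance $\eps$ for all time. Since orbits are tangent to $\cH \subset \cM_1$, they lie in leaves $L_\alpha, L_\beta \in \wcM_1$. If $L_\alpha = L_\beta$, both are geodesics of $L_\alpha \cong \HH^2$ sharing the forward endpoint $\xi_L$ and, by backward Hausdorff closeness, the same backward endpoint, so they must coincide. If $L_\alpha \neq L_\beta$, the action on the weak unstable direction of nearby leaves across $\cL_1 \cong \RR$ forces the orbits to diverge either in forward or backward time, violating the bound $\eps$. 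The main obstacle is carrying out the construction of the weak unstable foliation in the second step rigorously: one must show that the naive assignment $\eta \mapsto \{\text{orbits with unstable direction }\eta\}$ is actually a continuous 2-dimensional foliation, which requires combining the $\RR$-covered topology of $\cM_1$, the continuity of $\xi_L$, the Hausdorff leaf space of $\wcG$ (which through Proposition \ref{prop-pushing} transports arcs consistently between nearby leaves), and possibly a final collapsing operation compatible with both $\cM_1$ and $\cF_2$ so that transversality is preserved.
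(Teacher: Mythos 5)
The first half of your construction (Candel metric, geodesic fan toward $\xi_L$, continuity of $L \mapsto \xi_L$ giving a continuous equivariant vector field $X$ whose flow has $\cM_1$ as a ``stable'' foliation) is exactly the paper's route. The problems are in the second half, and they are genuine gaps rather than omitted routine details.

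First, your transverse expansivity argument is circular: when $L_\alpha \neq L_\beta$ you invoke ``the action on the weak unstable direction of nearby leaves'' to force divergence, but at that point no unstable structure has been constructed --- nothing established so far rules out two orbits in distinct nearby leaves of $\wt{\cM_1}$ staying a bounded distance apart for all time. The actual mechanism (Calegari's, which the paper quotes) is the marker machinery of \S\ref{ss.markers}: one produces a marker $m:[0,1]\times\RR \to \mt$ whose base projects to a closed curve in a leaf and whose thickness can be chosen \emph{strictly} decreasing to $0$ in the $\RR$-direction, checks that the contracting direction of the marker is opposite to $X$, and uses minimality of $\cM_1$ (every leaf is dense, so every lift meets the marker) to conclude that the forward flow strictly expands transversely to $\wt{\cM_1}$. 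This is where minimality enters essentially, and it is absent from your argument.

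Second, you try to build $\cW^{wu}$ by hand from ``unstable ideal points'' and acknowledge you cannot complete it. The paper never constructs the weak unstable foliation directly: once expansivity is established, it invokes the general fact that an expansive flow preserving a foliation is a topological Anosov flow (\cite[Theorem 5.9]{BFP}), which produces $\cW^{wu}$ as part of its conclusion. So the fix is to drop your step constructing $\cW^{wu}$, replace your transverse divergence claim with the marker argument, and then cite that black box. (Your opening worry about transverse invariant measures is also unnecessary: $\cM_1$ has Gromov hyperbolic leaves by hypothesis, so a Candel metric with hyperbolic leaves exists directly.)
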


\begin{proof}
This follows exactly the proof of \cite[Theorem 5.5.8]{Calegari} (see also \cite{Calegari-book} and \cite{BFP} for some discussion). One considers a Candel metric on leaves of $\cM_1$ and fixes, for each $L \in \wt{\cM_1}$ a foliation by lines which is a \emph{geodesic fan} towards the point $\xi_L \in S^1(L)$. Since the point $\xi_L$ varies continuously with the leaf (Lemma \ref{lem-continuous}) we deduce that by considering the vector field tangent to each geodesic with unit size and toward $\xi_L$, we get a vector field which projects to a vector field $X$ in $M$ (because the points $\xi_L$ are equivariant as well as the Candel metric). \ Calegari  proved that $X$ generates an expansive flow $\phi$, we just  review a couple of
steps in the proof of
\cite[Theorem 5.5.8]{Calegari}: \ 1) Along leaves of $\cM_1$ flow lines
diverge backwards. 
\  2) Show that transversely to  $\cM_1$ the orbits diverge
in the forward direction. Hence either forward or backward two orbits
eventually diverge from each other, so the flow  is expansive.
To show 2), Calegari uses a marker $m: [0,1] \times \RR \to \mt$,
so that $m(\{ 0 \} \times \RR)$ projects to a closed curve
in a leaf. He shows that one can choose this marker strictly
decreasing to $0$ in thickness in the $\RR$  direction,
as opposed to not increasing. Each leaf of  $\cM_1$ is dense,
so a lift  intersects this marker. He shows that the contracting 
direction  in the marker is the direction opposite to the flow $\widetilde X$.
Hence the positive direction of $X$  expands transversely to 
$\widetilde{\cM_1}$. This is the main idea to prove 2)
and obtain expansivity of $\phi$.

Hence $\phi$ is expansive, and it preserves a foliation  ($\cM_1$).
It follows that $\phi$ is a topological Anosov flow (see \cite[Theorem 5.9]{BFP}). 
\end{proof}

Finally we show the following result which completes the proof of Proposition \ref{p.novisualallbubble}. Note that by definition, a (topological)  Anosov flow is $\RR$-covered if its weak stable foliation (and thus also its weak unstable foliation) is $\RR$-covered. We refer the reader to \cite{Barbot,FenleyAnosov} for background.

\begin{remark}
At this point we have shown that if $\cF_1$ and $\cF_2$ are two transverse foliations so that $\wcG= \wcF_1 \cap \wcF_2$ has Hausdorff leaf space when lifted to the universal cover and we know that $\cF_1$ is not $\RR$-covered, then, for every $L \in \wcF_1$ we know that $\cG_L$ is by quasigeodesics. In the next section we will extend this further to show that the only obstruction is given by (modifications) of the example from \S \ref{s.example}. 
\end{remark}

\section{Failure of small visual measure}\label{s.SVM}

This section is divided in two parts. On the one hand, we will show that if one of the foliations has the small visual measure property, then, both must have it, and thus we deduce that the foliation $\cG$ is leafwise quasigeodesic getting the conclusion of Theorem \ref{teo.main}. As an ingredient for this, we show that if $\cG$ is leafwise quasigeodesic in one of the foliations, then it must have a closed leaf. The second part of the section concludes the proof of Theorem \ref{teo.main}. 

\subsection{Closed leaves and small visual measure in both foliations} 

We first show the following statement which implies Corollary \ref{quasigeodesiccoro-periodic}: 

\begin{teo}\label{prop-closedleaf}
Let $\cF_1$ and $\cF_2$ be two transverse foliations by Gromov hyperbolic leaves intersecting in $\cG$ which verifies that $\wcG$ has Hausdorff leaf space (cf. Proposition \ref{prop-Hsdff2Dand3D}). If $\cG$ has the small visual measure property in $\cF_1$, then, there is a closed leaf of $\cG$. 
\end{teo}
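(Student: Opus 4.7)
The plan is to combine the small visual measure hypothesis with Hausdorffness to obtain leafwise quasigeodesicity (via Propositions~\ref{prop-Hsdff2Dand3D} and \ref{prop.svmimpliesqg}): in every $L\in\wcF_1$ the leaves of $\cG_L$ are uniform quasigeodesics with well-defined, distinct endpoints in $S^1(L)$, and the leaf space $\cO_L$ is homeomorphic to $\RR$. In particular the endpoint map $\cO_L\to S^1(L)\times S^1(L)\setminus\Delta$ is continuous and $\mathrm{Stab}(L)$-equivariant.

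Next I would apply Theorem~\ref{teo.nonholonomy} to a minimal sublamination of $\cF_1$ (noting that $\pi_1(M)$ is non-abelian in our setting because $\cF_1$ has Gromov hyperbolic leaves) to produce a leaf $L\in\wcF_1$ and a non-trivial $\gamma\in\pi_1(M)$ with $\gamma L=L$. By Lemma~\ref{lem-closedgeodesics}, $\gamma$ preserves a geodesic axis $\alpha\subset L$ whose endpoints $\alpha^\pm\in S^1(L)$ are the attracting and repelling fixed points of $\gamma$ acting on $S^1(L)$. The aim is to find a $\gamma$-invariant leaf $\ell\in\cG_L$: any such $\ell$, being a leaf of $\wcG$ stabilized by a non-trivial deck transformation, descends to a closed leaf of $\cG$ in $M$. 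Since $\cO_L\cong\RR$ and $\gamma$ acts on it by a homeomorphism, this amounts to finding a fixed point of the action of $\gamma$ on $\cO_L$, and the only case to exclude is that $\gamma$ acts freely as a non-trivial translation.

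The hard part will be to rule out that translation case. My plan is to pass to the hyperbolic cylinder $C=L/\langle\gamma\rangle$, in which $\alpha$ becomes a core closed geodesic and $\cG_L$ descends to an oriented one-dimensional foliation of $C$ whose leaves are all non-closed (because $\gamma$ is assumed to act freely on $\cO_L$) and whose leaf space is the Hausdorff circle $\cO_L/\langle\gamma\rangle\cong S^1$. The topological constraints on a foliation of a cylinder by non-closed lines with circle leaf space, combined with the quasigeodesic behavior of the lifts in $L$ and a careful analysis of the endpoints $\ell^\pm$ in $S^1(L)$ as the leaf varies (using that $\gamma$-equivariance pulls endpoints along $\gamma$-orbits of $S^1(L)$, which accumulate at $\alpha^\pm$), should force each leaf of $\cG_L$ to lie within a bounded neighborhood of $\alpha$ via the Morse lemma in the Gromov hyperbolic space $L$. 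This is the desired contradiction: the leaves of $\cG_L$ fill $L$, but $L$ is quasi-isometric to $\HH^2$ and hence is not contained in any bounded neighborhood of a geodesic. Consequently $\gamma$ must fix some leaf of $\cG_L$, producing the sought closed leaf of $\cG$.
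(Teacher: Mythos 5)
Your reduction to finding a $\gamma$-fixed point of the action on $\cO_L\cong\RR$ is the right framing, and your first two steps (quasigeodesicity via Propositions \ref{prop-Hsdff2Dand3D} and \ref{prop.svmimpliesqg}, then Theorem \ref{teo.nonholonomy} to get a periodic leaf $L$ with stabilizer $\gamma$) agree with the paper. But the step where you rule out the translation case has a genuine gap: the configuration you are trying to exclude is perfectly consistent at the level of a single leaf. Take $L=\HH^2$, $\gamma$ a hyperbolic isometry with axis $\alpha$, and foliate $\HH^2$ by the geodesics orthogonal to $\alpha$. This is a $\gamma$-invariant foliation by uniform (quasi)geodesics with Hausdorff leaf space $\cong\RR$ on which $\gamma$ acts as a fixed-point-free translation; in the quotient cylinder it descends to a foliation by essential properly embedded lines with leaf space $S^1$ and no closed leaf. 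So neither the topology of the quotient foliation nor the Morse lemma forces the leaves into a bounded neighborhood of $\alpha$. In fact the equivariance argument you sketch yields the opposite conclusion: if $\gamma$ acts freely on $\cO_L$, each leaf projects to a \emph{properly} embedded line in $L/\langle\gamma\rangle$, which forbids either ray from having ideal point $\alpha^{\pm}$ (such a ray would stay in a compact part of the cylinder by the Morse lemma); a short nesting argument then shows every leaf must have exactly one ideal point in each component of $S^1(L)\setminus\{\alpha^+,\alpha^-\}$, i.e.\ the leaves link $\alpha$ at infinity rather than fellow-travel it. No contradiction arises from data internal to $L$.

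The paper closes this gap with a global input: it invokes \cite[Proposition 6.9]{BFP} to produce a $\pi_1(M)$-invariant sublamination $\Lambda$ of $\wcF_1$ on whose leaves $\cG_L$ is a \emph{weak quasigeodesic fan} (all leaves share one ideal point $\xi_L$, and every other ideal point is hit by some leaf). Theorem \ref{teo.nonholonomy} is then applied to $\Lambda$, and on a periodic leaf $L\in\Lambda$ the deck transformation $\gamma$ fixes both the canonical point $\xi_L$ and an axis endpoint $\xi^-\neq\xi_L$; the set of leaves of $\cG_L$ joining $\xi^-$ to $\xi_L$ is a nonempty closed $\gamma$-invariant interval in $\cO_L$, whose boundary leaves are therefore $\gamma$-fixed. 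To repair your argument you would need some substitute for this fan structure (or another use of compactness of $M$ across leaves); a purely leafwise analysis of $\gamma$ acting on $\cO_L$ cannot succeed.
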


\begin{proof}
Proposition \ref{prop.svmimpliesqg} gives that for every $L \in \wcF_1$ we have that $\cG_L$ is by uniform quasigeodesics. Using \cite[Proposition 6.9]{BFP} we know that there is a closed saturated $\pi_1(M)$-invariant sublamination $\Lambda$ of $\wcF_1$ so that for every $L \in \Lambda$, we have that $\cG_L$ is a \emph{weak-quasigeodesic fan}, meaning that there is a unique  point $\xi_L \in S^1(L)$ so that every leaf of $\cG_L$ has one ray landing on $\xi_L$ and such that for every $\xi \in S^1(L) \setminus \{\xi_L\}$ there is at least one leaf of $\cG_L$ with a ray landing in $\xi$. 

Using Theorem \ref{teo.nonholonomy} we can find a leaf $L \in \Lambda$ for which there is $\gamma \in \pi_1(M) \setminus \{\mathrm{id}\}$ so that $\gamma L = L$. It follows that $\gamma$ acting on $S^1(L)$ must fix both $\xi_L$ and another point $\xi^-$ (cf. Lemma \ref{lem-closedgeodesics}). The set of quasigeodesics of $\cG_L$ from $\xi^{-}$ to $\xi_L$ is a closed  interval in the leaf space $\cG_L$, thus, the boundaries must be fixed by $\gamma$ and project to closed leaves of $\cG$. 
\end{proof}

\begin{remark}\label{rem-manyperiodic}
Note that the proof implies that for every $L \in \wcF_1$ so that there is some $\gamma \in \pi_1(M)\setminus \{\mathrm{id}\}$ with $\gamma(L) = L$, then
 there is some $\ell \in \cG_L$ so that $\gamma \ell = \ell$. Note that under some conditions it is known that foliations by Gromov hyperbolic leaves admit several deck transformations with fixed leaves (see e.g. \cite[Proposition 3.3]{ABMP}). Also, admitting a leafwise quasigeodesic subfoliation can provide some extra information that could allow to improve the conclusion of Corollary \ref{quasigeodesiccoro-periodic} to obtain infinitely many closed leaves even if they may not be blow ups of Anosov foliations (see \cite{ChandaFenley} for more discussion).  
\end{remark}

As a consequence of this and Proposition \ref{p.novisualallbubble} we obtain: 

\begin{coro}\label{coro.nomixed}
Let $\cF_1$ and $\cF_2$ be two transverse foliations by Gromov hyperbolic leaves intersecting in $\cG$ which verifies that $\wcG$ has Hausdorff leaf space. If $\cG$ has the small visual measure property in $\cF_1$ then it also has the small visual measure property in $\cF_2$. In particular, we deduce that the leaves of $\cG_L$ are uniform quasigeodesics in $L$ for all $L \in \wcF_1, \wcF_2$. 
\end{coro}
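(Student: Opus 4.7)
The plan is to use Theorem \ref{prop-closedleaf} to produce a closed leaf of $\cG$ and then show that this closed leaf cannot be a bubble leaf in its ambient leaf of $\wcF_2$, which is what Proposition \ref{p.novisualallbubble} would force if $\cG$ failed to have the small visual measure property in $\cF_2$. The last sentence of the corollary is then immediate from Proposition \ref{prop.svmimpliesqg} applied to each of the two foliations (using the Hausdorff hypothesis, valid in every leaf of both $\wcF_i$ by Proposition \ref{prop-Hsdff2Dand3D}).

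Assume therefore that $\cG$ has the small visual measure property in $\cF_1$. By Theorem \ref{prop-closedleaf} there exists a closed leaf $c$ of $\cG$; lift it to $\tilde c \subset L \cap E$ with $L \in \wcF_1$ and $E \in \wcF_2$, and let $\gamma \in \pi_1(M) \setminus \{\mathrm{id}\}$ satisfy $\gamma \tilde c = \tilde c$. In particular $\gamma E = E$. Suppose for contradiction that $\cG$ does not have the small visual measure property in $\cF_2$. Applying Proposition \ref{p.novisualallbubble} with the roles of the two foliations swapped, every leaf of $\cG_E$ is a bubble leaf; in particular both rays of $\tilde c$ in $E$ land at a single point $\xi \in S^1(E)$.

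The main obstacle is to extract a contradiction from this bubble behaviour of the closed leaf, and for this I would combine a Jordan curve argument with Lemma \ref{lem-closedgeodesics}. The set $\tilde c \cup \{\xi\}$ is a Jordan curve in $\overline{E} = E \cup S^1(E) \cong \overline{\DD^2}$, so it divides $\overline{E}$ into two non-empty open regions; let $V$ be the one whose closure meets $S^1(E)$ only at $\xi$. Since $\gamma$ fixes $\xi$, preserves $\tilde c$, and is orientation-preserving on $E$ (by our standing orientability assumptions), $V$ is $\gamma$-invariant. On the other hand, Lemma \ref{lem-closedgeodesics} provides a $\gamma$-invariant geodesic $g$ in $E$ whose endpoints $\eta^+, \eta^- \in S^1(E)$ are distinct fixed points of $\gamma$. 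For every $p \in E$ one has $d_E(\gamma^n p, g) = d_E(p, g)$, so the orbit $(\gamma^n p)_n$ lies in a uniform tubular neighborhood of $g$ and its nearest-point projections to $g$ move by $n$ translation lengths; therefore $\gamma^n p \to \eta^+$ and $\gamma^{-n} p \to \eta^-$ in $\overline{E}$. Picking any $p \in V$, invariance of $V$ forces $\gamma^{\pm n} p \in V$ for all $n$, so all accumulation points of the orbit in $S^1(E)$ lie in $\overline V \cap S^1(E) = \{\xi\}$. Hence $\eta^+ = \eta^- = \xi$, contradicting $\eta^+ \neq \eta^-$. This is the desired contradiction and the proof is complete.
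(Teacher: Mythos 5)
Your proposal is correct and follows essentially the same route as the paper's proof: obtain a closed leaf via Theorem \ref{prop-closedleaf}, then observe that failure of small visual measure in $\cF_2$ would force that leaf to be a bubble leaf in its $\wcF_2$-leaf (Proposition \ref{p.novisualallbubble}/Lemma \ref{lem-continuous}), contradicting the two distinct endpoints supplied by the $\gamma$-invariant geodesic of Lemma \ref{lem-closedgeodesics}. Your Jordan-curve argument simply fills in the details that the paper leaves as a parenthetical citation; one small remark is that the $\gamma$-invariance of the region $V$ is better justified by noting that the two complementary components of the Jordan curve are topologically distinguished (one has closure meeting $S^1(E)$ only at $\xi$, the other in the whole circle) than by invoking orientation-preservation.
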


\begin{proof}
Note that if $\cG$ does not have the small visual measure property in $\cF_2$ it follows that for every $E \in \wcF_2$ we have that all leaves of $\cG_E$ land in the same point $\xi_E \in S^1(E)$ (cf. Lemma \ref{lem-continuous}). Consider $c \in \wcG$ such that $c = \gamma c$ for some $\gamma \in \pi_1(M) \setminus \{\mathrm{id}\}$ provided by Theorem \ref{prop-closedleaf}. Them $c = L \cap E$ with $L \in \wcF_1$ and $E \in \wcF_2$. 

It follows that $c$ has two distinct limit points in $S^1(E)$ (cf. Lemma \ref{lem-closedgeodesics}) and this contradicts Lemma \ref{lem-continuous}. 

The uniform quasigeodesic property now follows from Proposition \ref{prop.svmimpliesqg}. 
\end{proof}

\subsection{Non solvable fundamental group}\label{ss.nonsolvable}

Let $\cF_1$ and $\cF_2$ be two transverse foliations on a closed 3-manifold $M$ with Gromov hyperbolic leaves and so that the leaf space of $\wt{\cG}$ is Hausdorff ($\cG = \cF_1 \cap \cF_2$). We will assume that the intersected  foliation $\cG$ fails the small visual measure property in both. We want to show that this implies that $M$ has solvable fundamental group, and that up to collapsing, $\cF_1$ and $\cF_2$ are topologically equivalent to the weak stable foliations of a suspension Anosov flow, as the example in \S \ref{s.example}. 

Using Lemma \ref{lema-minimal} (see \S\ref{ss.collapse}) we can collapse $\cF_1$ and $\cF_2$ to two transverse minimal foliations $\cM_1$ and $\cM_2$ which are (by Lemma \ref{lem-anosovflow}) the weak stable foliations of (topological) Anosov flows $\Phi_1$ and $\Phi_2$ respectively. These flows are constructed so that in an arbitrary leaf $L$ of $\wt{\cM_1}$ (resp. $\wt{\cM_2}$) orbits are quasigeodesics in $L$
pointing towards (that is the forward flow direction)
the point $\xi_L$ given by Lemma \ref{lem-continuous}. Lemma \ref{lem-anosovrcovered} implies that both $\Phi_1$ and $\Phi_2$ are $\RR$-covered. 

As explained in \S \ref{ss.Anosovflows} we know that if the flows $\Phi_1$ and $\Phi_2$ are not orbitally equivalent to suspension Anosov flows, we know that they must be skewed-$\RR$-covered. In particular, the minimal foliations $\cM_1$ and $\cM_2$ are uniform and correspond to the weak stable foliations of $\Phi_1$ and $\Phi_2$. (Recall \S \ref{ss.uniformfol} for definition of uniform and uniformly equivalent foliations, and \S \ref{ss.Anosovflows} for some properties of skewed $\RR$-covered Anosov flows that we will use.) 

The idea is to use the fact that inside each leaf of (say) the foliation $\wt{\cM_1}$, every leaf of $\cG_L$ has both rays which
converge to the same point at infinity in the universal circle of $\cM_1$.
 Then switch foliations and obtain a contradiction to  the fact (see \S \ref{ss.Anosovflows}) that for skewed-$\RR$-covered Anosov flows, the endpoint $\xi_L$ must vary in a monotonous way with $L$. To make sense of this and be able to switch foliations, we first need to know that $\cM_1$ and $\cM_2$ are uniformly equivalent. 
Hence we
first show:

\begin{lema}\label{lem-oneintersection}
Let $E \in \wt{\cM_2}$ and let $\gamma \in \pi_1(M) \setminus \{\mathrm{id}\}$, then $E$ can intersect at most one leaf of $\wt{\cM_1}$ fixed by $\gamma$. 
\end{lema}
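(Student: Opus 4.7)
I plan to argue by contradiction: suppose $E \in \wt{\cM_2}$ intersects two distinct $\gamma$-fixed leaves $L_1, L_2 \in \wt{\cM_1}$. By Lemma~\ref{l.twointersnonHsdff}, the intersections $c_i := E \cap L_i$ are unique leaves of $\wt{\cG}$, and by Proposition~\ref{p.novisualallbubble} each $c_i$ is a bubble leaf in $L_i$ with both rays landing at the non-marker point $\xi_{L_i} \in S^1(L_i)$.

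The core tool I will use is a \emph{no periodic leaves} principle for $\wt{\cG}$ in this setting: no leaf $c$ of $\wt{\cG}$ is fixed by a nontrivial $\delta \in \pi_1(M)$. Indeed, if $\delta c = c$ and $c \subset L \in \wt{\cM_1}$, then $\delta L = L$, and since $\cM_1$ is the weak-stable foliation of a skewed $\RR$-covered Anosov flow, the stabilizer of $L$ is infinite cyclic, generated by the deck element associated to the closed orbit of $\Phi_1$ in $L$ (Lemma~\ref{lem-closedgeodesics} and Proposition~\ref{p.skewedAnosov}). Hence $\delta$ is loxodromic on $L$ with two distinct fixed points $\xi_L$ and $\xi_L^- \neq \xi_L$ on $S^1(L)$, and for any $p \in L$ one has $\delta^{-n}p \to \xi_L^-$ in $L \cup S^1(L)$. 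On the other hand $\delta$ has no fixed points in $\mt$, so it acts freely on $c \cong \RR$ as a translation, and both sequences $\delta^{\pm n}p$ escape to the two ends of the bubble leaf $c$, each of which converges to $\xi_L$ in $L \cup S^1(L)$. This contradicts $\delta^{-n}p \to \xi_L^- \neq \xi_L$.

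Applying the principle settles the easy case: if $\gamma^k E = E$ for some $k \geq 1$, then $\gamma^k$ preserves both $L_1$ and $E$, so $\gamma^k c_1 = c_1$ by the uniqueness in Lemma~\ref{l.twointersnonHsdff}, contradicting the principle with $\delta = \gamma^k$. So we may assume that no power of $\gamma$ fixes $E$; then the leaves $\{\gamma^n E\}_{n \in \ZZ}$ are pairwise distinct, each intersecting $L_i$ in a bubble $\gamma^n c_i$ at $\xi_{L_i}$, producing a nested monotone family in the Hausdorff leaf space $\cO_{L_i}$. The plan for this remaining case is to invoke Proposition~\ref{p.skewedAnosov} — after possibly replacing $L_1, L_2$ by an adjacent $\gamma$-fixed pair between them, we may assume $\gamma$ attracts at $\xi_{L_1}$ while it repels at $\xi_{L_2}$ — so that $\gamma^n c_1$ shrinks toward $\xi_{L_1}$ in $L_1$ while $\gamma^n c_2$ expands in $L_2$. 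The main obstacle is to convert these opposing dynamics into a concrete contradiction; the strategy is to push a compact arc of $\cG_E$ joining $c_1$ and $c_2$ via Proposition~\ref{prop-pushing} onto each $\gamma^n E$, and combine this with the Hausdorffness of $\cO_{L_1}$ to extract a convergent subsequence of $\{\gamma^n E\}$ in $\wt{\cM_2}$ whose limit is $\gamma$-invariant, thereby returning to the easy case and yielding the final contradiction via the no-periodic-leaves principle.
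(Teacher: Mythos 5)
Your ``no periodic leaves'' principle is correct and is in fact the same mechanism the paper uses implicitly (a $\gamma$-invariant leaf of $\cG_L$ would be a bubble leaf whose two ends both land at $\xi_L$, while $\gamma$ acts on it as a translation and drags points to a second fixed point $\xi_L^-\neq\xi_L$ at infinity); the paper invokes exactly this to show that $E$ must cross the $\gamma$-invariant orbit $o_L$. Your disposal of the case $\gamma^kE=E$ is also fine. The problem is the main case, which you leave as a plan, and the plan as stated does not work. First, there is no reason for $\{\gamma^nE\}$ to have a convergent subsequence in the leaf space of $\wt{\cM_2}$: if $\gamma$ fixes no leaf of $\wt{\cM_2}$ it acts on that leaf space ($\cong\RR$) as a translation and the orbit escapes. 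Second, and more seriously, even when a $\gamma$-invariant limit $E_\infty$ exists, it cannot intersect $L_1$ or $L_2$, so you cannot ``return to the easy case.'' Indeed, by your own principle $\gamma$ acts freely on $\cO_{L_i}\cong\RR$, hence as a translation, so the leaves $\gamma^nc_i=\gamma^nE\cap L_i$ escape every compact set of $L_i$ as $n\to\pm\infty$; if $E_\infty$ met $L_i$, nearby leaves $\gamma^nE$ would have to meet a fixed small transversal inside $L_i$, contradicting that escape (connectedness of $\gamma^nE\cap L_i$ comes from Hausdorffness). So the limiting argument produces no contradiction, and the ``opposing dynamics'' at $\xi_{L_1}$ versus $\xi_{L_2}$ are never actually used.

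The paper converts those opposing dynamics into a contradiction by a separation argument that your sketch is missing. One writes $\mt\setminus E=\cE^+\cup\cE^-$ and checks, using continuity of $\xi_L$ and the push-through property, that the side $\cE^+$ can be chosen consistently so that for every leaf $L$ meeting $E$, the component $\cE^+\cap L$ is the one whose closure meets $S^1(L)$ only in $\xi_L$. Since negative flow rays of $\wt{\Phi_1}$ land at points different from $\xi_L$, they eventually enter $\cE^-$. Because $E$ meets $o_{L_1}$ and $\gamma$ translates $o_{L_1}$ (say) forward, $\gamma^{-k}E$ meets $\cE^-$ for large $k$, hence lies in $\cE^-$, and the translation action of $\gamma$ on $\cO_{L_1}$ then forces $\gamma E\subset\cE^+$. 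Running the same argument on an adjacent $\gamma$-fixed leaf between $L_1$ and $L_2$, where Proposition~\ref{p.skewedAnosov} reverses the direction of translation along the invariant orbit, yields $\gamma E\subset\cE^-$, the desired contradiction. If you want to complete your write-up, this is the step you need to supply; the convergence-of-$\gamma^nE$ route should be abandoned.
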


\begin{proof}
Since $E$ is the lift of a leaf of a Reebless foliation in $M$ it separates $\mt$ in two connected components, which according to its orientation we denote by 

$$\mt \setminus E = \cE^+ \cup \cE^-.$$

We choose $\cE^+$ so that for an arbitrary $L \in \wt{\cF}_1$ intersecting
$E$ we have that if $\ell = L \cap E$ then $\cE^+ \cap L$ is the connected component of $L \setminus \ell$ whose closure in $L \cup S^1(L)$ only contains $\xi_L$ in $S^1(L)$. We first show that this does not depend on the choice of $L$ by continuity of $\xi_L$, and the push through property, Proposition \ref{prop-pushing}. 
In other words the components of $L \setminus \ell$ limiting only
in $\xi_L$ vary continuously with $L$. 
To prove this: note that for any closed 
segment $I$ in $S^1(L) \setminus \xi_L$,
then $I$ is the limit of a interval family of arc segments $\tau_t$ of 
$\cG_L$. Using the denseness of markers and the push through property,
this can be pushed to nearby leaves $L'$. Hence for nearby $L'$ to $L$, 
the connected component of $L' \setminus (E \cap L')$ which only
limits in $\xi_{L'}$ has to be disjoint from the push through 
family of arcs of $\cG_{L'}$ obtained from $\tau_t$ pushed through 
to $L'$.
This shows the independence of choice of $L$ as claimed above.
See figure \ref{fig-horo}.

\begin{figure}[ht]
\begin{center}
\includegraphics[scale=0.92]{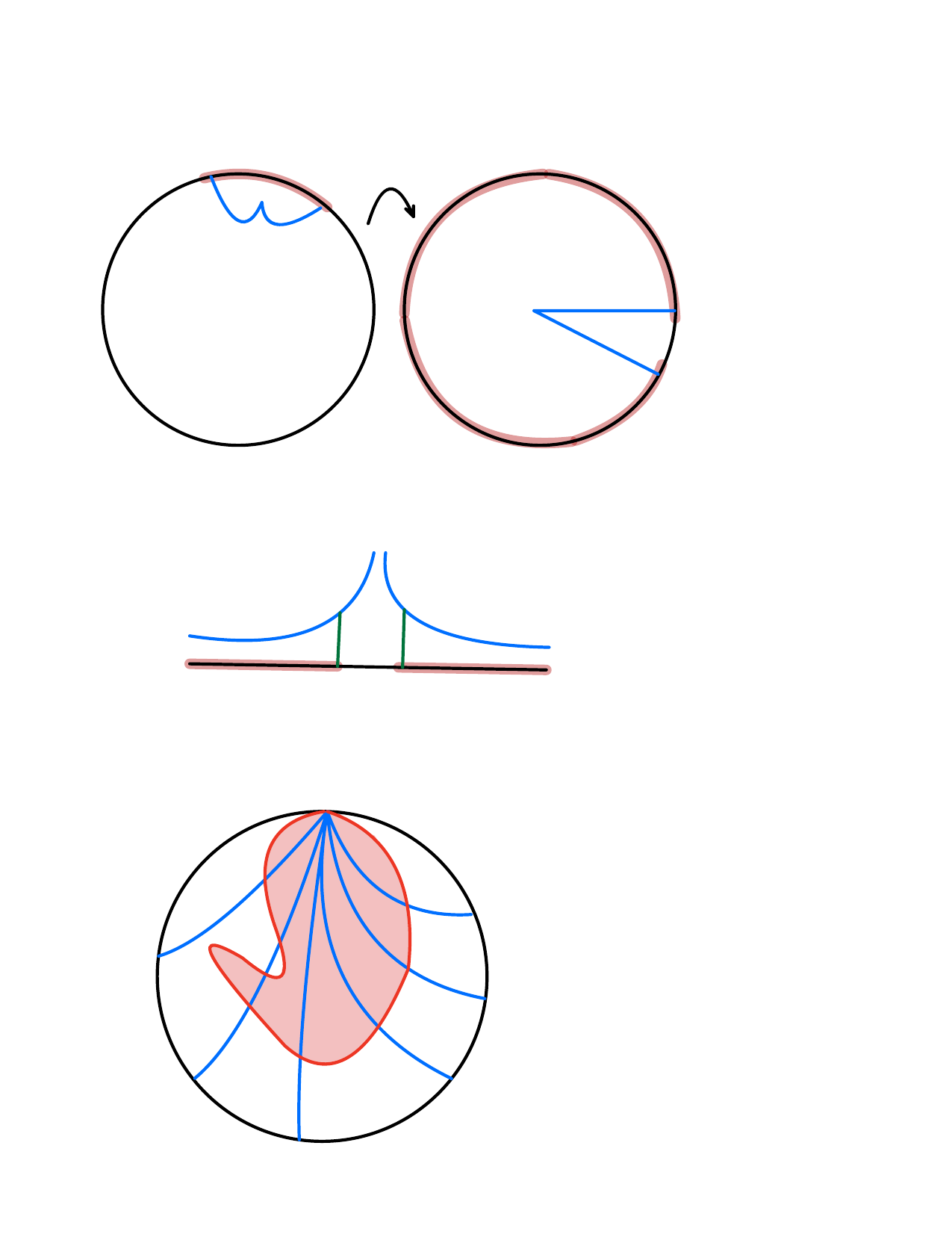}
\begin{picture}(0,0)
\put(-40,40){$L$}
\put(-120,210){$\xi_L$}
\put(-126,120){$\cE^+ \cap L$}
\end{picture}
\end{center}
\vspace{-0.5cm}
\caption{{\small Depiction of $\cE^+ \cap L$ in the shaded region. Note that the flow lines $\wt{\Phi_1}$ (depicted as a quasigeodesic fan) need not be transverse to the boundary.}}\label{fig-horo}
\end{figure}

If $\wt{\Phi_1}$ were transverse to  $\wt{\cM_2}$ then for every $x \in E$ we would have that $\wt{\Phi_1}^t(x) \in \cE^+$ for $t>0$ and $\wt{\Phi_1}^t(x) \in \cE^-$ for $t<0$. This may not be true in general, still we do get that for every $t<0$ of sufficiently large modulus, we have that $\wt{\Phi_1^t}(x) \in \cE^{-}$; this is because the negative ray of a flow line lands in a point different from $\xi_L$, so the ray eventually escapes $\cE^{+}$.

Let $L \in \wt{\cM_1}$ be a leaf fixed by $\gamma$ so that $E \cap L \neq \emptyset$ (if $\gamma$ does fix any leaf of $\wt{\cM_1}$ the lemma is vacuously true). Then, $L$ contains an orbit $o_L$ of $\wt{\Phi_1}$ which is invariant under $\gamma$, i.e. on which $\gamma|_{o_L} : o_L \to o_L$ is a translation. Up to taking $\gamma^{-1}$ we can assume that $\gamma$ moves $o_L$ forward with respect to the orientation of the orbits of the flow $\wt{\Phi_1}$.  

The leaf $E$ intersects $L$. We claim that $E$
intersects $o_L$. 
In fact  it must intersect every orbit of $\wt{\Phi_1}$ in $L$, but 
we will not prove that. Suppose that $E$ does not intersect $o_L$.
The set of leaves of $\cG_L$ intersecting $o_L$ is $\gamma$ invariant
(and connected),
and so is the set of leaves not intersecting $o_L$. By
assumption both are non empty, so there is a leaf in the boundary
of one separating it from the other one. This leaf is $\gamma$
invariant. Therefore cannot limit on $\xi_L$ in both directions,
contradiction.

Thus, we get that $\gamma^{-k} E$ must intersect $o_L$ in its negative orientation, so, by our choice of orientation we deduce that $\gamma^{-k} E$ intersects $\cE^-$ for all $k > k_0$. Since $\cM_2$ is a foliation, this implies that $\gamma^{-k} E \subset \cE^-$ for all $k > k_0$. Since the action of $\gamma$ on the leaf space of $\cG_L$ in $L$ is a translation, this gives that $\gamma E \subset \cE^+$.  

By connectedness and the properties of skewed-$\RR$-covered Anosov flows, if $E$ intersects another fixed leaf $L'$ of $\wt{\cM_1}$ which is fixed by $\gamma$ it must also intersect an adjacent one, in particular, a leaf $L''$ which is fixed by $\gamma$ and whose orbit $o_{L''}$ fixed by $\gamma$ is translated backwards by $\gamma$ (cf. Proposition \ref{p.skewedAnosov}).  This would imply that $\gamma E$ must be contained in $\cE^-$ a contradiction. This contradiction implies that $E$ can intersect at most one fixed leaf by $\gamma$ in $\wt{\cM_1}$ as stated. 
\end{proof}


\begin{lema}\label{l.uniformequiv}
The foliations $\cM_1$ and $\cM_2$ are uniformly equivalent. 
\end{lema}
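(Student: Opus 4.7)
My plan proceeds as follows. By the preceding case analysis we may assume that both $\Phi_1$ and $\Phi_2$ are skewed-$\RR$-covered Anosov flows, since the alternative (one being a suspension) forces $\pi_1(M)$ to be solvable, which is the conclusion sought for Theorem~\ref{teo.main}. In particular, both $\cM_1$ and $\cM_2$ are uniform foliations (cf.~\S\ref{ss.Anosovflows}), so every pair of leaves in each $\wt{\cM_i}$ is at finite mutual Hausdorff distance in $\mt$.

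The central step is to show that for every $E \in \wt{\cM_2}$, the set of leaves of $\wt{\cM_1}$ intersected by $E$ is a \emph{bounded} open interval in the leaf space $\cL_1 \cong \RR$ of $\wt{\cM_1}$. By local product structure and transversality, this set is an open connected subset of $\cL_1$, hence an interval. Were it unbounded on, say, the positive side, I would pick $\gamma \in \pi_1(M) \setminus \{\mathrm{id}\}$ corresponding to a closed orbit of $\Phi_1$. By Proposition~\ref{p.skewedAnosov}, $\gamma$ fixes a bi-infinite family of leaves $\{L_n\}_{n\in\ZZ}$ of $\wt{\cM_1}$ with $L_n \to \pm\infty$ in $\cL_1$ as $n \to \pm\infty$. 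An unbounded interval would then contain infinitely many $L_n$, so $E$ would intersect more than one $\gamma$-fixed leaf, contradicting Lemma~\ref{lem-oneintersection}. The symmetric argument (interchanging $\cM_1$ and $\cM_2$ and using a closed orbit of $\Phi_2$) shows that every $L \in \wt{\cM_1}$ meets only a bounded interval $I_2 \subset \cL_2$ of leaves of $\wt{\cM_2}$.

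To finish, fix $L \in \wt{\cM_1}$ and choose $E \in \wt{\cM_2}$ with $E \cap L \neq \emptyset$. Let $I_1 \subset \cL_1$ be the bounded interval of $\wt{\cM_1}$-leaves intersected by $E$; it contains $L$ and has compact closure $\overline{I_1}$. Every point of $E$ lies on some $L' \in I_1$, so $E$ is contained in the $\wt{\cM_1}$-saturation of $\overline{I_1}$. Combining uniformity of $\cM_1$ with cocompactness of the $\pi_1(M)$-action on $\mt$ applied over the compact set $\overline{I_1}$, the Hausdorff distance from $L'$ to $L$ is uniformly bounded as $L'$ varies in $\overline{I_1}$, so $E$ lies in a finite metric neighborhood of $L$. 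The symmetric application, using $I_2$ and uniformity of $\cM_2$, gives that $L$ is contained in a finite metric neighborhood of $E$. Hence $L$ and $E$ are at finite Hausdorff distance; as $L$ was arbitrary and the role of the two foliations is symmetric, $\cM_1$ and $\cM_2$ are uniformly equivalent.

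The main obstacle I expect is upgrading the pointwise finiteness of Hausdorff distances (uniformity of $\cM_i$) to a \emph{uniform} bound over the compact closure of the bounded interval $I_1$. This should follow from the standard fact that in a uniform $\RR$-covered foliation the Hausdorff distance function is locally bounded on the leaf space, a consequence of the cocompact deck action and compactness of $M$.
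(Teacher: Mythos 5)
Your proof is correct and follows essentially the same route as the paper: both arguments trap a leaf $E \in \wt{\cM_2}$ in a bounded interval of the leaf space of $\wt{\cM_1}$ by combining Lemma \ref{lem-oneintersection} with the structure of $\gamma$-fixed leaves from Proposition \ref{p.skewedAnosov}, and then conclude by symmetry. The only difference is in the last step, where the paper bounds the Hausdorff distance by the uniformly bounded gap between consecutive $\gamma$-fixed leaves while you invoke local boundedness of the Hausdorff distance on compact sets of the leaf space of a uniform foliation; these are standard facts of comparable depth, so your version is fine.
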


\begin{proof}
Since both $\cM_1$ and $\cM_2$ are uniform, to prove that $\cM_1$ and $\cM_2$ are uniformly equivalent it is enough to show that for every leaf $E \in \wt{\cM_2}$ there is $L \in \wt{\cM_1}$ so that $E$ is contained in a bounded neighborhood of $L$ and for every $ L \in \wt{\cM_1}$ there is some leaf $E \in \wt{\cM_2}$ such that $L$ is contained in a bounded neighborhood of $E$. 

We first fix a deck transformation $\gamma_1 \in \pi_1(M)\setminus \{ \mathrm{id}\}$ which has a fixed leaf in $\wt{\cM_1}$. Note that the Hausdorff distance between two consecutive leaves of $\wt{\cM_1}$ fixed by $\gamma_1$ is uniformly bounded above by some constant $K_1$. Given a leaf $E \in \wt{\cM_2}$ we know that it can intersect at most one fixed leaf by $\gamma_1$ thanks to Lemma \ref{lem-oneintersection}.  This implies that $E$ must be contained in the $2K_1$ neighborhood of some fixed leaf of $\wt{\cM_1}$. The argument is symmetric, so we can find $\gamma_2$ and $K_2$ to obtain that every leaf $L \in \wt{\cM_1}$ is contained in a $2K_2$-neighborhood of any leaf of $\wt{\cM_2}$ that it intersects.  
This completes the proof. 
\end{proof}

\begin{remark}
In fact, one can push this argument to show that the set of deck transformations fixing leaves of $\wt{\cM_1}$ coincides with those fixing leaves of $\wt{\cM_2}$ and thus, by applying the main result of \cite{BM} it follows that the flows $\Phi_1$ and $\Phi_2$ are orbitally  equivalent by an orbit equivalence homotopic to identity. We will not use this fact. 
\end{remark}

\noindent
{\bf {Completion of the proof of Theorem \ref{teo.main}}} $-$
The following lemma will give a contradiction with the properties of skewed-$\RR$-covered Anosov flows and completes the proof that if $\cG$ fails the small visual measure property in both foliations, then the foliations (after collapsing) are topologically equivalent to the weak stable foliations of a suspension Anosov flow as in the example from \S \ref{s.example}. 
In other words if $\pi_1(M)$ is not solvable, then
(under the Hausdorff hypothesis for $\wcG$), it follows that
$\cG$ has the small visual measure property in one of 
$\cF_1$ or $\cF_2$. Corollary \ref{coro.nomixed}
then implies that $\cG$ has the small visual measure property
in both $\cF_1$ and $\cF_2$. Proposition \ref{prop.svmimpliesqg}
then implies that $\cG$ is uniformly leafwise quasigeodesic
in both $\cF_1$ and $\cF_2$. 
This will then complete the proof of Theorem \ref{teo.main}. 

\begin{lema}
The point $\xi_L$ which sends $L \in \wt{\cM_1}$ to the point $\xi_L \in S^1(L)$ on which all rays of the intersected  foliation land is constant in the universal circle $S^1_u(\cM_1)$. 
\end{lema}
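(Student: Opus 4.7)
My plan is to establish the stated constancy, which then contradicts the strict monotonicity in Proposition \ref{prop.nonmarkermoves}, thereby finishing the proof of Theorem \ref{teo.main}. I would begin by recalling that, by Lemma \ref{lem-anosovflow}, the orbits of $\wt{\Phi_1}$ inside any $L \in \wt{\cM_1}$ form a quasigeodesic fan converging to $\xi_L$, so $\xi_L$ is exactly the non-marker point of $\cM_1$ considered in Proposition \ref{prop.nonmarkermoves}. Since $\pi_1(M)$ is non-solvable and both topological Anosov flows $\Phi_1, \Phi_2$ are $\RR$-covered (by Lemmas \ref{lem-anosovrcovered} and \ref{lem-anosovflow}), they must be skewed-$\RR$-covered; consequently $\cM_1$ and $\cM_2$ are uniform and uniformly equivalent by Lemma \ref{l.uniformequiv}, and their universal circles $S^1_u(\cM_1)$ and $S^1_u(\cM_2)$ are canonically identified as explained in \S \ref{ss.uniformfol}.

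The crucial geometric input is the following matching of landing points. For any $L \in \wt{\cM_1}$ and any $E \in \wt{\cM_2}$ with $L \cap E \neq \emptyset$, the canonical boundary identification $h_{L,E}: S^1(L) \to S^1(E)$ (induced by the bounded Hausdorff distance between $L$ and $E$) sends $\xi_L$ to $\xi_E$. The reason is that the common curve $\ell = L \cap E$ is a leaf of $\wcG$, which by the setting of this section is a bubble leaf in both leaves: both rays of $\ell$ in $L$ land at $\xi_L$, and both rays of $\ell$ in $E$ land at $\xi_E$. Because $\ell$ lies in both $L$ and $E$, the closest-point quasi-isometry $L \to E$ moves a ray of $\ell$ by a bounded amount, hence its image in $E$ has the same landing point as $\ell$ itself in $S^1(E)$, namely $\xi_E$.

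Using this, whenever $L_0, L_1 \in \wt{\cM_1}$ both intersect a common $E \in \wt{\cM_2}$, the points $\xi_{L_0}$ and $\xi_{L_1}$ represent the same element of $S^1_u(\cM_1) \cong S^1_u(\cM_2)$, because both map to $\xi_E$ under the respective boundary identifications. Since the leaf spaces of $\wt{\cM_1}$ and $\wt{\cM_2}$ are both $\RR$ and each leaf of one foliation intersects an open interval of leaves of the other, any two $L, L' \in \wt{\cM_1}$ can be connected by a finite chain $L = L_0, E_1, L_1, E_2, \ldots, E_n, L_n = L'$ with $L_{i-1}$ and $L_i$ both intersecting $E_i$. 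Iterating the previous step yields $\xi_L = \xi_{L'}$ in $S^1_u(\cM_1)$, which proves the lemma.

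The main delicate point is the compatibility of the universal circle identifications: namely, that the direct identification $S^1(L_0) \to S^1(L_1)$ coming from $\cM_1$'s uniform structure coincides with the composition $S^1(L_0) \to S^1(E) \to S^1(L_1)$ routed through any common $E \in \wt{\cM_2}$. This is essentially the content of the canonical identification $S^1_u(\cM_1) \cong S^1_u(\cM_2)$ from \S \ref{ss.uniformfol}, and reduces to the standard fact that two quasi-isometries between Gromov hyperbolic spaces that remain at bounded distance from each other induce the same boundary homeomorphism.
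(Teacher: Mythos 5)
Your proposal is correct and follows essentially the same route as the paper: the key mechanism in both is that the common leaf $\ell = L\cap E$ is a bubble leaf in both $L$ and $E$, so the single landing point $\xi_E\in S^1(E)$ pins down $\xi_{L'}$ for every leaf $L'$ of $\wt{\cM_1}$ meeting $E$, after which constancy follows from connectedness of the (Hausdorff, $\cong\RR$) leaf space. The paper phrases the last step as local constancy along a transversal contained in $E$ rather than as an explicit chain of overlapping intervals, but this is the same argument, and your remark on the compatibility of the boundary identifications (two coarse closest-point maps at bounded distance induce the same boundary map) is exactly what the identification of $S^1_u(\cM_1)$ with $S^1_u(\cM_2)$ in \S\ref{ss.uniformfol} provides.
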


Since $\cM_1$ is $\RR$-covered, the identification of the circles $S^1(L)$ with $S^1_u(\cM_1)$ is as explained in \S \ref{ss.uniformfol}. The local constancy of the point $\xi_L$ given by Lemma \ref{lem-continuous} contradicts a property of skewed-$\RR$-covered Anosov flows (cf. Proposition \ref{prop.nonmarkermoves}). 

\begin{proof}
Since $\cM_1$ and $\cM_2$ are uniformly equivalent, one can identify $S^1_u(\cM_1)$ and $S^1_u(\cM_2)$ as in \S \ref{ss.uniformfol} (for each leaf $L \in \wt{\cM_1}$ and $E \in \wt{\cM_2}$ there is a map $f_{L,E}: L \to E$ which maps each point in $L$ to a closest point in $E$ and is a coarsely well defined quasi-isometry and thus extends to a well defined map from $S^1(L)$ to $S^1(E)$ and allows to identify the universal circles). 

Now, fix a leaf $L \in \wt{\cM_1}$ and consider $E \in \wt{\cM_2}$ so that $L \cap E \neq \emptyset$. It follows that if $\ell = L \cap E$ is the unique leaf of $\wt{\cG}$ in the intersection, it follows that there is $\xi \in S^1_u(\cM_1) \cong S^1_u(\cM_2)$ (under the identification) so that both rays of $\ell$ converge to $\xi$ in $S^1(L)$ and $S^1(E)$ respectively (again, after the identification with the corresponding universal circles). 

Consider a transversal $\tau: (-\eps, \eps) \to E$ to $\wt{\cM_1}$ (with $\tau(0) \in \ell \subset L$) and denote by $L_t$ the leaf of $\wt{\cM_1}$ through the point $\tau(t)$. Let $\ell_t = L_t \cap E$. Lemma \ref{lem-continuous} implies that both rays of $\ell_t$ converge in $E \cup S^1(E)$ to $\xi$. This implies that in $L_t$ all rays converge to $\xi$ too. This implies that the map from $L$ to the endpoint of all rays in $L$ is locally constant, 
when thought of as a map from the leaf space of $\wt{\cM_1}$ to the
universal circle of $\cM_1$. Therefore this map is constant, completing the proof.
\end{proof}

\section{Parabolic leaves}\label{s.nonhyperbolicleaves}

In this section we discuss the assumption of having Gromov hyperbolic leaves. 

\subsection{Minimal case}

Here we show: 

\begin{teo}\label{teo.mainparabolic}
Let $\cF_1$ and $\cF_2$ be two transverse foliations so that $\cF_1$ is minimal, and  has a leaf which is not Gromov hyperbolic. Assume moreover that the leaf space of the intersected  foliation $\cG$ in the universal cover is Hausdorff. Then, the foliation $\wcG$ is \emph{leafwise quasigeodesic}. 
\end{teo}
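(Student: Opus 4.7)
Proof plan.

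The first step is to invoke Proposition \ref{prop-CandelThm0} in contrapositive form: since $\cF_1$ is minimal and has some leaf in $\wcF_1$ which is not Gromov hyperbolic, $\pi_1(M)$ must be virtually nilpotent. By Gromov's polynomial growth theorem together with the classification of closed $3$-manifolds with virtually nilpotent fundamental group, up to passing to a finite cover (which preserves both the hypotheses and the conclusion) $M$ is either the $3$-torus $\TT^3$ or a Heisenberg nilmanifold $\mathrm{Nil}^3/\Gamma$. Using the classification of minimal Reebless codimension-one foliations on such manifolds (in the spirit of Rosenberg, Tischler, Plante, Ghys), in both cases $\cF_1$ is topologically conjugate to a homogeneous foliation whose leaves are cosets of a two-dimensional closed connected subgroup. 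Since any two-dimensional subgroup of $\mathrm{Nil}^3$ must contain the one-dimensional center and is therefore abelian, the induced left-invariant metric is flat on each leaf $L \in \wcF_1$, so $L$ is isometric to a Euclidean plane and the inclusion $L \hookrightarrow \mt$ is a quasi-isometric embedding.

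Next I would combine this structural information with the Hausdorff assumption via Proposition \ref{prop.Hsdffimpliesunifpropemb}: there exists a proper function $\rho \colon \RR_+ \to \RR_+$ such that for every leaf $\ell \in \wcG$ and every $x, y \in \ell$, the length of the arc of $\ell$ between them is bounded by $\rho(d_{\mt}(x,y))$. Via the quasi-isometric embedding $L \hookrightarrow \mt$ this yields the analogous control in the intrinsic metric of $L$, so that each leaf of $\cG_L$ is a proper uniform embedding of $\RR$ into the flat plane $L$.

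The main step, and the principal obstacle, is to upgrade this proper estimate to a linear one, which is the precise content of the quasigeodesic property in the Euclidean setting. I would do this by exploiting the cocompact $\pi_1(M)$-action: extract a minimal sublamination $\Lambda \subseteq \cG$, which in the homogeneous setting can be assumed invariant under a one-parameter subgroup action, so that its lifted leaves are straight affine lines inside the flat Euclidean leaves of $\wcF_1$ and hence uniform quasigeodesics. A standard rescaling argument using the uniform bound $\rho$ together with the cocompact action then propagates the quasigeodesic property from $\Lambda$ to all of $\wcG$: otherwise one would produce, after translating back to a fundamental domain, leaves whose arc-length to ambient-distance ratio diverges on scales going to infinity, contradicting the uniform bound.

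The most delicate point is the Heisenberg case, in which one must rule out the possibility that leaves of $\wcG$ wind inside the flat plane $L$ in a spiral or horocycle-like manner that is compatible with the properness of $\rho$ but not with linear growth. Here the Hausdorff hypothesis on $\wcG$ (and not only on each $\cG_L$) is essential: combined with the homogeneous structure of $\cF_1$ and with the same push-through technique as in Proposition \ref{prop-pushing}, it forces the one-dimensional foliation transverse to $\cF_1$ to be itself homogeneous up to isotopy, yielding the desired linear control.
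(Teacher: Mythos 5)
Your opening reduction (Proposition \ref{prop-CandelThm0} in contrapositive, hence $\pi_1(M)$ virtually nilpotent, $M$ a torus or nilmanifold, $\cF_1$ homogeneous with flat leaves) matches the paper's starting point, which instead invokes the sharper Proposition \ref{prop-nonGHimplieslinear} to get that $\cF_1$ is uniformly equivalent to a linear foliation. After that there are genuine gaps. First, the claim that the inclusion $L \hookrightarrow \mt$ is a quasi-isometric embedding is false in the Heisenberg case: a flat plane in $\mathrm{Nil}^3$ containing the center direction is quadratically distorted in the ambient metric. Second, and more seriously, you never establish any structure on $\cF_2$. The bulk of the paper's proof consists precisely of pinning down $\cF_2$: showing it has no Reeb components and no foliated $\TT^2\times[0,1]$ with non-Hausdorff lifted leaf space (both ruled out using the Hausdorffness of $\wcG$), and showing via global product structure for holonomy-free foliations (Lemma \ref{lem-nonunifequiv}) that $\cF_2$ cannot be uniformly equivalent to $\cF_1$ — so that $\cF_2$ is uniformly equivalent to a \emph{different} linear foliation. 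Your last paragraph gestures at this (``forces the transverse foliation to be homogeneous up to isotopy'') but supplies no argument.

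Third, the central upgrading step does not work as stated. A minimal sublamination of $\cG$ has no reason to consist of straight affine lines, and the proposed ``standard rescaling argument'' cannot upgrade the proper bound $\rho$ of Proposition \ref{prop.Hsdffimpliesunifpropemb} to a linear one: properness of $\rho$ is perfectly compatible with superlinear arc-length growth, as the horocycle example of Section \ref{s.example} shows ($\wcG$ Hausdorff, $\rho$ exists, leaves not quasigeodesic) — if compactness alone sufficed, the main theorem of the paper would be immediate. The argument that actually closes the proof is Lemma \ref{lem.diferentequiv} and is different in kind: once $\cF_1$ and $\cF_2$ are known to be uniformly equivalent to distinct linear foliations directed by planes $P_1, P_2$, one shows that every leaf of $\cG_L$ lies within uniformly bounded Hausdorff distance of a translate of the straight line $P_1 \cap P_2$, the Hausdorffness of $\cG_L$ being used to show the leaf shadows the entire line rather than a single ray of it; this explicit identification is what yields the uniform quasigeodesic property.
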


We first use the following result from \cite[\S 5]{FP} (recall our standing assumption on orientability): 

\begin{prop}\label{prop-nonGHimplieslinear}
Let $\cF$ be a minimal foliation containing a leaf which is not Gromov hyperbolic. Then, either $\cF$ is uniformly equivalent to a (linear) irrational foliation by planes in $\TT^3$ or it is uniformly equivalent to a (linear) irrational foliation by cylinders in a nilmanifold $N$ (which could be $\TT^3$). In particular, no leaf of $\cF$ is Gromov hyperbolic. 
\end{prop}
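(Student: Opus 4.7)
The plan is to invoke Candel's theorem as the entry point. Since $\cF$ is minimal, it cannot have Reeb components (a Reeb component would have a torus boundary leaf, contradicting density of every leaf), so $\cF$ is Reebless. Candel's dichotomy then asserts that either all leaves of $\cF$ are uniformly quasi-isometric to $\HH^2$, or $\cF$ admits a non-trivial transverse invariant measure. The hypothesis that some leaf fails to be Gromov hyperbolic forces the second alternative; by minimality, the support of any such transverse invariant measure $\mu$ must be all of $M$.

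First I would deduce the last sentence of the proposition, namely that \emph{no} leaf of $\cF$ is Gromov hyperbolic. By Plante's theorem, every leaf meeting the support of a transverse invariant measure has sub-exponential (in fact polynomial) growth, and so cannot be quasi-isometric to $\HH^2$; since $\mu$ has full support, this applies to every leaf.

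Next I would extract the global structure imposed by $\mu$. Integrating $\mu$ against closed transversals yields the Plante homomorphism $\pi_1(M) \to \RR$, whose image is a finitely generated abelian subgroup of rank $1$ or $2$. A Tischler-type approximation promotes $\mu$ to a closed non-vanishing transverse $1$-form, producing a fibration $M \to S^1$ (or a suitable map to a $2$-torus in the rank-$2$ case) whose fibers are transverse to $\cF$, so that $M$ acquires the structure of a torus bundle over the circle. Combined with the polynomial growth of leaves and the minimality hypothesis, the classification of closed orientable $3$-manifolds admitting such fibrations with virtually nilpotent monodromy forces $M$ to be either $\TT^3$ or a non-toral nilmanifold. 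In the first case the leaves (being connected, simply connected at infinity in the universal cover $\RR^3$, and of polynomial growth) are planes; in the second they are cylinders.

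The main obstacle will be upgrading this coarse structural information to a genuine \emph{uniform equivalence} with a linear model. For this one has to show that the transverse holonomy of $\cF$ is topologically trivial, which follows from the existence of a fully supported transverse invariant measure together with minimality by a Sacksteder-type argument in the measured setting (any non-trivial holonomy along a closed transversal would contract a $\mu$-positive set). Once the holonomy is controlled, the lift $\wcF$ to $\mt$ can be equivariantly straightened: identifying $\mt$ with $\RR^3$, one uses the Plante homomorphism to specify the constant infinitesimal direction transverse to the leaves, and then produces the leafwise homotopy realizing uniform equivalence with the corresponding linear irrational foliation $-$ by planes in the $\TT^3$ case, and by cylinders in the nilmanifold case. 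Since the Plante image has irrational slope (rational slopes would produce a compact leaf, violating minimality), the resulting linear foliation is irrational, as asserted.
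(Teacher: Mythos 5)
The paper itself does not prove this proposition: it quotes it from \cite[Theorem 5.1]{FP} and only supplies the bridging observation that a non-Gromov-hyperbolic leaf forces, via Candel's theorem, the existence of a transverse invariant measure (the hypothesis of the quoted theorem). Your entry point is exactly that observation, and your subsequent outline (full support by minimality, Plante homomorphism, Tischler, foliations without holonomy, classification on nilmanifolds) is the standard route that the cited reference follows. So the strategy is right; the issues are in the execution.

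Three steps as written are genuinely problematic. First, the deduction that no leaf is Gromov hyperbolic via ``Plante's theorem'' runs the theorem backwards: Plante produces an invariant transverse measure from a leaf of non-exponential growth, not polynomial growth from membership in the support of an invariant measure (a compact hyperbolic leaf carries an invariant transverse Dirac mass, so the converse is false in general). The correct order here is to first run the structure theory --- minimality plus full support forces the measure to be non-atomic, hence the holonomy pseudogroup is conjugate to a group of translations and $\cF$ is without holonomy --- and then read off that all leaves are planes or cylinders; the ``in particular'' clause is a consequence of the uniform equivalence, not an independent input. Second, the fibers of the Tischler fibration $M \to S^1$ are nearly \emph{tangent} to $T\cF$, approximating the leaves, not transverse to $\cF$; it is precisely because they are incompressible closed surfaces almost tangent to a foliation carrying an invariant measure of non-negative Euler characteristic that they must be tori. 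Third, knowing $M$ is a torus bundle over $S^1$ is not enough: you must exclude Anosov monodromy (a sol-manifold), and you assert virtually nilpotent monodromy without justification. The exclusion uses the algebra of the Plante homomorphism: its image in $\RR$ is abelian, so it kills the commutator subgroup, which for Anosov monodromy has finite index in the fiber group $\ZZ^2$; the kernel then contains $\ZZ^2$, which sits in the fundamental group of every leaf and would produce torus leaves, contradicting minimality. Finally, the closing ``equivariant straightening'' only gestures at the bounded-Hausdorff-distance statement; that step requires the global product structure for foliations without holonomy (Hector--Hirsch, Imanishi) together with the explicit linear models as in \cite[Appendix B]{HP}. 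None of these gaps is unfixable, but each is a step the cited proof actually has to carry out.
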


Note that in \cite[Theorem 5.1]{FP} the existence of a holonomy invariant measure is assumed, but in our situation the existence of such measure follows from the existence of a leaf which is not Gromov hyperbolic (see \cite[Chapter 7]{Calegari-book}). 

In particular, as a consequence of the proposition, we know
that $M$ is a nilmanifold.

First, we will assume that $\cF_1, \cF_2$ are
uniformly equivalent to linear foliations, and in 
addition that the linear foliations which are uniformly equivalent to $\cF_1$ and $\cF_2$ are not the same. In this case, we can prove the conclusion of Theorem \ref{teo.mainparabolic} without requiring minimality of the foliations, nor that the equivalent foliations are irrational (we refer the reader to \cite[Appendix B]{HP} for a description of foliations in nilmanifolds, including their lineal models). 

\begin{lema}\label{lem.diferentequiv}
Let $M$ be a nilmanifold (possibly $\TT^3$) and let $\cF_1$ and $\cF_2$ be two foliations which are uniformly equivalent to different linear foliations and such that the intersected  foliation $\cG$ has Hausdorff leaf space in the universal cover. Then, the foliation $\cG_L$ is uniformly equivalent to a linear foliation in every $L \in \wcF_i$.  
\end{lema}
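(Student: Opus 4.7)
The plan is to lift to the universal cover $\mt$ (either $\RR^3$ when $M=\TT^3$, or the Heisenberg group) and compare $\wcG$ with the intersected foliation $\wcG^{\mathrm{lin}}:=\wcF_1^{\mathrm{lin}}\cap \wcF_2^{\mathrm{lin}}$ of the two linear models. The leaves of $\wcF_i^{\mathrm{lin}}$ are cosets of a two-dimensional Lie subgroup $H_i\leq \mt$; since the linear models are different the subalgebras $\mathfrak h_1,\mathfrak h_2$ meet in a one-dimensional subalgebra, so every pair of leaves (one from each $\wcF_i^{\mathrm{lin}}$) meets transversely along a coset of the one-dimensional subgroup $H_1\cap H_2$ with a uniform positive angle built into the Lie structure. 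In particular $\wcG^{\mathrm{lin}}$ is a foliation by parallel affine lines inside each linear plane, and this is the model against which I will match $\cG_L$.

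The central step is to establish, for each leaf $\ell=L\cap E$ of $\wcG$ and any choice of $L^{\mathrm{lin}}\in\wcF_1^{\mathrm{lin}}, E^{\mathrm{lin}}\in\wcF_2^{\mathrm{lin}}$ within Hausdorff distance $C$ of $L,E$, a uniform bound $d_H(\ell,\ell^{\mathrm{lin}})\leq C''$ where $\ell^{\mathrm{lin}}:=L^{\mathrm{lin}}\cap E^{\mathrm{lin}}$. The inclusion $\ell\subset N_{C'}(\ell^{\mathrm{lin}})$ is immediate: every point of $\ell$ lies within $C$ of both $L^{\mathrm{lin}}$ and $E^{\mathrm{lin}}$, and the uniform transversality of the linear cosets converts this into distance $\leq C'$ from their intersection $\ell^{\mathrm{lin}}$. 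For the reverse inclusion, given $q\in\ell^{\mathrm{lin}}$ I pick $q'\in L$ within $C$ of $q$; since $L$ is uniformly transverse to $\wcF_2$ and $E$ lies within $C$ of $q$, I can follow a short arc inside $L$ transverse to $\cG_L$ into the leaf $L\cap E=\ell$, landing uniformly close to $q$. This yields $\ell^{\mathrm{lin}}\subset N_{C''}(\ell)$ pointwise; to upgrade to the full Hausdorff bound one must still exclude the possibility that $\ell$ folds back inside the tubular $C'$-neighborhood of $\ell^{\mathrm{lin}}$, and this is where the Hausdorff leaf space hypothesis enters via Proposition \ref{prop.Hsdffimpliesunifpropemb}: the proper function $\rho$ bounding arc-length on $\ell$ by ambient distance forces $\ell$ to be bi-infinite and to run along $\ell^{\mathrm{lin}}$ to infinity in both directions.

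Having established the correspondence $\ell\longleftrightarrow \ell^{\mathrm{lin}}$ with uniformly bounded Hausdorff error in both directions (the symmetric statement matching each leaf of $\wcG^{\mathrm{lin}}$ to a nearby leaf of $\wcG$ follows by the same argument starting from an arbitrary $E^{\mathrm{lin}}$), I fix $L\in\wcF_i$ with paired $L^{\mathrm{lin}}$. The coarse nearest-point projection $\pi:L\to L^{\mathrm{lin}}$ is a quasi-isometry between planes at bounded Hausdorff distance, and under $\pi$ leaves of $\cG_L$ correspond with bounded Hausdorff error to the leaves of the linear foliation $\wcG^{\mathrm{lin}}|_{L^{\mathrm{lin}}}$ of the affine plane $L^{\mathrm{lin}}$ by parallel lines; pulling this foliation back through $\pi$ realises $\cG_L$ as uniformly equivalent to a linear foliation of $L$. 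The principal obstacle is the backward inclusion $\ell^{\mathrm{lin}}\subset N_{C''}(\ell)$, which genuinely requires the Hausdorff hypothesis $-$ without it a leaf could track only a half-line of $\ell^{\mathrm{lin}}$ or degenerate inside the tube $-$ and Proposition \ref{prop.Hsdffimpliesunifpropemb} is precisely the tool that rules this out. A secondary technical point, in the Heisenberg case, is that the Euclidean transversality estimate for the linear cosets must be replaced by its Carnot-Carath\'eodory analogue; left-invariance of the metric reduces this to the standard ball-box estimate at the identity, yielding the same kind of quantitative bound used throughout.
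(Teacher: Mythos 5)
Your proposal is correct in substance and reaches the stated conclusion, but it handles the key step --- the one where the Hausdorff hypothesis actually does work --- by a genuinely different mechanism than the paper. The paper fixes $L$, notes that every $E\in\wcF_2$ meets $L$ in exactly one component (non-empty because the linear models are different, connected by Lemma~\ref{l.twointersnonHsdff}), identifies the leaf space $\cI\cong\RR$ of $\cG_L$ with the leaf space of $\wcF_2$, and rules out a leaf $\ell_{E_0}$ tracking only one ray of a translate of $c=P_1\cap P_2$ by a connectedness argument: the set of leaves meeting a bi-infinite curve $\eta$ that tracks $P_1^x\cap P_2^x$, together with the leaves trapped in the bounded complementary component of $\ell_{E_0}$, forms a ray-interval of $\cI$ all of whose leaves stay in a bounded neighborhood of $\eta$; its complement in $\cI$ is then connected, yet it must contain leaves meeting each of two disjoint half-planes of $L$ that cannot be joined without crossing $\eta$ --- a contradiction. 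You instead invoke Proposition~\ref{prop.Hsdffimpliesunifpropemb}: if both ends of $\ell$ escaped to the same end of the tube $N_{C'}(\ell^{\mathrm{lin}})$, two points on the two strands at the same height along $\ell^{\mathrm{lin}}$ would be at bounded ambient distance but unbounded arc-length apart along $\ell$, contradicting the proper function $\rho$. That is valid, more quantitative, and works leaf by leaf without the leaf-space bookkeeping; the paper's route is softer and avoids the uniform-properness proposition. Two small points. First, your ``follow a short arc inside $L$'' step for the reverse inclusion is really a separation argument at scale comparable to $C$: a large intrinsic disk of $L$ about $q'$ covers, up to bounded error, a disk of $L^{\mathrm{lin}}$ about $q$ containing points deep on both sides of $E^{\mathrm{lin}}$, hence on both sides of $E$, so the disk meets $E$; this uses that leaves at bounded Hausdorff distance from linear planes are uniformly quasi-isometrically embedded, a background fact the paper also relies on (it is stated right after the lemma). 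Second, once that pointwise reverse inclusion is combined with connectedness of $L\cap E$, you already have the two-sided Hausdorff bound; the folding-back exclusion is not an additional ``upgrade'' but an independent (and also correct) route to the same conclusion, so your proof contains a harmless redundancy rather than a gap.
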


 Note also that since leaves $L$ are quasi-isometric to euclidean planes, being uniformly equivalent to linear foliations is a stronger property than being quasi-isometric to a linear foliation. 

\begin{proof}
In the universal cover, we know that the intersection between a leaf $L \in \wcF_1$ and a leaf $E \in \wcF_2$ takes place in a neighborhood of the intersection $c= P_1 \cap P_2$ between two linear planes: $P_1$ which is Hausdorff close to $L$ and $P_2$ which is Hausdorff close to $E$. 

Fix a leaf $L$ of $\wcF_1$.
For any leaf $E$ of $\wcF_2$, let $\ell_E = L \cap E$. By Hausdorffness of $\cG_L$ 
this has at most one component. By uniform equivalence with 
different linear foliations, this is always non empty.
Hence the leaf space of $\wcF_2$ is naturally  homeomorphic to the leaf 
space  of $\cG_L$ by $\ell_E = L \cap E$.
Let $\cI$ be the leaf space of $\cG_L$, which is homeomorphic to
the reals.

We want to show that for any $\ell_E$, some uniform neighborhood of
$\ell_E$ in $\mt$ contains a translate of $c$.  If this were not the case, we would have one $\ell_{E_0} \subset L$ which is close to only one ray of a translate of $c$. 
We will show that this contradicts that $\cG_L$ has Hausdorff leaf
space. 

Since $\ell_{E_0}$ is close to only one ray of a translate of $c$
it follows that there is a unique complementary component
of $\ell_{E_0}$ in $L$ which is contained in a bounded neighborhood
of $\ell_{E_0}$
in $L$. Let $B$ be this complementary component. Let $x$ be a point in
$\ell_{E_0}$. The leaf $\ell_{E_0}$ separates the leaf space
of $\cG_L$ and so there is one component $A$ of 
$\cI \ \setminus \ \{ \ell_{E_0} \}$
so that every leaf of $\cG_L$ in $A$ is contained in $B$ and
so it is a finite Hausdorff distance from $\ell_{E_0}$.

Let $P_1^x, P_2^x$ be the two linear planes parallel to $P_1, P_2$
respectively and both containing $x$. Now take a bi-infinite curve
$\eta$ in $L$ which is a finite Hausdorff distance from $P^x_1
\cap P^x_2$ and we assume that $\eta$ contains $x$.
We consider the set $C$ of all leaves of $\cG_L$ intersecting
$\eta$ plus all leaves in $A$. Both sets are connected,
and both contain the leaf of $\cG_L$ through $x$. 
Hence the set $C$ is an interval
in $\cI$. It contains a ray in $\cI$ as $A$ is a ray in $\cI$.
Note that $\ell_{E_0}$ is contained in a bounded neighborhood 
of $\eta$ in
$L$.

For any point $y$ in $\eta$, then $y$ is uniformly close to $P_1^x \cap P^x_2$,
hence its $\cG_L$ leaf is boundedly close to $P^x_1 \cap P^x_2$,
so boundedly close in $L$ to $\eta$. It follows that any
leaf in $C$ is contained in a bounded neighborhood of $\eta$ in $L$.
However $L \setminus C$ contains two disjoint half planes. 
$H_1, H_2$. Any leaf intersecting $H_1$ cannot connect to
a leaf intersecting $H_2$ without intersecting $\eta$, hence
without intersecting $C$. But the leaves intersecting $C$ 
form an interval which is a ray, so the complement is also an interval,
hence connected. This contradicts the above.
This finishes the proof.
\end{proof}

Next we show that if of one of the foliations is minimal the other cannot be uniformly equivalent to it. This follows arguments similar to \cite{Pot,HP}. 

\begin{lema}\label{lem-nonunifequiv}
Let $\cF_1$ be a minimal foliation that has a leaf which is not Gromov hyperbolic and let $\cF_2$ be a transverse foliation to $\cF_1$. Then, $M$ is a nilmanifold (possibly $\TT^3$) and $\cF_2$ is not uniformly equivalent to $\cF_1$. 
\end{lema}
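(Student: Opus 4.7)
The first assertion, that $M$ is a nilmanifold, follows immediately from Proposition~\ref{prop-nonGHimplieslinear}: since $\cF_1$ is minimal and contains a leaf which is not Gromov hyperbolic, $\cF_1$ is uniformly equivalent to an irrational linear foliation in $\TT^3$ or in a nilmanifold, and in particular $M$ is a nilmanifold.

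For the second assertion, I would argue by contradiction: assume that $\cF_2$ is uniformly equivalent to $\cF_1$. Then by transitivity of uniform equivalence and Proposition~\ref{prop-nonGHimplieslinear}, both $\cF_1$ and $\cF_2$ are uniformly equivalent to the same linear foliation $\cL$ of $M$. In the universal cover $\mt$, this means every leaf of $\wcF_1$ and every leaf of $\wcF_2$ is contained in a slab of uniformly bounded width around some leaf of $\wcL$. Choosing a linear function $h : \mt \to \RR$ whose level sets are the leaves of $\wcL$, we obtain $\pi_1(M)$-equivariant monotone maps $\phi_i : \cL(\wcF_i) \to \cL(\wcL) \cong \RR$ (for $i=1,2$), each sending a leaf to the $h$-level of its $\wcL$-partner. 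The task is to use these maps, together with transversality, to derive a contradiction.

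The plan is to exploit the global constraint imposed by uniform equivalence against the local transversality of $\cF_1$ and $\cF_2$. By compactness of $M$ there is a uniform angle bound $\theta_0 > 0$ between $T\cF_1$ and $T\cF_2$. Fix $L \in \wcF_1$ and its partner $P \in \wcL$. The leaves $E \in \wcF_2$ with $|\phi_2(E) - \phi_1(L)|$ small form a continuous one-parameter family, all contained in a bounded neighborhood of $L$ by the triangle inequality. I would first establish the key geometric step: a subfamily of positive measure of these $E$'s must intersect $L$ transversally, yielding a one-parameter family of leaves of $\cG_L$ inside a bounded region of $L$. This would use the fact that two leaves cannot both be trapped in a thin slab about the same $\wcL$-leaf, be at bounded Hausdorff distance, and be uniformly transverse with angle $\ge \theta_0$, without producing intersections. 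The intersection curves then cover $L$ by bounded-width strips controlled by the small interval of $\phi_2$-values near $\phi_1(L)$.

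Combined with the $\pi_1(M)$-equivariance of the $\phi_i$'s and minimality of $\cF_1$, this local picture would propagate to every leaf of $\wcF_1$, forcing the tangent planes of $\cF_2$ to agree (up to bounded angular deviation) with those of $\cF_1$ along a set dense in $M$, contradicting the transversality bound $\theta_0$. The main obstacle I anticipate is precisely the first step above: extracting an actual intersection of $L$ and $E$ from only bounded Hausdorff distance and transverse angle, since uniform equivalence is a coarse condition that does not, a priori, force nearby leaves to meet. I expect this step to require a finer analysis using the minimality of $\cF_1$ (to produce deck translates of $L$ dense in any neighborhood) together with the nilpotent group structure of $\pi_1(M)$ acting by translations on $\cL(\wcL)$, in the spirit of the arguments developed in~\cite{Pot,HP}.
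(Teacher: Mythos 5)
Your first paragraph is fine: the nilmanifold claim is exactly Proposition~\ref{prop-nonGHimplieslinear}, and the paper does the same. The second part, however, is a plan rather than a proof, and the step you yourself flag as the ``main obstacle'' is precisely the content of the lemma: nothing in your sketch actually produces the intersections, and even granting them, the proposed endgame (a one-parameter family of leaves of $\cG_L$ in a bounded region ``forcing the tangent planes of $\cF_2$ to agree with those of $\cF_1$'') is not justified --- two transverse planes at angle $\geq \theta_0$ intersect in a line, and accumulating many such intersection curves in a bounded piece of $L$ does not by itself contradict transversality. The underlying difficulty is that uniform equivalence is a coarse, metric condition while transversality is infinitesimal, and the bridge between them cannot be made by an angle estimate alone: there is no a priori metric on the leaf space of $\wt{\cF_1}$ against which ``moving transversally at a definite rate'' forces you to leave a bounded slab.

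The paper closes this gap with a topological, not metric, argument. Since $\cF_1$ is minimal and uniformly equivalent to an irrational linear foliation, it has no holonomy, and then \cite[Theorem VIII.2.2.1]{HectorHirsch} gives \emph{global product structure}: any one-dimensional foliation $\cT$ transverse to $\cF_1$ lifts so that every leaf of $\wt{\cT}$ meets \emph{every} leaf of $\wt{\cF_1}$. One then takes $\cT$ tangent to $T\cF_2$ and transverse to $T\cF_1$ (the orthogonal of $T\cF_1\cap T\cF_2$ inside $T\cF_2$, using the orientability assumptions), so each leaf of $\wt{\cT}$ is contained in a single leaf $E$ of $\wt{\cF_2}$. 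If $\cF_2$ were uniformly equivalent to $\cF_1$, then $E$ would lie within distance $R$ of one leaf $L$ of $\wt{\cF_1}$; but there are leaves $L'$ of $\wt{\cF_1}$ outside the $R$-neighborhood of $L$, which the $\wt{\cT}$-leaf inside $E$ must nevertheless intersect --- a contradiction. Your height-function idea is morally the same mechanism (the gradient lines of $h$ restricted to $E$ play the role of $\cT$), but to make it rigorous you would still need the global product structure (or an equivalent holonomy-free statement) to guarantee that following $\cT$ actually sweeps through the whole leaf space of $\wt{\cF_1}$; that is the missing ingredient in your proposal.
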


\begin{proof}
A minimal foliation like $\cF_1$ is, due to Proposition \ref{prop-nonGHimplieslinear}, uniformly equivalent to a linear irrational foliations by planes on $\TT^3$ or a linear irrational foliation by cylinders in a nilmanifold (possibly $\TT^3$). Note that such foliations do not have holonomy, in particular, one can apply  \cite[Theorem VIII.2.2.1]{HectorHirsch} to deduce that every one dimensional foliation $\cT$ transverse to $\cF_1$ must have, in $\mt$ global product structure with $\wt{\cF_1}$, meaning that for every $\ell \in \wt{\cT}$ and $L \in \wcF_1$ the intersection $\ell \cap L$ is non empty and contains at least one point. 

In particular, since for every $R>0$ there are leaves $L,L'$ so that $L'$ does not intersect the $R$-neighbood of $L$ we deduce that $\cT$ cannot be contained in a foliation which is uniformly equivalent to $\cF_1$ since otherwise leaves of $\cT$ could not intersect every leaf of $\wcF_1$ (compare with \cite[Proposition 6.8]{Pot}).


Now fix a vector field tangent to $T\cF_2$ which is everywhere transverse to $T \cF_1$ (note that this can be done by the orientability assumptions, for instance, by considering a metric and choosing the orthogonal vector of norm $1$ to $T\cF_2 \cap T\cF_1$ in $T\cF_2$). We can assume that the vector field integrates to a one dimensional foliation (if needed, by smoothing along the leaves of $\cF_2$) and thus we can apply the discussion of the previous paragraph to complete the proof. 

\end{proof}

\begin{remark}\label{remark-lemauniform}
Foliations in nilmanifolds without Reeb components are well understood (see e.g. \cite[Appendix B]{HP}) and are either uniformly equivalent to linear foliations or contain some torus leaf. Note that if a foliation in a nilmanifold is minimal, then it has to be uniformly equivalent to a linear foliation which has some irrational direction. In conclusion, in the setting of the previous lemma, if $\cF_2$ has a torus leaf, it cannot be at bounded distance from the plane that directs the minimal foliation $\cF_1$.  
\end{remark}

Now we can complete the proof of Theorem \ref{teo.mainparabolic}. 

\begin{proof}[Proof of Theorem \ref{teo.mainparabolic}]
First, Lemma \ref{lem-nonunifequiv} applied to $\cF_1$ implies
that $M$ is a nilmanifold and $\cF_1$ is uniformly equivalent
to a linear foliation in $M$.
To prove the Theorem we just need to prove that the foliation $\cF_2$ cannot be uniformly equivalent to a foliation containing either: 
1) A Reeb component, or 2) A foliated $\TT^2 \times [0,1]$ which has
non Hausdorff leaf space in its universal cover.
The reason is that if we prove that, 
then the classification of foliations in nilmanifolds up to uniform equivalence (see e.g. \cite[Appendix B]{HP}) implies that
$\cF_2$ is uniformly equivalent to a linear foliation.
Lemma \ref{lem-nonunifequiv} implies that $\cF_1, \cF_2$ are 
not uniformly equivalent, hence they are individually uniformly equivalent
to different linear foliations. With the hypothesis that $\wcG$ has
Hausdorff leaf space, this satisfies the hypothesis
of Lemma \ref{lem.diferentequiv}, which then
implies that $\cG_L$ is uniformly equivalent to a linear
foliation in every $L \in \wcF_i$, and so completes the proof
of Theorem \ref{teo.mainparabolic}. 

The fact that $\cF_2$ does not have Reeb components follows from the transversality of the foliations $\cF_1, \cF_2$ and the fact that $\cF_1$ is uniformly equivalent to a linear foliation. Indeed, suppose that $\cF_2$ has a
Reeb component  $R$ and let $\hat R$ be a connected component of its lift to $\mt$. The inclusion $i: R \to M$ induces a map $i_\ast: \pi_1(R) \to \pi_1(M)$. If this map is zero, then $\hat R$ is a (compact) solid torus, else, it is an infinitely long tube in a bounded radius neighborhood of the lift of the closed geodesic representing a generator of the image of $i_\ast$.  In the first case, it follows that  the boundary of $\hat R$, which is a leaf of $\wcF_2$ must have a tangency with some leaf of $\wcF_1$ because $\wcF_1$ is uniformly equivalent to a linear foliation. 
Suppose now that $\hat R$ is an infinite tube. Then its boundary
is an infinite cylinder, which we denote by $L$. It is a leaf of 
$\wcF_2$. If the intersection of $\wcF_1$ with $L$ has a compact leaf,
then this compact leaf bounds a disk in its $\wcF_1$ leaf. The
disk has to be contained in $\hat R$ and this forces a tangency
with $\wcF_2$, impossible. Otherwise all leaves of $\wcF_1 \cap L$
are lines. Let $g$ be such a line and $F$ the $\wcF_1$ leaf
containing it. Looking at the intersection of $F$ with $\hat R$ near
$g$ we have a leaf $\ell$ of $\wcG$ contained in the interior of $\hat R$.
This leaf $\ell$ is contained in a planar leaf $Z$ of $\wcF_2$, since
$R$ is a Reeb component. In addition, again because $R$
is a Reeb component, then both ends of $\ell$ escape in $Z$ and 
get closer and closer to $L$, and finally the ends of $\ell$
escape in $\hat R$ towards the
same end of $\hat R$. Therefore rays of $\ell$ are asymptotic
to rays of leaves of $\wcF_1 \cap L$. The Hausdorff hypothesis
of leaf space of $\wcG$ 
implies that these two leaves of $\wcF_1 \cap L$ are the same leaf.
This is the main property, it implies that this leaf of $\cG_L$
is in fact $g$. But this implies that there is an end of $L$ so
that every leaf of $\wcF_1 \cap L$ has both rays escaping
to this end of $L$. This is impossible.
This shows that $\cF_2$ does not have Reeb components.

To complete the proof we must rule out that $\cF_2$ has a foliated $\TT^2 \times [0,1]$ so that when lifted to $\mt$ we have that the boundary  tori lift to planes $L_1$ and $L_2$ which are not separated in the leaf space of $\wcF_2$.
Let $Z$ be a leaf of $\wcF_1$ intersecting $L_1$.
Suppose that it intersects $L_2$, let $\ell_1, \ell_2$ be components
of the intersection. Since the leaf space of $\cG_Z$ is Hausdorff
there is a transversal to $\cG_Z$ connecting $\ell_1$ to $\ell_2$.
This transversal is also a transversal to $\wcF_2$. But it
connects $L_1$ and $L_2$, contradiction. Hence any such $Z$
does not intersect $L_2$. The union $U$ of the leaves of $\wcF_1$ 
intersecting $L_1$ is open and it is invariant under 
$G = \pi_1(\pi(L_1))$ (here $\pi(L_1)$ is a torus).
Hence there is a unique leaf $B$ of $\wcF_1$ in the boundary
of $U$ which separates it from $L_2$. This leaf $B$ is also invariant
under $G$ because $L_2$ is. It follows that $\pi(B)$ is a compact
leaf of $\cF_1$, contradiction to $\cF_1$ being minimal.
This completes the proof of Theorem \ref{teo.mainparabolic}.
\end{proof}
%

\subsection{Further questions} 

There are foliations which are not by Gromov hyperbolic leaves which admit transverse foliations.

A Reeb surface is (under our orientability assumptions) a foliated annulus in a leaf of $\cF_1$ so that when lifted to the universal cover, the leaves in the interior accumulate in both boundary components of the lifted band. See \cite{FP4} for more information.

\begin{example}
Consider a suspension Anosov flow and do a $DA$-modification at some periodic orbits obtaining some transverse tori to the flow $T_1, \ldots, T_k$ some attracting and some repelling. See for instance \cite{BBY}. One can start with the foliations by fibers transverse to the suspension and drill in the direction of the tori in order to keep the flow transverse to the foliation, but now the tori $T_i$ become leaves of the foliation $\cF$ we have constructed and which is transverse to the flow. Now, one can glue such piece to other pieces to obtain an Anosov flow in a closed 3-manifold, and create gluing foliations like $\cF$ to obtain a foliation with torus leaves which is transverse to the Anosov flow (and therefore to its weak stable and weak unstable foliations). This way one can produce Reebless or taut foliations transverse to foliations by Gromov hyperbolic leaves (note that in \cite{BBY} they produce transitive Anosov flows with this setting and that would allow us to produce transversals intersecting every leaf of $\cF$, thus it is possible to make the foliations taut). \end{example}

This proposes the following question:

\begin{question}
Is it possible to construct two transverse foliations $\cF_1$ and $\cF_2$ one of which does not have Gromov hyperbolic leaves and the other does, in such a way that the intersected  foliation is leafwise Hausdorff?
\end{question}

We mention two other open questions that we found relevant. 

\begin{question}
Let $\cF_1$ and $\cF_2$ be two transverse foliations by Gromov hyperbolic leaves. Is it true that if the leaf space of the intersected  foliation $\cG$ in the universal cover is not Hausdorff then there are Reeb surfaces in $\cF_1$ and $\cF_2$? 
\end{question}

\begin{question} Let $\cF_1$ and $\cF_2$ be orientable transverse foliations
which are minimal in an atoroidal closed 3-manifold $M$.
Is it true that $\cG$ is topologically equivalent to the 
flow foliation of an Anosov flow?
\end{question}

\section{Application to partially hyperbolic diffeomorphisms}
\label{s.ph}

Let $f$ be a partially hyperbolic diffeomorphism in a closed
$3$-manifold $M$. We refer to \cite{BFP} for the definition of
partial hyperbolicity, as well as branching foliations, and 
different forms of collapsed Anosov flow behavior. 

The center leaf space is defined as follows: let $E$ be a center stable
leaf in $\mt$ and $F$ a center unstable leaf in $\mt$.
A connected component of $E \cap F$ is called a {\em center leaf} of $f$.
The center leaf space has a very natural topology \cite{BFP} 
which makes it into a simply connected two dimensional,
possibly non Hausdorff manifold.

\begin{teo} Let $f$ be a partially hyperbolic diffeomorphism
in a closed 3-manifold $M$, with $\pi_1(M)$ not
virtually solvable, and such that $f$ admits branching foliations,
center stable ($\cs$) and center unstable ($\cu$), both of
which have Gromov hyperbolic leaves and are orientable.
Suppose that the center leaf space of $f$ is Hausdorff.
Then $f$ is a collapsed Anosov flow. \end{teo}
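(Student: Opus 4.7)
The plan is to bridge between the (branching) center stable and center unstable foliations of $f$ and the setting of Theorem \ref{teo.main}, and then to invoke the collapsed Anosov flow criterion from \cite[\S 6]{BFP}. First I would apply the standard approximation procedure for branching foliations: under the orientability hypothesis, $\cs$ and $\cu$ are approximated arbitrarily well by pairs of genuine transverse foliations $\cF_1, \cF_2$ of $M$ (see \cite{BFP}), whose leaves are Hausdorff close to corresponding leaves of $\cs, \cu$ in $\mt$ and are therefore also uniformly Gromov hyperbolic in the induced metric. Let $\cG = \cF_1 \cap \cF_2$, with lift $\wcG$.

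The next step is to transfer the Hausdorff hypothesis from the center leaf space of $f$ to $\wcG$. The idea is that center leaves of $f$ are in bijective correspondence with connected components of intersections $E \cap F$ for $E$ a center stable and $F$ a center unstable leaf, and after passing to the approximating foliations $\cF_1, \cF_2$ this correspondence turns into an identification of the center leaf space with $\wcG$ modulo the collapse maps. Concretely, if the center leaf space were Hausdorff but $\wcG$ were not, one could produce a sequence $\ell_n$ of leaves of $\wcG$ with two distinct accumulation leaves $\ell, \ell'$; tracking these back through the approximation yields two center leaves that are not separated, contradicting the hypothesis. So $\wcG$ has Hausdorff leaf space, and by Proposition \ref{prop-Hsdff2Dand3D} the induced one dimensional foliations $\cG_L$ are Hausdorff in every leaf $L$ of $\wcF_1$ or $\wcF_2$.

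Now since $\pi_1(M)$ is not virtually solvable it is in particular not solvable, so Theorem \ref{teo.main} applies to $\cF_1, \cF_2$: the leaves of $\cG_L$ are uniform quasigeodesics in $L$ for every $L \in \wcF_1, \wcF_2$. Transferring back to the branching foliations (using that the approximating foliations can be taken arbitrarily $C^0$-close to $\cs, \cu$, and that the quasigeodesic constant obtained is uniform) yields that the branching one dimensional foliation obtained as the intersection $\cs \cap \cu$ is leafwise quasigeodesic in both $\cs$ and $\cu$. This is precisely the hypothesis of the main criterion of \cite[\S 6]{BFP}, which then concludes that $f$ is a collapsed Anosov flow.

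The main obstacle I anticipate is the passage between $\wcG$ (for the approximating foliations) and the actual center leaf space of $f$, particularly the direction of transferring Hausdorffness and of transferring back the quasigeodesic property. The approximations change the leaves, so one needs the statement that a leaf of $\wcG$ is a quasigeodesic in a leaf of $\wcF_i$ with uniform constants, to imply a similar property in the corresponding leaf of $\wcs$ or $\wcu$; because the center stable/unstable leaves are close to their approximations and the constants are uniform, this should go through, but it requires a careful bookkeeping already developed in \cite{BFP}. The non virtual solvability hypothesis (stronger than what Theorem \ref{teo.main} asks) is inherited from the partially hyperbolic setting and is used in \cite{BFP} to ensure that the approximation and collapsing machinery behaves well, not directly in the quasigeodesic step.
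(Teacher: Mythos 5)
Your proposal follows essentially the same route as the paper's proof: approximate $\cs,\cu$ by genuine transverse foliations via Burago--Ivanov, identify the leaf space of $\wcG$ with the center leaf space to transfer the Hausdorff hypothesis, apply Theorem \ref{teo.main}, transfer the uniform quasigeodesic property back to the branching foliations, and conclude by the collapsed Anosov flow criterion of \cite{BFP}. The paper's argument is exactly this, citing \cite[Theorem D]{BFP} for the last step, so your proposal is correct and matches it.
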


\begin{proof}
Under the orientability conditions
Burago and Ivanov \cite{BI} showed that $\cs, \cu$ are approximated
by actual foliations $\cF^{cs}_\eps, \cF^{cu}_\eps$, whose
tangent planes are $\eps$ near those of $\cs$ and $\cu$ and
are transverse to each other.

Let $\cG = \cF^{cs}_\eps \cap \cF^{cu}_\eps$. 
Then the leaf space of $\wcG$ is naturally homeomorphic to the center leaf
space of $f$. By assumption the center leaf space
is Hausdorff, so the leaf space of $\wcG$ is Hausdorff as well. In addition
leaves of $\widetilde{\cF^{cs}_\eps},  
\widetilde{\cF^{cu}_\eps}$ are Gromov hyperbolic.  
By Theorem \ref{teo.main} it follows that leaves of $\wcG$
are uniformly quasigeodesic in leaves of $\widetilde{\cF^{cs}_\eps}$
and $\widetilde{\cF^{cu}_\eps}$.
This implies that center leaves are uniform quasigeodesics
in leaves of $\wcs, \wcu$.

When $\cs, \cu$ are transversely orientable, then
\cite[Theorem D]{BFP} shows that $f$ is also 
a strong collapsed Anosov flow as desired.
\end{proof}

Applications to ergodicity can be found in \cite{FPAcc}.

\end{document}